\documentclass[11pt]{amsart}

\usepackage{fullpage}
\usepackage{amsfonts}
\usepackage{bbding}
\usepackage[dvips]{graphicx}
\usepackage{amsfonts,amssymb,amsmath,amsthm,amscd}
\usepackage{mathrsfs}
\usepackage{xcolor}
\usepackage{empheq}
\usepackage{adforn}
\usepackage{cancel}

\usepackage{xr}
\usepackage{mdframed}

\usepackage{enumerate}
\usepackage{lmodern}
\usepackage{mdframed}
\usepackage{stmaryrd}
\usepackage{blindtext}
\usepackage[all, knot]{xy}
\usepackage{leftidx}
\usepackage{accents}

\definecolor{slightblue}{rgb}{.8, .8, 1}
\definecolor{hair}{RGB}{100,225,190}
\definecolor{ruby}{RGB}{220,50,120}
\definecolor{grass}{RGB}{150,220,110}

\usepackage[colorlinks=true, pdfstartview=FitP, linkcolor=hair!80!black, citecolor=hair!80!black, urlcolor=hair!80!black]{hyperref}

\usepackage[T1]{fontenc}

\newtheorem{theorem}{Theorem}[section] \newtheorem{proposition}[theorem]{Proposition}
\newtheorem{lemma}[theorem]{Lemma} \newtheorem{corollary}[theorem]{Corollary}

\theoremstyle{definition}

\theoremstyle{remark} \newtheorem{remark}[theorem]{Remark} \numberwithin{equation}{section}
\numberwithin{figure}{section}

\newcommand{\eps}{\varepsilon}

\newcommand{\R}{\mathbb{R}}

\renewcommand{\H}{\mathbb{H}}

\newcommand{\N}{\mathbb{N}}
\newcommand{\C}{\mathbb{C}}

\renewcommand{\P}{\mathbf{P}}
\newcommand{\E}{\mathbf{E}}

\newcommand{\G}{\mathcal{G}}

\renewcommand{\a}{{a}}
\renewcommand{\b}{{b}}
\renewcommand{\c}{{c}}

\newcommand{\capacity}{\mathrm{Cap}}
\newcommand{\dist}{\mathrm{dist}}

\newcommand{\dif}{d}

\usepackage{sidecap,mathtools,cases,caption,subcaption,empheq}
\usepackage[section]{placeins}
\usepackage{wrapfig}
\usepackage{lpic}

\theoremstyle{definition}
\newtheorem{construction}[theorem]{Construction}

\begin{document}

\title {Conformal restriction: The trichordal case}
\author{Wei Qian}
%\thanks{The author is partially supported by the SNF grant \#155922.}
\address{Department of Mathematics\\
 ETHZ\\ 
 R\"amistrasse 101, 8092 Z\"urich, Switzerland}
\email[]{wei.qian@math.ethz.ch}
\date{}

\begin{abstract}
The study of conformal restriction properties in two-dimensions has been initiated by Lawler, Schramm and Werner in \cite{MR1992830} who focused on the 
natural and important chordal case: They characterized and constructed all random subsets of a given simply connected domain that join two marked boundary points and that satisfy the 
additional restriction property. The radial case (sets joining an inside point to a boundary point) has then been investigated by Wu in \cite{MR3293294}. In the present paper, we study the third natural 
instance of such restriction properties, namely the ``trichordal case'', where one looks at random sets that join three marked boundary points. 
This case involves somewhat more technicalities than the other two, as the construction of this family of random sets relies on
special variants of SLE$_{8/3}$ processes with a drift term in the driving function that involves hypergeometric functions. 
It turns out that such a random set can not be a simple curve simultaneously in the neighborhood of all three marked points, and that the exponent $\alpha = 20/27$ shows up in the 
description of the law of the skinniest possible symmetric random set with this trichordal restriction property.  
\end{abstract}
\maketitle
\tableofcontents

\section{Introduction}

\subsection {Background}
In the present paper we further study random two-dimensional sets that satisfy the conformal invariance property combined with the restriction property, following the work of Lawler, Schramm and Werner  in \cite{MR1992830}  and the paper of Wu  in \cite{MR3293294}.

Measures that satisfy {conformal restriction property} were introduced and first studied by Lawler, Schramm and Werner in \cite{MR1992830}:
For a simply connected domain $D\not=\mathbb{C}$ with two marked boundary points $a$ and $b$ (we will say ``boundary points'' instead of prime ends in the present introduction), they studied 
a class of random simply connected and relatively closed sets $K\subset \overline{D}$ such that $K$ intersects $\partial D$ only at $a$ and $b$.
Such a set (or rather, its distribution) is said to satisfy  \emph{chordal conformal restriction property} if the following two conditions hold:
\begin{itemize}
\item[(i)](conformal invariance) the law of $K$ is invariant under any conformal map from $D$ onto itself that leave the boundary points $a$ and $b$ invariant.
\item[(ii)](restriction) for any simply connected subset $D'$ of $D$ such that {$\dist(D \setminus D', \{ a, b \})>0$},
the conditional distribution of $K$ given $K \subset D'$ is equal to the image of the law of $K$ under $\phi$, where $\phi$ is any conformal map from $D$ onto $D'$ that 
leaves the points $a,b$  invariant (property (i) actually ensures that if this holds for one such map $\phi$, then it holds also for any other such map). See Figure \ref{fig:two-point-restriction}.
\end{itemize}
\begin{figure}[h]
\centering
\includegraphics[width=0.78\textwidth]{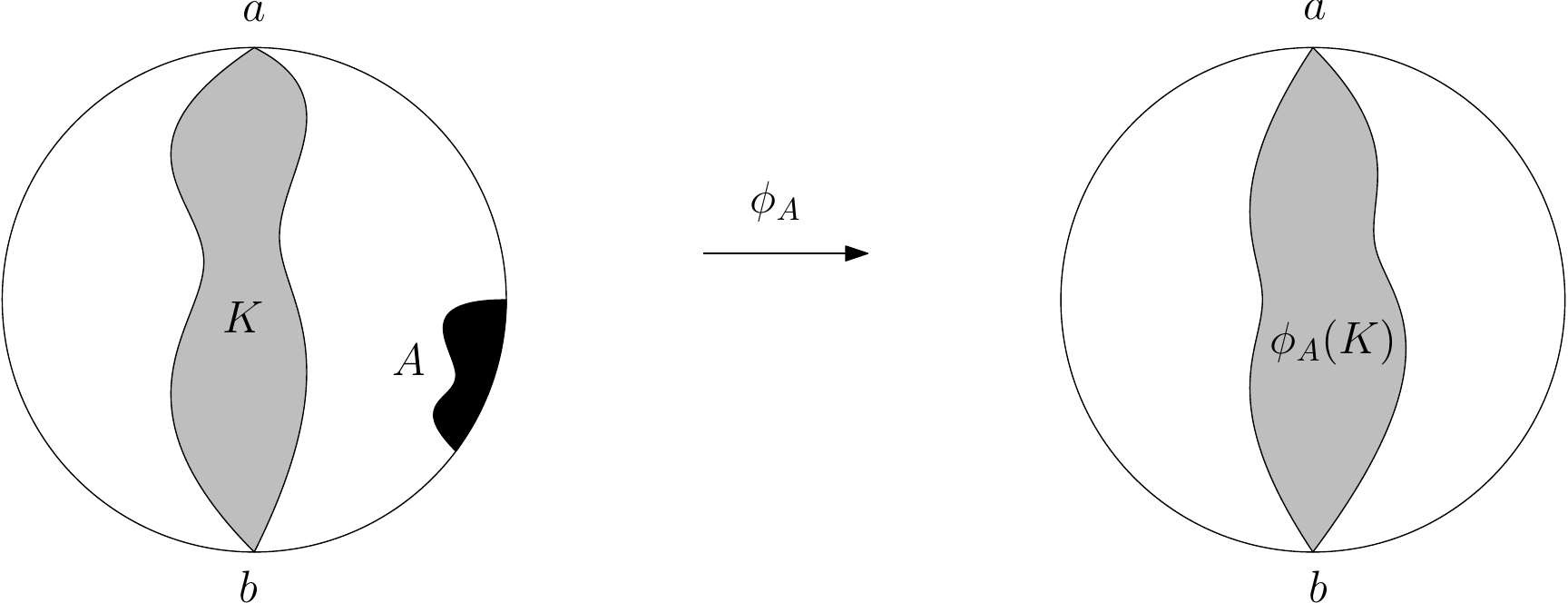}
\caption{
{Chordal restriction: $D'=D\setminus A$ and $\phi_A$ is a conformal map from $D\setminus A$ onto $D$ that leaves $a,b$ invariant.}
The conditional law of $\phi_A(K)$ 
given $\{K\cap A=\emptyset\}$ is equal to the (unconditional) law of $K$. }
\label{fig:two-point-restriction}
\end{figure}

It is straightforward to check that if  a random set $K$ satisfies this chordal conformal restriction property in one simply connected domain $D$ with boundary points $a$ and $b$, then if we map $D$ conformally to another simply connected domain $\Phi ( D)$ via some fixed deterministic map $\Phi$, then $\Phi (K)$ satisfies chordal conformal restriction in $\Phi (D)$ with boundary points $\Phi (a)$ and $\Phi (b)$ (and  property (i) ensures that the image law {depends} only on 
the triplet $(\Phi (D), \Phi (a), \Phi(b))$, and not on the particular instance of $\Phi$). Hence, it is sufficient to study this property in one particular given domain $D$, such as the unit disc or the upper half-plane.

Recall that conformal invariance is believed to hold  in the scaling limit for a large class of two-dimensional models from statistical physics. A chordal restriction
property can be interpreted as follows: On a lattice, one can associate to each set $K$ an energy, and the restriction property means that this energy of $K$ is ``intrinsic'' in 
the sense that it does not depend on the domain in which it lives but only on $K$ (and the extremal points $a$ and $b$) itself.
For example, for a simple random walk on a square lattice in a discretized domain $D$, which is conditioned
to go from one boundary point $a$ to another boundary point $b$, the probability of a given path $\gamma$ will be proportional to $4^{-|\gamma|}$ where $|\gamma|$ is the number of steps of $\gamma$. 
This weight $4^{-|\gamma|}$ is then intrinsic because it only depends on the path $\gamma$ itself but not on the domain $D$. As a consequence, if we condition such a random walk excursion from $a$ to $b$ in $D$  to stay in a subdomain $D'$ of $D$ which still has $a$ and $b$ on its boundary, then one gets exactly a random walk excursion from $a$ to $b$ in this smaller domain $D'$. In the continuous limit, the Brownian excursion from $a$ to $b$ in $D$ (or rather its ``filling'' in order to get a simply connected set) does indeed satisfy chordal conformal restriction. chordal restriction measures therefore describe the natural conformally invariant and intrinsic ways to join two boundary points in a simply connected domain. 

Lawler, Schramm and Werner proved in \cite{MR1992830} that such measures are fully characterized by one real parameter $\alpha$ (here and in the sequel, this means that there is an injection from the set of conformal restriction measures into $\R$) that can be described as follows. For each chordal conformal restriction measure, there exists a positive $\alpha$ such that for all $A\subset D$ such that
$D\setminus A$ is again a simply connected domain and $\dist(A,\{a,b\})>0$, one has
\begin{align}\label{two-point-formula}
\P(K\cap A=\emptyset) =\left(\phi_A'(a)\phi_A'(b)\right)^\alpha
\end{align}
(It is straightforward to see that the product $\phi_A'(a) \phi_A'(b)$ does not depend on the choice of $\phi_A$ among the one-dimensional family of conformal maps from $D \setminus A$ onto $D$ that leave $a$ and $b$ invariant). Conversely, it is easy to see that for each given $\alpha$, there exists at most one law of $K$ satisfying this relation for all $A$. 

The more challenging part is then to investigate for which values of $\alpha$ such a random set $K$ does indeed exist. Lawler, Schramm and Werner showed in the same paper \cite{MR1992830} that 
 such a probability measure exists if and only if $\alpha\ge5/8$, and they provided a detailed description of these measures: When the smallest value $\alpha=5/8$, it is exactly  the law of chordal SLE$_{8/3}$ from $a$ to $b$ in $D$, so that $K$ is almost surely a simple curve from $a$ to $b$ in $D$. 
When $\alpha>5/8$, they showed that $K$ is almost surely not a simple curve anymore (the case $\alpha=1$ is the aforementioned law of the filling of a Brownian excursion in $D$ from $a$ to $b$).
In fact, they also described the law of the right boundary of $K$ for all $\alpha\ge 5/8$ in terms of a variant of SLE$_{8/3}$ (the SLE$_{8/3} ( \rho)$ processes), which showed in particular that 
the boundary of all these random sets $K$ look locally like an SLE$_{8/3}$ or equivalently like the boundary of a two-dimensional Brownian motion.
Intuitively, the larger $\alpha$ is, the ``fatter'' the random set $K$ is. For instance, one can prove that $K$ will contain cut-points if and only if $\alpha < 35/24$, see \cite {MR2060031} (where 
the law of the left boundary given the right one is also described).

\bigskip

These chordal restriction measures can in fact be viewed as special limiting cases of a larger class of restriction measures 
defined similarly, but with three marked boundary points instead of two; this class will be the topic of the present paper: 
For a simply connected domain $D\not=\mathbb{C}$ with three (different) marked boundary points $a,b$ and $c$, we consider probability measures supported on simply connected and relatively closed $K\subset \overline{D}$ such that $K\cap \partial D = \{a,b,c\}$. Such a measure is said to satisfy the \emph{trichordal conformal restriction property} if for $D'\subset D$ such that $D'$ is simply connected and $\dist( D \setminus D', \{a,b,c\})>0$, the law of $K$ conditioned to stay in $D'$ is identical to the law of $\phi(K)$ where $\phi$ is the unique conformal map from $D$ onto $D'$ that leaves the points $a,b,c$ invariant, see Figure \ref{fig:trichordal-restriction}. One expects intuitively this family of measures to be larger (ie. parametrized by more than one real parameter) because condition (i) 
of the chordal case is no longer required, so that the measures are invariant under a smaller semi-group of transformations.
\begin{figure}[h!]
\centering
\includegraphics[width=0.78\textwidth]{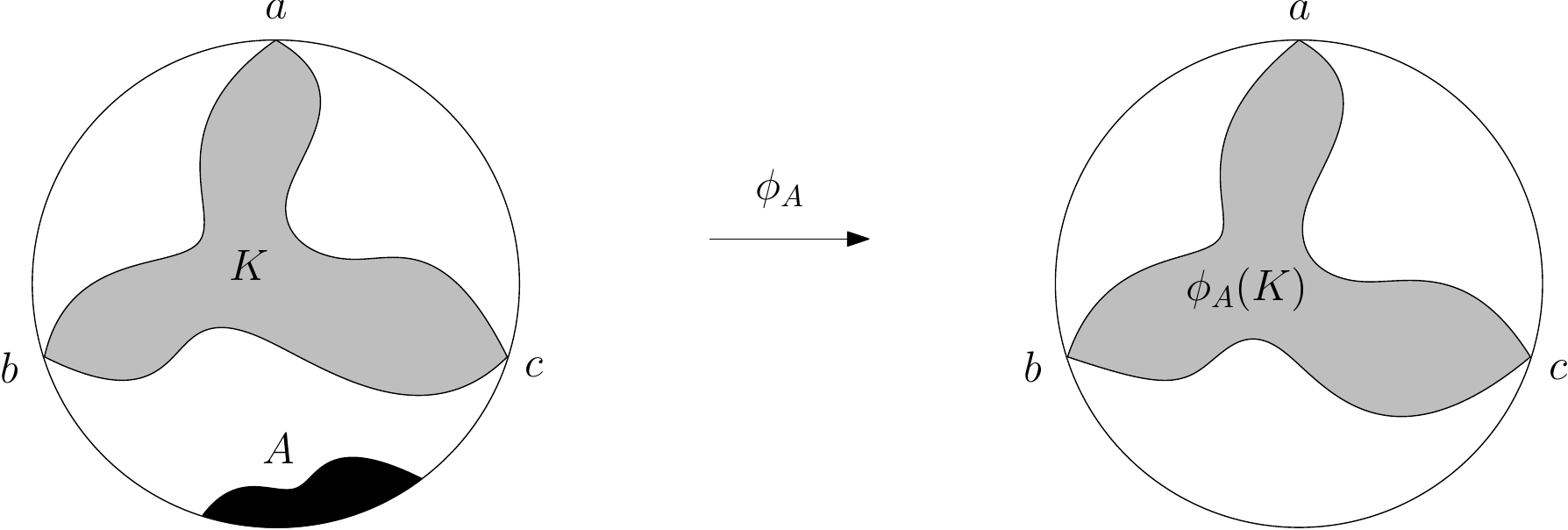}
\caption{{Trichordal restriction: } $\phi_A$ is the conformal map from $D\setminus A$ onto $D$ that leaves $a,b$ and $c$ invariant. The law of $\phi_A (K)$ given  $\{K\cap A=\emptyset\}$ is equal to  the (unconditioned) law  of $K$. }
\label{fig:trichordal-restriction}
\end{figure}

Before going into the details of this trichordal case, let us say a few words about the radial case studied by Wu  in \cite{MR3293294}. 
Given that the group of conformal automorphism of a domain 
is three-dimensional, this is the other natural variant to consider: 
For two simply connected domains, if we fix one interior point and one boundary point for each domain, then  there is a unique conformal map from one domain onto the other that sends both of the prefixed interior point  and boundary point of one domain to the corresponding points of the other.

For a simply connected domain $D\not=\mathbb{C}$ with an interior point $a$ and a boundary point $b$, one looks at probability measures supported on simply connected and relatively closed sets $K\subset \overline{D}$ such that $a\in K$ and $K\cap \partial D=\{b\}$. Such a measure is said to satisfy \emph{radial conformal restriction property} if for $D'\subset D$ such that $D'$ is simply connected and
 $\dist(D \setminus D', \{a,b\})>0$, $K$ conditioned to stay in $D'$ has the same law as $\phi(K)$ where $\phi$ is the unique conformal map from $D$ onto $D'$ that leaves the points $a,b$ invariant.

Wu showed that this family was characterized by two real parameter: For such measures, there exists $\alpha$ and $\beta$ such that for all  $A\subset D$ such that $D\setminus A$ is again a simply connected domain that contains $a$ in its interior and $b$ on its boundary, one has
\begin{align*}
\P(K\cap A=\emptyset)=|\phi_A'(a)|^\alpha \phi_A'(b)^\beta.
\end{align*}
She also showed that the range of admissible values of $\alpha$ and $\beta$ is given by $\beta \ge 5/8$ and $\alpha \le \xi (\beta)$ where $\xi (\beta) := ( ( \sqrt{24\beta+1}-1 )^2-4 ) / 48$ is the so-called disconnection exponent of $\alpha$.

Among the natural conformal restriction measures, the trichordal case was in some sense, the only one that was left to be studied.
If ones adds more marked points, then there will typically be no conformal map $\phi_A$ that fix simultaneously all these points. 
Even if it is possible to define a similar notion of $n$-point conformal restriction, we will not be able to speak of 'one measure' that satisfies conformal restriction, but one has to consider 
instead a  family of measures indexed by conformally equivalent configurations of the $n$ marked points. For $n\ge 4$, the family of all families of $n$-point chordal restriction measures will be much bigger than that of chordal, trichordal or radial cases, and it will in fact be infinite dimensional.
Intuitively speaking, one can associate to a four-point restriction sample $K$, an intrinsic conformally invariant quantity, such as the modulus $M(K)$ of the quadrilateral $a,b,c,d$ in $K$ (where $a$, $b$, $c$ and $d$ are the four marked boundary points). 
One can then weight the law of  $K$ by any function $f(M)$ with mean $1$, one thus still gets a four-point restriction measure. 
Indeed, Dub\'edat computed in \cite{MR2358649} formulae about $n$-point chordal restriction measures which involve such  functions $f$. This will be further explained in the preliminary section \S\,\ref{sec:dubedat}.

\subsection {Discussion of our results on trichordal restriction measures}

As in the chordal and radial cases, the goal in the trichordal case is to characterize all random sets satisfying the trichordal conformal restriction property.
Recall that one heuristic motivation is that one would like to describe all possible ways to connect three boundary points of a domain via random sets with ``intrinsic energy'', and one 
major question is to see what the ``thinest'' sets $K$ look like.

Let us first make the following observation: 
If we consider $K_1, K_2, K_3$ three independent chordal restriction measures in $D$ that intersect $\partial D$ respectively at pairs of points $\{a,b\}, \{b,c\}, \{c,a\}$, then the filling $K$ of $K_1\cup K_2\cup K_3$ obviously satisfies trichordal restriction. Moreover, if $K_1,K_2,K_3$ respectively  have exponents $\alpha_1, \alpha_2, \alpha_3$ then 
$$
\P\left( K \cap A=\emptyset \right) =\prod_{i=1,2,3}\P(K_i\cap A=\emptyset)=\phi_A'(a)^{\alpha_1+\alpha_3}\phi_A'(b)^{\alpha_1+\alpha_2} \phi_A'(c)^{\alpha_2+\alpha_3}.
$$
This suggests that the family of trichordal restriction measures might depend on three parameters.

This indeed turns out to be the case. The following characterization is of the same type as the characterization of the chordal and radial cases:
\begin{proposition}\label{prop:charac}[Characterization]
For each trichordal restriction measure, one can find three real numbers $\alpha,\beta,\gamma$ so that for all $A\subset D$ such that $D\setminus A$ is simply connected and $\dist(A, \{a,b,c\})>0$, one has
\begin{align}\label{thm1}
\P(K\cap A=\emptyset)=\phi_A'(a)^{\alpha}\phi_A'(b)^{\beta}\phi_A'(c)^{\gamma}.
\end{align}
Conversely, for each $\alpha, \beta, \gamma$, there exists at most one probability measure so that this holds. We then denote the law of $K$ by  $\P(\alpha,\beta,\gamma)$. 
\end{proposition}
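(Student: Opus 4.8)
The plan is to follow the strategy used by Lawler--Schramm--Werner in the chordal case and by Wu in the radial case, namely to extract the exponents from the behavior of $\P(K\cap A=\emptyset)$ under composition of the conformal maps $\phi_A$, exploiting the one-dimensional nature of the relevant semigroup near each marked point. Fix a reference domain, say $D=\H$ with marked points $a,b,c\in\R$. For a ``hull'' $A$ with $\dist(A,\{a,b,c\})>0$, write $\phi_A$ for the unique conformal map $\H\setminus A\to\H$ fixing $a,b,c$. First I would record the cocycle/multiplicativity property: if $A_1\subset A_2$ (more precisely if $A_2\setminus A_1$ is a hull in $\H\setminus A_1$ at positive distance from $\{a,b,c\}$, viewed through $\phi_{A_1}$), then by the restriction property applied twice we get
\begin{align*}
\P(K\cap A_2=\emptyset)=\P(K\cap A_1=\emptyset)\,\P(K\cap \widetilde A_2=\emptyset),
\end{align*}
where $\widetilde A_2=\phi_{A_1}(A_2\setminus A_1)$, and simultaneously the chain rule $\phi_{A_2}'=\phi_{\widetilde A_2}'\circ\phi_{A_1}\cdot\phi_{A_1}'$ holds at each of $a,b,c$. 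Thus the function $F(A):=\P(K\cap A=\emptyset)$ and the three ``derivative'' functions $D_a(A):=\phi_A'(a)$, etc., all satisfy the same multiplicative cocycle identity.

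Next I would reduce to a one-parameter problem. Consider hulls $A$ growing out of the boundary away from all marked points; the semigroup of maps $\phi_A$ acting near a fixed marked point, say $a$, is essentially one-dimensional once we mod out by the parts of $A$ that do not affect the germ at $a$ — concretely, the only conformally invariant data of $\phi_A$ relevant near $a$ after fixing $a$, $b$, $c$ is captured by the single real number $\log\phi_A'(a)$ together with the analogous numbers at $b$ and $c$, and these three can be varied essentially independently by choosing $A$ localized near $a$, near $b$, or near $c$ respectively. For $A$ localized near $a$ (at positive distance from $b,c$), the germ of $\phi_A$ at $a$ together with the constraint of fixing $b$ and $c$ gives a one-real-parameter family, parametrized smoothly by $t=-\log\phi_A'(a)\ge 0$; the cocycle identity for $F$ restricted to this family reads $F(t+s)=F(t)F(s)$, and by the standard measurability/continuity argument (monotonicity of $t\mapsto F$ in $t$, since enlarging $A$ decreases the probability) one gets $F=e^{-\alpha t}=\phi_A'(a)^\alpha$ for some real constant $\alpha$, depending a priori on the chosen localization near $a$. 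A short additional argument — comparing two different families localized near $a$ via a common refinement, again using the cocycle — shows $\alpha$ is independent of the choice, so it is a genuine invariant of the measure. Running the same argument near $b$ and near $c$ produces $\beta$ and $\gamma$.

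To get the full formula \eqref{thm1} for arbitrary $A$, I would combine the three exponents using the cocycle once more: decompose a general hull $A$ (or approximate it) by a finite chain $A_0\subset A_1\subset\cdots\subset A_n=A$ in which each successive increment is localized near exactly one of $a,b,c$ in the appropriate uniformized picture; telescoping the multiplicative identity gives $F(A)=\prod_i (\text{local factor})$, and matching each local factor against the already-established formula yields $F(A)=\phi_A'(a)^\alpha\phi_A'(b)^\beta\phi_A'(c)^\gamma$. A density/continuity argument (both sides are continuous in $A$ in a suitable Carathéodory-type topology, $F$ because of the restriction property and the fact that $\{K\cap A=\emptyset\}$ is, up to null events, monotone and continuous in $A$; the right side by Loewner theory) extends this from the chain-decomposable hulls to all admissible $A$. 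The ``conversely'' clause is the easy direction: formula \eqref{thm1} for all admissible $A$ determines $\P(K\cap A=\emptyset)$ for all such $A$, hence determines the probabilities of a generating $\pi$-system of events (events of the form $\{K\cap A=\emptyset\}$ are stable under finite unions of hulls and generate the relevant $\sigma$-algebra on closed connected subsets avoiding $\partial D$ except at $a,b,c$), so by Dynkin's lemma there is at most one such measure.

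The main obstacle I expect is the reduction-to-one-parameter step together with the verification that the exponent $\alpha$ obtained near $a$ does not depend on which family of $a$-localized hulls was used: this is where one must argue carefully that the ``germ at $a$ modulo fixing $b,c$'' really is one-dimensional, and that different localizations are compatible. In the chordal case of \cite{MR1992830} this is eased by the extra conformal symmetry (one can send $a,b$ to $0,\infty$); here, with three fixed points and no residual symmetry, one has to work directly with the cocycle and a separation-of-variables argument showing that $\log F(A)$ is a linear function of the triple $(\log\phi_A'(a),\log\phi_A'(b),\log\phi_A'(c))$. Once linearity in these three coordinates is established — using that the three coordinates can be perturbed independently and that $F$ is multiplicative — the coefficients are exactly $\alpha,\beta,\gamma$ and the proposition follows.
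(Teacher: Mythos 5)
Your overall strategy (multiplicativity of $A\mapsto\P(K\cap A=\emptyset)$ under composition of hulls, reduction to one-parameter families, a density/continuity argument, and a $\pi$-system argument for uniqueness) is in the same spirit as the paper, and the uniqueness direction is fine. But the central step has a genuine gap. You assert that for hulls $A$ localized near $a$ the only relevant conformally invariant data is $\log\phi_A'(a)$, so that $\P(K\cap A=\emptyset)=\phi_A'(a)^\alpha$, and more generally that $\log\P(K\cap A=\emptyset)$ is a linear function of the triple $\left(\log\phi_A'(a),\log\phi_A'(b),\log\phi_A'(c)\right)$. That is precisely the content of the proposition, and it does not follow from the cocycle identity: two hulls localized near $a$ with the same derivative data are in general not related by any composition of admissible hulls, so multiplicativity gives no comparison between them, and the space of such hulls is infinite-dimensional, so there is no a priori reason why $F$ should factor through finitely many real parameters. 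Moreover, the sub-claim $F(A)=\phi_A'(a)^\alpha$ for $a$-localized hulls is false even granting the proposition: $\phi_A$ fixes $b$ and $c$ but $\phi_A'(b)$ and $\phi_A'(c)$ are not equal to $1$ (for a hull sitting on one boundary arc, the derivative at the opposite marked point exceeds $1$ while the derivatives at the two adjacent marked points are below $1$, as recalled in the introduction and visible on the explicit slit families via (\ref{-1})--(\ref{0-1})), so all three factors are nontrivial for such hulls. Your closing remark that linearity would follow ``using that the three coordinates can be perturbed independently and that $F$ is multiplicative'' restates the difficulty rather than resolving it; the same objection applies to the chain-decomposition step, whose local factors would not have the simple form you assign to them.

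What actually closes this gap in the paper is an infinitesimal exchange argument. One introduces, for each boundary point $x\notin\{-1,0\}$ in the $(\H,-1,0,\infty)$ picture, an explicit one-parameter semigroup of slit hulls $\eta_x([0,t])$ whose normalized maps satisfy $\varphi_{x,t+s}=\varphi_{x,t}\circ\varphi_{x,s}$, whence $\P\left(K\cap\eta_x([0,t])=\emptyset\right)=e^{-\lambda(x)t}$ as in (\ref{exp-lambda}); after establishing continuity and a.e.\ differentiability of $\lambda$, the key input is the commutation relation (\ref{x-y}), obtained by computing the probability that $K$ meets two infinitesimal hulls grown at $x$ and at $y$ in the two possible orders. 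Letting $y\to x$ turns this relation into the third-order ODE $\left(x^2(x+1)^2\lambda(x)\right)'''=0$, whose three-dimensional solution space is exactly where the three exponents $\alpha,\beta,\gamma$ come from; the same relation applied across the three boundary arcs identifies the constants (Lemma \ref{lem:123}), and density of the semigroup generated by the $\eta_x$ together with continuity of $A\mapsto\P(K\cap A=\emptyset)$ (Lemma \ref{topology}) then yields (\ref{thm1}) for all admissible $A$. Some ingredient of this type is indispensable here precisely because, as you yourself note, no residual M\"obius symmetry survives after fixing three boundary points; without it, your argument does not establish that the avoidance probability is a function of the three derivatives at all, let alone a product of powers.
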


Let us insist on the fact that this result does not tell us for which values of $\alpha$, $\beta$ and $\gamma$ such a $\P(\alpha,\beta,\gamma)$ exists. 
The main contribution of the present paper will be to give the exact range of $\alpha,\beta,\gamma$ for which  $\P(\alpha,\beta,\gamma)$ exist and to construct the corresponding restriction sets. The exponents $\alpha,\beta,\gamma$ of the trichordal case play symmetric roles and are interrelated, as opposed to the radial case.

Let us first make a few comments: 
\begin {itemize}
 \item 
First note that the points $a,b,c$ cut $\partial D$ into three arcs, namely $(ab), (bc)$ and $(ca)$.
If $A\cap\partial D$ is a subset of the arc $(bc)$ (which is the case depicted in Figure \ref{fig:trichordal-restriction}), then 
it is easy to see that $\phi_A'(a)>1$ and  $\phi_A'(b)<1, \phi_A'(c)<1$. In order for the right-hand side of (\ref{thm1}) to be a probability, 
$\alpha$ can therefore not be much bigger than both $\beta$ and $\gamma$. By symmetry, this indicates that each of the three  exponents needs to be bounded from above by a certain function of the other two. On the other hand, it is 
easy to see that $\phi_A' (a) \phi_A' (b) \phi_A' (c)$ is always smaller than $1$. Hence, one can still say that in the symmetric family $\P (\alpha, \alpha, \alpha)$, the smaller $\alpha$ is, the skinnier $K$ is (because for each $A$, the probability 
to intersect $A$ is an increasing function of $\alpha$).
\item
Another type of condition that one should expect is that $\alpha$, $ \beta$ and $\gamma$ should all be greater or equal to $5/8$.  Intuitively, a random set $K$ with law $\P(\alpha,\beta,\gamma)$ should look like,  in a infinitesimal neighborhood of one marked point (say $a$),  a chordal restriction measure of the corresponding exponent (which is $\alpha$ in the case of $a$).
More precisely, if we let the point $b$ tend to the point $c$, the limiting measure can be proved to be equal to the chordal restriction measure of exponent $\alpha$.
Since chordal restriction measures of exponent $\alpha$ as characterized by (\ref{two-point-formula}) exist only for $\alpha\ge5/8$, the same condition should hold here.
\end {itemize}

A natural question is what is smallest $\alpha$ for which the measure $\P(\alpha, \alpha, \alpha )$ exists.  For the chordal case, the measure of exponent $\alpha=5/8$ corresponds to a simple curve, which is the thinest that one can obtain. If the measure $\P(5/8,5/8,5/8)$ exists, then the corresponding random set would a.s. be a simple curve near each of the boundary points $a,b,c$. Would {the} three legs of such an SLE$_{8/3}$ spider then merge at one single point in the middle  or would there be a fat ``body'' set near the center? 
The previously described method of taking the filled union of chordal restriction measures only constructs such measure with $\alpha \ge 5/4$, which is unlikely to be the thinest choice.
The answer to all these questions turns out to be somewhat surprising: A particular case of the following theorem is that that the smallest $\alpha$ for which $\P ( \alpha, \alpha, \alpha)$ exists is actually $\alpha=20/27$. In this special case, the corresponding random set $K$ consists of three parts, that have only one common point which is the unique \emph{triple disconnecting point}: When one removes this point, one disconnects $K$ into three disjoint connected components, that respectively contain $a$, $b$ and $c$. However, since $20/27  > 5/8$, each of these three ``legs'' of this 
symmetric restriction spider are a bit ``fatter'' than simple curves (even though they each of them will have quite a lot of cut-points). 

Before stating this main result, let us first recall the formulas for the half-plane and whole-plane \emph{Brownian intersection exponents} that have been determined 
by Lawler, Schramm and Werner in \cite{MR1879850, MR1879851, MR1899232} :
\begin{align*}
&\tilde\xi(\alpha_1,\alpha_2)=\frac{\left(\sqrt{24\alpha_1+1}+\sqrt{24\alpha_2+1}-1\right)^2-1}{24},\\
&\xi(\alpha_1,\alpha_2,\alpha_3)=\frac{\left(\sqrt{24\alpha_1+1}+\sqrt{24\alpha_2+1}+\sqrt{24\alpha_3+1}-3\right)^2-4}{48}.\\
\end{align*}
We will use here only the formulas and not the interpretation in terms of non-intersection probabilities for Brownian paths (but the latter are of course related 
to  restriction properties).

\begin{theorem}\label{thm:existence}[Existence]
The measure $\P(\alpha,\beta,\gamma)$ exists if and only if $\alpha,\beta,\gamma$ satisfy all the following conditions:
\begin{itemize}
\item[(i)] $\alpha,\beta,\gamma\ge 5/8,$
\item[(ii)] $\alpha\le\tilde\xi(\beta,\gamma), \beta\le\tilde\xi(\alpha,\gamma), \gamma\le\tilde\xi(\alpha,\beta),$
\item[(iii)] $\xi (\alpha,\beta,\gamma)\ge2,$
\end{itemize}

Furthermore, if $K$ is a random set  with law $\P(\alpha,\beta,\gamma)$, then
\begin{itemize}
\item[-] $K$ has a triple disconnecting point if and only if $\xi (\alpha,\beta,\gamma)=2$. Moreover in this case,  each of the three legs away from this unique triple disconnecting point
have almost surely infinitely many cut-points (this can be viewed as a consequence of the fact that necessarily, $\alpha, \beta, \gamma \le 1 < 35/24$). See Figure \ref{fg:tripple-cut-point}.

\item[-] The boundary point $c$ is a cut-point of $K$ if and only if $\gamma=\tilde\xi(\alpha,\beta)$. See Figure \ref{fig:two-conditioned}.
\begin{figure}[h]
\centering
\begin{subfigure}[t]{0.4\textwidth}
\centering
\includegraphics[width=0.74\textwidth]{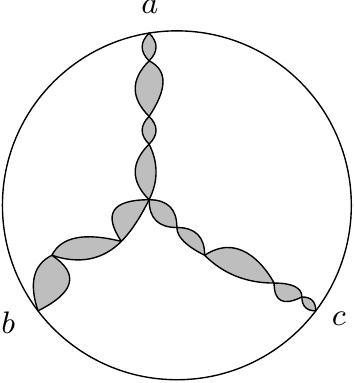}
\caption{The case $\xi(\alpha,\beta,\gamma)=2$ with the triple disconnecting point. This can be viewed as a sketch of the symmetric restriction spider when $\alpha=\beta=\gamma =20/27$.}
\label{fg:tripple-cut-point}
\end{subfigure}
\qquad\qquad
\begin{subfigure}[t]{0.4\textwidth}
\centering
\includegraphics[width=0.74\textwidth]{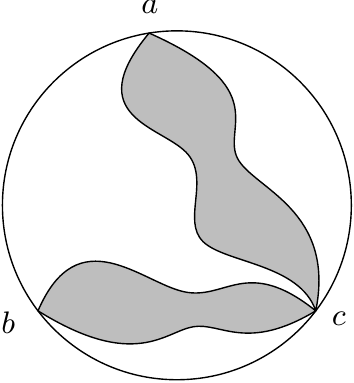}
\caption{The case $\gamma=\tilde \xi (\alpha,\beta)$. It can be interpreted as two chordal restriction samples conditioned not to intersect each other.
}
\label{fig:two-conditioned}
\end{subfigure}
\caption{Two extreme cases of trichordal restriction measures.}
\label{fig:existence}
\end{figure}
\end{itemize}
\end{theorem}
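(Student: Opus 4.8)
Two things must be proved: conditions (i)--(iii) are necessary, and (conversely) they are sufficient; in the sufficient direction, once a probability measure satisfying \eqref{thm1} is produced, Proposition~\ref{prop:charac} guarantees it is the unique one, and so is legitimately denoted $\P(\alpha,\beta,\gamma)$. For (i) the plan is a degeneration argument: assuming $\P(\alpha,\beta,\gamma)$ exists, we let $b\to c$ along $\partial D$ and pass to the limit --- the restriction property together with a compactness argument for such measures lets one identify the limit --- obtaining the chordal restriction measure of exponent $\alpha$ with marked points $a$ and $c$, which exists only for $\alpha\ge 5/8$ by \cite{MR1992830}; the bounds on $\beta,\gamma$ then follow by relabeling. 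For (ii) and (iii) we would first apply the restriction property to hulls $A$ attached to a single boundary arc of $D$ in order to see that a boundary arc of $K$ --- say the arc $\eta$ of $\partial K$ facing the boundary arc $(bc)$, read off as a Loewner curve from $b$ to $c$ --- must be an $\mathrm{SLE}_{8/3}$-type process whose driving function carries a drift equal to the logarithmic derivative, in the tip variable, of a partition function $Z$ of the configuration $(a,b,c)$, the indicial exponents of $Z$ at its singular points being fixed by $\alpha,\beta,\gamma$ through \eqref{thm1}. Requiring that $\eta$ be a genuine simple curve not swallowing $a$, together with the mutual consistency of the three arcs $\eta_{ab},\eta_{bc},\eta_{ca}$, should then force exactly $\alpha\le\tilde\xi(\beta,\gamma)$ and its cyclic analogues, as well as $\xi(\alpha,\beta,\gamma)\ge 2$; the equality cases correspond to the degenerations (a force point absorbed at an endpoint, respectively the three arcs pinched at one interior point), which is where the relevant Brownian exponent formula appears as a generalized restriction exponent of $\mathrm{SLE}_{8/3}$.

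\textbf{Construction.} Fix $\H$ with $a,b,c$ at convenient real points. The core is to realize one boundary arc of $K$, say the arc $\eta$ from $c$ to $a$, as the $\mathrm{SLE}_{8/3}$ curve whose driving function $W$ satisfies $dW_t=\sqrt{8/3}\,dB_t+\tfrac{8}{3}\,(\partial_W\log Z)(W_t;a_t,b_t)\,dt$, where $Z$ solves the second-order null-vector ODE in the tip variable whose remaining regular singular points are the images of $a,b,c$ and whose indicial exponents are determined by $\alpha,\beta,\gamma$. Since a second-order Fuchsian equation with three singular points is, after a M\"obius normalization, Euler's hypergeometric equation, $Z$ is the hypergeometric solution singled out by the boundary behavior that makes $\eta$ run from $c$ to $a$ inside $D$; this is the advertised hypergeometric drift term. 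The key step is an It\^o-formula computation: for every admissible $A$ the natural candidate process, built out of $\phi_{A_t}'(a_t)^{\alpha}\phi_{A_t}'(b_t)^{\beta}\phi_{A_t}'(c_t)^{\gamma}$ and a ratio of values of $Z$ before and after applying $\phi_A$, is a local martingale exactly because the hypergeometric drift cancels the It\^o drift; a uniform-integrability and optional-stopping argument then gives $\P(\eta\cap A=\varnothing)=\phi_A'(a)^{\alpha}\phi_A'(b)^{\beta}\phi_A'(c)^{\gamma}$ for one-sided events. Passing from this arc to the full set $K$ is carried out recursively: conditionally on $\eta$, the part of $K$ inside the component of $D\setminus\eta$ containing $b$ is, by the restriction property, a restriction sample of strictly lower complexity in that component, so a recursion with base cases chordal $\mathrm{SLE}_{8/3}$ and the configurations in which a marked point is a cut point produces $K$ (for the non-extremal parameters the missing ``body'' can instead be supplied by adding a Brownian loop soup of the appropriate intensity). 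Finally one checks \eqref{thm1} for $A$ in a generating family and concludes it for all $A$ via Proposition~\ref{prop:charac}.

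\textbf{The two extreme cases.} The equality $\gamma=\tilde\xi(\alpha,\beta)$ is precisely the value at which $Z$ develops the degenerate indicial behavior at $c$ that forces $\eta_{ab}$ through $c$, i.e.\ makes $c$ a cut point; in this regime the construction collapses to two chordal restriction samples of exponents $\alpha$ and $\beta$ glued at $c$ and conditioned not to meet elsewhere, and one reads off the exponents from the Brownian half-plane formula. The equality $\xi(\alpha,\beta,\gamma)=2$ is the simultaneous degeneration at all three marked points, which pins the three legs to a single common interior point, the triple disconnecting point; since then necessarily $\alpha,\beta,\gamma\le 1<35/24$, applying the chordal cut-point criterion of \cite{MR2060031} to each of the three legs yields the statement that each leg has infinitely many cut-points.

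\textbf{The main difficulty.} The principal obstacle is the analysis of the hypergeometric partition function $Z$ together with the martingale argument: one must (a) select, within the two-dimensional solution space of the Fuchsian ODE, the solution with the correct boundary behavior; (b) prove that the resulting $\mathrm{SLE}_{8/3}$-with-drift is almost surely a continuous simple curve with the prescribed endpoints throughout the admissible parameter range, degenerating exactly on its boundary; and (c) upgrade the one-sided restriction identity for a single arc to the two-sided restriction property for the whole set $K$. It is precisely at step (c) that one sees that $K$ cannot be a simple curve simultaneously in the neighborhood of all three of $a,b,c$.
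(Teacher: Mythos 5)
Your construction half is essentially the paper's route (an $\mathrm{SLE}_{8/3}$ whose drift is the logarithmic derivative of a hypergeometric function realizes one boundary arc, an It\^o computation plus optional stopping gives the restriction identity, and the full set is obtained by nesting a lower-complexity restriction sample in the component on the other side of that arc), although two details matter more than your sketch suggests: the nested sample does not carry the exponents $(\alpha,\beta,\gamma)$ but the shifted triple $(m,n,l)$ with $U$-values decreased by $3/4$ at the two ``unfolded'' points, and the missing limiting cases are supplied by $\mathrm{SLE}_{8/3}(\rho_1,\rho_2)$ and Poisson point processes of boundary-to-boundary Brownian excursions, not by a Brownian loop soup (adding a loop soup to a trichordal sample does not shift the three boundary exponents in the way the radial argument of \cite{MR3293294} exploits an interior marked point).

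The genuine gap is in the necessity of (ii) and, above all, (iii). You claim that, once a boundary arc is identified as an SLE-type curve with partition-function drift, ``requiring that $\eta$ be a genuine simple curve not swallowing $a$, together with the mutual consistency of the three arcs,'' forces $\alpha\le\tilde\xi(\beta,\gamma)$ and $\xi(\alpha,\beta,\gamma)\ge 2$. This does not work as stated: the hSLE$_{8/3}(\lambda,\mu,\nu)$ is a perfectly well-defined simple curve with the prescribed endpoints on a strictly larger parameter range than (\ref{third-condition}) (see Proposition \ref{prop:hsle} and (\ref{first-condition})--(\ref{second-condition})), so no degeneracy of the driving SDE rules out parameters with $\xi(\alpha,\beta,\gamma)<2$; moreover the identification of the boundary arc's conditional law as such an SDE is itself nontrivial and the paper deliberately avoids relying on it. What is actually needed for (iii) is: (a) the unfolding step, i.e.\ that $J=\varphi_{\gamma_b}(K)$ is independent of the bottom arc $\gamma_b$ and satisfies two-sided trichordal restriction (Lemmas \ref{indep} and \ref{lem:J}, proved via conditioning on $\gamma_b\subset[-1,0]^\eps$ and letting $\eps\to0$); (b) the exponent transformation $(\alpha,\beta,\gamma)\mapsto(\alpha,h(\beta),h(\gamma))$ with $U(h(x))=U(x)-3/4$ (Lemma \ref{lem:aaa}); and (c) the necessity of $\tilde\xi\ge1$ for two-sided measures, itself proved by contradiction: if $\tilde\xi(\alpha,\beta,\gamma)<1$ were attained, one adds an independent one-sided chordal sample of exponent $\delta$ to reach the critical case $\tilde\xi=1$ and contrasts the absence of a common point of the two boundaries in that construction with the triple disconnecting point that the critical measure must have (Proposition \ref{lem:goemetry2}). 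Chaining (a)--(c) converts $\tilde\xi(\alpha,h(\beta),h(\gamma))\ge1$ into $U(\alpha)+U(\beta)+U(\gamma)\ge 5/2$, i.e.\ $\xi(\alpha,\beta,\gamma)\ge2$. Similarly, (ii) is inherited from the one-sided classification, whose bound $\beta\le\tilde\xi(\alpha,\gamma)$ is proved by adjoining a Poisson-excursion sample to reach the critical one-sided case and using that its right boundary a.s.\ passes through the marked point (Lemma \ref{lem:-1}), not by any positivity or simplicity property of the drift. None of these mechanisms appears in your outline, so the ``only if'' direction of the theorem is not established by your argument.
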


Note that if we define $\hat \alpha$, $\hat \beta$, $\hat \gamma$ to be the respective image of $\alpha$, $\beta$ and $\gamma$ under the monotone bijection from $\R_+$ onto itself defined by 
$ x \mapsto -1 + \sqrt {24x +1 }$, then one can easily express the conditions (i), (ii) and (iii) 
without reference to the Brownian intersection exponents: the condition (i) becomes $\min (\hat \alpha, \hat \beta, \hat \gamma) \ge 3$, 
condition (iii) becomes $\hat \alpha + \hat \beta + \hat \gamma \ge 10$ (while (i) implied only that this sum cannot be smaller than 9), and 
condition (ii) becomes three inequalities of the type $\hat \alpha + \hat \beta \ge \hat \gamma  $. In other words, all these conditions can be summed up as
$$ \min (\hat \alpha, \hat \beta, \hat \gamma) \ge 3, \quad \hat \alpha + \hat \beta + \hat \gamma \ge  \max ( 10, 2\hat \alpha, 2\hat \beta, 2\hat \gamma).$$

We have already highlighted to special role of $\P ( 20/27, 20/27, 20/27) $. Another interesting case worth mentioning is the measure $\P ( 5/8, 5/8, 1)$ (that also exhibits a triple disconnecting point). 
Our construction will in fact implicitly describe this measure as being constructed by an SLE$_{8/3}$ from $a$ to $b$, weighted according to its (renormalized)
harmonic measure seen from $c$ (this will make it more likely for the path to wander closer to $c$), and to which one attaches a Brownian excursion from $c$ to a point (chosen according to this renormalized harmonic measure) on this SLE. 

In general, when $\xi(\alpha,\beta,\gamma)=2$, it is natural to guess that conditionally on the triple disconnecting point, $K$ consists of three radial restriction samples (from the center point 
to the three points $a$, $b$ and $c$ respectively) conditioned not to intersect each other except at this center point. 
However, in practice, it would be a rather tedious and intricate venture to define this conditioning rigorously, and we will not follow this route. 

For the $\gamma=\tilde\xi(\alpha,\beta)$ case, one can make a similar remark, and this time, it can be made rigorous more easily. It will indeed follow from our construction that conditioned on one branch of $K$, the other branch is a chordal restriction sample in one of the connected components of the complement of the first branch. In this sense, one can view $K$ as the union of two chordal restriction samples conditioned not to intersect each other except at $c$. An interesting 
example is of course $\P ( 5/8, 5/8, 2)$, which corresponds to two chordal SLE$_{8/3}$ from $a$ to $c$ and from $b$ to $c$, conditioned not to intersect. This (and the three symmetric images when interchanging $a$, $b$ and $c$) is the 
only trichordal restriction measure that consists of a simple path in $\overline D$. 

The family of measure $\P(5/8, 5/8, \gamma)$ exists exactly in the range $\gamma \in [1,2]$, and the two cases $\P(5/8, 5/8, 1)$ and $\P( 5/8, 5/8, 2)$ that we have just briefly described are the two extremal ones. The case $\P (5/8, 5/8, 5/4)$ corresponds to {(the filling of) the union of} two independent SLE$_{8/3}$ curves from $a$ to $c$ and from $b$ to $c$. As explained above, when $\gamma$ increases, as opposed to the symmetric case, one can not really argue anymore that the sets $K$ become larger (and indeed, the $\gamma=2$ case corresponds to a simple curve, while this is not the case for $\gamma=1$). Instead, when $\gamma$ grows, the sets are in some sense  ``pushed towards $c$''.

\subsection {Outline and further results} 

The proof of the characterization result will follow somewhat similar ideas than in the chordal and than in the radial cases. The proof of the existence part will be divided in two parts: 
First, we will construct trichordal restriction measures for all admissible values of the parameters $\alpha$, $\beta$ and $\gamma$, and then we will see that if a measure $P({\alpha, \beta, \gamma})$ exists for the non-admissible part, one gets a contradiction.  

Our construction of trichordal restriction measures will rely on a special family of SLE processes with driving functions involving hypergeometric functions, that we call hypergeometric SLEs and denote by hSLE. This family of hypergeometric SLEs depends on $\kappa$ and three extra parameters $\mu,\nu,\lambda$ and contains the SLE$_\kappa(\rho)$ processes as special cases when $\mu,\nu,\lambda$ take extremal values. It also contains the intermediate SLEs that Zhan has defined in \cite{MR2646499} to describe the reverse of SLE$_\kappa(\rho)$ processes. 
In the present paper, we will make use of the hSLE curves with $\kappa=8/3$ to construct the boundaries of trichordal restriction samples.

The construction goes in fact through several steps. First of all, we define the notion of
{one-sided restriction} (in this trichordal case) -- which is reminiscent and analogous to the one-sided chordal restriction measures that were studied by Lawler, Schramm and Werner.
We consider measures supported on simply connected and relatively closed $K\subset\overline D$ such that $K\cap\partial D$ is the union of the arcs $(ab)$ and $(bc)$.
For all $A\subset \overline D$ such that $D\setminus A$ is again a simply connected domain and $\overline A \cap \partial D$ is a subset of the arc $(ca)$.
Such a measure is said to satisfy \emph{one-sided (trichordal) conformal restriction property} if $K$ conditioned on $\{K\cap A=\emptyset\}$ has the same law as $\phi_A(K)$ where $\phi$ is the unique conformal map from $D\setminus A$ onto $D$ that preserves the points $a,b,c$.
See Figure \ref{fig:one-sided}.
\begin{figure}[h]
\centering
\includegraphics[width=0.76\textwidth]{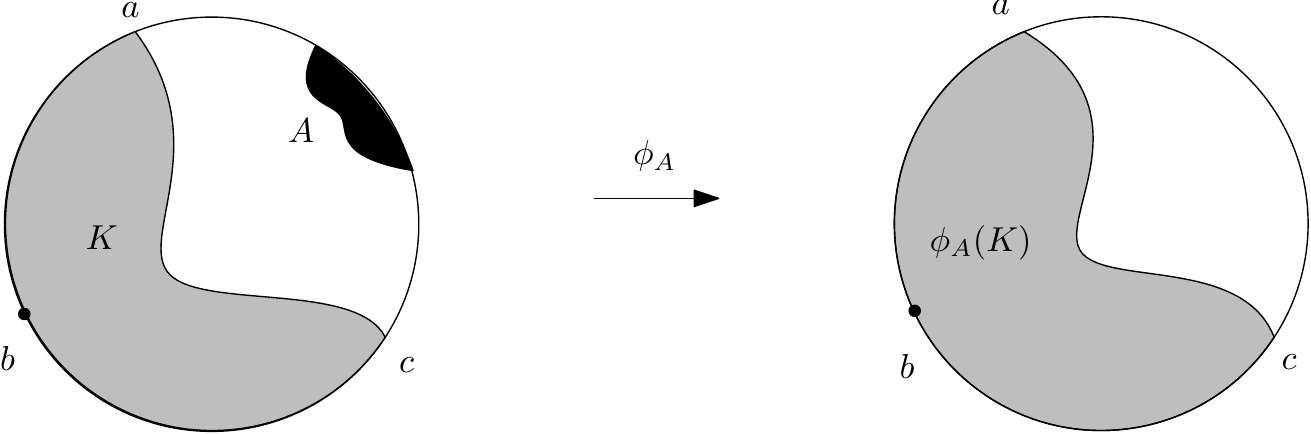}
\caption{{One-sided (trichordal) restriction:} $\phi_A$ is the conformal map from $D\setminus A$ onto $D$ that preserves the points $a,b,c$. The law of $\phi_A(K)$ conditioned on $\{K\cap A=\emptyset\}$ is equal to the (unconditioned) law of $K$.}
\label{fig:one-sided}
\end{figure}

One-sided restriction measures are also characterized by (\ref{thm1}) with three parameters $\alpha,\beta,\gamma$ and we denote the measure by $\P^1(\alpha,\beta,\gamma)$. The range of $\alpha,\beta,\gamma$ for which the measure $\P^1(\alpha,\beta,\gamma)$ exists is larger than the range for which the three-sided restriction measure $\P(\alpha,\beta,\gamma)$ exists. 
In particular, since we only consider $A$ that intersect $\partial D$ at the arc $(ac)$, there is nothing that prevents $\alpha$ or $\gamma$ to be much bigger than the other two exponents. Moreover,
some negative values of  $\beta$ will be allowed. We will show that the range is the following:
\begin{theorem}
A one-sided measure $\P^1(\alpha,\beta, \gamma)$ exists if and only if $\alpha, \beta, \gamma$ satisfy the following conditions:
\begin{align*}
\alpha\ge 0, \quad \gamma \ge 0,\quad \beta \le \tilde \xi(\alpha, \gamma).
\end{align*}
Moreover, if $K$ has law $\P^1(\alpha,\beta, \gamma)$ where $\beta=\tilde\xi(\alpha,\gamma)$, then the point $b$ is on the right boundary of $K$.
\end{theorem}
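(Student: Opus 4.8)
The plan is to reduce the statement to a property of the right boundary of $K$ and then to realize that boundary as a hypergeometric SLE$_{8/3}$ curve, using the same Loewner--martingale machinery as in the chordal and in the two-sided trichordal cases. Normalize $D=\H$ with $a=0$, $c=\infty$, $b=-1$, so that the arc $(ca)$ is $(0,\infty)$ and the admissible hulls $A$ are those attached to $(0,\infty)$ and bounded away from $\{0,-1,\infty\}$. Write $\eta$ for the right boundary of $K$ -- a curve in $\overline\H$ from $0$ to $\infty$, with $K$ the closure of the connected component of $\H\setminus\eta$ bordering $(-\infty,0)$. The arc $(ca)$ lies strictly on the far side of $\eta$ from $K$, so for every admissible $A$ one has $\{K\cap A=\emptyset\}=\{\eta\cap A=\emptyset\}$; conversely any random curve $\eta$ from which such a $K$ is well defined gives a candidate one-sided restriction set. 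Hence the one-sided restriction property of $K$ is equivalent to
\begin{equation}\label{eq:onesided-etaA}
\P\bigl(\eta\cap A=\emptyset\bigr)=\phi_A'(0)^{\alpha}\,\phi_A'(-1)^{\beta}\,\phi_A'(\infty)^{\gamma}\qquad\text{for every admissible }A,
\end{equation}
and since $\P^1(\alpha,\beta,\gamma)$ is characterized by its parameters, uniqueness is automatic and only existence is at issue. It therefore suffices to (i) construct a curve satisfying \eqref{eq:onesided-etaA} whenever $\alpha\ge0$, $\gamma\ge0$, $\beta\le\tilde\xi(\alpha,\gamma)$, and identify $\{b\in\eta\}$ at the boundary $\beta=\tilde\xi(\alpha,\gamma)$ of this range, and (ii) rule out the complementary values.

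For (i), take $\eta$ to be an hSLE$_{8/3}$ from $0$ to $\infty$ with marked point $-1$ and parameters $(\mu,\nu,\lambda)$ to be pinned down. Fix an admissible $A$ and set up the usual commuting diagram: let $(g_t)$ be the Loewner maps of $\eta$, let $A_t=g_t(A)$, and let $h_t:\H\setminus A_t\to\H$ fix the images $W_t=g_t(0)$, $O_t=g_t(-1)$ and $\infty$; track $W_t$, $O_t$, the two prime ends of $\partial A_t$, and the derivatives $h_t'(W_t),h_t'(O_t),h_t'(\infty)$. One looks for a local martingale of the form
\[
M_t=h_t'(W_t)^{\alpha}\,h_t'(O_t)^{\beta}\,h_t'(\infty)^{\gamma}\,\Psi_t,
\]
where $\Psi_t$ is an auxiliary positive factor depending on the cross-ratio formed by $W_t,O_t$ and the $\partial A_t$-data. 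Applying It\^o to $M_t$ along the hSLE driving SDE and requiring the drift to vanish forces $\kappa=8/3$, determines $(\mu,\nu)$ explicitly in terms of $\alpha,\gamma$ -- most transparently through $\hat\alpha=\sqrt{24\alpha+1}-1$ and $\hat\gamma=\sqrt{24\gamma+1}-1$ -- and collapses the $\beta$-dependence to a second-order linear ODE for $\Psi$, whose fundamental solution is a hypergeometric function and which admits the required globally positive solution exactly when $\beta\le\tilde\xi(\alpha,\gamma)$: this is where the Brownian intersection exponent enters. With $\Psi$ in hand (normalized to equal $1$ at the starting configuration), $M$ is a bounded martingale with $M_0=\phi_A'(0)^{\alpha}\phi_A'(-1)^{\beta}\phi_A'(\infty)^{\gamma}$ and $M_t\to\indic\{\eta\cap A=\emptyset\}$ at the terminal time, so optional stopping yields \eqref{eq:onesided-etaA}. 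Two softer points remain: that the hSLE$_{8/3}$ with these parameters is a.s.\ a continuous transient curve from $0$ to $\infty$ -- by absolute continuity with SLE$_{8/3}(\rho)$ away from $\{0,-1,\infty\}$ together with the properties of hSLE from the preliminary section, which is precisely where $\alpha\ge0$ and $\gamma\ge0$ (i.e.\ $\rho\ge-2$ at $0$ and at $\infty$) are used -- and that $K$ is then a genuine simply connected set with $K\cap\partial\H=(ab)\cup(bc)$, which validates the reduction. Finally, when $\beta=\tilde\xi(\alpha,\gamma)$ the factor $\Psi$ degenerates so that the effective weight the hSLE carries at the marked point $-1$ sits exactly at the threshold above which an SLE$_{8/3}(\rho)$-type process a.s.\ swallows its force point; hence $\eta$ a.s.\ passes through $-1$, i.e.\ $b$ lies on the right boundary of $K$.

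For (ii): if $\alpha<0$ (the case $\gamma<0$ is symmetric), localizing $K$ in an infinitesimal neighborhood of $a$ -- where $K$ touches only the arc $(ab)$ -- contradicts the fact that one-sided chordal restriction measures exist only for nonnegative exponent \cite{MR1992830}. If $\beta>\tilde\xi(\alpha,\gamma)$, one shows \eqref{eq:onesided-etaA} would fail: letting $A$ exhaust the complement in $\H$ of a thin neighborhood of an arc from $a$ to $c$ passing close to $b$, the joint asymptotics of $\phi_A'(0),\phi_A'(-1),\phi_A'(\infty)$ are controlled by the half-plane Brownian intersection exponent, so $\phi_A'(-1)^{\beta}$ cannot be compensated by $\phi_A'(0)^{\alpha}\phi_A'(\infty)^{\gamma}$ so as to keep the product at most $1$ unless $\beta\le\tilde\xi(\alpha,\gamma)$; the estimates needed are the half-plane exponent computations of \cite{MR1879850,MR1879851}. (Equivalently, for $\beta>\tilde\xi(\alpha,\gamma)$ the ODE for $\Psi$ in step (i) has no global positive solution.)

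The main obstacle is step (i): first, guessing the correct three-parameter hSLE$_{8/3}$ together with the correct cross-ratio argument of $\Psi$, and then pushing the It\^o computation through so that all drift terms cancel under the parameter identification -- the hypergeometric ODE, and with it the sharp threshold $\beta\le\tilde\xi(\alpha,\gamma)$, come out of precisely this computation; and second, the nonformal verification that $\eta$ is an honest continuous curve all the way to $c$ and that $K$ is measurably recovered from $\eta$ with the correct boundary trace, which is needed to justify $\{K\cap A=\emptyset\}=\{\eta\cap A=\emptyset\}$. The necessity of $\beta\le\tilde\xi(\alpha,\gamma)$ is delicate for a complementary reason: the admissible hulls $A$ are always bounded away from $b$, so the bound cannot be read off from a direct ``pinch at $b$'' argument and must instead be extracted from large hulls via Brownian-exponent asymptotics.
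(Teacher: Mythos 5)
Your step (i) overstates what the hSLE--martingale construction can deliver. The drift-cancellation/positivity analysis does give the family of Section~\ref{sec:construction-by-hsle}, but that family is parametrized by real $\lambda,\mu,\nu$ with $\beta=U^{-1}(\mu)=\mu(2\mu+1)/3$, and this quantity is bounded below by $-1/24$ for \emph{every} real $\mu$; so no choice of parameters (and no ``globally positive hypergeometric solution'') produces $\beta<-1/24$, and the cases $\alpha=0$ or $\gamma=0$ are likewise outside the hSLE range. The paper fills in exactly these missing parameters by a different mechanism: Poisson clouds of Brownian excursions straddling $b$ (Lemma~\ref{sub-family-1} and the unions built from it), which is the natural way to realize negative exponents at $b$. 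Moreover, at the critical value $\beta=\tilde\xi(\alpha,\gamma)$ the hSLE degenerates into an SLE$_{8/3}(\rho_1,\rho_2)$ that a.s.\ hits $-1$ and cannot be continued (Proposition~\ref{prop:hsle}); your claim that ``$\eta$ a.s.\ passes through $-1$'' and still defines the restriction sample skips the real work, namely continuing the set beyond $b$ by attaching an independent one-sided \emph{chordal} restriction sample and proving restriction of the combined set with a different bounded martingale whose terminal value is a conditional probability, not an indicator (Section~\ref{sec:limit}). This critical construction is not a footnote: it is what establishes Lemma~\ref{lem:-1} ($b$ on the right boundary), which is both the second assertion of the theorem and the engine of the non-existence proof.

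Your step (ii) for $\beta>\tilde\xi(\alpha,\gamma)$ is not repairable as stated. You propose to find hulls $A\in\mathcal{Q}^{(0,\infty)}$ with $\phi_A'(0)^{\alpha}\phi_A'(-1)^{\beta}\phi_A'(\infty)^{\gamma}>1$. But the set of triples for which this product is $\le 1$ for all $A$ is a cone (the constraints are homogeneous in $(\alpha,\beta,\gamma)$), whereas the existence region is not: take $(\alpha,\beta,\gamma)=(5/16,\,1,\,5/16)=\tfrac12(5/8,\,2,\,5/8)$. Since $\P^1(5/8,2,5/8)$ exists (critical case $\beta=\tilde\xi(5/8,5/8)=2$), we have $\phi_A'(0)^{5/8}\phi_A'(-1)^{2}\phi_A'(\infty)^{5/8}\le1$ for every $A$, hence also $\phi_A'(0)^{5/16}\phi_A'(-1)\phi_A'(\infty)^{5/16}\le1$ for every $A$; yet $1>\tilde\xi(5/16,5/16)\approx0.93$, so this triple must be excluded even though no single hull ever makes the candidate probability exceed $1$. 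So no deterministic ``pinching'' asymptotics (and no appeal to the Brownian exponent values, which concern non-intersection probabilities, not derivative asymptotics of hulls) can yield the sharp bound; and your parenthetical fallback --- non-existence of a positive solution to your ODE ansatz --- does not rule out existence of the measure by other means. The paper's necessity argument is genuinely probabilistic: assuming $\P^1(\alpha,\beta,\gamma)$ existed with $\beta>\tilde\xi(\alpha,\gamma)$, superimpose an independent excursion cloud of law $\P^1(0,\tilde\xi(\alpha,\gamma)-\beta,0)$ to manufacture a sample of the critical law $\P^1(\alpha,\tilde\xi(\alpha,\gamma),\gamma)$; the added excursions a.s.\ cover $-1$ so the resulting right boundary misses $b$, contradicting Lemma~\ref{lem:-1}. (Your treatment of $\alpha<0$, $\gamma<0$ is fine in spirit, though the paper's scaling computation with $A/n$ and $nA$ is more elementary than a localization limit.)
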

These one-sided restriction measures $\P^1(\alpha,\beta,\gamma)$ will be constructed, in most of the cases, by a hSLE$_{8/3}$ curve. However for a certain range of parameters, this construction does not work, and we will use Poisson point process of Brownian excursions instead.

Next we will study two-sided restriction measures. Their definition is similar to that of one-sided restriction, except that two-sided restriction measures are supported on $K$ such that $K\cap \partial D$ is the union of $\{ a \}$ and the arc $(bc)$, and the sets $A\subset D$ are such that  $\overline A \cap \partial D$ is a subset of  the union of the arcs $(ca)$ and $(ab)$, see Figure \ref{fig:two-sided}.
\begin{figure}[h]
\centering
\includegraphics[width=0.78\textwidth]{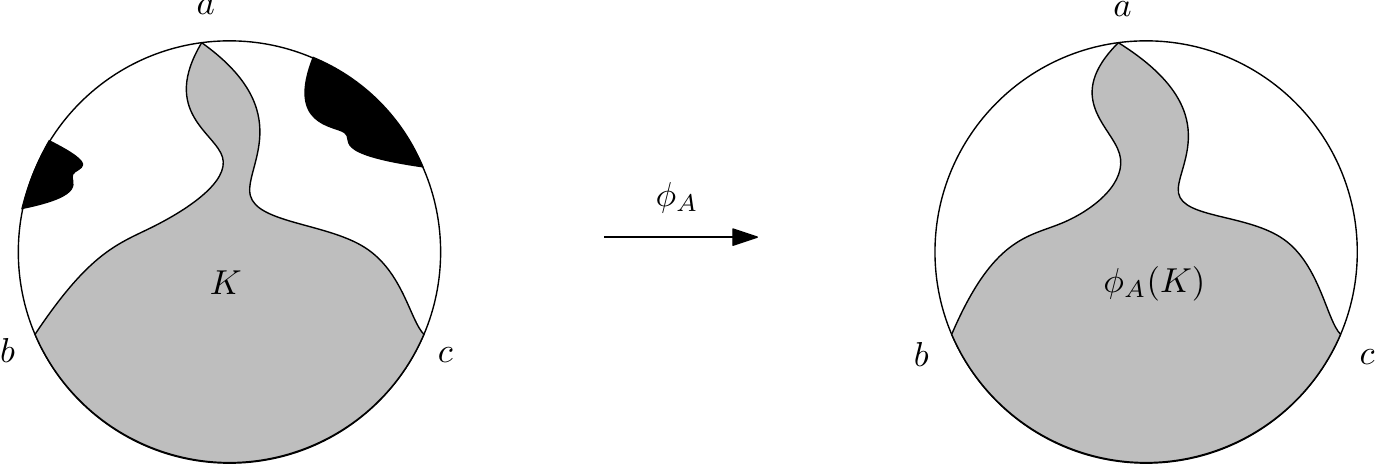}
\caption{{Two-sided (trichordal) restriction:} $\phi_A$ is the conformal map from $D\setminus A$ onto $D$ that preserves the points $a,b,c$. The law of $\phi_A(K)$  conditioned on $\{K\cap A=\emptyset\}$ is equal to the (unconditioned) law of $K$.}
\label{fig:two-sided}
\end{figure}

Two sided restriction measures are also characterized  by (\ref{thm1}) with three parameters $\alpha,\beta,\gamma$ and we denote the measure by $\P^2(\alpha,\beta,\gamma)$.
The range of $\alpha,\beta,\gamma$ for which the measure $\P^2(\alpha,\beta,\gamma)$ exists is given in the following theorem.
\begin{theorem}
The two-sided measure  $\P^2(\alpha,\beta, \gamma)$ exists if and only if $\alpha, \beta, \gamma$ satisfy the following conditions:
\begin{align*}
\alpha\ge\frac58,\quad\beta\ge 0,\quad \gamma\ge0,\quad \beta\le\tilde\xi(\alpha,\gamma),\quad \gamma\le\tilde\xi(\alpha,\beta), \quad \tilde\xi(\alpha,\beta,\gamma)\ge 1.
\end{align*}
Moreover, if $K$ has the law $\P^2(\alpha,\beta, \gamma)$  where $\tilde\xi(\alpha,\beta,\gamma)= 1$, then there exist a point on the arc $(bc)$ which is both on the left and the right boundary of $K$.
\end{theorem}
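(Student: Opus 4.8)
The plan is to run the construction and the characterization in parallel: first identify the two boundary curves of a two-sided sample, then build $\P^2(\alpha,\beta,\gamma)$ for every admissible triple by sampling the right boundary as a hypergeometric SLE and filling the remaining region with a one-sided sample; necessity of the stated inequalities then follows from local/degeneration arguments together with the one-sided range obtained in the preceding theorem; and the ``moreover'' statement is read off the extremal case of the construction. The starting point is structural: if $K$ has law $\P^2(\alpha,\beta,\gamma)$ then, being simply connected, relatively closed, with $K\cap\partial D=\{a\}\cup(bc)$, its boundary inside $\overline D$ is the union of the closed arc $[b,c]$ and a crosscut running from $b$ through $a$ to $c$; writing this crosscut as $\eta_L\cup\eta_R$, the left boundary $\eta_L$ joins $a$ to $b$, the right boundary $\eta_R$ joins $a$ to $c$, and the two meet only at $a$ (see Figure~\ref{fig:two-sided}). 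Hence $K$ is the filling of $\eta_L\cup\eta_R\cup[b,c]$ and is determined by the pair $(\eta_L,\eta_R)$. Moreover, reading the definition of two-sided restriction, conditionally on $\eta_R$ the remaining data is a random subset of the component $V$ of $D\setminus\eta_R$ carrying the arcs $(ab)$ and $(bc)$: it meets $\partial V$ exactly along $\eta_R$, along $[b,c]$, and at $a$, and restriction sets $A$ attached to the arc $(ab)$ act on it precisely as in the one-sided definition. Thus conditionally on $\eta_R$ the rest of $K$ is a one-sided sample $\P^1$ in $V$, with the ``middle'' marked point being $c$ (the landing point of $\eta_R$), the two ``outer'' ones being $a$ and $b$, and free boundary exactly $\eta_L$; symmetrically after conditioning on $\eta_L$.

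\textbf{Construction (sufficiency).}
Given an admissible triple, run from $a$ to $c$ in $D$ a curve $\eta_R$ with the law of the right boundary of a one-sided measure $\P^1(\alpha_2,\cdot,\gamma_2)$ — that is, a hSLE$_{8/3}$ curve, or, in the parameter range where that one-sided measure is built from a Poisson point process of Brownian excursions, the corresponding boundary curve; by construction $\eta_R$ satisfies the restriction property for $A$ attached to the arc $(ca)$. Conditionally on $\eta_R$, fill $V$ with an independent one-sided sample $\P^1(\alpha_1,\beta,\gamma_1)$ with marked points $(a;c;b)$ as above, and let $K$ be the filling of its union with $\eta_R$. To check the two-sided restriction property for $A$ with $\overline A\cap\partial D\subset(ca)\cup(ab)$, one reduces, by the usual reductions, to $A$ whose trace on $\partial D$ is a single subarc, either of $(ca)$ — handled by the restriction property of $\eta_R$, which transforms $\eta_R$'s law covariantly and pushes the inner sample into the image domain — or of $(ab)$ — handled by the one-sided restriction of the inner sample; a short chain-rule computation of $\phi_A'$ at $a,b,c$ then gives $\P(K\cap A=\emptyset)=\phi_A'(a)^{\alpha}\phi_A'(b)^{\beta}\phi_A'(c)^{\gamma}$, with $\alpha$ and $\gamma$ the appropriate sums of inner and outer exponents at $a$ and at $c$ and $\beta$ inherited from the inner sample. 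This is the trichordal analogue of the ``right boundary plus one-sided filling'' description of two-sided chordal samples. It then remains to check that as $(\alpha_1,\beta,\gamma_1)$ ranges over the one-sided admissible region and $(\alpha_2,\gamma_2)$ over the admissible outer parameters, the triples $(\alpha,\beta,\gamma)$ obtained fill exactly $\{\alpha\ge5/8,\ \beta\ge0,\ \gamma\ge0,\ \beta\le\tilde\xi(\alpha,\gamma),\ \gamma\le\tilde\xi(\alpha,\beta),\ \tilde\xi(\alpha,\beta,\gamma)\ge1\}$; the condition $\tilde\xi(\alpha,\beta,\gamma)\ge1$ arises here precisely as the requirement that the $a$-exponent $\alpha=\alpha_1+\alpha_2$ admits a splitting for which the inner one-sided measure exists.

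\textbf{Necessity and the extremal case.}
Conversely, assume $\P^2(\alpha,\beta,\gamma)$ exists. A local analysis near $a$ shows that in a small neighbourhood of $a$ the set $K$ looks like a chordal restriction sample of exponent $\alpha$, so $\alpha\ge5/8$ by \cite{MR1992830}; near $b$ and $c$ the local picture is one-sided in nature, since $K$ contains the arc $(bc)$ adjacent to both points, and matching the outer-exponent condition of the one-sided range gives $\beta\ge0$ and $\gamma\ge0$. Conditioning on $\eta_R$ exhibits a one-sided sample in $V$, whose existence, by the preceding one-sided theorem, forces its middle exponent to be at most $\tilde\xi$ of its two outer ones; translating back yields $\gamma\le\tilde\xi(\alpha,\beta)$, and conditioning on $\eta_L$ symmetrically yields $\beta\le\tilde\xi(\alpha,\gamma)$. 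The remaining inequality $\tilde\xi(\alpha,\beta,\gamma)\ge1$ is the genuinely new constraint: one shows, in the spirit of the non-existence arguments announced in the outline, that were it to fail the decomposition would require a one-sided measure outside its range — equivalently, one compares, for a family of restriction sets $A$ pinching a neighbourhood of a point of the arc $(bc)$, the decay of $\phi_A'(a)^{\alpha}\phi_A'(b)^{\beta}\phi_A'(c)^{\gamma}$ with the rate the geometry of $K$ permits. Finally, when $\tilde\xi(\alpha,\beta,\gamma)=1$ the inner one-sided sample (say, conditionally on $\eta_R$) sits at the boundary of its own existence range, so by the ``moreover'' of the one-sided theorem its middle marked point lies on its right boundary; since that middle point is a point of the arc $(bc)$ and that right boundary is $\eta_L$, while the same point also lies on $\eta_R$, we obtain a point of $(bc)$ lying on both the left and the right boundary of $K$.

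\textbf{Main obstacle.}
The principal difficulty is the exact determination of the range, and in particular proving both directions of $\tilde\xi(\alpha,\beta,\gamma)\ge1$ with the correct constant. On the constructive side this requires controlling which hSLE$_{8/3}$ drives occur as right boundaries of the one-sided measures and how the exponents at $a$ and $c$ add under the ``outer curve plus inner one-sided sample'' operation; on the obstruction side it requires excluding thinner configurations, for which the cleanest route is to invoke the already-established ranges for chordal and one-sided restriction rather than arguing from scratch. The same analysis — the degeneration of the hypergeometric driving function at the critical value $\tilde\xi(\alpha,\beta,\gamma)=1$, where the process behaves like a boundary-hitting SLE$_{8/3}(\rho)$ — is also what is needed to locate the pinch point on the arc $(bc)$.
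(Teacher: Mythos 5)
Your overall architecture (right boundary given by an hSLE / boundary of a one-sided measure, then an independent one-sided sample in the left component; necessity partly inherited from the one-sided range; the ``moreover'' read off criticality of the inner sample) is indeed the paper's, but the step you dismiss as ``a short chain-rule computation'' is precisely the core of the proof and is missing. For a hull $A$ attached to the arc $(ab)$, the event $\{K\cap A=\emptyset\}$ involves \emph{both} $\eta_R$ and the inner sample, and neither the restriction property of $\eta_R$ (which holds only for hulls attached to $(ca)$) nor that of the inner sample suffices: one must show that the law of $\eta_R$, restricted to $\{\eta_R\cap A=\emptyset\}$, reweighted by the conditional avoidance probability of the inner sample and pushed forward by $\varphi_A$, is again the law of $\eta_R$, and that the resulting total mass is the claimed product of derivatives. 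In the paper this is exactly the content of the boundedness of the hypergeometric local martingale $M_t$ for $A\in\mathcal{Q}^{(-\infty,-1)}$ (Lemma \ref{martingale2}) together with optional stopping (Proposition \ref{prop:two-sided}), and it is there -- in the lower bound for $f(x,y)$, via $n\le\tilde\xi(l,m)$ -- that the constraints $\gamma\le\tilde\xi(\alpha,\beta)$ and $\tilde\xi(\alpha,\beta,\gamma)\ge1$ actually enter the construction. Your exponent bookkeeping is also incorrect: avoidance probabilities do not factorize, exponents at a shared marked point combine through $U$ (in the paper $\alpha=U^{-1}(U(l)+3/4)$, $\beta=m$, $\gamma=U^{-1}(U(n)+3/4)$ in the non-boundary-touching case), not as $\alpha=\alpha_1+\alpha_2$, and $\tilde\xi(\alpha,\beta,\gamma)\ge1$ is not ``a splitting condition on $\alpha$'' but, jointly with $\gamma\le\tilde\xi(\alpha,\beta)$, the existence condition $n\le\tilde\xi(l,m)$ for the inner measure, whose middle exponent sits at the attachment point on $(bc)$. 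Finally your construction does not produce the limiting cases in the stated range: $\beta=\tilde\xi(\alpha,\gamma)$, $\gamma=0$ with $\beta>0$ (done by symmetry), and above all $\beta=\gamma=0$, $\alpha\ge1$, for which the paper needs a genuinely different construction (a chordal restriction sample attached at a random point $\tau$ of $(bc)$ with density proportional to $(\tau(\tau+1))^{\alpha-1}$, together with a Poisson point process of excursions of negative exponent $1-\alpha$).

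On the necessity side, the inequalities $\beta,\gamma\ge0$, $\beta\le\tilde\xi(\alpha,\gamma)$, $\gamma\le\tilde\xi(\alpha,\beta)$ and $\alpha\ge\frac58$ are handled as you indicate, but your argument for $\tilde\xi(\alpha,\beta,\gamma)\ge1$ is not a proof: it invokes a conditional decomposition of an arbitrary two-sided sample (conditionally on $\eta_R$, the rest is one-sided) that you have not established, and, even granting it, the identification of the exponents of that conditional law (the shift $h(x)=U^{-1}(U(x)-3/4)$) is in the paper only obtained \emph{from} the construction, so this route is close to circular; the alternative ``pinching'' comparison is not carried out. The paper argues differently and concretely: if $\P^2(\alpha,\beta,\gamma)$ existed with $\tilde\xi(\alpha,\beta,\gamma)<1$, one adds an independent one-sided chordal sample of exponent $\delta$ along $[b,c]$ so that $\tilde\xi(\alpha,\beta+\delta,\gamma+\delta)=1$; the filled union is then a sample of $\P^2(\alpha,\beta+\delta,\gamma+\delta)$ whose left and right boundaries visibly share no point of $(bc)$, contradicting the triple-disconnecting-point property established for critical parameters via the construction. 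Relatedly, your structural premise that $\eta_L$ and $\eta_R$ meet only at $a$ and that the inner sample's middle marked point is $c$ fails exactly on the critical surface: when $\tilde\xi(\alpha,\beta,\gamma)=1$ and $\alpha\ge\frac58$ one has $\beta,\gamma<\frac13$, the right-boundary hSLE is boundary-touching and lands at an interior point $X$ of $(bc)$, and it is $X$ (not $c$) that carries the inner middle exponent and becomes the common point of the two boundaries; criticality at $c$ corresponds instead to $\gamma=\tilde\xi(\alpha,\beta)$, i.e.\ a cut point at $c$. Your ``moreover'' argument therefore needs to be rebased on the attachment point $X$, as in Proposition \ref{lem:goemetry2}.
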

The two-sided restriction measures will be constructed, in most of the cases, as follows: Construct first its boundary at one side (say the right side) which is a one-sided restriction measure, by a hSLE, then conditionally on this side, sample an independent one-sided restriction measure in the connected component which is to the other side (say the left side) of the {first boundary}. 
However we will again use other methods to construct some limiting cases. 

In order to prove that the hSLE define such trichordal restriction measures, we will use a martingale-type argument, in the spirit of the proof of the chordal restriction measure construction. However, the technical implementation of this strategy can appear somewhat daunting, but things are not as bad as they look: One derives that a certain process is a local martingale by longish but straightforward It\^o formula calculations (having the a priori information that the computation should work out nicely) and in order to be able to apply the optional stopping theorem, we have to prove that the semi-martingale is anyway bounded (this is not obvious from the expression of the process as product of terms that are not bounded, but we have again the a priori knowledge that in the end, this quantity will be a conditional probability, so that it should be bounded), which one can then show using some a priori knowledge on the hypergeometric functions that we use.

Having a full description of all possible one-sided and two-sided restriction measures, we will be able to construct the family of three-sided restriction measures by first constructing the boundary on one side (say the right side) as a one-sided measure and then conditionally on this side, sampling  a two-sided restriction measure in the connected component which is to the other side (say the left side) of this first boundary.

The determination of the exact range of admissible parameters in one-sided and two-sided cases, will be obtained by investigating some geometric properties of the corresponding random sets that we 
construct in the limiting cases of the parameter-range. In the three-sided case, we will need to do a kind of reverse procedure of the previous construction (showing that a three-sided restriction measure can be decomposed as described above). This will be explained in \S\,\ref{sec:three-sided}.

\section{Notations and preliminaries}\label{sec:prel}

First we would fix some notations and give a more complete definition of the trichordal restriction property.
After that, we will briefly recall some relevant background material on SLEs and Brownian excursions. We will also review in more detail the 
chordal  and the radial  conformal restriction properties. 

\subsection {Notations}
Let $(D,z_1,z_2,z_3)$ be a configuration where $D\not=\C$ is a simply connected domain and $z_1,z_2,z_3$ denote three different prime ends of $D$. When the boundary of $D$ is a Jordan curve, then the set of prime ends is exactly in 
bijection with $\partial D$. In the sequel, by slight abuse of notation, we will often just refer to the set of prime ends as $\partial D$. 
 Let $\Omega(D, z_1,z_2,z_3)$ be the collection of simply connected and closed $K\subset \overline D$ such that $K\cap\partial D=\{ z_1,z_2,z_3\}$. 
Let $\mathcal{Q}_D^{z_1,z_2,z_3}$ be the collection of  $A\subset \overline D$ such that $A=\overline {D\cap A}$, $D\setminus A$ is simply connected and $A\cap\{z_1,z_2,z_3\}=\emptyset$.
We endow $\Omega(D, z_1,z_2,z_3)$ with the $\sigma-$algebra generated by events $\{K\in\Omega(D, z_1,z_2,z_3), K\cap A=\emptyset\}$ for $A\in\mathcal{Q}_D^{z_1,z_2,z_3}$.
In the sequel, $\P_{(D,z_1,z_2,z_3)}$ will always implicitly denote a probability measure on the space $\Omega(D,z_1,z_2,z_3)$.
A family of probability measures $\P_{(D,z_1,z_2,z_3)}$  is said to verify \emph{trichordal conformal restriction property} if it satisfies the following properties.
\begin{itemize}
\item[(i)] (Conformal invariance) If $\varphi: D\to D'$ is a bijective conformal map, then $$\varphi\circ\P_{(D,z_1,z_2,z_3)}=\P_{(D',\varphi(a),\varphi(b),\varphi(c))}.$$
\item[(ii)] (Restriction) For all $A\in\mathcal{Q}_D^{z_1,z_2,z_3}$, let $\varphi_A: D\setminus A\to D$ be the unique bijective conformal map such that $\varphi_A(z_i)=z_i$ for $i=1,2,3$. Let $K$ be a sample with law $\P_{(D,z_1,z_2,z_3)}$, then the law of $\varphi_A(K)$ conditioned on $K\cap A=\emptyset$ is equal to the (unconditioned) law of $K$.
\end{itemize}

By conformal invariance, one can restrict the study of such measures  for the  $(D, z_1, z_2, z_3) = (\H, 0, -1, \infty)$. 
In this upper-half plane setting, we 
let $\mathcal{Q}$ be the set of bounded closed $A\subset\overline\H$ such that $\H\setminus A$ is simply connected.
We will also frequently use the following subsets of $\mathcal{Q}$ :
\begin{itemize}
\item For $I\subset\R$, let $\mathcal{Q}^{I}:= \{A\in\mathcal{Q}; A\cap \R\subset I\}$. We will mainly consider $\mathcal{Q}^{(-1,0)}, \mathcal{Q}^{(-\infty,-1)}$ and $\mathcal{Q}^{(0,\infty)}$.
\item Let $\mathcal{Q}^*:=\{A\in\mathcal{Q}, 0\not\in A\}$. Let $\mathcal{Q}^{**}:=\{A\in\mathcal{Q};  -1,0\not\in A\}$.
\end{itemize}
For each $A\in\mathcal{Q}$, there is a unique conformal map $g_A:\H\setminus A\to\H$ with the normalization 
$$g_A(z)=z+\capacity(A)/z+o(1/z) \quad \mbox{as }  z\to\infty,$$ where $\capacity(A)\ge 0$ is called the \textit{half-plane capacity} of $A$.

In the $(\H,0,-1,\infty)$ setting, the formula (\ref{thm1}) in Proposition \ref{prop:charac} becomes
\begin{align}\label{c}
\P[K\cap A=\emptyset]=g_A'(-1)^{\beta}g_A'(0)^{\gamma}(g_A(0)-g_A(-1))^{\alpha-\beta-\gamma}.
\end{align}
Note that
 (\ref{c}) can be alternatively written as
\begin{align*}
\P[K\cap A=\emptyset]=\varphi_A'(-1)^\beta\varphi_A'(0)^\gamma\varphi_A'(\infty)^\alpha,
\end{align*}
where $\varphi_A(\cdot):=(g_A(\cdot)-g_A(0))/(g_A(0)-g_A(-1))$ is the conformal map from $\H\setminus A$ to $\H$ that preserves the three points $-1,0,\infty$, and $\varphi_A'(\infty)$ is the limit as $z\to\infty$ of $z/\varphi_A(z)$.

\subsection{Loewner evolutions}
Suppose that $W=(W_t,t\ge 0)$ is a real-valued continuous function. For each $z\in\overline \H$, let $g_t(z)$ be the solution of the initial value problem
\begin{align}\label{loewner}
\partial_t g_t(z)=\frac{2}{g_t(z)-W_t}, \quad g_0(z)=z.
\end{align}
For each $z\in\overline\H$ there is a time $\tau(z)\in[0,\infty]$ such that the solution $g_t(z)$ exists for $t\in[0,\tau]$ and $\lim_{t\to\tau}g_t(z)=W_\tau$ if $\tau<\infty$. 
The set $K_t:=\{z\in\overline\H:\tau(z)\le t\},t\ge 0$ is then the chordal Loewner hull generated by the driving function $(W)$ at time $t$.
It is easy to check that $g_t$ is the unique conformal map from $\H\setminus K_t$ onto $\H$ such that
$g_t(z)=z+2t/z+o(1/z)$ as $z\to\infty$ and that $\capacity(K_t)=2t.$

The \emph{Schramm-Loewner evolution} (denoted by SLE) was introduced by Schramm in \cite{MR1776084} as candidate for scaling limits of discrete models: 
Chordal SLE$_\kappa$ in $\H$ from $0$ to $\infty$ is the Loewner evolution that one obtains when the driving function is 
 $W_t=\sqrt{\kappa}B_t$ and $(B_t,t\ge 0)$ is  a standard one-dimensional Brownian motion.

A useful variant of SLE are the SLE$_\kappa (\rho)$ processes (see \cite {MR1992830,MR2118865}) is the following: 
 Let $x_1,\cdots,x_n\in\R, \rho_1,\cdots, \rho_n\in(-2,\infty)$. Let $(W_t,O_t^1,\cdots,O_t^n)$ be the solution to the system of stochastic differential equations
\begin{align*}
&d W_t=\sqrt{\kappa} dB_t+\sum_{k=1}^n \frac{\rho_k}{W_t-O^k_t}dt;\\
& dO^k_t=\frac{2dt}{O^k_t-W_t}, \quad k=1,\cdots,n;\\
& W_0=0;\quad O^k_0=x_k,\quad k=1,\cdots,n.
\end{align*}
The corresponding random Loewner chain generated by the driving function $W$ is called a  chordal SLE$_\kappa(\rho_1,\cdots,\rho_n)$ having $(x_1,\cdots,x_n)$ as marked points. It is possible to choose one of the marked points $x_k$ 
to be $0^-$ or $0^+$.

\subsection{Brownian excursions}\label{sec:brownian-excursions}
In the upper-half plane, for any point $x$ on the real line, let $\nu_{\H,x}$ be a probability measure which is the rescaled limit as $\eps\to 0$ of a Brownian motion started at $x+i\eps$ and killed at the exit of $\H$.
It is possible to decompose the measure according to the location of the exit point $y$. Then,
\begin{align*}
\nu_{\H,x}=\int_\R \frac{dy}{(x-y)^2}\,\nu_{\H,x,y},
\end{align*}
where $\nu_{\H,x,y}$ is the probability measure of a Brownian bridge from $x$ to $y$ that stays in $\H$.
It is also possible to integrate the starting point $x$ according to the Lebesgue measure: 
\begin{align}\label{brownian-excursion-measure}
\mu_\H = \int dx \nu_{\H, x} = \int \int \frac{dx\,dy}{(x-y)^2}\,\nu_{\H,x,y}.
\end{align}
Note that $\mu_\H$ is a measure with infinite total mass.
It is then well-known (\cite {MR2178043}) that
\begin{itemize}
\item[(i)]
 $\mu_\H$ is conformal invariant under M\"obius transformation from $\H$ to itself.
 \item[(ii)]
 $\mu_\H$ satisfies the following restriction property :
for all $A\in\mathcal{Q}$, $g_A \circ \mu_\H\mathbf{1}_{\gamma\in \H\setminus A}=\mu_\H$.
\end{itemize}

For $\theta>0$, consider a Poisson point process ${\mathcal P}$  of Brownian excursions with  intensity 
$$\theta \int_{\R-} \int_{\R_-} \frac{dx\,dy}{(x-y)^2}\,\nu_{\H,x,y}.$$
For any set $C\subset\H$, let $\mathcal{F}$ be the function that sends $C$ to the filling of $C$, i.e. $\mathcal{F}(C)$ is the closure of the complement in $\H$ of the unbounded connected component of $\H\setminus C$.
Then, as explained in \cite[\S\, 4.3]{MR2178043}, it turns out that $\mathcal{F}\left( \mathcal{P} \right)$ satisfies one-sided chordal conformal restriction 
with an exponent $c \theta$ for certain given positive proportionality factor $c$.

\subsection{Conformal restriction: chordal case}
Here we make a brief  summary of the main results on the chordal case studied by Lawler, Schramm and Werner \cite{MR1992830}.
Let $\Omega$ be the collection of simply connected and relatively closed $K\subset \overline \H$ such that $K\cap\partial\H=\{0,\infty\}$.
Recall that a probability measure $\P$ on $\Omega$ (or the random set $K$ under the law $\P$) is said to verify the (two-sided) \emph{chordal conformal restriction property} if 
\begin{itemize}
\item[(i)](conformal invariance) the law of $K$ is invariant under any Mobius transform from $\H$ to itself.
\item[(ii)](restriction) for all $A\in\mathcal{Q}^*$, $\phi_A(K)$ conditioned on $K\cap A=\emptyset$ has the same law as $K$.
\end{itemize}
For all $A\in\mathcal{Q}^*$, let $\phi_A(\cdot):=g_A(\cdot)-g_A(0)$.
Proposition 3.3 in \cite{MR1992830} states that if $K$ is a chordal restriction measure, then there exist a constant $\alpha$ such that for all $A\in\mathcal{Q}^*$,
\begin{align*}
\P(K\cap A=\emptyset)=\phi_A'(0)^\alpha
\end{align*}
and that this formula characterizes uniquely the restriction measure $\P$.

In order to show that the range of admissible values of $\alpha$ is $[5/8, \infty)$, the argument in \cite{MR1992830} goes as follows: 
\begin {itemize}
 \item The measures are constructed explicitly for all $\alpha \ge 5/8$. 
 \item SLE$_{8/3}$ satisfies two-sided conformal restriction with exponent $5/8$, and it is a simple curve.
 \item Sets that satisfy a one-sided conformal restriction properties are all constructed (this corresponds to all positive values of $\alpha$). If the two-sided restriction measure exists, the right-boundary of its samples does defines a one-sided conformal restriction measure.  By symmetry, the probability that the point $i$ lies to the right of the two-sided restriction measure can not be larger than $1/2$ (because it is also equal to the probability that it lies to the left). The fact that SLE$_{8/3}$ is a simple curve and that 
 this probability is decreasing with $\alpha$ then implies that when $\alpha < 5/8$, this probability would be larger than $1/2$, and therefore that two-sided restriction measures can not exist.
 \end {itemize}
 Let us mention some ways to construct chordal restriction measures (other constructions, for instance via SLE$_{8/3} (\rho)$ processes are also possible):
 \begin{enumerate}
 \item First construct in $\H$ a SLE$_\kappa$ from $0$ to $\infty$ with $\kappa={6}/(2\alpha+1)$. Then consider  independently a Brownian loop-soup in $\H$ with intensity $\lambda={(8-3\kappa)(6-\kappa)}/{2\kappa}$. Finally, define the filling $K$ of the union of the SLE curve and the loops that it intersects.
\item In Section 5.2 of \cite{MR2060031}, two-sided restriction samples are constructed by first constructing their right boundary $\gamma$ (that is a one-sided restriction sample), then putting another one-sided restriction sample in the domain between $\R^-$ and $\gamma$ (and we will be using similar ideas in the trichordal setting). 
It is possible to construct one-sided restriction samples of any exponent $\alpha\in(0,\infty)$ using SLE$_{8/3}(\rho)$ processes for $\rho\in(-2,\infty)$, or by Brownian excursions as 
recalled in \S\,\ref{sec:brownian-excursions}.
\end{enumerate}
Note that once restriction measures are described in $\H$, one gets for free the description in any simply connected domain $D$ other than $\C$ by conformal invariance.

\subsection{Conformal restriction: radial case}\label{sec:radial}
The radial case of conformal restriction was studied by Wu \cite{MR3293294}. Here we make a short review of the results and of some of the arguments. 
Let $\Omega$ be the collection of simply connected and relatively closed subsets $K$ of the closed unit disc $\overline{\mathbb{U}}$ such that $K\cap\partial\mathbb{U}=\{1\}$ and $0\in K$.
A probability measure $\P$ on $\Omega$ (or the random set $K$ under the law $\P$) is said to verify \emph{radial conformal restriction property} if for any closed subset $A$ of the closed unit disc such that $\dist(A,\{0,1\})>0$, and such that $\mathbb{U}\setminus A$ is simply connected, the law of $\phi_A(K)$ conditioned on $\{K\cap A=\emptyset\}$ is equal to the law of $K$ where $\phi_A$ is the conformal map from $\mathbb{U}\setminus A$ onto $\mathbb{U}$ that fixes both the boundary point $1$ and the origin.

As already mentioned in the introduction, the main theorem of \cite{MR3293294} states on the one hand that a radial restriction measure is fully characterized by a pair of real numbers $(\alpha,\beta)$ such that
\begin{align*}
\P(K\cap A=\emptyset)=\left| \phi_A'(0) \right|^\alpha \phi_A'(1)^\beta,
\end{align*}
and on the other hand that the range of admissible values of $\alpha$ and $\beta$ is exactly 
\begin{align*}
\beta\ge\frac58, \quad \alpha\le\left( \left( \sqrt{24\beta+1}-1 \right)^2-4 \right) / 48.
\end{align*}
The proof uses the following steps: 
\begin {itemize}
 \item First, the characterization part is established directly by rather soft arguments reminiscent of the chordal case. 
 \item The measures $P_{\alpha, \beta}$ for $\beta \ge 5/8$ and $\alpha = \xi (\beta)$ are constructed explicitly using variants of the radial SLE$_{8/3}$ processes. It turns out that they all have the property that a sample of this measure always has the origin on its boundary. 
 \item When a measure $P_{\alpha, \beta}$ exists, it is always possible to construct $P_{\alpha', \beta}$ for all $\alpha' < \alpha$ by adding to the former a Poisson point process of Brownian loops 
 that surround the origin. In particular, a sample of the latter then do not have the origin on its boundary anymore. This shows in particular that $P_{\alpha, \beta}$ can not exist for 
 $\beta \ge 5/8$ and $\alpha > \xi (\beta)$. 
 \item Finally, by comparison with the chordal case, one can see that $\beta$ can not be smaller than $5/8$.
\end {itemize}

\subsection{Dub\'edat's remark on the $n$-point case}\label{sec:dubedat}

There is a natural extension of the conformal restriction property to the $n$-point case. 
Let $(D,z_1,\cdots,z_n)$ be a configuration where $D\subset\C$ is a simply connected domain and $z_1,\cdots,z_n\in\partial D$. Let $\Omega(D, z_1, \cdots, z_n)$ be the collection of simply connected and relatively closed $K\subset \overline D$ such that $K\cap\partial D=\{ z_1, \cdots, z_n\}$. Let $\P_{(D,z_1,\cdots,z_n)}$ be a probability measure on $\Omega(D,z_1,\cdots,z_n)$.
A family of probability measures $\P_{(D,z_1,\cdots,z_n)}$  is said to verify \emph{$n$-point chordal conformal restriction property} if it satisfies the following properties.
\begin{itemize}
\item[(i)] (Conformal invariance) If $\varphi: D\to D'$ is a bijective conformal map, then $$\varphi\circ\P_{(D,z_1,\cdots,z_n)}=\P_{(D',\varphi(z_1),\cdots,\varphi(z_n))}.$$
\item[(ii)] (Restriction) For all $A\subset D$ such that $D\setminus A$ is simply connected and $\dist(A, \{z_1,\cdots,z_n\})>0$ is positive. Let $K$ be a sample with law $\P_{(D,z_1,\cdots,z_n)}$. Then $\P_{(D,z_1,\cdots,z_n)}$ conditionally on $\{K\cap A=\emptyset\}$ is equal to $\P_{(D\setminus A,z_1,\cdots,z_n)}$.
\end{itemize}

In  \cite{MR2358649} (see also \cite{MR2253875}), Dub\'edat, based on formal algebraic arguments (in the spirit of the formal Section 8.5 in \cite{MR1992830}) argues that if $(\P_{\H,z_1,\cdots,z_n})$ is a family of $n$-point restriction measures,
 then, under some regularity assumptions,
  there should exist
\begin{itemize}
\item
real numbers $\nu_{ij} , 1\le i<j\le n$
\item
a continuous function $f: \R^n \to \R$ which is invariant under $\H\to \H$ Mobius transformations
\end{itemize}
such that for all $A\subset \H$ such that $A\cap\{z_1,\cdots,z_n\}=\emptyset$ and $\H\setminus A$ is simply connected,
\begin{align}\label{dubedat}
\P(K\cap A=\emptyset)=\prod_{i<j} \left( \varphi_A'(z_i) \varphi_A'(z_j) \left( \frac{z_j-z_i}{\varphi_A(z_j)-\varphi_A(z_i)} \right)^2 \right)^{\nu_{ij}}
\frac{f\left( \varphi_A(z_1),\cdots,\varphi_A(z_n) \right)}{f(z_1,\cdots,z_n)}.
\end{align}

For $n=2$ and $3$, the function $f$ can only be constant hence families of chordal and trichordal restriction measures are characterized respectively by $1$ and $3$ parameters.
For $n \ge 4$, the function $f$ comes up and the families will be infinite-dimensional.

This formal argument is indeed convincing on heuristic level, but the actual formal regularity issues are not addressed in \cite{MR2358649}.  In  the next section of the 
present paper, we will provide a complete self-contained characterization of the special case of trichordal restriction measures. Let us stress also that the formula (\ref{dubedat}) does not 
indicate for which values of $\nu_j$ the measure exists, which will be one main focus of the present paper.

\subsection {Remarks on Brownian exponents}\label{sec:brownian-exponents}

As pointed out in \cite{MR1742883}, the structure of Brownian half-plane intersection exponents $\tilde \xi$ is of the form
$$ \tilde \xi ( \alpha_1, \ldots, \alpha_n) = U^{-1} ( U (\alpha_1) + \ldots + U (\alpha_n))$$
for some function $U$ (this relation was then interpreted and exploited by Duplantier  \cite {MR1666816} in the light of a quantum gravity approach to these 
exponents, see also \cite {MR2112128} for further references). It was then shown in \cite{MR1879850, MR1899232} that the explicit expression $\tilde \xi$ corresponds to the function 
$$ U(x) = (-1 + \sqrt { 24x + 1 }) / 4$$
(note that the relation between $U$ and $\tilde \xi$ determines $U$ up to a multiple constant; for future notational convenience, we choose to divide the function $x \mapsto -1 + \sqrt { 24 x +1}$ that  appeared in the 
introduction by this factor $4$). In particular, one gets that 
$\tilde \xi (\alpha, \beta, \gamma)$ is smaller than $1$ if and only if $U(\alpha_1) + \ldots + U(\alpha_n)$ is smaller than $U(1)=1$. 

It was further shown in \cite{MR1742883}, that the full-plane disconnection exponent $\xi ( \alpha_1, \ldots, \alpha_n)$ was in fact a function of the 
half-plane exponent $\tilde \xi ( \alpha_1, \ldots, \alpha_n)$, and the explicit expression for this function was determined 
in \cite{MR1879851}. In particular, it turns out that the former exponent $\xi (\alpha_1, \ldots, \alpha_n)$ is smaller than $2$ if and only the latter 
exponent $\tilde \xi$ is smaller than $5$ ie. if and only if $U(\alpha_1) + \ldots + U (\alpha_n) < U (5)= 5/2$.

\section{Characterization}\label{Characterization}

In the present section, we will adapt ideas used in \cite{MR1992830} and \cite{MR3293294} in order to derive Proposition \ref{prop:charac}:
Let us first define a family of deterministic Loewner curves in $\H$ that is stable under the mapping down procedure with respect to conformal maps that preserve the three marked points $-1,0,\infty$:
For all $x\in \R\setminus\{-1,0\}$ and all $z\in\overline\H$, let $g_{x,t}(z)$ be the solution to the initial value problem
\begin{align}\label{perfect equation}
\partial_t  g_{x,t}(z)=\frac{a_x(t)}{ g_{x,t}(z)-b_x(t)}, \quad g_0(z)=z
\end{align}
where
\begin{align*}
a_x(t)=\exp\left( - \frac{2t}{x^2+x} \right), \quad b_x(t)=\left(2x+1\right)\exp\left(-\frac{t}{x^2+x}\right)-x-1.
\end{align*}
The evolving hull generated by (\ref{perfect equation}) is a continuous curve that we will denote by $(\eta_x(t), t\ge 0)$ and $g_{x,t}$ is the conformal map from  $\H\setminus \eta_x([0,t])$ onto $\H$ such that $g_{x,t}(z)=z+a_x(t)/z+o(1/z)$. Here $a_x(t)>0$ is not constantly equal to $2$ as in (\ref{loewner}), which corresponds to a continuous and monotone reparametrization in time of the standard Loewner evolution.

Let $\varphi_{x,t}(\cdot):=( g_{x,t}(\cdot)- g_{x,t}(0))/( g_{x,t}(0)- g_{x,t}(-1))$ be the unique conformal map from $\H\setminus \eta_x([0,t])$ onto $\H$ that sends
$-1,0,\infty$ to themselves. The following lemma shows a nice semi-group property of the curves $\eta_x$.

\begin{lemma}
For all $s,t>0$,
\begin{align}\label{semi-group}
\varphi_{x,t+s}=\varphi_{x,t}\circ\varphi_{x,s}.
\end{align}
\end{lemma}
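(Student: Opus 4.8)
The plan is to verify the semi-group relation by checking that both sides solve the same initial value problem in a suitable sense, exploiting the explicit form of $a_x$ and $b_x$. First I would recall that $g_{x,t}$ is the unique conformal map from $\H\setminus\eta_x([0,t])$ onto $\H$ with $g_{x,t}(z)=z+a_x(t)/z+o(1/z)$, and that the curve $\eta_x$ is generated by the (reparametrized) Loewner flow \eqref{perfect equation}. The natural statement underlying \eqref{semi-group} is the hull composition identity $g_{x,t+s}=\widetilde g\circ g_{x,s}$, where $\widetilde g$ maps $\H\setminus g_{x,s}(\eta_x([s,s+t]))$ onto $\H$; the point is then to identify $\widetilde g$ with $g_{x,t}$ itself, i.e. to show that the ``remaining'' curve $g_{x,s}(\eta_x([s,s+t]))$, suitably viewed, is again an $\eta_x$-type curve with the same parameter $x$, up to the Möbius normalization that fixes $-1,0,\infty$.

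The key computation is to track how the three marked points move. Writing $U_s:=g_{x,s}(0)$ and $V_s:=g_{x,s}(-1)$, one differentiates \eqref{perfect equation} evaluated at $z=0$ and $z=-1$ to get $\dot U_s=a_x(s)/(U_s-b_x(s))$ and $\dot V_s=a_x(s)/(V_s-b_x(s))$. The crucial structural fact — which is exactly why $a_x$ and $b_x$ were chosen with those exponentials — is that under the rescaling $\varphi_{x,s}=(g_{x,s}(\cdot)-U_s)/(U_s-V_s)$, the image configuration again has its driving point in the ``same position relative to $-1,0,\infty$'' as at time $0$; concretely, one checks that $b_x(s)$ lies between $V_s=g_{x,s}(-1)$ and $U_s=g_{x,s}(0)$ in the right proportion, so that $\varphi_{x,s}$ applied to the time-$s$ driving point reproduces the initial data of an $\eta_x$ evolution. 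This is where I would do the (longish but routine) algebra: solve the ODEs for $U_s,V_s$ in closed form using $a_x,b_x$, and verify that $(\varphi_{x,s}\circ g_{x,s}^{-1})$ conjugates the tail of the flow $(g_{x,t+s})_{t\ge0}$ into the flow $(g_{x,t})_{t\ge0}$. Once that conjugation is established, $\varphi_{x,t+s}=\varphi_{x,t}\circ\varphi_{x,s}$ follows by unwinding the definitions, since $\varphi_{x,\cdot}$ is by construction the Möbius-normalized version of $g_{x,\cdot}$.

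An alternative and perhaps cleaner route, which I would present as the main line of argument, is to avoid reconstructing the tail curve and instead argue purely at the level of conformal maps. Set $h:=\varphi_{x,t}\circ\varphi_{x,s}$. Then $h$ is a conformal map from $\H\setminus\eta_x([0,t+s])$ onto $\H$ (since $\varphi_{x,s}$ maps $\H\setminus\eta_x([0,s])$ onto $\H$ and sends the further hull $\eta_x([s,t+s])$ to some hull, while $\varphi_{x,t}$ removes precisely that image hull — here one needs that the pushed-forward tail is generated by the same Loewner data, which is the content of the previous paragraph). Moreover $h$ fixes $-1,0,\infty$: indeed $\varphi_{x,s}$ fixes all three by definition, and $\varphi_{x,t}$ fixes all three by definition. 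But $\varphi_{x,t+s}$ is the \emph{unique} conformal map from $\H\setminus\eta_x([0,t+s])$ onto $\H$ fixing $-1,0,\infty$ (uniqueness of such a Möbius normalization), so $h=\varphi_{x,t+s}$.

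The main obstacle, as flagged above, is the step asserting that the forward image $\varphi_{x,s}(\eta_x([s,s+t]))$ is again exactly the curve $\eta_x([0,t])$ rather than some other curve — equivalently, that the ODE \eqref{perfect equation} is consistent with this rescaling, with the \emph{same} $x$ and \emph{same} functions $a_x,b_x$. This is not automatic: it relies on the specific functional form chosen, and verifying it amounts to checking that $b_x(s)$, after applying the Möbius map $(\,\cdot\,-U_s)/(U_s-V_s)$, equals the initial driving position $g_{x,0}^{\text{driving}}$ appropriate to parameter $x$, and that the time-derivative structure matches after the induced time change. I expect this to reduce to an identity among the exponentials $\exp(-t/(x^2+x))$ and $\exp(-2t/(x^2+x))$ together with the linear combination defining $b_x$, which can be verified by direct substitution; but it is the one place where the argument genuinely uses the explicit formulas rather than soft uniqueness principles.
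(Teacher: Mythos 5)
Your proposal is correct and is essentially the paper's argument: the paper also uses the explicit solutions $g_{x,t}(0)$ and $g_{x,t}(-1)$ (the exponential formulas you propose to derive) and the uniqueness of the normalized map, the whole ``main obstacle'' you flag being packaged there into the single observation that $\partial_t\varphi_{x,t}(z)=f(x,\varphi_{x,t}(z))$ with $f$ independent of $t$, so the normalized maps form the flow of an autonomous vector field and the semigroup identity follows at once. The computation you defer does work out exactly as you expect (for instance one checks $\bigl(b_x(s)-g_{x,s}(0)\bigr)/\bigl(g_{x,s}(0)-g_{x,s}(-1)\bigr)=x$ for all $s$), so your plan is a slightly more pedestrian but valid execution of the same idea.
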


\begin{proof}
It is not difficult to verify that
\begin{align*}
&g_{x,t}(0)=(x+1) \left( \exp\left( -\frac{t}{x^2+x} \right)-1 \right)\\ 
&g_{x,t}(-1)=x  \exp\left( -\frac{t}{x^2+x} \right)-x-1
\end{align*}
are solutions of equation (\ref{perfect equation}). Then we differentiate $\varphi_{x,t}(z)$ using chain rules to get
\begin{align}\label{varphi}
\partial_t \varphi_{x,t} (z) =\frac{\varphi_{x,t}(z)}{x^2+x}+\frac{1}{x}+\frac{1}{\varphi_{x,t}(z)-x}.
\end{align}
Let $$f(x,z):=\frac{z}{x^2+x}+\frac{1}{x}+\frac{1}{z-x}$$
for all $z\in\overline\H$. Then we have $\partial_t \varphi_{x,t} (z)=f\left(x, \varphi_{x,t} (z) \right)$. The fact that $f$ does not depend on $t$ readily implies (\ref{semi-group}).
\end{proof}
Let us immediately compute $g_{x,t}'(-1)$ and $g_{x,t}'(0)$ that we will need later. Differentiating (\ref{perfect equation}) w.r.t. $z$ at $z=-1$ leads to
$$
\partial_t g_{x,t}'(-1)=-\frac{a_x(t)g_{x,t}'(-1)}{(g_{x,t}(-1)-b_x(t))^2}.
$$
Solving the equation with initial condition $g_0'(-1)=1$ leads to
\begin{align}\label{-1}
g_{x,t}'(-1)=\exp\left(-\frac{t}{(x+1)^2}\right).
\end{align}
In the same way, we have
\begin{align}\label{0}
g_{x,t}'(0)=\exp\left(-\frac{t}{x^2}\right).
\end{align}
We will also need
\begin{align}\label{0-1}
g_{x,t}(0)-g_{x,t}(-1)=\exp\left( -\frac{t}{x^2+x} \right).
\end{align}

\begin{lemma}
For all $x\in\R\setminus\{-1,0\}$, there exist $\lambda(x)\in\R$ such that,
\begin{align}\label{exp-lambda}
\P\left( K\cap \eta_x([0,t])=\emptyset \right)=\exp(-\lambda(x) t).
\end{align}
\end{lemma}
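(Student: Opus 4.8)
The plan is to exploit the semi-group property (\ref{semi-group}) together with the restriction property to show that the function $t\mapsto \P(K\cap\eta_x([0,t])=\emptyset)$ is multiplicative, and then to invoke continuity to conclude that it must be of exponential form. Let me write $F_x(t):=\P(K\cap\eta_x([0,t])=\emptyset)$ for fixed $x\in\R\setminus\{-1,0\}$.

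First I would establish the multiplicativity $F_x(t+s)=F_x(t)F_x(s)$ for all $s,t>0$. The key point is that $\eta_x([0,t+s])$ is obtained from $\eta_x([0,t])$ by attaching the hull $\varphi_{x,t}^{-1}(\eta_x([0,s]))$; here one uses that $\varphi_{x,t+s}=\varphi_{x,t}\circ\varphi_{x,s}$, which says exactly that mapping out $\eta_x([0,t])$ via $\varphi_{x,t}$ sends the remaining piece $\eta_x([t,t+s])$ to a copy of $\eta_x([0,s])$. Now condition on $\{K\cap\eta_x([0,t])=\emptyset\}$: by the trichordal restriction property (property (ii) in the definition, applied with $A=\eta_x([0,t])$, which lies in $\mathcal{Q}$ and avoids the three marked points since $\eta_x$ is a curve emanating from a point different from $-1,0,\infty$ — one should check $\eta_x$ starts at $g_{x,0}$-image of the seed; from $b_x(0)=x$ the curve starts at $x\notin\{-1,0\}$), the image $\varphi_{x,t}(K)$ has the (unconditioned) law of $K$. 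Therefore the conditional probability that $K$ additionally avoids the extra piece $\eta_x([t,t+s])$ equals the probability that $\varphi_{x,t}(K)$ avoids $\varphi_{x,t}(\eta_x([t,t+s]))=\eta_x([0,s])$, which by the restriction property is $\P(K\cap\eta_x([0,s])=\emptyset)=F_x(s)$. Multiplying by $\P(K\cap\eta_x([0,t])=\emptyset)=F_x(t)$ gives $F_x(t+s)=F_x(t)F_x(s)$.

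Next, from multiplicativity and the observation that $F_x$ is non-increasing in $t$ (the hulls $\eta_x([0,t])$ are increasing) and takes values in $[0,1]$ with $F_x(0)=1$, I would deduce the exponential form. The standard argument: $F_x$ is a non-increasing multiplicative function on $[0,\infty)$; monotonicity gives measurability, so by the classical Cauchy functional equation solution $F_x(t)=e^{-\lambda(x)t}$ for some $\lambda(x)\in[0,\infty]$. To rule out $\lambda(x)=\infty$ (i.e. $F_x(t)=0$ for all $t>0$) one notes that for $A$ a sufficiently small hull around a point of $\eta_x$ away from the marked points, $\P(K\cap A=\emptyset)>0$ — indeed this is really where one needs some a priori non-degeneracy; since $K$ is a closed set meeting $\partial\H$ only at three points, for small enough $t$ the curve $\eta_x([0,t])$ has positive distance from "most" configurations and $F_x(t)\to 1$ as $t\to 0$ is forced by $F_x(t)=F_x(t/n)^n$ together with $F_x$ being bounded below by a positive quantity for some small time. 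Actually the cleanest route: multiplicativity plus $F_x\le 1$ already forces either $F_x\equiv 0$ on $(0,\infty)$ or $F_x(t)=e^{-\lambda(x)t}$ with $\lambda(x)<\infty$; and $F_x\equiv 0$ is excluded because $\eta_x$ is a Loewner curve so $\eta_x([0,t])\downarrow\{{\rm pt}\}$ as $t\downarrow 0$ and $K$ avoids that point with positive probability.

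The main obstacle I expect is precisely this last non-degeneracy step — verifying carefully that $F_x(t)>0$ for small $t$ (equivalently $\lambda(x)<\infty$), which requires knowing that a restriction sample $K$ does not almost surely intersect every neighborhood of the initial point of $\eta_x$. This should follow from the characterization set-up (the measure $\P$ is a genuine probability measure on $\Omega(\H,0,-1,\infty)$, so $K$ is a.s. a proper closed subset, and one can use the restriction property itself with a tiny $A$ near $\eta_x(0^+)$ together with a compactness/continuity argument), but it is the one place where soft abstract nonsense does not immediately suffice. Everything else — the semi-group identity is already proved, and the passage from a monotone multiplicative function to an exponential is classical — is routine.
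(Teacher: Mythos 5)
Your argument is essentially the paper's own proof: the semigroup identity $\varphi_{x,t+s}=\varphi_{x,t}\circ\varphi_{x,s}$ combined with the restriction property gives $\P(K\cap\eta_x([0,t+s])=\emptyset)=\P(K\cap\eta_x([0,t])=\emptyset)\,\P(K\cap\eta_x([0,s])=\emptyset)$, from which the exponential form follows, exactly as in the paper (which states this in two lines). Your additional care about the degenerate case is sound and in fact fills a detail the paper leaves implicit: since $K\cap\R=\{-1,0\}$ and $K$ is closed, $\dist(K,x)>0$ almost surely, so $\P(K\cap\eta_x([0,t])=\emptyset)\to 1$ as $t\to 0$, ruling out $\lambda(x)=\infty$.
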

\begin{proof}
The semigroup property (\ref{semi-group}) of the curve $\eta_x$ and the conformal restriction property of $\P$ implies that
\begin{align*}
\P\left( K\cap \eta_x([0,t+s])=\emptyset \right)=\P\left( K\cap \eta_x([0,t])=\emptyset \right)\P\left( K\cap \eta_x([0,s])=\emptyset \right).
\end{align*}
The existence of $\lambda(x)$ follows. 
\end{proof} 

We will endow  $\mathcal{Q}^{\R \setminus\{-1,0\}}$ with the same topology $\mathcal{T}$ as in \cite{MR1992830} (which is also closely related to Carath\'eodory topology, see the comments in 
\cite {MR1992830}): Given $A\in \mathcal{Q}^{\R \setminus\{-1,0\}}$ and a sequence $\{A_n\}\subset \mathcal{Q}^{\R\setminus\{-1,0\}}$, we say that $A_n$ converges to $A$ if $\varphi_{A_n}$ converges to $\varphi_A$ uniformly on compact subsets of $\overline \H\setminus A$ and $\bigcup_n A_n$ is bounded away from $-1,0$ and $\infty$.
 
Then we have the following lemma which is an analogue of \cite[Lemma 3.5]{MR1992830}.

\begin{lemma}\label{topology}
On $\mathcal{Q}^{(0,\infty)}$ (resp. on $\mathcal{Q}^{(-1,0)}$, on $\mathcal{Q}^{(-\infty,-1)}$) with the topology $\mathcal{T}$, the semigroup generated by $\{\eta_x([0,t]), t\ge 0, x\in(0,\infty)\}$ (resp. by $\{\eta_x([0,t]), t\ge 0, x\in(-1,0)\}$, by $\{\eta_x([0,t]), t\ge 0, x\in(-\infty, -1)\}$) is dense, the application $A\mapsto \P(K\cap A=\emptyset)$ is continuous, and $A\mapsto \varphi_A'(0)$ is continuous.
\end{lemma}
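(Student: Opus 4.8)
\textbf{Proof proposal for Lemma \ref{topology}.}

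The plan is to mimic the structure of \cite[Lemma 3.5]{MR1992830}, treating the three cases $\mathcal{Q}^{(0,\infty)}$, $\mathcal{Q}^{(-1,0)}$, $\mathcal{Q}^{(-\infty,-1)}$ in parallel since they are interchanged by the M\"obius transformations of $\H$ permuting $-1,0,\infty$; so it suffices to argue, say, for $\mathcal{Q}^{(0,\infty)}$. I would split the statement into three assertions and handle them in the order: (1) continuity of $A\mapsto\varphi_A'(0)$; (2) density of the semigroup generated by $\{\eta_x([0,t])\}$; (3) continuity of $A\mapsto\P(K\cap A=\emptyset)$.

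For (1), continuity of $A\mapsto\varphi_A'(0)$ with respect to $\mathcal{T}$ is essentially built into the topology: if $A_n\to A$ then $\varphi_{A_n}\to\varphi_A$ uniformly on compact subsets of $\overline\H\setminus A$, and since $0$ stays bounded away from $\bigcup_n A_n$ we may apply this on a fixed small half-disc around $0$; Cauchy's integral formula for the derivative (writing $\varphi_{A_n}'(0)$ as a contour integral of $\varphi_{A_n}$ over a small semicircle around $0$, using the Schwarz reflection of $\varphi_{A_n}$ across the real segment near $0$) then gives $\varphi_{A_n}'(0)\to\varphi_A'(0)$. For (2), the density argument is the standard one: any $A\in\mathcal{Q}^{(0,\infty)}$ can be approximated in $\mathcal{T}$ by hulls that are finite unions of pieces of the curves $\eta_x$. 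Concretely, one grows a Loewner hull whose tip traces out $A\cap\H$ in the Carath\'eodory sense; at each infinitesimal step the driving point lies in $(0,\infty)$, and near a given time the local evolution agrees to first order with some $\eta_x$ for the appropriate $x>0$ (the family $\{\eta_x\}$ is designed precisely so that, together with its semigroup closure under $\varphi_{x,t}$, it realizes all ``admissible'' infinitesimal Loewner increments with driving point in $(0,\infty)$). Discretizing the time parameter and concatenating short pieces $\varphi_{x_k,t_k}$, and invoking the semigroup property \eqref{semi-group} together with stability of the class under the maps $\varphi_{x,t}$, produces a sequence in the semigroup converging to $A$ in $\mathcal{T}$; one must also check that the approximants stay bounded away from $-1,0,\infty$, which holds because $A$ does and the approximation is uniform.

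For (3), I would first establish continuity of $A\mapsto\P(K\cap A=\emptyset)$ on the dense semigroup and then extend. On the semigroup the value is given explicitly: by \eqref{exp-lambda} together with conformal covariance of the restriction measure under the $\varphi_{x,t}$, a finite concatenation has probability equal to a product of factors $\exp(-\lambda(x_k)t_k)$ suitably transported, and monotonicity (if $A\subset A'$ then $\{K\cap A'=\emptyset\}\subset\{K\cap A=\emptyset\}$) lets one control the probability of a general $A$ by squeezing it between semigroup elements; combined with the fact that $\P(K\cap A=\emptyset)$ is always a number in $[0,1]$ and that $A_n\to A$ in $\mathcal{T}$ forces $K\cap A_n=\emptyset$ and $K\cap A=\emptyset$ to agree in the limit (inner and outer approximation of $A$ by hulls that are slightly larger / slightly smaller), one gets upper and lower semicontinuity and hence continuity. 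The main obstacle, as in \cite{MR1992830}, is the density statement (2): one must argue carefully that the specific one-parameter families $\eta_x$ — rather than general Loewner curves — already generate a $\mathcal{T}$-dense semigroup, i.e. that restricting the driving-point location to lie in the appropriate arc and using only these ``perfect'' curves still suffices to approximate every hull whose real trace lies in that arc. This requires the local-approximation step (matching an arbitrary short Loewner increment, with driving point in $(0,\infty)$, by a short piece of some $\eta_x$) to be made quantitative in the topology $\mathcal{T}$, uniformly enough to survive concatenation; everything else is soft.
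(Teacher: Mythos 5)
Your proposal takes essentially the same route as the paper, which in fact does not prove this lemma in detail but refers to \cite[Lemma 3.5]{MR1992830} and records exactly the two ideas you use: approximate a general hull by a fattened and smoothed hull whose boundary is a simple curve, approximate the driving function of that simple curve piecewise by the driving functions of the $\eta_x$, and obtain continuity of $A\mapsto\P(K\cap A=\emptyset)$ by monotonicity and inner/outer approximation, while continuity of $A\mapsto\varphi_A'(0)$ is essentially built into the topology $\mathcal{T}$. The only point to make explicit in your density step is the fattening/smoothing of $A$ before speaking of a Loewner tip tracing its boundary, since a general hull in $\mathcal{Q}^{(0,\infty)}$ need not be generated by a curve.
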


We refer the readers to \cite[Lemma 3.5]{MR1992830} for a complete proof. The idea is that any simple Loewner curve can be approximated by a curve which has a driving function that takes piecewise the form of the driving function of $\eta_x$. Any hull $A$ can be approximated by a fattened and smoothed hull $A^\eps$ whose boundary is a simple curve.

\begin{lemma}\label{lem:continuous}
The function $\lambda$ is continuous. 
\end {lemma}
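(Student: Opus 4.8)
\textbf{Proof proposal for Lemma~\ref{lem:continuous}.}
The plan is to show that $\lambda$ is continuous by combining the exponential formula \eqref{exp-lambda} with the continuity of $A \mapsto \P(K \cap A = \emptyset)$ in the topology $\mathcal{T}$ established in Lemma~\ref{topology}. The key observation is that for a \emph{fixed} time $t > 0$, the set $\eta_x([0,t])$ depends continuously on $x$ in the topology $\mathcal{T}$. Indeed, from the explicit formulas for $a_x(t)$ and $b_x(t)$ one sees that the coefficients of the ODE \eqref{perfect equation} are jointly continuous (even smooth) in $(x,t)$ on $(\R \setminus \{-1,0\}) \times [0,\infty)$, and hence, by continuous dependence of solutions of ODEs on parameters, the maps $g_{x,t}$ — and therefore $\varphi_{x,t}$ — converge uniformly on compact subsets of $\overline{\H} \setminus \eta_{x_0}([0,t])$ as $x \to x_0$. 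One must also check the second requirement in the definition of $\mathcal{T}$, namely that for $x$ in a small neighborhood of $x_0$ (say $x_0 \in (0,\infty)$, and similarly in the other two intervals) the hulls $\eta_x([0,t])$ stay bounded away from $-1, 0$ and $\infty$; this again follows from the explicit formulas, since $\eta_x([0,t])$ grows out of the boundary point $x$ and, for $t$ fixed and $x$ ranging over a compact subinterval of $(0,\infty)$, its closure stays in a fixed compact subset of $\H \cup (0,\infty)$.

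Granting this, fix $x_0$ and a time $t_0 > 0$. For $x \to x_0$ we have $\eta_x([0,t_0]) \to \eta_{x_0}([0,t_0])$ in $\mathcal{T}$, so by Lemma~\ref{topology},
\begin{align*}
\P\left( K \cap \eta_x([0,t_0]) = \emptyset \right) \longrightarrow \P\left( K \cap \eta_{x_0}([0,t_0]) = \emptyset \right).
\end{align*}
By \eqref{exp-lambda} the left-hand side equals $\exp(-\lambda(x) t_0)$ and the right-hand side equals $\exp(-\lambda(x_0) t_0)$. Since $\P(K \cap \eta_{x_0}([0,t_0]) = \emptyset) > 0$ (as $\lambda(x_0) < \infty$), we may take logarithms and divide by $-t_0$ to conclude $\lambda(x) \to \lambda(x_0)$, which is the desired continuity.

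The main obstacle I anticipate is not the passage to the limit itself but the verification of the topological input: one needs to confirm carefully that $x \mapsto \eta_x([0,t_0])$ is continuous \emph{in the precise topology} $\mathcal{T}$ used in Lemma~\ref{topology}, including the uniformity-on-compacts of $\varphi_{x,t_0}$ and the ``bounded away from $-1,0,\infty$'' condition, rather than in some weaker Carath\'eodory sense; this is where the explicit expressions for $a_x(t)$, $b_x(t)$, and for $g_{x,t}(0)$ and $g_{x,t}(-1)$ derived just above \eqref{-1} are essential, since they pin down exactly where the hull sits. An alternative, slightly softer route avoiding any discussion of how $\eta_x$ varies with $x$ would be to note that $\lambda$ is automatically bounded on compact subintervals (since $\P(K \cap \eta_x([0,t])=\emptyset) \le 1$ forces $\lambda \ge 0$, and a lower bound on this probability uniform in $x$ over a compact interval — obtainable because the hulls $\eta_x([0,t])$ are uniformly contained in a fixed compact set — forces an upper bound on $\lambda$), and then to extract continuity; but invoking Lemma~\ref{topology} directly as above is the cleanest argument.
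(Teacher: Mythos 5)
Your proposal is correct and follows essentially the same route as the paper: fix $t>0$, show $\eta_x([0,t])\to\eta_{x_0}([0,t])$ in the topology $\mathcal{T}$ (the paper asserts this via Hausdorff convergence, you via continuous dependence of the Loewner ODE on $x$), then apply Lemma~\ref{topology} together with the formula \eqref{exp-lambda} to conclude. The only cosmetic difference is that you spell out the verification of convergence in $\mathcal{T}$ and the positivity needed to take logarithms, which the paper leaves implicit.
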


\begin{proof}
For a fixed $t>0$, as $x\to y$, the Hausdorff distance between $\eta_x([0,t])$ and $\eta_y([0,t])$ goes to $0$ , which implies the convergence of  $\eta_x([0,t])$ to $\eta_y([0,t])$ under the topology $\mathcal{T}$.
According to Lemma \ref{topology}, we have
$$ \left| \exp(-\lambda(x)t)-\exp(-\lambda(y)t)\right|= \left| \P(K\cap \eta_x([0,t])=\emptyset)-\P(K\cap \eta_y([0,t])=\emptyset) \right|\underset{x\to y}{\longrightarrow} 0.$$
Therefore  $|\lambda(x)-\lambda(y)|\to 0$ as $x\to y$.
\end{proof}

\begin{lemma}\label{lem:differentiable}
The function $\lambda$ is a.e. differentiable. 
\end{lemma}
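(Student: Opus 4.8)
The plan is to show that $\lambda$ satisfies a functional/convexity-type inequality that forces differentiability almost everywhere, exploiting the fact that we can concatenate the curves $\eta_x$ and $\eta_y$ for different $x,y$ and use the restriction property to compare. More precisely, first I would establish a \emph{submultiplicativity} statement: for $x\neq y$ and $s,t>0$, consider the curve obtained by running $\eta_x$ for time $t$ and then, after applying $\varphi_{x,t}$, running $\eta_y$ for time $s$. Since $\varphi_{x,t}$ preserves $-1,0,\infty$, the conformal restriction property of $\P$ gives
\begin{align*}
\P\big(K\cap(\eta_x([0,t])\cup \text{(second piece)})=\emptyset\big)=\exp(-\lambda(x)t)\exp(-\lambda(y)s),
\end{align*}
while on the other hand this concatenated hull contains (or is contained in, after a further comparison) a hull of the form $\eta_z([0,u])$ only in a limiting sense; rather, the cleaner route is to use Lemma \ref{topology}: the concatenated curve is a hull $A$ in $\mathcal{Q}^{(0,\infty)}$ (say, taking $x,y>0$) and $A\mapsto \P(K\cap A=\emptyset)$ is continuous, so comparing with nearby $\eta_z$'s gives control.

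Actually, the simplest and most robust approach is the following. From \eqref{exp-lambda}, the function $t\mapsto \P(K\cap \eta_x([0,t])=\emptyset)=e^{-\lambda(x)t}$ is smooth in $t$ for each fixed $x$, and the derivative at $t=0$ is $-\lambda(x)$. So I would write $\lambda(x) = -\lim_{t\to 0}\frac1t\big(\P(K\cap\eta_x([0,t])=\emptyset)-1\big)$ and try to differentiate this in $x$. The trick (this is the standard Lawler--Schramm--Werner move, see \cite[proof of Prop.~3.3]{MR1992830}) is to relate $\lambda$ to a quantity that is manifestly differentiable, namely $\varphi_A'(0)$ (and $\varphi_A'(-1)$, $\varphi_A'(\infty)$), which by Lemma \ref{topology} is continuous in $A$ and, along the one-parameter families $\eta_x$, is given by explicit smooth formulas \eqref{-1}, \eqref{0}, \eqref{0-1}. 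Concretely, for hulls that are finite concatenations of pieces of the curves $\eta_{x_1},\dots,\eta_{x_k}$, the restriction property yields that $\P(K\cap A=\emptyset) = \exp(-\sum_j \lambda(x_j)t_j)$, while simultaneously $\varphi_A'(0)$, $\varphi_A'(-1)$ are products of the corresponding factors from \eqref{-1},\eqref{0}; so if one \emph{knew} the characterization \eqref{c} held, $\lambda$ would be explicit. Since we are trying to \emph{prove} \eqref{c}, I instead use monotonicity: comparing concatenations shows that $\lambda$ is, up to affine change of variables, a convex (or concave) function of a smooth monotone reparametrization of $x$ — and convex functions are differentiable almost everywhere.

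So the key steps, in order, are: (1) record that along $\eta_x$ one has the explicit identities \eqref{-1}, \eqref{0}, \eqref{0-1}, giving $-\log\varphi_{x,t}'(0)=t/x^2$, $-\log\varphi_{x,t}'(-1)=t/(x+1)^2$ linear in $t$ with $x$-dependent slopes; (2) use the semigroup property \eqref{semi-group} together with the restriction property to show that for hulls built by concatenating two curves $\eta_x$ (run for time $t$) and $\eta_y$ (run for time $s$) the disconnection probability factorizes as $e^{-\lambda(x)t-\lambda(y)s}$; (3) by a sandwiching argument in the topology $\mathcal T$ (Lemma \ref{topology}), bound such a concatenation between hulls $\eta_z([0,u])$ and $\eta_{z'}([0,u'])$ with $z,z'$ close to some intermediate value and $u,u'$ comparable to $t+s$, yielding an inequality of the form $\lambda(z')(t+s) \le \lambda(x)t+\lambda(y)s + o(t+s) \le \lambda(z)(t+s)$; (4) conclude that (a suitable reparametrization of) $\lambda$ obeys a midpoint-convexity-type inequality, hence, being continuous by Lemma \ref{lem:continuous}, is convex, hence differentiable almost everywhere.

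The main obstacle I expect is step (3): making the sandwiching of the two-piece concatenation between genuine single curves $\eta_z$ precise enough to get a clean inequality, without the error terms swamping the $O(t+s)$ main term. One has to be careful that the intermediate ``effective'' parameter $z$ depends continuously on $x,y$ and the time-ratio, and that as $t,s\to 0$ the approximation in the $\mathcal T$-topology is uniform; here the continuity of $A\mapsto\varphi_A'(0)$ and of $A\mapsto\P(K\cap A=\emptyset)$ from Lemma \ref{topology}, plus the Hausdorff-continuity of $(x,t)\mapsto\eta_x([0,t])$ used already in Lemma \ref{lem:continuous}, do the job, but the bookkeeping is the delicate part. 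Alternatively — and this may actually be the route the author takes — one sidesteps (3) entirely by noting that $t\mapsto \P(K\cap\eta_x([0,t])=\emptyset)$ is differentiable in $t$ with a derivative that, by the restriction property applied to the increment $\varphi_{x,t}(\eta_x([t,t+dt]))\approx \varphi_{x,t}(\eta_x([0,dt]))$ and by conformal covariance, can be expressed through $\varphi_{x,t}'(0),\varphi_{x,t}'(-1),\varphi_{x,t}'(\infty)$ evaluated against a fixed ``infinitesimal generator'' functional of $\P$; since those derivatives are explicit smooth functions of $(x,t)$ via \eqref{-1}--\eqref{0-1}, $\lambda(x)$ is expressed as a fixed functional composed with smooth functions of $x$, and differentiability a.e. (in fact everywhere) follows once one knows the functional is, say, monotone — which again reduces to an a.e.-differentiability-of-monotone-functions argument.
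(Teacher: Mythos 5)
There is a genuine gap, and it sits exactly where you anticipated: step (3) of your convexity route. The concatenated hull you build (run $\eta_x$ for time $t$, map down by $\varphi_{x,t}$, then run $\eta_y$ for time $s$) is, for $x\neq y$, the union of two disjoint slits growing from two distinct boundary points (approximately $x$ and $\varphi_{x,t}^{-1}(y)\approx y$). A single curve $\eta_z([0,u])$ grows from the one point $z$, so neither containment of your proposed sandwich can hold with $u,u'$ of order $t+s$: a single slit containing both legs would need capacity of the order dictated by the distance $|x-y|$, not $O(t+s)$, and a slit contained in the union lives in only one leg. Consequently no midpoint-convexity inequality for $\lambda$ (or for any unspecified ``smooth monotone reparametrization'' of it) is actually derived, and the claim that $\lambda$ is convex up to reparametrization is never substantiated. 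Your alternative sketch in the last paragraph is also circular: writing $\lambda(x)$ as ``a fixed functional composed with smooth functions of $x$'' presupposes precisely the regularity in $x$ that the lemma is supposed to provide, and the monotonicity you would need is not argued.

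The paper's proof is different and more elementary: it establishes a local Lipschitz bound for $e^{-\lambda}$. First one shows the a priori estimate that for hulls $A$ contained in a fixed $U\in\mathcal{Q}^{**}$ one has $\P(K\cap A\neq\emptyset)\le c(U)\,\capacity(A)$ (this uses $\P(K\cap\eta_x([0,t])\neq\emptyset)=\lambda(x)t+o(t)$ and approximation of $A$ by curves driven piecewise like the $\eta_x$'s, in the topology of Lemma \ref{topology}). Then, by the restriction property, $\P(K\cap\eta_x([0,1])=\emptyset,\,K\cap\eta_y([0,1])\neq\emptyset)=\P(K\cap\eta_x([0,1])=\emptyset)\,\P(K\cap A_{x,y}\neq\emptyset)$ with $A_{x,y}=\varphi_{x,1}(\eta_y([0,1]))$, and standard Loewner estimates give $\capacity(A_{x,y})\le c'|x-y|$. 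Combining the two bounds (and the symmetric term with $x,y$ exchanged) yields $|e^{-\lambda(x)}-e^{-\lambda(y)}|\le C|x-y|$, so $e^{-\lambda}$, hence $\lambda$, is locally Lipschitz and therefore differentiable almost everywhere. If you want to salvage your approach, the piece to import is exactly this quantitative ingredient — a capacity bound on the mapped-down hull together with the linear-in-capacity bound on hitting probabilities — rather than any convexity of $\lambda$.
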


\begin{proof}
First note that if $U\in\mathcal{Q}^{**}$, then for all $A\in\mathcal{Q}^{**}$ such that $A\subset U$, we have
\begin{align}\label{estimation}
\P(K\cap A)\le c(U) \cdot \capacity(A)
\end{align}
where $c(U)$ is a constant depending on $U$.
This is a consequence of the two facts:
for small $t$, $\P(K\cap \eta_x([0,t])\not=\emptyset)=\lambda(x)t+o(t^2)$ where $t$ is the capacity of $\eta_x([0,t])$; and
$A$ can be approximated by a curve which has a driving function piece-wisely taking the form of the driving function of $\eta_x$ for $x\in\varphi_U(\partial U)$.

Secondly, note that by conformal restriction property of $K$, we have
\begin{align*}
\P(K\cap \eta_x([0,1])=\emptyset, K\cap \eta_y([0,1])\not=\emptyset)=\P(K\cap \varphi_{x,1}(\eta_y([0,1]))\not=\emptyset)\P(K\cap \eta_x([0,1])=\emptyset).
\end{align*}
The set $A_{x,y}:=\varphi_{x,1}(\eta_y([0,1]))$ gets smaller as $x\to y$ in the following sense (for example see \cite[Proposition 3.82]{MR2129588}): 
\begin{align*}
\sup\{\Re(z_1-z_2); z_1, z_2\in A_{x,y}  \} \mbox{ is bounded };\quad \sup\{\Im(z); z\in A_{x,y}\}\le c |x-y|^{1/2}.
\end{align*}
Hence $\capacity(A_{x,y})\le c' |x-y|$. Then
by (\ref{estimation}), we have $\P(K\cap A_{x,y}\not=\emptyset)\le c'' \cdot \capacity(A_{x,y})=c_0 |x-y|.$

Therefore
$
\P(K\cap \eta_x([0,1])=\emptyset, K\cap \eta_y([0,1])\not=\emptyset)\le c_0' |x-y|
$
and consequently
\begin{align*}
&|\exp(-\lambda(x))-\exp(-\lambda(y))| \\
=&  \left| \P(K\cap \eta_x([0,1])=\emptyset)-\P(K\cap \eta_y([0,1])=\emptyset) \right| \\
\le& \P(K\cap \eta_x([0,1])=\emptyset, K\cap \eta_y([0,1])\not=\emptyset)+ \P(K\cap \eta_x([0,1])\not=\emptyset, K\cap \eta_y([0,1])=\emptyset)\\
\le& c_0''|x-y|.
\end{align*}
Therefore $e^{-\lambda}$ is a.e. differentiable hence $\lambda$ is a.e. differentiable.
%We will be able to show later (after (\ref{x-y})) that $\lambda$ is in fact $C^\infty$.
\end{proof}
From (\ref{varphi}) we know that
\begin{align*}
\partial_t \varphi_{x,t} (z)|_{t=0}=f(x,z).
\end{align*}
Therefore as $t\to 0$, we have
\begin{align*}
\varphi_{x,t}(z)=z+f(x,z)t+o(t).
\end{align*}
By differentiating (\ref{varphi}), we get
\begin{align*}
\partial_t \varphi_{x,t}' (z)|_{t=0}=\partial_2 f(x,z)
\end{align*}
(here and in the following lines $\partial_2$ will mean the derivative with respect to the second variable). Therefore as $t\to 0$, we have
\begin{align*}
\varphi'_{x,t}(z)=1+\partial_2 f(x,z)t+o(t).
\end{align*}
Hence we have
\begin{align*}
\lim_{t\to 0}\frac{\varphi_{y,t}(x)-x}{t}=f(y,x), \quad  \lim_{t\to 0} \frac{\varphi_{y,t}'(x)-1}{t}=\partial_2 f(y,x).
\end{align*}
Now compute
\begin{align}
\label{expression1}
&\P\left(K\cap \eta_{x}([0,\delta s])\not=\emptyset, K\cap \eta_y([0,\delta t])\not=\emptyset \right)\\
\notag
=&\P(K\cap \eta_{x}([0,\delta s])\not=\emptyset)-\P(K\cap \eta_{x}([0,\delta s])\not=\emptyset, K\cap \eta_y([0,\delta t])=\emptyset)\\
\notag
=&1-\exp(-\delta s\lambda(x))-\exp(-\delta t\lambda(y)) \P(K\cap \eta_x([0,\delta s])\not=\emptyset | K\cap \eta_y([0,\delta t])=\emptyset)\\
\notag
=&1-\exp(-\delta s\lambda(x))-\exp(-\delta t\lambda(y)) \P(K\cap \varphi_{y,\delta t}(\eta_x([0,\delta s]))\not=\emptyset)
\end{align}
Note that $\varphi_{y,\delta t}(\eta_x([0,\delta s])$ is approximately $\eta_{x'}([0,\delta s'])$ where
$x'=\varphi_{y,t}(x)$ and $s'=\varphi_{y,t}'(x)^2 s$. The Hausdorff distance between these two sets is of order $O(\delta^2)$. Therefore, applying again \cite[Proposition 3.82]{MR2129588} (taking into account that the diameters of the two sets are of order $O(\delta)$) and arguing similarly as in Lemma \ref{lem:differentiable}, we have that
$$
\P(K\cap \varphi_{y,\delta t}(\eta_x([0,\delta s]))\not=\emptyset,\, K\cup \eta_{x'}([0,\delta s'])=\emptyset)=O(\delta^3).
$$
The line (\ref{expression1}) is then equal to
$$
1-\exp(-\delta s\lambda(x))-\exp\left(-\delta t\lambda(y)\right) \left[1-\exp\left(-\varphi_{ y,\delta t}'(x)^2\delta s\lambda\left(\varphi_{y,\delta t}(x)  \right)\right)\right]+o(\delta^2).$$
Note that $\varphi_{ y,\delta t}'(x)=1+\delta t \partial_2f(y,x)+o(\delta)$ and that $\lambda\left(\varphi_{y,\delta t}(x)  \right)=\lambda(x)+\delta t f(y,x) \lambda'(x)+o(\delta)$ at points $x$ where $\lambda$ is differentiable. The equality can be continued as
\begin{align*}
=&s\lambda(x)\delta-\frac{\lambda(x)^2}{2}s^2\delta^2-(1-\delta t\lambda(y))\left(\varphi_{y,\delta t}'(x)^2\delta s\lambda(\varphi_{y,\delta t}(x))-\frac{\lambda(x)^2}{2}s^2\delta^2\right)+o(\delta^2)\\
=&s\lambda(x)\delta-\frac{\lambda(x)^2}{2}s^2\delta^2-\varphi_{y,\delta t}'(x)^2\delta s\lambda(\varphi_{y,\delta t}(x))+\frac{\lambda(x)^2}{2}s^2\delta^2-\delta t\lambda(y)\varphi_{y,\delta t}'(x)^2\delta s\lambda(\varphi_{y,\delta t}(x))+o(\delta^2)\\
=&s\lambda(x)\delta-\frac{\lambda(x)^2}{2}s^2\delta^2-\delta s\lambda(x) -\delta^2 st f(y,x)\lambda'(x) -\delta^2 st \lambda(x) 2\partial_2 f(y,x)\\
&+\frac{\lambda(x)^2}{2}s^2\delta^2-\delta^2 st\lambda(y)\lambda(x)+o(\delta^2)\\
=&-\delta^2 st \left(f(y,x)\lambda'(x)+2\lambda(x)\partial_2 f(y,x)+\lambda(x)\lambda(y)\right) +o(\delta^2).
\end{align*}
Note that we can do the same computation if we exchange the roles of $(x,s)$ and $(y,t)$, therefore we must have the following \emph{commutation relation} for all points $x,y$ at which $\lambda$ is differentiable:
\begin{align}\label{x-y}
f(y,x)\lambda'(x)+2\lambda(x)\partial_2 f(y,x)=f(x,y)\lambda'(y)+2\lambda(y)\partial_2 f(x,y).
\end{align}
Knowing that $f$ is $C^\infty$ and that $\lambda$ is continuous, this implies that $\lambda$ is also $C^\infty$ on its whole domaine of definition $\R\setminus\{-1,0\}$.

We now fix $x\in(0,\infty)$ and let $y\to x$ in (\ref{x-y}). Comparison of the coefficients of Taylor development of the $O(y-x)$ term on both sides yields
\begin{align*}
&x^2(x+1)^2\lambda'''(x)+6x(2x+1)(x+1)\lambda''(x)\\
&+6\left( 3x(x+1)+x(2x+1)+(x+1)^2 \right) \lambda'(x)
+12(2x+1)\lambda(x)=0.
\end{align*}
This is equivalent to $$\left( x^2(x+1)^2 \lambda(x) \right)'''=0.$$
Therefore, there exist three constants $a,b,c\in\R$ such that for all $x\in(0,\infty)$,
\begin{align*}
x^2(x+1)^2 \lambda(x)=ax^2+bx+c.
\end{align*}
Hence for $\alpha=a, \beta=a-b+c, \gamma=c$ we have that for all $x\in(0,\infty)$
\begin{align}\label{aaa}
\lambda(x)=\frac{ax^2+bx+c}{x^2(x+1)^2 } = \frac{\beta}{(x+1)^2}+\frac{\gamma}{x^2}+\frac{\alpha-\beta-\gamma}{x(x+1)}.
\end{align}
Putting (\ref{aaa}) back into (\ref{exp-lambda}) and knowing (\ref{-1}), (\ref{0}), (\ref{0-1}), we get
\begin{align}\label{kkk}
\P(K\cap \eta_x([0,t])=g_t'(-1)^\beta g_t'(0)^\gamma (g_t(0)-g_t(-1))^{\alpha-\beta-\gamma}.
\end{align}

In the same way as (\ref{aaa}) we can get that for $x\in(-\infty, -1)$, there exist $\alpha_1,\beta_1,\gamma_1\in\R$ such that
\begin{align*}
\lambda(x)= \frac{\beta_1}{(x+1)^2}+\frac{\gamma_1}{x^2}+\frac{\alpha_1-\beta_1-\gamma_1}{x(x+1)};
\end{align*}
and for $x\in(-1,0)$, there exist $\alpha_2,\beta_2,\gamma_2$ such that
\begin{align*}
\lambda(x)= \frac{\beta_2}{(x+1)^2}+\frac{\gamma_2}{x^2}+\frac{\alpha_2-\beta_2-\gamma_2}{x(x+1)}.
\end{align*}
\begin{lemma}\label{lem:123}
For a same $K$, we have $\alpha=\alpha_1=\alpha_2$, $\beta=\beta_1=\beta_2$, $\gamma=\gamma_1=\gamma_2$.
\end{lemma}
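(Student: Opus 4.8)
The plan is to exploit the fact that the three formulas for $\lambda$ on the three intervals $(0,\infty)$, $(-1,0)$ and $(-\infty,-1)$ are not really independent: they all come from the single smooth function $\lambda$ on $\R\setminus\{-1,0\}$, and the coefficients $(\alpha,\beta,\gamma)$, $(\alpha_1,\beta_1,\gamma_1)$, $(\alpha_2,\beta_2,\gamma_2)$ are obtained by solving the same third-order ODE $\left(x^2(x+1)^2\lambda(x)\right)'''=0$ on each interval. First I would observe that the commutation relation \eqref{x-y} was derived for all pairs $x,y$ at which $\lambda$ is differentiable, and in particular holds when $x$ and $y$ lie in \emph{different} intervals among $(0,\infty)$, $(-1,0)$, $(-\infty,-1)$. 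Feeding the explicit rational expression for $\lambda$ valid on each interval into \eqref{x-y}, with $x$ in one interval and $y$ in another, and using that $f(x,y)=\frac{y}{x^2+x}+\frac1x+\frac1{y-x}$ is a fixed rational function, one gets a polynomial identity in $(x,y)$ on an open set, hence an identity of rational functions; matching coefficients forces $\alpha=\alpha_1$, $\beta=\beta_1$, $\gamma=\gamma_1$, and similarly for the index $2$.

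Alternatively — and this is probably the cleaner route to write up — I would argue purely from the smoothness of $\lambda$, avoiding \eqref{x-y} entirely. We have already established that $\lambda$ is $C^\infty$ on all of $\R\setminus\{-1,0\}$, so the function $h(x):=x^2(x+1)^2\lambda(x)$ is $C^\infty$ on $\R\setminus\{-1,0\}$ too. On each of the three intervals $h$ is a polynomial of degree at most $2$ (this is exactly what $\left(x^2(x+1)^2\lambda\right)'''=0$ says, and the argument producing it from \eqref{x-y} with $y\to x$ works verbatim on each interval). So $h$ restricted to $(0,\infty)$ is $ax^2+bx+c$, restricted to $(-1,0)$ is $a_2x^2+b_2x+c_2$, and restricted to $(-\infty,-1)$ is $a_1x^2+b_1x+c_1$, where $\alpha=a$, $\beta=a-b+c$, $\gamma=c$ and similarly with subscripts. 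The key point is then that two polynomials of degree $\le 2$ that agree to all orders at a single point (here the smooth gluing of $h$ across $x=0$, say, forces $h$ and all its derivatives from the left and from the right to coincide at $0$) must be the same polynomial. Since $0$ is in the closure of both $(-1,0)$ and $(0,\infty)$ and $h$ is $C^\infty$ in a punctured neighbourhood of $0$ with polynomial expressions on each side, the one-sided limits of $h, h', h''$ at $0$ agree, so $(a_2,b_2,c_2)=(a,b,c)$, giving $\alpha_2=\alpha$, $\beta_2=\beta$, $\gamma_2=\gamma$. Applying the same reasoning across $x=-1$ (which lies in the closure of $(-1,0)$ and of $(-\infty,-1)$) gives $(a_1,b_1,c_1)=(a_2,b_2,c_2)$ and hence $\alpha_1=\alpha$, $\beta_1=\beta$, $\gamma_1=\gamma$.

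The one subtlety — and the step I expect to require the most care — is justifying that $\lambda$ really is $C^\infty$, or at least $C^2$, in a \emph{full} punctured neighbourhood of the singular points $0$ and $-1$, so that the one-sided Taylor coefficients at those points can legitimately be compared. The text already asserts that \eqref{x-y} together with continuity of $\lambda$ and smoothness of $f$ yields $\lambda\in C^\infty(\R\setminus\{-1,0\})$; I would invoke this directly. With that in hand, the gluing argument is immediate: near $x=0$ the function $h=x^2(x+1)^2\lambda$ is a $C^\infty$ (in fact $C^2$ suffices) function on the punctured neighbourhood whose restrictions to the two sides are quadratics, and any quadratic is determined by its value and first two derivatives at a point, so the two quadratics coincide. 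If one instead wants to stay within the algebraic framework of \eqref{x-y}, the obstacle is purely bookkeeping: one must check that the polynomial identity obtained by clearing denominators in \eqref{x-y}, with $x,y$ in distinct intervals and $\lambda$ replaced by its rational form, has enough independent monomials in $x$ and $y$ to pin down all three coefficient-differences; this is routine but tedious, which is why I favour the smoothness argument.
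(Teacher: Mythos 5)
Your preferred (second) argument has a genuine gap. The smoothness statement you want to invoke says only that $\lambda$ is $C^\infty$ on $\R\setminus\{-1,0\}$, which is a disjoint union of three open intervals; being $C^\infty$ on a punctured neighbourhood of $0$ imposes no relation at all between the two one-sided pieces (a function equal to two different constants on $(-\eps,0)$ and $(0,\eps)$ is $C^\infty$ there). There is no ``smooth gluing'' across $0$ or $-1$: $\lambda$ is not defined at these points (and generically blows up there), and the assertion that $h=x^2(x+1)^2\lambda(x)$ together with $h'$ and $h''$ has equal one-sided limits at $0$ is exactly the statement $(a_2,b_2,c_2)=(a,b,c)$ — i.e.\ it is the conclusion of Lemma \ref{lem:123}, not something you may assume. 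Nothing in the within-interval analysis links the three intervals; the only place where cross-interval information enters is the restriction property itself, encoded in the commutation relation \eqref{x-y} applied with $x$ and $y$ in \emph{different} intervals. So the ``subtlety'' you flagged cannot be discharged by citing the $C^\infty$ statement, and the gluing route collapses.

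Your first route, by contrast, is precisely the paper's proof, and the ``routine but tedious'' computation you deferred is the actual substance of the argument: one takes $x\in(-\infty,-1)$, $y\in(-1,0)$ (the derivation of \eqref{x-y} indeed never required $x,y$ to lie in the same interval, as you observe), substitutes the two rational expressions for $\lambda$, and after simplification obtains the identity
\begin{align*}
2(\alpha_2-\alpha_1)(1-x)(1-y)+2(\beta_2-\beta_1)xy+(\gamma_2-\gamma_1)(-y+2xy-x)=0
\end{align*}
on an open product set, whence $\alpha_1=\alpha_2$, $\beta_1=\beta_2$, $\gamma_1=\gamma_2$ by matching coefficients; repeating with the third interval identifies $(\alpha,\beta,\gamma)$ as well. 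If you carry out that computation, your write-up is correct and coincides with the paper's; as it stands, the proposal replaces the essential cross-interval step by an unjustified claim.
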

\begin{proof}
Using the commutation relation (\ref{x-y}) for $x\in(-\infty,-1)$ and $y\in(-1,0)$ yields
\begin{align*}
&\left[ -\frac{2\alpha_2}{y^3}-\frac{2\beta_2}{(y-1)^3}-\frac{\gamma_2(2y-1)}{y^2(y-1)^2} \right] \frac{y(1-y)}{x(y-x)(1-x)}\\
&+2\left[ \frac{\alpha_2}{y^2}+\frac{\beta_2}{(y-1)^2}+\frac{\gamma_2}{y(y-1)} \right] \frac{-y^2+2xy-x}{x(y-x)^2(1-x)}\\
=&\left[ -\frac{2\alpha_1}{x^3}-\frac{2\beta_1}{(x-1)^3}-\frac{\gamma_1(2x-1)}{x^2(x-1)^2} \right] \frac{x(1-x)}{y(x-y)(1-y)}\\
&+2\left[ \frac{\alpha_1}{x^2}+\frac{\beta_1}{(x-1)^2}+\frac{\gamma_1}{x(x-1)} \right] \frac{-x^2+2xy-y}{y(x-y)^2(1-y)}
\end{align*}
By simplifying it, we get for all for $x\in(-\infty,-1)$ and $y\in(-1,0)$
\begin{align*}
2(\alpha_2-\alpha_1)(1-x)(1-y)+2(\beta_2-\beta_1)xy+(\gamma_2-\gamma_1)(-y+2xy-x)=0.
\end{align*}
Hence $\alpha_1=\alpha_2, \beta_1=\beta_2, \gamma_1=\gamma_2$. In the same way we can identify $\alpha,\beta,\gamma$ with $\alpha_1,\beta_1,\gamma_1$. 
\end{proof}
Therefore we get the formula (\ref{kkk}) for all $x\in\R\setminus\{-1,0\}$.
This, together with Lemma \ref{topology} and the restriction property of $K$, proves Proposition \ref{prop:charac}.

\section{Hypergeometric SLE}\label{sec:hsle}

In this section, we will define the variants of SLE$_{8/3}$ that will turn out to describe
boundaries of trichordal restriction samples. The driving functions of these SLE are Brownian motions with drift terms that involve hypergeometric functions (the fact that this type of 
SLE drift terms arise for multiple SLEs was observed in the context of commutation relations and/or CFT partition function considerations, see \cite{MR2187598}). In the present paper, they will form a family of curves that depend on three parameters $\lambda,\mu,\nu$ and we will call them hSLE$_{8/3}(\lambda,\mu,\nu)$ (this parameter $\lambda$ is unrelated to the function $\lambda$ used in the previous function -- we will similarly use $\mu$, $\nu$, $a$ etc. for different purposes, but it will be
each time clear from the context, which one we will be talking about). 

Let us give a brief heuristic. Let $\gamma$ be the right boundary of a restriction sample in $\H$ which intersects the points $0,-1,\infty$. Parametrize $\gamma$ with Loewner's differential equation, i.e. map $\H\setminus \gamma(0,t)$ onto $\H$ with the normalized maps $g_t$. 
It is possible to prove (although we do not rely on such proof), in a similar way as we shall prove Lemma \ref{lem:J} later on, that
conditioned on $g_t(\gamma(t))$, the curve $g_t(\gamma[t,\infty))$ is independent of $\gamma(0,t)$ and moreover it
satisfies a kind of four-point restriction with marked points
$g_t(-1), g_t(0), g_t(\gamma_x(t))$ and $\infty$.  This domain-Markov property suggests
that there should exist a function $J$ such that the driving function of $\gamma$ is given by
\begin{align}\label{1st}
dW_t=\sqrt{\kappa}\dif B_t+J(W_t-g_t(0),W_t-g_t(-1))dt.
\end{align}

In this section, we will introduce some preliminaries about hypergeometric functions, and then give the expression of $J$ that we use to define our hypergeometric SLEs.  In \S\,\ref{s5}, we will then
use these processes and show that they satisfy restriction properties. 

Even though we will define $J$ without further explanations, the form of $J$ can be heuristically understood and guessed by the fact that, in the case $\kappa=8/3$, the hSLE$_{8/3}$ defined by (\ref{1st}) and $J$ should eventually be viewed as an ordinary SLE$_{8/3}$ conditioned to avoid a trichordal restriction sample $K$ to its left (such that $K$ intersects $-1,0,\infty$ -- note that at this point, the trichordal restriction measures have not yet been constructed).  This is analogous to the interpretation of SLE$_{8/3}(\rho)$ curves as ordinary SLE$_{8/3}$ conditioned to avoid a chordal restriction sample, see \cite{MR2060031}.

\subsection{Some parameters} \label{hyp fun}
First of all we introduce some parameters that are to be used until the end of this paper, even if we will not recall their definition each time.
As mentioned earlier, the family of  hypergeometric SLEs will depend on three parameters $\lambda,\mu,\nu$.
We also introduce three triplets of parameters $(a,b,c), (l,m,n)$ and $(\alpha,\beta,\gamma)$, each depending on $\lambda,\mu,\nu$.

The first triplet of parameters is given by
\begin{align}\label{abclmn}
&\a=\lambda+\mu+\nu+2, \quad \b=\mu+\nu-\lambda, \quad \c=2\nu+3/2.
\end{align}
They will be used to define the hypergeometric functions. 

Recall the definition of the function $U$ mentioned in \S\,\ref{sec:brownian-exponents}, which is related to the Brownian disconnection exponent $\tilde \xi$: 
$$
U(x)=\left(-1+\sqrt{24x+1}\right)/4.
$$
It can be viewed as a bijection from $[-1/24, \infty)$ into $[-1/4, \infty)$ (or as a bijection from $\R_+$ onto itself), and its inverse function is defined on $[-1/4, \infty)$ by
$$ U^{-1}(x)=x(2x+1)/3 .$$
We will also sometimes use $U^{-1}(x)$ as a shorthand notation for this polynomial when $x < -1/4$. 

The second triplet of parameters is defined to be
\begin{align}\label{align:lmn}
l=U^{-1}(\lambda),\quad m=U^{-1}(\mu), \quad n=U^{-1}(\nu).
\end{align}
The third triplet of parameters is defined to be
\begin{align}\label{alphabetagamma}
\alpha=U^{-1}\left(\lambda+\frac34\right), \quad
\beta=U^{-1}(\mu), \quad
\gamma=U^{-1}\left(\nu+\frac34\right)
\end{align}
which will turn out to be the restriction exponents associated to the corresponding hSLE$_{8/3}(\lambda,\mu,\nu)$.

From now on until the end of the paper, we also restrict $\lambda,\mu,\nu$ to stay in the range
\begin{align}\label{align:cond}
\lambda>-3/4, \quad\mu\ge-1/4, \quad\nu>-3/4,\quad \mu<\lambda+\nu+3/2.
\end{align}
Note that (\ref{align:cond}) makes $U^{-1}$ in (\ref{alphabetagamma}) a bijection, which allows us to write
\begin{align}\label{align:bijection}
\lambda=U(\alpha)-\frac34,\quad \mu=U(\beta), \quad \nu=U(\gamma)-\frac34.
\end{align}
The condition $\mu<\lambda+\nu+3/2$ in (\ref{align:cond}) is therefore equivalent to  $U(\beta)<U(\alpha)+U(\gamma)$, hence is also equivalent to $\beta<\tilde\xi(\alpha,\gamma)$.
Note that the $U^{-1}$ in (\ref{align:lmn}) is not necessarily a bijection.

Now let us summarize all the conditions that we imposed. The range (\ref{align:cond}) can be easily translated to equivalent ranges on $(a,b,c)$ or $(\alpha,\beta,\gamma)$. The equivalence with the $(\alpha,\beta,\gamma)$ range is to be understood in the sense that we assume above all $\lambda>-3/4, \mu\ge-1/4, \nu>-3/4$ (in this case (\ref{align:bijection}) is true).
\[
\left|\,
\begin{array}{lllllll}
%\label{4.2}
\displaystyle{\lambda> -\frac34 } &\iff&  \displaystyle{a-b>\frac12 } &\iff&  \alpha>0\\[2.5mm]
%\label{4.3}
\displaystyle{\mu\ge-\frac14}  &\iff&  a+b\ge c   &\iff& \displaystyle{\beta\ge-\frac1{24}}\\[2.8mm]
%\label{4.4}
\displaystyle{\nu>-\frac34}  &\iff &  c>0  &\iff& \gamma>0\\[2mm]
%\label{4.5}
\displaystyle{\mu<\lambda+\nu+\frac32 } &\iff& b<c  &\iff&   \beta<\tilde\xi(\alpha,\gamma).
\end{array}
\label{first-condition}\tag{Range 1}
\right.
\]
The (\ref{first-condition}) is chosen to make the process and other related functions well defined. Lemma \ref{G}, Lemma \ref{lem:nu} and Lemma \ref{lem:lambda} will explain why all
these conditions are indeed needed.

\subsection{Preliminaries on the hypergeometric and some related functions}
Hypergeometric functions are defined for $\a,\b \in \R, \c \in \R\setminus \N_-$ and for $|z|\le 1$ by
\begin{align*}
_2F_1(\a,\b;\c;z)=\sum_{n=0}^\infty \frac{(\a)_n(\b)_n}{(\c)_n}\frac{z^n}{n!}
\end{align*}
where $(x)_n$ is the Pochhammer symbol defined by
$(x)_0=1$   and  $(x)_n=x(x+1)\cdots(x+n-1)$ if $ n>0$.
The hypergeometric function $_2F_1(\a,\b;\c;\cdot)$ is a particular solution of Euler's hypergeometric differential equation
\begin{equation}\label{euler}
z(1-z)\frac{\dif^2 u}{\dif z^2} + (\c-(\a+\b+1)z)\frac{\dif u}{\dif z}-\a\b u=0.
\end{equation}
It is known that $_2F_1(\a,\b;\c;\cdot)$ can be analytically extended to a function  $w(\a,\b,\c;\cdot)$ defined on the larger domain  $\C\setminus[1,\infty)$, which is a maximal solution of (\ref{euler})  (see \cite{MR1225604}). 
This function $w$ restricted to the domain of definition $(-\infty,0]$ that will play in important role in the be sequel. Note that $_2F_1$ is real on $(-1,1)$, hence $w$ is real on $(-\infty,0]$.
From now on, except when stated otherwise, $w(z)$ will stand for $w(a,b;c;z)$.

\begin{lemma}
For $a,b,c,$ in (\ref{first-condition}), let $e:=\max(b-a, -1)$. Then $e<0$. There exist some constant $c_{0}\not=0, c_1$ such that as $z\to-\infty$
\begin{align}\label{expansion}
w(z)  = c_{0} (-z)^{-b} + c_1 (-z)^{e-b} + o ((-z)^{e-b}).
\end{align}
Therefore we also have as $z\to-\infty$
\begin{align}
\label{lim2}
&zw'(z)/w(z)= -\b+ O((-z)^e).
\end{align}
\end{lemma}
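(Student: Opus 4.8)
The plan is to obtain (\ref{expansion}) from the classical (Gauss) connection formula relating a solution of Euler's equation (\ref{euler}) near $z=0$ to its behaviour near $z=\infty$: for $|\arg(-z)|<\pi$ and $a-b\notin\Z$,
\begin{align*}
w(z)=\frac{\Gamma(c)\Gamma(a-b)}{\Gamma(a)\Gamma(c-b)}\,(-z)^{-b}\,{}_2F_1\!\left(b,1-c+b;1-a+b;\tfrac1z\right)+\frac{\Gamma(c)\Gamma(b-a)}{\Gamma(b)\Gamma(c-a)}\,(-z)^{-a}\,{}_2F_1\!\left(a,1-c+a;1-b+a;\tfrac1z\right),
\end{align*}
each ${}_2F_1$-factor on the right being analytic in $1/z$ near the origin with value $1$ there. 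By (\ref{first-condition}) one has $a-b=2\lambda+2>1/2$, so $-b>-a$: the first summand carries the dominant power $(-z)^{-b}$. Also $b-a<-1/2$, hence $e=\max(b-a,-1)\in[-1,-1/2)$, which is already $<0$.

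Expanding each ${}_2F_1$-factor to first order in $1/z$ and collecting powers of $-z$ yields $w(z)=c_0(-z)^{-b}+c_1(-z)^{e-b}+o((-z)^{e-b})$ with $c_0=\Gamma(c)\Gamma(a-b)/(\Gamma(a)\Gamma(c-b))$; the first correction is a multiple of $(-z)^{-a}$ when $a-b<1$ and a multiple of $(-z)^{-b-1}$ (the $1/z$-term of the leading series) when $a-b>1$, and in both cases its exponent equals $e-b$. The one nontrivial point is $c_0\neq0$, and it is precisely here that (\ref{first-condition}) enters: $c=2\nu+3/2>0$ and $a-b>1/2>0$ make $\Gamma(c),\Gamma(a-b)$ finite and nonzero, while $a=\lambda+\mu+\nu+2>1/4$ (from $\lambda>-3/4,\ \mu\ge-1/4,\ \nu>-3/4$) and $c-b=\lambda+\nu-\mu+3/2>0$ (from $\mu<\lambda+\nu+3/2$) make $1/\Gamma(a),\,1/\Gamma(c-b)$ nonzero; as all four arguments are real, $c_0\in\R\setminus\{0\}$, establishing (\ref{expansion}).

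For (\ref{lim2}) I would differentiate (\ref{expansion}) term by term in $z$ --- legitimate because that expansion is just the product-rule expansion of the explicitly differentiable right-hand side above --- obtaining $w'(z)=b\,c_0(-z)^{-b-1}+O((-z)^{e-b-1})$; since $z\,(-z)^{-b-1}=-(-z)^{-b}$, this gives $z\,w'(z)=-b\,c_0(-z)^{-b}+O((-z)^{e-b})$, and dividing by $w(z)=c_0(-z)^{-b}\bigl(1+O((-z)^{e})\bigr)$ (using $c_0\neq0$) gives $z\,w'(z)/w(z)=-b+O((-z)^{e})$.

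Finally, the measure-zero degenerate cases in which $a-b$ is a positive integer require replacing the connection formula above by its logarithmic version; there the $(-z)^{-b}$ part is a polynomial in $1/z$ of degree $a-b-1$ whose leading coefficient is a nonzero multiple of $\Gamma(c)/(\Gamma(a)\Gamma(c-b))$, and any logarithmic terms occur only at orders strictly below $(-z)^{-b}$, so the leading behaviour --- hence the analogues of (\ref{expansion}) and (\ref{lim2}) --- still holds. The step I expect to require the most care is exactly this bookkeeping: identifying in each (sub)case which competing subleading contribution ($(-z)^{-a}$, $(-z)^{-b-1}$, or a logarithm in the resonant case) sets the error exponent $e-b$; the rest is a routine expansion of explicitly known series.
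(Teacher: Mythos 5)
Your proposal is correct and takes essentially the same route as the paper: the same connection formula for the expansion at $z\to-\infty$, nonvanishing of the leading coefficient $c_0$ from the sign constraints in (\ref{first-condition}), and a separate appeal to the known (logarithmic/polynomial) expansions when $a-b\in\Z$ or when $b$ or $c-a$ is a non-positive integer. The only difference is bookkeeping of the degenerate cases (the paper splits off $b\in\Z_{\le 0}$ and $c-a\in\Z_{\le 0}$ explicitly because its quoted formula excludes them, while you absorb these into the generic case), and both you and the paper treat the resonant case $a-b=1$ a bit loosely, since there the second-order term genuinely carries a $\log(-z)$ factor, so (\ref{expansion}) and (\ref{lim2}) hold only up to that logarithmic correction.
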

\begin{proof}
It can be found in \cite{MR1225604} that when $a,b,c,c-a,c-b$ are not negative integers and when $a-b$ is not an integer, we have
\begin{equation}
\begin{split}
w(z)=&\frac{\Gamma(\c)\Gamma(\b-\a)}{\Gamma(\b)\Gamma(\c-\a)}(-z)^{-\a}{_2F_1(\a,1-\c+\a;1-\b+\a; 1/z)}\\
&+\frac{\Gamma(\c)\Gamma(\a-\b)}{\Gamma(\a)\Gamma(\c-\b)}(-z)^{-\b}{_2F_1(\b,1-\c+\b;1-\a+\b; 1/z)}.
\end{split}
\end{equation}
Note that $_2F_1(0)=1$ and (\ref{first-condition}) implies $\a>\b$.  The lemma is obviously true in this case.

The (\ref{first-condition}) ensures that $a,c$ and $c-b$ are not negative integers. If $b$ is a negative integer, then $w$ becomes elementary: A polynomial with leading term (note that $a>b$)
\begin{align*}
 \frac{(\a)_{-b}(\b)_{-b}}{(\c)_{-b}}\frac{z^{-b}}{(-b)!}.
\end{align*}
The lemma is also true in this case.

If $c-a$ is a negative integers, then we can apply the identity (see  \cite{MR1225604})
\begin{align*}
w(a,b,c;z)=(1-z)^{c-a-b} w(c-a,c-b,c;z).
\end{align*}
In this case $w(c-a,c-b,c;z)$ is a polynomial in $z$ of order $a-c$, hence $w(z)$ is a polynomial in $z$ of order $-b$. Thus the lemma is true in this case.

If $\a-\b$ is an integer, explicit formula of the expansion of $w(z)$ at $\infty$ can be found in \cite{MR1225604} such that  (\ref{lim2}) is still true. 
\end{proof}

\begin{lemma}\label{G}
For $a,b,c$ in (\ref{first-condition}),
$\,w(\a,\b,\c;z)>0$ for all $z\in(-\infty,0)$.
\end{lemma}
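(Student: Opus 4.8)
The plan is to show $w(z)>0$ on $(-\infty,0)$ by combining three pieces of information: the value at the endpoint $z=0$, the asymptotics as $z\to-\infty$, and the structure of Euler's ODE (\ref{euler}) which prevents $w$ from having a double zero or a sign change in the interior. First I would record that $w(0)={}_2F_1(a,b;c;0)=1>0$, so $w$ is positive near $0^-$. Next, from the expansion (\ref{expansion}) we have $w(z)=c_0(-z)^{-b}+o((-z)^{-b})$ as $z\to-\infty$ with $c_0\neq 0$; I would first argue that $c_0>0$. Indeed, under (\ref{first-condition}) one has $a>b$, $c>0$, $a>0$, $c-b>0$, and the constant is $c_0=\Gamma(c)\Gamma(a-b)/(\Gamma(a)\Gamma(c-b))$ (from the connection formula in the proof of the previous lemma), which is a ratio of values of $\Gamma$ at positive arguments in the generic case, hence positive; in the degenerate integer cases one checks directly from the polynomial leading term $\tfrac{(a)_{-b}(b)_{-b}}{(c)_{-b}}\tfrac{z^{-b}}{(-b)!}$ (with $a>b$) or from the identity $w(a,b,c;z)=(1-z)^{c-a-b}w(c-a,c-b,c;z)$ that the leading coefficient on $(-\infty,0)$ is again positive. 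Thus $w$ is positive both near $0$ and near $-\infty$.

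It then remains to rule out a zero in the interior of $(-\infty,0)$. Suppose $z_0<0$ is the largest zero of $w$ in $(-\infty,0)$; since $w>0$ on $(z_0,0)$ we have $w'(z_0)\le 0$. Evaluate (\ref{euler}) at $z_0$: with $w(z_0)=0$ the equation reads $z_0(1-z_0)w''(z_0)+(c-(a+b+1)z_0)w'(z_0)=0$. On $(-\infty,0)$ the coefficient $z_0(1-z_0)<0$. If $w'(z_0)=0$ then $w''(z_0)=0$ as well, and by differentiating (\ref{euler}) repeatedly one sees inductively that all derivatives of $w$ vanish at $z_0$, forcing $w\equiv 0$ (as $w$ is analytic on $\C\setminus[1,\infty)$), a contradiction with $w(0)=1$. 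Hence $w'(z_0)<0$, and then $w''(z_0)=-(c-(a+b+1)z_0)w'(z_0)/\big(z_0(1-z_0)\big)$; I would check the sign of $c-(a+b+1)z_0$ using $c>0$, $z_0<0$, together with the sign of $a+b+1$ under (\ref{first-condition}), to conclude that $w''(z_0)$ and $w'(z_0)$ have the same (strictly negative) sign, so $w$ is strictly concave and strictly decreasing at $z_0$; combined with $w(z_0)=0$ this forces $w<0$ immediately to the left of $z_0$, and an ODE continuation argument (the same concavity/monotonicity persists as long as $w\le 0$, so $w$ stays negative and cannot return to a positive regime) contradicts $w>0$ near $-\infty$.

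The main obstacle I anticipate is the bookkeeping around the boundary cases of (\ref{first-condition}) — when $b$, $c-a$, or $a-b$ is an integer the connection formula degenerates and the sign of the leading coefficient $c_0$ has to be extracted from the explicit polynomial or logarithmic expansions cited from \cite{MR1225604} rather than from a clean $\Gamma$-ratio; and secondarily, verifying that the sign of $c-(a+b+1)z_0$ is controlled throughout $(-\infty,0)$ for all admissible $(a,b,c)$, which may require splitting on the sign of $a+b+1$. An alternative, possibly cleaner route that avoids the case analysis is a continuity/monodromy argument: the set of parameters $(\lambda,\mu,\nu)$ satisfying (\ref{first-condition}) is connected, $w(z)$ depends continuously (indeed analytically) on the parameters for each fixed $z\in(-\infty,0)$, $w$ has no zero on $(-\infty,0]$ for some explicit base point in the range (e.g. where $w$ reduces to an elementary positive function), and a zero can only be created or destroyed by one crossing in from $0$ (impossible since $w(0)=1$) or in from $-\infty$ (impossible since the leading asymptotic coefficient $c_0$ stays nonzero throughout the connected parameter region, as $\Gamma$ never vanishes); hence $w>0$ on $(-\infty,0)$ for all parameters in (\ref{first-condition}).
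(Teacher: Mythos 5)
Your endpoint analysis is fine ($w(0)=1$, and $c_0>0$ in (\ref{expansion}) since $a$, $c$, $a-b$, $c-b$ are all positive under (\ref{first-condition})), but the interior-zero exclusion is where the argument breaks. First, a sign slip: if $z_0<0$ is the largest zero and $w>0$ on $(z_0,0)$, then $w'(z_0)\ge 0$, not $\le 0$; after excluding the double zero you get $w'(z_0)>0$, so $w<0$ immediately to the \emph{left} of $z_0$, and your picture of $w$ being ``strictly concave and strictly decreasing at $z_0$'' is inconsistent with your own standing assumption. More seriously, the continuation step (``the same concavity/monotonicity persists as long as $w\le 0$, so $w$ cannot return to a positive regime'') is not backed by the ODE on $(-\infty,0)$: for $w$ to recover positivity before $-\infty$ it would need a negative interior minimum $z_m$, and evaluating (\ref{euler}) there gives $z_m(1-z_m)w''(z_m)=ab\,w(z_m)$, a nonpositive left-hand side equal to $ab$ times a negative number. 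This is contradictory only when $ab<0$; under (\ref{first-condition}) one has $a>0$ and $c-b>0$, but $b=\mu+\nu-\lambda$ may well be positive, so on a substantial part of the parameter range your argument does not exclude the dip-and-recover scenario. This sign obstruction --- $z(1-z)<0$ on $(-\infty,0)$ while $ab$ has no fixed sign --- is exactly what the paper sidesteps: it first applies the identity $w(a,b,c;x)=(1-x)^{-a}\,w(a,c-b,c;x/(x-1))$, reducing the claim to positivity (in fact monotonicity) of $w(a,c-b,c;\cdot)$ on $(0,1)$, where $z(1-z)>0$ and the relevant product $a(c-b)$ is strictly positive, so the ``first point where the function stops increasing'' argument closes immediately.

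Your alternative deformation sketch also has a gap as stated: as the parameters vary, a zero need not enter through $z=0$ or $z=-\infty$; it can be born tangentially in the interior, as a nonnegative function acquiring a double zero. That particular degeneration can be ruled out by the uniqueness argument you already invoke ($w(z_0)=w'(z_0)=0$ forces $w\equiv 0$), but you would additionally need locally uniform (in the parameters) asymptotics at $-\infty$ to prevent a zero from escaping to $-\infty$ along the deformation, including across the integer-parameter cases where the connection formula degenerates; none of that is supplied. So as written neither route proves the lemma; the simplest repair is the paper's transformation to $(0,1)$.
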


\begin{proof}

We have the identity (see \cite{MR1225604})
$$
w(\a,\b,\c;x)=(1-x)^{-\a}{w}\left(\a,\c-\b,\c;\frac{x}{x-1}\right)\quad \mbox{for} \quad x\in(-\infty,0).
$$
Therefore it is equivalent to show that $w\left(\a,\c-\b,\c;\cdot\right)>0$ on $(0,1)$.
Since $w(\a,\c-\b,\c;0)=1$, it is sufficient to show that ${w}(\a,\c-\b,\c;\cdot)$ is increasing on $(0,1)$. 
Note that ${w'}(\a,\c-\b,\c;0)={a(c-b)}/{c}>0$ because (\ref{first-condition}) implies $a\ge c-b>0, c>0$.
If ${w}(\a,\c-\b,\c;z_0)$ is not increasing on $(0,1)$, then there must be a $z_0\in(0,1)$ which corresponds to the first point at which ${w}(\a,\c-\b,\c;\cdot)$ stops being increasing. Consequently, ${w}(\a,\c-\b,\c;z_0)>0$ and ${w}(\a,\c-\b,\c;z_0)$ is at its local maximum.

Note that ${w}(\a,\c-\b,\c;\cdot)$ satisfies the Euler's differential equation
\begin{equation}\label{w(a,c-b,c)}
z(1-z)\frac{\dif^2 w}{\dif z^2} + (c-(a+(c-b)+1)z)\frac{\dif w}{\dif z}-a(c-b) w=0.
\end{equation}
If we evaluate (\ref{w(a,c-b,c)}) at $z_0$, knowing that $w'(a,c-b;c; z_0)=0$, we have
\begin{align*}
z_0(1-z_0)w''(a,c-b;c; z_0)=a(c-b)w(\a,\c-\b;\c;z_0).
\end{align*}
The left-hand side is negative  but the right-hand side is strictly positive, which leads to a contradiction.
Hence ${w}(\a,\c-\b,\c;\cdot)$ is increasing on $(0,1)$ and the lemma follows.
\end{proof}

\subsubsection{The functions $G,Q,J$}
Now we define the related functions $G, Q, J$ and make some preparatory estimations. Later on,
 the function $J$  will be used to define the hSLE via the formula (\ref{1st}).
The function $G$ will be used in the definition of the martingale that will be related to the hSLE. The function $Q$  has an auxiliary role and will  be referred to in some computations.

For $\a,\b,\c$ within (\ref{first-condition}), define for all $z\in [0,\infty)$
\begin{align*}
&G(\a,\b;\c;z):=z^{\nu}(1+z)^{\mu} w(\a,\b;\c;-z).
\end{align*}
It is worth noting that $G$ is the solution of the following differential equation
\begin{align}\label{Gdiff}
\frac{\kappa}{2}G''(z)z(z+1)+2G'(z)(2z+1)-G(z)\left(2m\frac{z}{z+1}+2n\frac{z+1}{z}+2(l+\lambda-m-n)\right)=0.
\end{align}
It is immediate that
\begin{align}\label{z-to-0}
G(\a,\b;\c;z) =z^{\nu}(1+o(1))\quad\mbox{ as } z\to 0.
\end{align}
By (\ref{expansion}), we know that there exist a constant $c_0>0$ such that
\begin{align}\label{z-to-infty}
G(\a,\b;\c;z) =c_0\,z^{\lambda}(1+o(1))\quad\mbox{ as } z\to \infty.
\end{align}
Lemma \ref{G} ensures that $G$ is non zero on $(0,\infty)$.
Hence we can define
\begin{align*}
Q(\a,\b;\c;z):=zG'(\a,\b;\c;z)/G(\a,\b;\c;z) \quad\mbox{ for } z\in(0,\infty).
\end{align*}
According to earlier estimates, $Q(z)$ has a limit as $z$ goes to $0$ or $\infty$. 
\begin{align}
\label{q1}
&Q(z) =\nu+O(z) \mbox{ as } z\to 0;\\
\label{q2}
&Q(z) = \lambda+O(1/z) \mbox{ as } z\to \infty.
\end{align}
Always for $a,b,c$ under condition (\ref{first-condition}), for $p,q\in\R$ such that $0\le p\le q$ and $(p,q)\not=(0,0)$, define
\begin{align}\label{def:J}
J(a,b,;c; p,q):= \frac{\kappa}{p}Q\left(\a,\b;\c;\frac{p}{q-p}\right).
\end{align}

\subsubsection{Degenerate case}\label{degenerated case}
In (\ref{first-condition}), if we allow $\mu=\nu+\lambda+3/2$, then we have equivalently $b=c$.
Then  $$w(z)=(1-z)^{-\a}$$ and (\ref{lim2}) no longer holds. Instead we have
\begin{align}\label{lim3}
zw'(z)/w(z)\underset{z\to\infty}\longrightarrow-\a.
\end{align}
The functions $G, Q, J$ become
\begin{align}
\notag
&G(\a,\b;\c;z)=z^\nu(1+z)^{\mu-\a};\\
\label{Q}
&Q(\a,\b;\c;z)=\nu+(\mu-\a)\frac{z}{1+z};\\
\label{J}
&J_{\lambda,\mu,\nu}(p,q)=\frac{\kappa\nu}{p}+\frac{\kappa(\mu-\a)}{q}.
\end{align}

\subsection{Hypergeometric SLE}
In the present section, we are first going to define the hypergeometric SLE processes by giving the explicit form of their driving functions. We will then interpret these processes as weighted SLE$_\kappa(\rho)$ processes up to certain stopping times, by performing a Girsanov transformation (see \cite{MR2060031}). An alternative and equivalent way is to see these processes as SLEs weighted by partition functions as Lawler has described in \cite{MR2518970} (although we will not use this terminology in the following).
This shows that the hSLEs are indeed well defined and for $\kappa\le 4$ they are continuous simple curves. 
In order to see whether the hSLE touches certain parts of the boundary, we will do a coordinate change of the type Schramm and Wilson did for SLE$_\kappa(\rho)$ in \cite{MR2188260}. 

\subsubsection {Definition}

We define the hypergeometric SLE process to be the Loewner chain generated by the driving function $(W_t, t\ge 0)$, such that $(W_t, O_t, V_t)$ is the solution of the following system of stochastic differential equations
\begin{equation}\label{hsle}
\left\{
\begin{split}
&\; dO_t=\frac{2dt}{O_t-W_t},\\
&\; dV_t=\frac{2dt}{V_t-W_t},\\
&\; dW_t=\sqrt{\kappa}\dif B_t+J(W_t-O_t,W_t-V_t)dt, \\
&\; O_0=0,\, V_0=-1,\, W_0=x\ge 0,
\end{split}
\right.
\end{equation}
where $(B_t ,t\ge 0)$ is a standard Brownian motion.
We denote such a process hSLE$_\kappa(\lambda,\mu,\nu)$ where $\lambda,\mu,\nu$ are the parameters associated to the function $J$.
The solution to (\ref{hsle}) is clearly well defined at times when $W_t-O_t$ and $W_t-V_t$ are not $0$. In the next two sections, we will show that for $\lambda,\mu,\nu$ in (\ref{first-condition}), (\ref{hsle}) can be well defined even when $W_t-O_t$ or $W_t-V_t$ hits $0$, by showing that the law of the hSLE is  locally absolutely continuous w.r.t. that of an SLE$_\kappa(\rho)$ process. 

Recall that the  SLE$_\kappa(\rho)$ processes have been defined  in \cite{MR1992830} for all $\rho>-2$ and it has been proved there that they hit the boundary if and only if $\rho\in(-2,\kappa/2-2)$. For $\kappa\le 4$, when $\rho\ge \kappa/2-2$,  it is proved in \cite{MR2060031} that SLE$_\kappa(\rho)$s are absolutely continuous w.r.t. SLE$_\kappa$ processes, hence are continuous curves (see \cite{MR2153402}). Later, Miller and Sheffield \cite{MR3477777} have proved using imaginary geometry that SLE$_\kappa(\rho)$s are in fact continuous curves for all $\rho>-2$.

\subsubsection{Interpretation as weighted SLE$_\kappa(\rho)$ process}\label{sec:girsanov}
Let $(B_t, t\ge 0)$ be a standard Brownian motion for the probability measure $\P$ and the filtration $(\mathcal{F}_t, t\ge 0)$.
Let $(O_t, W_t)$ be the solution of the initial value problem
\[
\begin{dcases}
\; dO_t=\frac{2dt}{O_t-W_t},\\
\; dW_t=\sqrt{\kappa}\dif B_t+\frac{\rho dt}{W_t-O_t}, \\
\; O_0=0, \, W_0=x\ge 0.
\end{dcases}
\]
The Loewner curve $(\gamma_t, t\ge 0)$ generated by the equation
\begin{align*}
d g_t(z) =\frac{2dt}{g_t(z)-W_t}
\end{align*}
is a SLE$_\kappa(\rho)$ process.
Let $V_t:=g_t(-1)$. Let $$T_M:=\inf\left\{t\ge 0;  (W_s-O_s)/(O_s-V_s)\ge M \right\}.$$
Define the following $(\mathcal{F}_t)$ adapted local martingale
\begin{align*}
X_t:= \frac1{\sqrt{\kappa}} \int_0^t \mathbf{1}_{s< T_M} \left( J(W_s-O_s,W_s-V_s)-\frac{\rho}{W_s-O_s}\right) dB_s.
\end{align*}
Note that the function $J$ has a pole when $W_s-O_s=0$.
However when $\rho$ is chosen to be $\kappa\nu$, $J(W_s-O_s,W_s-V_s)-{\rho}/({W_s-O_s})$ remains bounded, because:
\begin{itemize}
\item[(i)] By definition
\begin{align*}
J(W_s-O_s,W_s-V_s)=\frac{\kappa}{W_s-O_s} Q\left( \frac{W_s-O_s}{O_s-V_s} \right).
\end{align*}
\item[(ii)]
By (\ref{q1}), $Q(z)=\nu+O(z)$ as $z\to 0$.
\item[(iii)]
For any $s<T_M$,  we have $(W_s-O_s)/(O_s-V_s)\le M$.
\end{itemize}
In this case,  we can then define the measure
$\mathbf{Q}_{T_M}$ on the $\sigma$-field  $\mathcal{F}_{T_M}$ by the following Girsanov transformation
\begin{align*}
\frac{d\mathbf{Q}_{T_M}}{d\P}: =\mathcal{E}(X)_{T_M}:=\exp\left( X_{T_M} -\frac{1}{2\kappa} \int_0^{T_M} \left( J(W_s-O_s,W_s-V_s)-\frac{\rho}{W_s-O_s}\right)^2 ds \right).
\end{align*}
The measure $\mathbf{Q}_{T_M}$ is absolutely continuous w.r.t. $\P_{T_M}$. Moreover, under $\mathbf{Q}_{T_M}$, the process 
\begin{align*}
\tilde B_t:= B_t-\frac{1}{\sqrt{\kappa}}\int_0^t \left( J(W_t-O_t,W_t-V_t)-\frac{\rho}{W_t-O_t}\right) ds
\end{align*}
is a standard Brownian motion up to time $t= T_M$. Therefore $(O_t, V_t, W_t; t<T_M)$ satisfies the differential equations
\[
\begin{dcases}
\; dO_t=\frac{2dt}{O_t-W_t},\\
\; dV_t=\frac{2dt}{V_t-W_t},\\
\; dW_t=\sqrt{\kappa}\dif \tilde B_t+J(W_t-O_t,W_t-V_t)dt, \\
\; O_0=0,\, V_0=-1,\, W_0=x\ge 0.
\end{dcases}
\]
Hence, under $\mathbf{Q}_{T_M}$, $\gamma$ is a hSLE$_\kappa(\lambda,\mu,\nu)$ up to time $T_M$.

This proves that the law of a hSLE$_\kappa(\lambda,\mu,\nu)$ is absolutely continuous w.r.t. the law of a SLE$_\kappa(\kappa\nu)$ process up to the stopping time $T_M$. For $\kappa<4$, a hSLE$_\kappa(\lambda,\mu,\nu)$ is therefore almost surely a simple curve up to $ T_M$.
Note that the measures $\mathbf{Q}_{T_M}$ are compatible for all $M>0$ hence we can let $M\to \infty$ and say that the hSLE$_\kappa(\lambda,\mu,\nu)$ is a simple curve up to the time
\begin{align}\label{ttt}
T=\lim_{M\to\infty} T_M.
\end{align}
However the absolute continuity of $\mathbf{Q}_{T_M}$ w.r.t. $\P$ is not preserved as $M\to\infty$ since $\mathcal{E}(X)_{T_M}$ might be no longer bounded away from $0$ or $\infty$.

Let us first mention that whether a hSLE curve touches the boundary in a neighborhood of the starting point (i.e. for $t<T_M$) can be deduced from the knowledge on SLE$_\kappa(\rho)$ processes, due to absolute continuity. 
We state this property only in the $\kappa=8/3, x=0$ case because it is all we need.
\begin{lemma}\label{lem:nu}
For $\lambda,\mu,\nu$ in (\ref{first-condition}), let $\gamma$ be a hSLE$_{8/3}(\lambda,\mu,\nu)$ curve as defined in (\ref{hsle}) which starts at $0$ and has $-1,0^-$ as boundary marked points. Let $T$ be a stopping time defined as (\ref{ttt}).
\begin{itemize}
\item 
If $\nu\ge -1/4$ , then $\gamma([0,T]) \cap (-1,0]=\emptyset$ a.s.
\item 
If $-3/4<\nu<-1/4$, then $\gamma([0,T]) \cap (-1,0]\not=\emptyset$ a.s.
\end{itemize}
\end{lemma}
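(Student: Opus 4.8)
The plan is to exploit the absolute continuity established in \S\,\ref{sec:girsanov}: the law of the hSLE$_{8/3}(\lambda,\mu,\nu)$ curve started at $0$ with marked points $-1$ and $0^-$ is, up to any stopping time $T_M$, absolutely continuous with respect to the law of an SLE$_{8/3}(\kappa\nu) = \text{SLE}_{8/3}(\tfrac{8}{3}\nu)$ process with force point at $0^-$. Since the question of whether the curve touches the arc $(-1,0]$ in a neighborhood of its starting point is determined by its behavior for $t < T$, and since the event $\{\gamma([0,T])\cap(-1,0]=\emptyset\}$ is measurable with respect to $\bigvee_{M} \mathcal{F}_{T_M}$, a.s.-zero-one statements transfer verbatim between the two laws. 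Thus the first step is simply to reduce the lemma to the corresponding statement for SLE$_{8/3}(\rho)$ with $\rho = \tfrac{8}{3}\nu$.

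The second step is to quote the known boundary-hitting criterion for SLE$_\kappa(\rho)$ processes: as recalled just above the lemma (and proved in \cite{MR1992830}), an SLE$_\kappa(\rho)$ curve with a single force point hits the boundary on the side of that force point if and only if $\rho \in (-2, \kappa/2 - 2)$. With $\kappa = 8/3$ this threshold is $\rho = \kappa/2 - 2 = 4/3 - 2 = -2/3$. The force point here is at $0^-$, i.e. on the same arc $(-1,0]$ whose intersection with $\gamma$ we are examining (note that the other marked point $-1$ is separated from $0^-$ by the tip, so it is the force point at $0^-$ that governs hitting of $(-1,0]$ near the start). Hence $\gamma$ touches $(-1,0]$ near $0$ if and only if $\tfrac{8}{3}\nu < -\tfrac{2}{3}$, i.e. $\nu < -1/4$; it avoids it if $\nu \ge -1/4$. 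Combined with the standing assumption $\nu > -3/4$ from (\ref{first-condition}), which guarantees $\rho = \tfrac{8}{3}\nu > -2$ so that the SLE$_\kappa(\rho)$ is well-defined, this yields exactly the dichotomy stated: $\gamma([0,T])\cap(-1,0]=\emptyset$ a.s. when $\nu\ge -1/4$, and $\gamma([0,T])\cap(-1,0]\neq\emptyset$ a.s. when $-3/4<\nu<-1/4$.

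One small point that needs care, and which I expect to be the only genuine obstacle, is the matching of conventions: one must check that the SLE$_{8/3}(\rho)$ appearing in the Girsanov computation of \S\,\ref{sec:girsanov} has its force point configured so that ``hitting the boundary'' in the classical criterion corresponds precisely to ``hitting the arc $(-1,0]$'' here, rather than some other arc, and that the drift term $J$ indeed reduces (via $Q(z) = \nu + O(z)$ as $z\to 0$, see (\ref{q1})) to the SLE$_\kappa(\rho)$ drift with $\rho = \kappa\nu$ in the relevant limit. Both of these were already set up in \S\,\ref{sec:girsanov} — the choice $\rho = \kappa\nu$ was made exactly so that $J(W_s - O_s, W_s - V_s) - \rho/(W_s - O_s)$ stays bounded as $W_s - O_s \to 0$ — so this is a matter of citation and bookkeeping rather than new work. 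The behavior of the curve relative to the other marked point $-1$ does not enter, because on the time interval $[0,T_M)$ the ratio $(W_s - O_s)/(O_s - V_s)$ is bounded away from the configuration in which $-1$ would be reached, so the local absolute continuity with respect to a single-force-point SLE$_\kappa(\rho)$ is legitimate and the classical one-sided hitting criterion applies directly.
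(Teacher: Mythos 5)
Your proposal is correct and follows essentially the same route as the paper: the paper also deduces the lemma from the local absolute continuity (via Girsanov) of the hSLE$_{8/3}(\lambda,\mu,\nu)$ with respect to SLE$_{8/3}(\kappa\nu)$ up to each $T_M$, combined with the known boundary-hitting criterion $\rho\in(-2,\kappa/2-2)$, which at $\kappa=8/3$ gives the threshold $\nu=-1/4$. The only point to state a little more carefully is the passage from $T_M$ to $T$: since mutual absolute continuity is not preserved in the limit $M\to\infty$, one transfers the null event (resp. full event) on each $\mathcal{F}_{T_M}$ and takes a countable union over $M$, noting that the possible limit point $\gamma_T$ lies in $\{-1\}\cup(-\infty,-1)\cup\{\infty\}$ and so never in $(-1,0]$.
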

\begin{remark}
 The condition $\nu>-3/4$ in (\ref{first-condition}) is due to the $\rho=-2$ threshold for SLE$_\kappa(\rho)$.
\end{remark}
Now we need to determine whether $T=\infty$. Let us observe that $T=\lim_{M\to\infty} T_M$ in fact corresponds to the first time $\gamma$ hits $(-\infty,-1]$ on the real line; and  $T=\infty$ if and only if $\gamma$ does not touch $(-\infty,-1]$.
More precisely, by knowledge on deterministic conformal maps, it is not difficult to see that as $M\to \infty$, 
\begin{itemize}
\item[(i)] if $\gamma_{T_M}\to\infty$, then $T=\infty$ and $(W_t-O_t)/(O_t-V_t)\to \infty$ as $t\to T$;
\item[(ii)] if $\gamma_{T_M}$ converges to a point on $(-\infty,-1)$, then $T< \infty$ and $(W_t-O_t)/(O_t-V_t)\to \infty$ as $t\to T$;
\item[(iii)] if $\gamma_{T_M}$ converges to the point $-1$, then $T< \infty$ and $(W_t-O_t)/(O_t-V_t)$ oscillates between $0$ and $\infty$ as $t\to T$.
\end{itemize}
The oscillation in case (iii) is due to the fact that $\gamma$ approaches $-1$ in a fractal way.
In order to determine in which case we are in, we will zoom into the neighborhood of the point $-1$, by doing a coordinate (and time) change of the type that Schramm and Wilson did for SLE$_\kappa(\rho)$s in \cite{MR2188260}. 

\subsubsection{Coordinate change}

Let $(\gamma_t, 0\le t <T)$ be a hSLE$_\kappa(\lambda,\mu,\nu)$ as previously defined. Let $f:z\mapsto {z}/({z+1})$ be a M\"obius transformation from $\H$ to itself, that sends the points $-1,0,\infty$ respectively to $\infty,0,1$. 
Let $f_t$ be a M\"obius transformation from $\H$ to itself such that $f_t\circ g_t\circ f^{-1}$ is normalized at infinity in a way that there exist $s(t)\in\R$,
\begin{align}\label{normalized gs}
\tilde g_s:= f_t\circ g_t\circ f^{-1}(y)=y+\frac{2s(t)}{y}+O\left(\frac{1}{y^2}\right) \mbox{ as } y\to\infty.
\end{align}
Then the image of $(\gamma_t, 0\le t <T)$ by $f$ 
reparametrized by $s(t)$ is a Loewner chain with driving function $\tilde W_{s(t)}:=f_t(W_t)$: For all $y\in\H$,
\begin{align*}
\partial_s \tilde g_s(y)=\frac{2}{\tilde g_s(y)-\tilde W_s}, \quad \tilde g_0(y)=y.
\end{align*}
The above-mentioned conformal maps are illustrated in Figure \ref{fig:coordinate--change}.

 \begin{figure}[h!]
    \centering
    \includegraphics[width=0.86\textwidth]{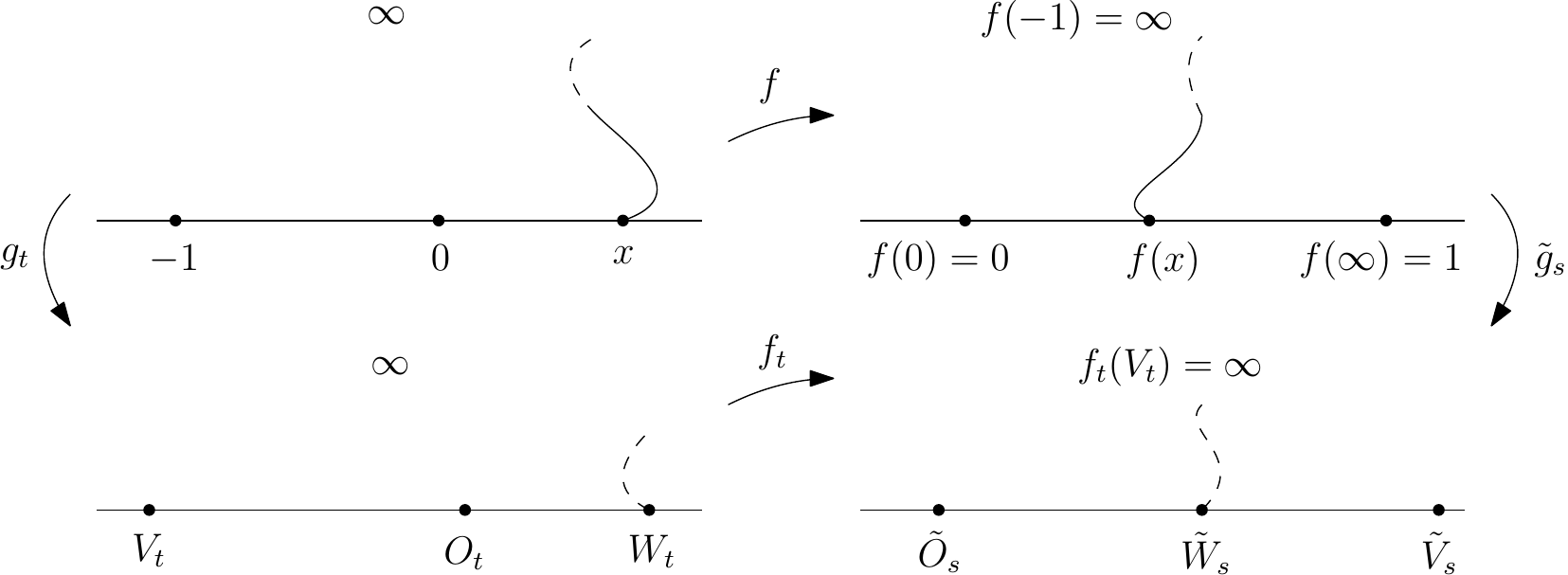}
    \caption{ Commutative diagram for coordinate change.}
    \label{fig:coordinate--change}
\end{figure}

Since $f_t$ sends $V_t$ to $\infty$, there exist $a_t,b_t\in\R$ such that
\begin{align*}
f_t(z)=a_t+\frac{b_t}{z-g_t(-1)}.
\end{align*}
We expand $\tilde g_s(y)= f_t\circ g_t\circ f^{-1}(y)$ at $y=\infty$,
\begin{align*}
\tilde g_s(y)
=&a_t-\frac{b_t}{g_t'(-1)}y+\frac{b_t}{g_t'(-1)}\left(1-\frac{g_t''(-1)}{2g_t'(-1)}\right)+O\left(\frac1y\right).
\end{align*}
In order that $\tilde g_s$ be normalized as (\ref{normalized gs}) , we should have
\begin{align*}
b_t=-g_t'(-1), \quad a_t=1-\frac{g_t''(-1)}{2g_t'(-1)}.
\end{align*}
Moreover, the time change should be
\begin{align}\label{align:time-change}
s'(t)=f_t'(W_t)^2=\frac{g_t'(-1)^2}{(W_t-g_t(-1))^4}.
\end{align}
Note that
\begin{align*}
\tilde W_s=f_t(W_t)=a_t+\frac{b_t}{W_t-g_t(-1)}=1-\frac{g_t''(-1)}{2g_t'(-1)}-\frac{g_t'(-1)}{W_t-g_t(-1)}.
\end{align*}
In order to get $d\tilde W_s$, let us calculate
\begin{align*}
dg_t'(-1)=-\frac{2g_t'(-1)}{(g_t(-1)-W_t)^2}dt,\quad dg_t''(-1)=-\frac{2g_t''(-1)}{(g_t(-1)-W_t)^2}dt+\frac{4g_t'(-1)^2}{(g_t(-1)-W_t)^3}.
\end{align*}
Applying the It\^o calculus rule, we have
\begin{align*}
d\tilde W_s
=&(\kappa-6)\frac{g_t(-1)-W_t}{g_t'(-1)} ds+ \sqrt{\kappa}d\tilde B_s+\frac{(g_t(-1)-W_t)^2}{g_t'(-1)} J(W_t-g_t(0), W_t-g_t(-1))ds.
\end{align*}
Let $\tilde O_s:=\tilde g_s(0)=f_t(g_t(0)), \tilde V_s:=\tilde g_s(1)=f_t(g_t(\infty))=a_t$, then we have
\begin{align*}
d\tilde W_s=\sqrt{\kappa}d\tilde B_s+(\kappa-6)\frac{ds}{\tilde W_s-\tilde V_s}+\kappa \frac{\tilde V_s-\tilde O_s}{(\tilde W_s-\tilde V_s)(\tilde O_s-\tilde W_s)} Q\left( \frac{\tilde O_s-\tilde W_s}{\tilde W_s-\tilde V_s} \right)ds.
\end{align*}

Thus we have proved the following Lemma.
\begin{lemma}
The image of $( \gamma_t, 0\le t< T)$ by $f$ under a proper time change $t\mapsto s(t)$ (see (\ref{align:time-change})) 
is a Loewner curve  
$(\tilde \gamma_s, s\le \tilde S=\inf \{ s\ge0, \tilde V_s -\tilde W_s=0\})$ 
generated by the Loewner equation
\begin{align*}
\partial_s \tilde g_s(z)=\frac{2}{\tilde g_s(z)-\tilde W_s}, \quad \tilde g_0(z)=z,
\end{align*}
where $(\tilde O_s,\tilde V_s, \tilde W_s)$ is the solution of the  differential system
\begin{equation}
\left\{
\begin{split}
&d\tilde O_s=\frac{2ds}{\tilde O_s-\tilde W_s},\\
&d\tilde V_s=\frac{2ds}{\tilde V_s-\tilde W_s},\\
&d\tilde W_s=\sqrt{\kappa}d\tilde B_s+\tilde J(\tilde V_s-\tilde W_s, \tilde W_s-\tilde O_s),\\
& \tilde O_0=0,\, \tilde V_0=1,\, \tilde W_0=f(x)\ge 0,
\end{split}
\right.
\end{equation}
and where
\begin{align*}
\tilde J (p,q): =\frac{6-\kappa}{p}+\kappa\left(\frac{1}{p}+\frac{1}{q}\right)Q\left(\frac{q}{p}\right).
\end{align*}
\end{lemma}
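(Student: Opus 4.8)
The plan is to carry out the coordinate change explicitly and organize it into the three standard ingredients of such an argument for Loewner chains, in the spirit of the coordinate change that Schramm and Wilson used for SLE$_\kappa(\rho)$ in \cite{MR2188260}: the choice of conformal maps, the time change, and the It\^o identity for the new driving function.

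First I would fix the M\"obius map $f(z)=z/(z+1)$, which sends $(-1,0,\infty)$ to $(\infty,0,1)$, and for each $t<T$ determine the M\"obius self-map $f_t$ of $\H$ so that $\tilde g_{s(t)}:=f_t\circ g_t\circ f^{-1}$ is hydrodynamically normalized at $\infty$. Since $f^{-1}$ sends $\infty$ to $-1$ and $g_t(-1)$ is finite precisely for $t<T$, the map $f_t$ must send $g_t(-1)$ to $\infty$, hence has the form $f_t(z)=a_t+b_t/(z-g_t(-1))$; expanding $f_t\circ g_t\circ f^{-1}(y)$ as $y\to\infty$ using $g_t(z)=z+2t/z+o(1/z)$ together with the Taylor expansion of $g_t$ at $-1$, and matching coefficients in the normalization (\ref{normalized gs}), one reads off $b_t=-g_t'(-1)$ (negative, which is what makes $f_t$ a self-map of $\H$), $a_t=1-g_t''(-1)/(2g_t'(-1))$, and the time change $s'(t)=f_t'(W_t)^2=g_t'(-1)^2/(W_t-g_t(-1))^4$. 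This normalization is deterministic and well defined as long as $W_t\neq g_t(-1)$, i.e.\ on $[0,T)$; the usual coordinate-change computation then shows that $(\tilde\gamma_s)$, the image of $(\gamma_t)$ reparametrized by $s$, is a Loewner chain with driving function $\tilde W_s=f_t(W_t)$, run up to $\tilde S$.

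Next I would identify the SDE satisfied by $\tilde W_s$. Differentiating the Loewner equation (\ref{loewner}) in $z$ at $z=-1$ gives $dg_t'(-1)=-2g_t'(-1)(g_t(-1)-W_t)^{-2}\,dt$ and $dg_t''(-1)=-2g_t''(-1)(g_t(-1)-W_t)^{-2}\,dt+4g_t'(-1)^2(g_t(-1)-W_t)^{-3}\,dt$; feeding these, together with the hSLE equation (\ref{hsle}), into It\^o's formula applied to $\tilde W_s=a_t-g_t'(-1)/(W_t-g_t(-1))$, and passing from $dt$ to $ds=s'(t)\,dt$, yields
\[
d\tilde W_s=\sqrt{\kappa}\,d\tilde B_s+(\kappa-6)\frac{g_t(-1)-W_t}{g_t'(-1)}\,ds+\frac{(g_t(-1)-W_t)^2}{g_t'(-1)}\,J\bigl(W_t-g_t(0),\,W_t-g_t(-1)\bigr)\,ds.
\]
The last step is purely algebraic: writing $\tilde O_s=f_t(g_t(0))$ and $\tilde V_s=a_t=f_t(\infty)$, the explicit form of $f_t$ gives $\tilde W_s-\tilde V_s=g_t'(-1)/(g_t(-1)-W_t)$ and the analogous identity for $\tilde O_s$, from which one checks that $(g_t(-1)-W_t)/g_t'(-1)=1/(\tilde W_s-\tilde V_s)$, that the argument $(W_t-g_t(0))/(g_t(0)-g_t(-1))$ of $Q$ inside $J$ equals $(\tilde W_s-\tilde O_s)/(\tilde V_s-\tilde W_s)$, and that $(g_t(-1)-W_t)^2\bigl(g_t'(-1)(W_t-g_t(0))\bigr)^{-1}=(\tilde V_s-\tilde O_s)\bigl((\tilde V_s-\tilde W_s)(\tilde W_s-\tilde O_s)\bigr)^{-1}$. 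Substituting $J=\kappa(W_t-g_t(0))^{-1}Q(\cdot)$ from (\ref{def:J}) then collapses the drift exactly to $\tilde J(\tilde V_s-\tilde W_s,\,\tilde W_s-\tilde O_s)$ with $\tilde J(p,q)=(6-\kappa)/p+\kappa(1/p+1/q)Q(q/p)$, while the initial data are $\tilde O_0=0$, $\tilde V_0=f(\infty)=1$ and $\tilde W_0=f(x)$. Finally $\tilde S$ is the image of $T$: the coordinate change degenerates precisely when $W_t-g_t(-1)\to0$, equivalently when $\tilde V_s-\tilde W_s\to0$.

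I expect the only real difficulty to be the bookkeeping, both in verifying that the reparametrized image is genuinely a Loewner chain and in the It\^o computation and subsequent substitutions, where one must keep careful track of which quantities are $f_t$-images of which and of the effect of the time change. Since the target expression $\tilde J$ is known in advance, it is clear that the drift must collapse to it, so this is a matter of patience rather than a genuine obstacle.
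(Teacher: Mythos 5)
Your proposal is correct and follows essentially the same route as the paper's proof: the same choice of $f$ and $f_t$, the same coefficient matching giving $b_t=-g_t'(-1)$, $a_t=1-g_t''(-1)/(2g_t'(-1))$ and the time change $s'(t)=g_t'(-1)^2/(W_t-g_t(-1))^4$, the same It\^o computation for $\tilde W_s=f_t(W_t)$ using $dg_t'(-1)$ and $dg_t''(-1)$, and the same algebraic substitutions (which you in fact spell out more explicitly than the paper) collapsing the drift to $\tilde J(\tilde V_s-\tilde W_s,\tilde W_s-\tilde O_s)$. The only slip is the closing remark that the coordinate change degenerates ``precisely when $W_t-g_t(-1)\to0$, equivalently when $\tilde V_s-\tilde W_s\to0$'': these are not equivalent (when the original curve escapes to $\infty$ one has $\tilde V_s-\tilde W_s=g_t'(-1)/(W_t-g_t(-1))\to0$ while $W_t-g_t(-1)\to\infty$), but the identification $\lim_{t\to T}s(t)=\tilde S$ is nonetheless correct and is treated just as lightly in the paper.
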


Note that the curve $\tilde \gamma$ is well defined when $\tilde W_s-\tilde O_s=0$ (or equivalently when $\tilde \gamma$ hits $(-\infty,0]$), because the curve $\gamma$ is well defined when it hits $(-1,0]$ (or we can argue similarly as in  \S\,\ref{sec:girsanov} by doing Girsanov transformation for $\tilde\gamma$ directly).

To see whether $\tilde V_s-\tilde W_s = 0$ (whether $\tilde S<\infty$), we need to do again a Girsanov transformation as in \S\,\ref{sec:girsanov}.
Note that when $(\tilde V_s-\tilde W_s)/(\tilde W_s-\tilde O_s)$ is small, by the estimate (\ref{q2}), the quantity $J(\tilde V_s-\tilde W_s, \tilde W_s-\tilde O_s)$ is approximately
\begin{align*}
(\lambda\kappa+6-\kappa)\frac{1}{\tilde V_s-\tilde W_s}.
\end{align*}
Hence when $(\tilde V_s-\tilde W_s)/(\tilde W_s-\tilde O_s)$ is small,  ${\tilde V_s-\tilde W_s}$ is absolutely continuous w.r.t. a Bessel process of dimension $d=3-2\lambda- (8/\kappa)$.
In the case when $\kappa=8/3, \lambda>-3/4$, we have $d=-2\lambda <{3}/{2}$. Hence $\tilde S<\infty$ a.s. Note that the constraint for $(\tilde V_s-\tilde W_s)/(\tilde W_s-\tilde O_s)$ to be small will be satisfied for some $s$ because the process $(\tilde V_s-\tilde W_s)/(\tilde V_s-\tilde O_s)$ oscillates between $[0,1]$ and has positive probability to get to any neighborhood of $0$.

Note that $\tilde S<\infty$ corresponds to the case (i) and (ii) at the end of \S\,\ref{sec:girsanov} for the original curve $\gamma$ and the case $\tilde S=\infty$ corresponds to the case (iii) in which $\gamma$ hits the point $-1$.
Hence for $\lambda,\mu,\nu$ in (\ref{first-condition}), we are in case (i) or (ii) and the curve hSLE$_{8/3}(\lambda,\mu,\nu)$ does not hit the point $-1$ a.s.

Let us now make a remark on the degenerate case.
\begin{remark}
If we allow $b=c$ in (\ref{first-condition}), then this corresponds to the degenerate case in \S\, \ref{degenerated case}, and we have
\begin{align*}
\tilde J(\tilde V_s-\tilde W_s, \tilde W_s-\tilde O_s)=\frac{6-\kappa(\lambda+3)}{\tilde V_s-\tilde W_s}+\frac{\kappa\nu}{\tilde W_s-\tilde O_s}.
\end{align*}
In this case $\tilde \gamma$ is a SLE$_\kappa(\rho_1,\rho_2)$ with $\rho_1=\kappa(\lambda+3)-6, \rho_2=\kappa\nu$. Since $\kappa=8/3, \lambda>-3/4$, we have $\rho_1>0, \rho_2>-2$. This means $\tilde S=\infty$.
We are in case (iii)  mentioned at the end of \S\,\ref{sec:girsanov}. The original curve $\gamma$ hits $-1$ a.s.
\end{remark}

\subsubsection{Evolution after $T$ and conclusion}\label{sect:sle-concl}
Under condition (\ref{first-condition}), we are either in case (i) or (ii) mentioned after Lemma \ref{lem:nu}. Therefore
$({W_t-O_t})/({O_t-V_t})\to\infty \mbox{ as } t\to T.$ By the expansion (\ref{q2}),  $J(W_t-O_t, W_t-V_t)$ behaves approximately like  ${\kappa\lambda}/({W_t-O_t})$. By redoing the Girsanov transformation as in \S\,\ref{sec:girsanov}, we have that as $t\to T$, the hSLE curve $\gamma$ is locally absolutely continuous w.r.t. a SLE$_\kappa(\kappa\lambda)$ process. Similarly to Lemma \ref{lem:nu}, we have the following.
\begin{lemma}\label{lem:lambda}
In the $\kappa=8/3$ case, under condition (\ref{first-condition}),
\begin{itemize}
\item
 $T=\infty$ and $\lim_{t\to T}\gamma_t =\infty$ if and only if $\lambda\ge-1/4$.
 \item
 $T<\infty$ and $\lim_{t\to T}\gamma_t \in (-\infty,-1)$ if and only if $-3/4 <\lambda<-1/4$.
 \end{itemize}
\end{lemma}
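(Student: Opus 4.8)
The plan is to analyze the behaviour of the driving function as $t\to T$ by reducing it, via a Girsanov transformation and a coordinate change, to the behaviour of an SLE$_\kappa(\rho)$ process near a marked point, for which the boundary-hitting dichotomy is already known. Recall from \S\,\ref{sect:sle-concl} that under (\ref{first-condition}) we are in case (i) or (ii) after Lemma \ref{lem:nu}, so that $(W_t-O_t)/(O_t-V_t)\to\infty$ as $t\to T$. By the expansion (\ref{q2}), $Q(z)=\lambda+O(1/z)$ as $z\to\infty$, so
\begin{align*}
J(W_t-O_t,W_t-V_t)=\frac{\kappa}{W_t-O_t}\,Q\!\left(\frac{W_t-O_t}{O_t-V_t}\right)=\frac{\kappa\lambda}{W_t-O_t}+\text{(bounded correction)}.
\end{align*}
Hence, as in \S\,\ref{sec:girsanov}, one sets up a Girsanov transformation: the difference between $J(W_t-O_t,W_t-V_t)$ and $\kappa\lambda/(W_t-O_t)$ stays bounded on the region where $(O_t-V_t)/(W_t-O_t)$ is small (by (\ref{q2}) the error is $O((O_t-V_t)/(W_t-O_t))\cdot\frac{1}{W_t-O_t}$, which one controls since $O_t-V_t$ stays bounded away from $0$ while $W_t-O_t$ is bounded below on the relevant window), so the exponential martingale is bounded away from $0$ and $\infty$ on each such window. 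This shows that, as $t\to T$, the law of $\gamma$ is locally absolutely continuous with respect to that of an SLE$_\kappa(\kappa\lambda)$ process with the marked point $O_t=g_t(0)$ playing the role of the force point, and with $V_t=g_t(-1)$ now an inert third point that $\gamma$ avoids (it has already been shown that $\gamma$ does not hit $-1$).

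Next I would invoke the classical SLE$_\kappa(\rho)$ boundary behaviour: for $\kappa\le 4$, an SLE$_\kappa(\rho)$ curve with force point at $0^-$ (or $0^+$) hits the boundary arc adjacent to that force point if and only if $\rho\in(-2,\kappa/2-2)$, and otherwise stays away from it. With $\kappa=8/3$ and $\rho=\kappa\lambda=\tfrac 83\lambda$, the threshold $\rho=\kappa/2-2=-2/3$ corresponds to $\lambda=-1/4$, and the threshold $\rho=-2$ corresponds to $\lambda=-3/4$ (which is exactly the reason for the constraint $\nu>-3/4$, $\lambda>-3/4$ in (\ref{first-condition})). Translating the trichotomy (i), (ii), (iii) listed at the end of \S\,\ref{sec:girsanov} into this language: $\gamma$ swallowing $(-\infty,-1]$ in finite time (case (ii)) is precisely the event that the SLE$_\kappa(\kappa\lambda)$-like curve $\gamma$ hits the real arc on the far side of $O_t$; by absolute continuity and the SLE$_\kappa(\rho)$ dichotomy, this happens almost surely when $-3/4<\lambda<-1/4$ and never when $\lambda\ge -1/4$. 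In the latter case $(W_t-O_t)/(O_t-V_t)\to\infty$ forces $\gamma_t\to\infty$ and $T=\infty$, while in the former $\gamma$ hits $(-\infty,-1)$ at a finite time, giving $T<\infty$ with $\lim_{t\to T}\gamma_t\in(-\infty,-1)$; case (iii) ($\gamma$ hitting the point $-1$) has already been excluded under (\ref{first-condition}) by the coordinate-change analysis preceding this lemma. Combining, the two bullet points follow.

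The main obstacle is making the ``local absolute continuity as $t\to T$'' rigorous, i.e.\ justifying that the Girsanov change of measure can be carried out on a sequence of stopping times increasing to $T$ with Radon--Nikodym derivatives bounded on each piece, so that the qualitative boundary-hitting property (an event measurable with respect to the germ of the curve near the boundary) transfers from SLE$_\kappa(\kappa\lambda)$ to the hSLE. This requires a careful choice of the localizing stopping times — one should stop when $(O_t-V_t)/(W_t-O_t)$ exits a small interval, and use the oscillatory behaviour of this ratio (analogous to the oscillation argument used for $(\tilde V_s-\tilde W_s)/(\tilde V_s-\tilde O_s)$ in the coordinate-change subsection) to cover a full neighbourhood of $T$ by countably many such windows. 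Everything else — the expansion of $J$, identification of the Bessel dimension / $\rho$ value, and the reading-off of thresholds — is a routine computation of the same flavour as the $\tilde S<\infty$ argument already carried out, simply with the roles of the points $0$ and $-1$ (equivalently $0$ and $\infty$) interchanged.
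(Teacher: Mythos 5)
Your proposal follows essentially the same route as the paper: in \S\,\ref{sect:sle-concl} the paper notes that under (\ref{first-condition}) one is in case (i) or (ii), so $(W_t-O_t)/(O_t-V_t)\to\infty$ as $t\to T$, uses (\ref{q2}) to see that $J$ behaves like $\kappa\lambda/(W_t-O_t)$, redoes the Girsanov comparison of \S\,\ref{sec:girsanov} to get local absolute continuity with respect to an SLE$_{\kappa}(\kappa\lambda)$ process, and then reads off the dichotomy from the SLE$_\kappa(\rho)$ boundary-hitting thresholds $\rho=-2$ and $\rho=\kappa/2-2$ (i.e.\ $\lambda=-3/4$ and $\lambda=-1/4$), exactly as you do. The only quibble is your parenthetical claim that $O_t-V_t$ stays bounded away from $0$ near $T$ (in the hitting case all marked points are swallowed at time $T$, so both gaps tend to $0$); this does not affect the conclusion, since the localization by stopping times that you describe as the main obstacle is precisely the step the paper itself leaves implicit.
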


In the $T<\infty$ case, for $t>T$, $V_t,O_t$ will stick to each other and the function $J$ degenerates to a rational function. The rest of the curve is just a $SLE_\kappa(\kappa\lambda)$ with marked point $V_t=O_t$.

Now we can summarize the above-discussed properties of hSLE processes.
\begin{proposition}\label{prop:hsle}
The hSLE$_{8/3}(\lambda,\mu,\nu)$ process is well defined for $\lambda,\mu,\nu$ inside (\ref{first-condition}). It is almost surely a simple curve and moreover
\begin{itemize}
\item[-] it does not hit the point $-1$;
\item[-] it hits $(-1,0)$ if and only if $\nu<-1/4$;
\item[-] it hits $(-\infty, -1)$ if and only if $\lambda<-1/4$.
\end{itemize}
When $\lambda,\mu,\nu$ are such that $\lambda>-3/4, \mu\ge-1/4, \nu>-3/4, \mu=\nu+\lambda+3/2$, 
the hSLE$_{8/3}(\lambda,\mu,\nu)$ defined by (\ref{hsle}) where $J$ is given by (\ref{J}) is actually an SLE$_\kappa(\rho_1,\rho_2)$ process that hits the point $-1$. It is unable to continue thereafter.
\end{proposition}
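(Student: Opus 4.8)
The plan is to assemble the facts established in \S\,\ref{sec:girsanov}--\S\,\ref{sect:sle-concl} and to read off the geometric conclusions from them. Throughout, fix $\kappa=8/3$ and $\lambda,\mu,\nu$ in (\ref{first-condition}). Well-definedness of $J$ itself rests on Lemma \ref{G}, which guarantees $G\neq 0$ on $(0,\infty)$ so that $Q=zG'/G$, hence $J$, makes sense; the fact that the solution of (\ref{hsle}) extends past the times at which $W_t-O_t$ or $W_t-V_t$ vanishes is exactly the content of the Girsanov argument of \S\,\ref{sec:girsanov}, run with $\rho=\kappa\nu$ --- this being the choice that makes the Girsanov drift $J(W_s-O_s,W_s-V_s)-\kappa\nu/(W_s-O_s)$ bounded up to $T_M$, thanks to $Q(z)=\nu+O(z)$ near $0$ (estimate (\ref{q1})). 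Consequently the law of the hSLE is, up to $T_M$, absolutely continuous with respect to that of an SLE$_\kappa(\kappa\nu)$; since $\kappa<4$ and $\nu>-3/4$ forces $\kappa\nu>-2$, the latter is a simple curve, hence so is the hSLE up to $T_M$, and, by compatibility of these laws in $M$, also up to $T=\lim_M T_M$ (see (\ref{ttt})). When $T<\infty$, the curve continues past $T$ (cf.\ \S\,\ref{sect:sle-concl}) as an SLE$_\kappa(\kappa\lambda)$ in the unbounded component of $\H\setminus\gamma([0,T])$ --- the drift $J$ being then asymptotic to $\kappa\lambda/(W_t-O_t)$ by (\ref{q2}) --- which is again a simple curve since $\kappa\lambda>-2$, so the concatenation is a.s.\ a simple curve.

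For the three hitting statements I would argue as follows. That $\gamma$ avoids the point $-1$ is the output of the coordinate change: after the M\"obius map $f(z)=z/(z+1)$ and the time change (\ref{align:time-change}), the process $\tilde V_s-\tilde W_s$ is, while $(\tilde V_s-\tilde W_s)/(\tilde W_s-\tilde O_s)$ is small, absolutely continuous with respect to a Bessel process of dimension $d=3-2\lambda-8/\kappa=-2\lambda$, which is $<3/2$ by $\lambda>-3/4$ and hence reaches $0$; so $\tilde S<\infty$ a.s., which is exactly the union of cases (i) and (ii) of \S\,\ref{sec:girsanov} and excludes case (iii), in which $\gamma$ would hit $-1$. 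That $\gamma$ meets $(-1,0)$ iff $\nu<-1/4$ is Lemma \ref{lem:nu}: the absolute continuity with respect to SLE$_\kappa(\kappa\nu)$ up to $T_M$ transfers the boundary-touching threshold $\rho=\kappa/2-2$ for SLE$_\kappa(\rho)$, and $\kappa\nu<\kappa/2-2\iff\nu<-1/4$; moreover $(-1,0)$ is swallowed at time $T$, so the event is already decided before $T$. That $\gamma$ meets $(-\infty,-1)$ iff $\lambda<-1/4$ is Lemma \ref{lem:lambda}: near $T$ the curve is locally absolutely continuous with respect to an SLE$_\kappa(\kappa\lambda)$, so it terminates at a finite point of $(-\infty,-1)$ (case (ii)) precisely when $\kappa\lambda\in(-2,\kappa/2-2)$, i.e.\ $\lambda\in(-3/4,-1/4)$, and escapes to $\infty$ (case (i), $T=\infty$) otherwise.

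For the degenerate case $\mu=\nu+\lambda+3/2$, equivalently $b=c$, the key point is that then $w(z)=(1-z)^{-a}$, so $G,Q,J$ collapse to the rational expressions of \S\,\ref{degenerated case}; in particular the drift in (\ref{hsle}) becomes $\kappa\nu/(W_t-O_t)+\kappa(\mu-a)/(W_t-V_t)$, so $\gamma$ is literally an SLE$_\kappa(\rho_1,\rho_2)$ with force points $0$ and $-1$ and weights $\rho_1=\kappa\nu$, $\rho_2=\kappa(\mu-a)=-\kappa(\lambda+\nu+2)$. Running the coordinate change (now with $Q$ given by (\ref{Q})), the image $\tilde\gamma$ is an SLE$_\kappa(\kappa(\lambda+3)-6,\kappa\nu)$, and since its weight $\kappa(\lambda+3)-6>0>\kappa/2-2$ at the force point $\tilde V$ keeps $\tilde\gamma$ from ever reaching $\tilde V$, one gets $\tilde S=\infty$, which is case (iii): $\gamma$ hits $-1$ almost surely. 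At that instant $\gamma$ also swallows $(-1,0)$, so all three marked points and the Loewner tip merge; the effective force weight there is $\rho_1+\rho_2=-\kappa(\lambda+2)$, which is $<-2$ for $\kappa=8/3$, $\lambda>-3/4$ (the associated Bessel process having dimension $1+2(\rho_1+\rho_2+2)/\kappa=-2\lambda-3/2<0$), so the evolution (\ref{hsle}) is absorbed and cannot be continued.

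The analytic content is already contained in the lemmas quoted above, so the main difficulty is the bookkeeping: matching the analytic trichotomy (cases (i)/(ii)/(iii), equivalently $\tilde S<\infty$ versus $\tilde S=\infty$) with the geometry of $\gamma$ near the real line, and checking that the several local absolute-continuity statements patch consistently across the stopping time $T$ and in the limit $M\to\infty$. The step needing most care is this geometric dictionary --- in particular that $\tilde S=\infty$ really corresponds to $\gamma$ accumulating at the single point $-1$ rather than at some point of $(-\infty,-1)$ --- together with confirming that the SLE$_\kappa(\kappa\lambda)$ tail beyond $T$, living in the unbounded component of $\H\setminus\gamma([0,T])$, stays off $\gamma([0,T])$ so that the concatenated curve is genuinely simple.
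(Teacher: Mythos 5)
Your proposal is correct and follows essentially the same route as the paper: Girsanov comparison with SLE$_{\kappa}(\kappa\nu)$ up to $T_M$ for well-definedness, simplicity and Lemma \ref{lem:nu}; the Schramm--Wilson coordinate change and the Bessel-dimension computation $d=-2\lambda<3/2$ to get $\tilde S<\infty$ and hence exclude hitting $-1$; local absolute continuity with SLE$_\kappa(\kappa\lambda)$ near $T$ for Lemma \ref{lem:lambda}; and, in the degenerate case $b=c$, the identification with an SLE$_\kappa(\rho_1,\rho_2)$ whose coordinate-changed weights $(\kappa(\lambda+3)-6,\kappa\nu)$ force $\tilde S=\infty$, i.e.\ the curve hits $-1$. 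Your extra computation that the merged force point then carries total weight $-\kappa(\lambda+2)<-2$ (negative Bessel dimension) is a welcome explicit justification of the paper's bare assertion that the process cannot be continued past that time.
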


\section{One-sided restriction}\label{s5}

As in the case of the chordal conformal restriction, it will be useful in the trichordal case to first study the following larger family of random sets, that satisfy conformal restriction on just one side: 
In the $(\H,-1,0,\infty)$ setting, let $\Omega^1(\H,-1,0,\infty)$ be the collection of simply connected and closed $K\subset\overline\H$ such that $K\cap\R=(-\infty,0]$ and $K$ is unbounded.
 A probability measure $\P^1_{\H,-1,0,\infty}$ on $\Omega^1(\H,-1,0,\infty)$ is said to verify \emph{one-sided trichordal conformal restriction for the side $(0,\infty)$} if for all $A\in\mathcal{Q}^{(0,\infty)}$, the law of $\varphi_A(K)$ conditioned on $\{K\cap A=\emptyset\}$ is the same as the law of $K$.

Clearly, if a random set satisfies (three-sided) trichordal conformal restriction, it also defines a set that satisfies trichordal one-sided conformal restriction (just add the connected components of the complement of the restriction sample, that have part of the negative half-line on their boundary). 

The argument given in \S\,\ref{Characterization} shows also that there exist three constants $\alpha,\beta,\gamma\in\R$ such that for all $A\in\mathcal{Q}^{(0,\infty)}$,
\begin{align*}
\P[K\cap A=\emptyset]=\phi_A'(\infty)^\alpha\phi_A'(-1)^{\beta}\phi_A'(0)^\gamma
\end{align*}
and conversely, it is easy to see there exists at most one one-sided restriction measure, that we denote by $\P^1 (\alpha, \beta, \gamma)$, that satisfies this identity. Our goal is now to determine the range of accessible values 
for these three parameters.

In the following subsections, we are first going to construct the right boundaries of the $\P^1(\alpha,\beta,\gamma)$ samples using hSLE curves, for a certain range of $\alpha,\beta,\gamma$. Then we will construct some limiting cases
that are not covered by this first construction, using SLE$(\rho)$ curves.
We will then use Poisson point process of Brownian excursions to construct yet another sub-family of $\P^1(\alpha,\beta,\gamma)$ distributions. In the end, we will prove that these three constructions combined together will give the full range of possible values for $\alpha,\beta,\gamma$.

\subsection{Construction via hSLE curves}\label{sec:construction-by-hsle}
In this section, we are going to construct one-sided restriction measures by constructing their right boundaries that are actually the hSLEs defined in the previous section. 
To this purpose, we will adopt the same strategy as the one used by Lawler, Schramm and Werner in the chordal case \cite{MR1992830}: 
We will first find out an explicit  martingale $(M_t, t\ge 0)$, which we will then show to correspond to the quantity $\P(\gamma\cap A=\emptyset | \gamma[0,t])$ 
by inspecting its $L^1$ convergence.
Let $\gamma$ be a hSLE$_{8/3}(\lambda,\mu,\nu)$ such that $\lambda, \mu, \nu$ are in (\ref{first-condition}).
Let $A\in\mathcal{Q}^{(0,\infty)}$.  Let $A_t:=g_t(A)$ and $h_t:=g_{A_t}$.  
 \begin{figure}[h!]
    \centering
    \includegraphics[width=0.86\textwidth]{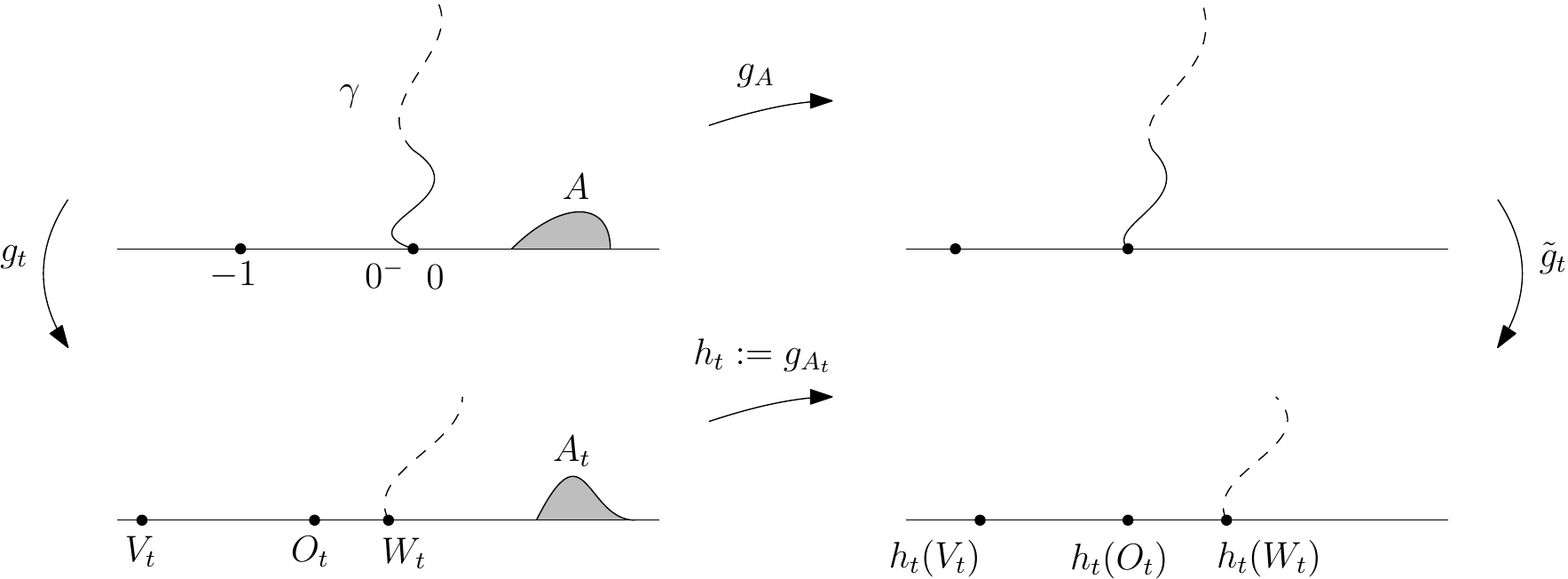}
    \caption{Commutative diagram}
    \label{fig:commutative-diagram}
\end{figure}

For all $t>0$, when neither $W_t-O_t$ nor $O_t-V_t$ is $0$, 
let us define
\begin{align*}
%\label{M_t}
M_t: =h_t'(W_t)^{5/8}h_t'(O_t)^{n}h_t'(V_t)^m
\left(\frac{h_t(O_t)-h_t(V_t)}{O_t-V_t}\right)^{l-m-n+\lambda}\\
{G\left(\frac{h_t(W_t)-h_t(O_t)}{h_t(O_t)-h_t(V_t)}\right)} \bigg/ {G\left(\frac{W_t-O_t}{O_t-V_t}\right)}.
\end{align*}
Knowing that as $t\to 0$
\begin{align*}
\frac{h_t(W_t)-h_t(O_t)}{h_t(O_t)-h_t(V_t)}\to 0, \, \frac{W_t-O_t}{O_t-V_t}\to 0,\, \frac{h_t(W_t)-h_t(O_t)}{h_t(O_t)-h_t(V_t)} \bigg/  \frac{W_t-O_t}{O_t-V_t}\to \frac{g_A'(0)}{g_A(0)-g_A(-1)}
\end{align*}
and knowing
 the expansion (\ref{z-to-0}), we define by continuity
\begin{align}\label{M0}
M_0:=\lim_{t\to 0} M_t= g_A'(0)^{5/8+n+\nu} g_A'(-1)^{m} \left( g_A(0)-g_A(-1) \right)^{l-m-n+\lambda-\nu}.
\end{align}
If $\gamma$ hits $(-1,0]$ at a time $T_1<\infty$, then $V_{T_1}<O_{T_1}=W_{T_1}$. Define by continuity

$$M_{T_1}:=\lim_{t\to {T_1}}M_t=h_{T_1}'(W_{T_1})^{5/8+n+\nu}h_{T_1}'(V_{T_1})^m\left(\frac{h_{T_1}(O_{T_1})-h_t(V_{T_1})}{O_{T_1}-V_{T_1}}\right)^{l-m-n+\lambda-\nu}.$$ 
If $\gamma$ hits $(-\infty,-1)$ at a time ${T_2}<\infty$, then $V_{T_2}=O_{T_2}=W_{T_2}$. Moreover, 
$$\frac{h_t(W_t)-h_t(O_t)}{h_t(O_t)-h_t(V_t)}\underset{t\to {T_2}}{\longrightarrow}\infty.$$  
Let us define by continuity
$$M_{T_2}:=\lim_{t\to {T_2}}M_t=h_{T_2}'(W_{T_2})^{5/8+l+\lambda}.$$
After the first time ${T_2}$ that $\gamma$ hits $(-\infty,-1)$, the points $V_t$ and $O_t$ will stick together. Thereafter let 
$$M_t=h_t'(W_t)^{5/8}h_t'(O_t)^{l+m+\lambda-\nu-\mu} \left(\frac{h_t(W_t)-h_t(O_t)}{W_t-O_t}\right)^{\nu+\mu-m}.$$

\begin{remark}
Note that $M_t$ is well defined only for $t\le T_A=\inf\{t: \gamma(0,t)\cap A=\emptyset\}$. Note that this $T_A$ should not be confused with the $T$ of the previous section; we will keep this definition of $T_A$ until the end of the paper.
\end{remark}

\begin{remark}
As mentioned at the beginning of \S\,\ref{sec:hsle}, conditionally on $\gamma[0,t]$, the remainder of the curve should satisfy a kind of four-point restriction property, 
and the martingale $M_t$ indeed take the form of four-point restriction probabilities as suggested by (\ref{dubedat}).
\end{remark}

\begin{lemma}\label{martingale-1-side} The process
$(M_t, t\ge 0)$ is a local martingale. Moreover for all $A\in\mathcal{Q}^{(0,\infty)}$, there exist a constant $C$, possibly dependent on $\lambda,\mu,\nu$ and $A$, such that $0\le M_t\le C$ for all $0\le t\le T_A$ a.s.
\end{lemma}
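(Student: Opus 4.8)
The plan is to establish the two claims separately: first that $(M_t)$ is a local martingale, and then that it is bounded on $[0,T_A]$.

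\textit{Local martingale property.} I would compute $dM_t$ using It\^o's formula. The process $M_t$ is a product of powers of $h_t'(W_t), h_t'(O_t), h_t'(V_t)$, a power of $(h_t(O_t)-h_t(V_t))/(O_t-V_t)$, and a ratio $G(Y_t)/G(Y_0)$ where $Y_t := (h_t(W_t)-h_t(O_t))/(h_t(O_t)-h_t(V_t))$. The maps $h_t = g_{A_t}$ evolve according to a Loewner-type flow driven by the image under $g_t$ of the hull $A$; the standard way to handle this is to differentiate $h_t$ in $t$ using the chain rule $h_t = g_{g_t(A)}$ together with $g_{t}$'s Loewner equation, obtaining evolution equations for $h_t'(W_t), h_t''(W_t)$, etc., in terms of $h_t'$, $h_t''$ evaluated at the three points $W_t, O_t, V_t$ and their differences. (These are the same kinds of computations that appear in \cite{MR1992830} for the chordal martingale.) Then I plug in the SDE $dW_t = \sqrt{8/3}\,dB_t + J(W_t - O_t, W_t - V_t)\,dt$ from \eqref{hsle} together with $dO_t, dV_t$, and write $d\log M_t$ as a sum of a $dB_t$ term and a $dt$ term. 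The drift must vanish: this is where the specific choice of exponents ($5/8$, $n$, $m$, $l-m-n+\lambda$) and the fact that $G$ solves the ODE \eqref{Gdiff} enter. Concretely, after collecting terms, the $dt$ part should reduce — using the hypergeometric ODE \eqref{Gdiff} for $G$ at the argument $Y_t$ (which rewrites $G''$ in terms of $G'$ and $G$) and the definition $Q = zG'/G$, $J = (\kappa/p)\,Q(p/(q-p))$ — to an identity that holds because the exponents were chosen precisely to cancel the remaining polynomial expressions in the cross-ratios. I expect this to be the lengthy but routine part: the a priori knowledge that $M_t$ is meant to be $\P(\gamma \cap A = \emptyset \mid \gamma[0,t])$ guarantees the cancellation works out. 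One also needs to check continuity of $M_t$ across the stopping times $T_1, T_2$ (where $\gamma$ hits $(-1,0]$ or $(-\infty,-1)$) so that the piecewise definition glues into a genuine semimartingale; the matching of exponents in the boundary-value definitions of $M_{T_1}, M_{T_2}$ (and the use of the asymptotics \eqref{z-to-0}, \eqref{z-to-infty} of $G$) is exactly what makes this work, and near those times one can also invoke the local absolute continuity with respect to SLE$_{8/3}(\kappa\nu)$, resp. SLE$_{8/3}(\kappa\lambda)$, established in \S\,\ref{sec:girsanov} and \S\,\ref{sect:sle-concl}.

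\textit{Boundedness.} This is the part I expect to be the main obstacle, since $M_t$ is a product of factors that are individually unbounded. The nonnegativity is clear: each $h_t'$ is positive (as $h_t$ is conformal from a domain in $\H$ onto $\H$), the differences $h_t(O_t) - h_t(V_t)$ and $O_t - V_t$ have the same sign, and $G$ is strictly positive on $(0,\infty)$ by Lemma \ref{G}; at the degenerate/boundary times the limiting expressions are likewise nonnegative. For the upper bound, the strategy is to exploit that $h_t = g_{A_t}$ where $A_t = g_t(A)$ has bounded half-plane capacity, uniformly in $t \le T_A$: since $A$ is a fixed compact hull at positive distance from $\{-1,0\}$ and bounded away from $\infty$, the image $A_t$ stays in a fixed compact region of $\H$ and all three points $W_t, O_t, V_t$ stay uniformly bounded away from $A_t$ for $t \le T_A$. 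Hence $h_t'(W_t), h_t'(O_t), h_t'(V_t)$ are each bounded above (by $1$, actually, since $h_t$ is a restriction map — $\phi_A' \le 1$ at boundary points not near $A$), and the cross-ratio factor $(h_t(O_t)-h_t(V_t))/(O_t - V_t)$ is bounded above and below. The delicate point is the ratio $G(Y_t)/G(Y_0)$: here $Y_t$ may go to $0$ or $\infty$, so one must control $G$ near both ends using the asymptotics $G(z) \sim z^\nu$ as $z \to 0$ (from \eqref{z-to-0}) and $G(z) \sim c_0 z^\lambda$ as $z \to \infty$ (from \eqref{z-to-infty}), together with the fact (from Proposition \ref{prop:hsle} and the coordinate-change analysis of \S\,\ref{sec:hsle}) that $\gamma$ does not hit $-1$ — so $Y_t$ cannot actually converge to both ends pathologically. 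Combining: on the event one is in case (i)/(ii), as $t \to T$ one has $Y_t \to \infty$ and $G(Y_t) \asymp Y_t^\lambda$; since $h_t'(W_t) \to 0$ at a compensating rate (again because $\gamma$ reaches $\infty$ or a point of $(-\infty,-1)$ and $h_t$ is a bounded-capacity restriction map), the product $h_t'(W_t)^{5/8} \cdot (\text{power of } Y_t)$ stays bounded. In short, the a priori interpretation as a conditional probability tells us the answer is in $[0,1]$, and the job is to extract a genuine analytic bound from (a) the uniform control on $h_t$ for $t \le T_A$ coming from $A$ being a fixed hull, and (b) the one-sided asymptotics of the hypergeometric-type function $G$ at $0$ and $\infty$ established in the Lemmas of \S\,\ref{sec:hsle}. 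I would organize the argument by cases according to whether and where $\gamma$ touches the real line, using in each case the appropriate limiting form of $M_t$ and the corresponding absolute-continuity statement to control $M$ up to and past the hitting time.
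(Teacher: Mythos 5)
Your outline of the local-martingale part matches the paper: the drift cancellation via It\^o's formula, the ODE (\ref{Gdiff}) for $G$ and the definition (\ref{def:J}) of $J$ is exactly what the paper does (the computation is deferred to the appendix there as well), so that part is acceptable as a sketch. The genuine gap is in the boundedness part, which is the actual content of the proof in the text. Your argument leans on (a) the a priori interpretation of $M_t$ as the conditional probability $\P(\gamma\cap A=\emptyset\mid\gamma[0,t])$ --- which is circular, since Lemma \ref{martingale-1-side} is precisely what is needed (via optional stopping/martingale convergence in Proposition \ref{prop:one-sided}) to justify that interpretation --- and (b) an unproved assertion that $h_t'(W_t)\to 0$ ``at a compensating rate'' against $G(Y_t)\asymp Y_t^{\lambda}$. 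Moreover, bounding factors individually cannot work: within (\ref{first-condition}) the exponents $m=U^{-1}(\mu)$ and $n=U^{-1}(\nu)$ may be negative (they can be as low as $-1/24$), and $l-m-n+\lambda$ has no definite sign, while $h_t'(O_t)$, $h_t'(V_t)$ and $(h_t(O_t)-h_t(V_t))/(O_t-V_t)$, though $\le 1$, are not uniformly bounded below for $t\le T_A$; a factor raised to a negative exponent can therefore blow up and must be compensated by the others, and you never exhibit the compensation.

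The paper's proof supplies exactly such a mechanism, which is absent from your proposal: assume $\partial A$ is a simple curve (such hulls are dense), unzip $A_t$ by a second Loewner flow parametrized by its half-plane capacity $s\in[0,S_t]$, and write $\log M_t$ as the integral in (\ref{exp-M}), with integrand $-2\bigl[P_{\lambda,\mu,\nu}(x,y)+q\bigl(\frac{x-1}{y-x}y\bigr)x(1-y)\bigr]$ evaluated at $x=(x_s-w_s)/(x_s-o_s)$, $y=(x_s-w_s)/(x_s-v_s)$. Since $S_t=\capacity(A_t)\le\capacity(A)$ by subadditivity of half-plane capacity, boundedness of $M_t$ for all $t\le T_A$ reduces to a uniform lower bound on this integrand; for $A\in\mathcal{Q}^{(0,\infty)}$ one has $v_s\le o_s\le w_s\le x_s$, so $(x,y)$ ranges over the compact set $\{0\le y\le x\le 1\}$ and continuity of $f$ suffices (no asymptotics of $G$ at $0$ or $\infty$, and no case analysis at hitting times, are needed at this stage). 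Without this global representation --- or some other quantitative version of the cancellations --- your case-by-case discussion does not yield a bound valid simultaneously for all $t\le T_A$, so the proposal as written does not prove the second assertion of the lemma.
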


\begin{proof}
The fact that $M$ is a local martingale can be shown by straightforward but {longish} It\^o computation that we postpone to the \S\,\ref{appendix}.

Without loss of generality, we can assume that $\partial A$ is a simple curve, since the semigroup generated by such $A$ will be dense in $\mathcal{Q}^{(0,\infty)}$.
For a fixed time $t$,  let $S_t:=\capacity(A_t)$.
Let $\beta_t:[0,S_t]\to\overline\H$ be the simple curve $\overline{\partial A_t\cap\H}$ starting from $\beta_t(0)=\inf(A_t\cap\R)$ and parametrized by half plane capacity. 

Note that $S_t=\capacity(A_t)\le \capacity(A)$ for all $t\ge 0$ because
that $$
\capacity(\gamma(0,t))+\capacity(A_t)=\capacity(A\cup \gamma(0,t)) \le \capacity(A)+\capacity(\gamma(0,t))
$$
by the well-known subadditivity property of the half-plane capacity.
Set $\hat g_s=g_{\beta_t[0,s]}$ and $x_s=\hat g_s(\beta_t(x))$. By chordal version of Loewner's theorem, $\partial_s\hat g_s(z)=2/(\hat g_s(z)-x_s)$. We have $\hat g_{S_t}=h_t$ since both are the normalized map from $\H\setminus A_t$ onto $\H$. Since $\partial_s\hat g_s'(z)=-2\hat g_s'(z)/(\hat g_s(z)-x_s)^2$, it follows that
\begin{align*}
\partial_s \log \hat g'_s(z)=\frac{-2}{(\hat g_s(z)-x_s)^2}.
\end{align*}
Note that $V_t\le O_t\le W_t \le x_0$ because $A_t\in\mathcal{Q}^{(0,\infty)}$. Let $v_s=\hat g_s(V_t), o_s=\hat g_s(O_t), w_s=\hat g_s(W_t)$. Then $v_s\le o_s\le w_s \le x_s$.  We  have
\begin{align*}
\partial_s\log(w_s-v_s)=\frac{-2}{(x_s-w_s)(x_s-v_s)}.
\end{align*}
Let
\begin{align}\label{qw}
q(z):=-zw'(-z)/w(-z),
\end{align}
then
\begin{align*}
\partial_s\log w\left(\frac{w_s-o_s}{o_s-v_s}\right)=-2q\left(\frac{w_s-o_s}{o_s-v_s}\right)\left(\frac{1}{(x_s-w_s)(x_s-o_s)}-\frac{1}{(x_s-o_s)(x_s-v_s)} \right).
\end{align*}
Therefore, for 
\begin{align}\label{P_lmn}
P_{\lambda,\mu,\nu}(x,y):=nx^2+(l-m-n+\lambda-\nu-\mu)xy+my^2+\nu x+\mu y+\frac58,
\end{align}
 we have
\begin{equation}\label{exp-M}
\begin{split}
M_t=\exp \int_0^{S_t} -2&\left[ P_{\lambda,\mu,\nu}\left(\frac{x_s-w_s}{x_s-o_s},\frac{x_s-w_s}{x_s-v_s}\right)\right.\\
&\left.+q\left(\frac{w_s-o_s}{o_s-v_s}\right)\left(\frac{1}{(x_s-w_s)(x_s-o_s)}-\frac{1}{(x_s-o_s)(x_s-v_s)} \right) \right] ds.
\end{split}
\end{equation}
Since $S_t$ is uniformly bounded, it is enough to prove that the quantity
\begin{align*}
f(x,y):=P_{\lambda,\mu,\nu}(x,y)+q\left(\frac{x-1}{y-x}y \right)x(1-y)
\end{align*}
is uniformly bounded from below for all $0\le y\le x\le 1$. It is true because $f$ is a continuous function and $0\le y\le x\le 1$ is a compact set.
\end{proof}

\begin{proposition}\label{prop:one-sided}
For $\lambda,\mu,\nu$ in (\ref{first-condition}), let $\gamma$ be a hSLE$_{8/3}(\lambda,\mu,\nu)$  from $0$ to $\infty$, having $-1,0^-$ as marked points. The closure of the connected component of $\H\setminus \gamma$ at the left of $\gamma$ has the  law  $\P^1(\alpha,\beta,\gamma)$.
\end{proposition}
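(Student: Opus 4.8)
The plan is to establish, for every $A\in\mathcal{Q}^{(0,\infty)}$, the restriction formula
\begin{align*}
\P[K\cap A=\emptyset]=g_A'(-1)^{\beta}g_A'(0)^{\gamma}\bigl(g_A(0)-g_A(-1)\bigr)^{\alpha-\beta-\gamma},
\end{align*}
where $K=\overline{\H}\setminus\Omega_R$ and $\Omega_R$ is the unbounded component of $\H\setminus\gamma$ whose boundary contains $(0,\infty)$. Since $\gamma$ is a simple curve from $0$ to $\infty$ that, by Proposition \ref{prop:hsle} and absolute continuity with SLE$_\kappa(\rho)$, does not meet $(0,\infty)$, the set $K$ lies in $\Omega^1(\H,-1,0,\infty)$, and for each such $A$ one has $\{K\cap A=\emptyset\}=\{\gamma\cap A=\emptyset\}=\{T_A=\infty\}$ (the key point being that $\gamma$ cannot separate $A$ from $(0,\infty)$ without hitting $(0,\infty)$). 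As the events $\{K\cap A=\emptyset\}$ generate the relevant $\sigma$-field, and as the above formula holding for all $A$ both forces $K$ to satisfy one-sided restriction (by the standard cascading argument using the chain rule at the fixed points $-1,0,\infty$) and identifies the law as $\P^1(\alpha,\beta,\gamma)$, it is enough to prove the displayed identity; by a monotone-approximation argument it suffices to treat $A$ with $\partial A$ a simple curve.

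The main tool is the process $M$ of Lemma \ref{martingale-1-side}. Being a bounded local martingale on $[0,T_A]$, the process $(M_{t\wedge T_A})_{t\ge0}$ is a bounded martingale, hence converges a.s.\ and in $L^1$ to a limit $M_{T_A}$ with $M_0=\E[M_{T_A}]$. The initial value is computed in (\ref{M0}): $M_0=g_A'(0)^{5/8+n+\nu}g_A'(-1)^{m}(g_A(0)-g_A(-1))^{l-m-n+\lambda-\nu}$. Using (\ref{align:lmn}), (\ref{alphabetagamma}) and the identity $\tfrac58+U^{-1}(x)+x=U^{-1}(x+\tfrac34)$ (immediate from $U^{-1}(x)=x(2x+1)/3$) one gets $\beta=m$, $\gamma=\tfrac58+n+\nu$ and $\alpha-\beta-\gamma=l-m-n+\lambda-\nu$, so $M_0$ is exactly the right-hand side of the desired formula. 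Thus everything reduces to showing $M_{T_A}=\indic_{\{T_A=\infty\}}$ almost surely.

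I would prove this identification by splitting into two events. On $\{T_A<\infty\}$: as $t\uparrow T_A$ one has $A_t=g_t(A)\to A_{T_A}$, a hull of capacity bounded away from $0$ (since $\capacity(A_t)=\capacity(A\cup\gamma[0,t])-2t$), while $\gamma(t)$ tends to a point of $A$, so $W_t=g_t(\gamma(t))$ tends to a point of $\overline{A_{T_A}}\cap\R$ and standard distortion estimates for the uniformizing map $h_t=g_{A_t}$ give $h_t'(W_t)\to0$; at the same time $O_t$ and $V_t$ stay at positive distance from $A_t$ (because $A$ is bounded away from $\{-1,0\}$ and $g_t$ is conformal off $\gamma[0,t]$), so $h_t'(O_t)$, $h_t'(V_t)$ and $(h_t(O_t)-h_t(V_t))/(O_t-V_t)$ converge to finite positive limits and the two arguments of $G$ appearing in $M_t$ stay in a compact subset of $(0,\infty)$, where $G$ is positive by Lemma \ref{G}. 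Since the exponent of $h_t'(W_t)$ in $M_t$ equals $5/8>0$, this forces $M_{T_A}=0$. On $\{T_A=\infty\}$: by Lemma \ref{lem:lambda} the curve $\gamma$ is transient (either $\gamma_t\to\infty$, or $\gamma$ reaches $(-\infty,-1)$ in finite time and thereafter evolves as an SLE$_{8/3}(\kappa\lambda)$ process, which is transient since $\kappa\lambda>-2$); a standard fact about transient curves avoiding a fixed hull then gives $\capacity(g_t(A))\to0$, and plugging $S_t=\capacity(g_t(A))$ into the representation (\ref{exp-M}) of $M_t$ as $\exp(-2\int_0^{S_t}[\cdots]\,ds)$, with the integrand bounded on $\{0\le y\le x\le 1\}$ as shown in the proof of Lemma \ref{martingale-1-side}, yields $M_t=\exp(O(S_t))\to1$. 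Combining the two cases, $\E[M_{T_A}]=\P[T_A=\infty]=\P[K\cap A=\emptyset]$, which equals $M_0$, the desired formula.

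I expect the identification of $M_{T_A}$ to be the main obstacle. It rests on two standard-but-delicate conformal-geometry inputs — that $h_t'(W_t)\to0$ as $\gamma$ approaches $A$, together with the simultaneous control that the remaining factors of $M_t$ stay bounded away from $0$ and $\infty$; and that $\capacity(g_t(A))\to0$ for a transient $\gamma$ avoiding $A$ — which here must be dovetailed with the transience analysis of Section \ref{sec:hsle} (in particular the behavior of $\gamma$ after a possible excursion onto $(-\infty,-1)$) and with the fact that $\gamma$ never meets $(0,\infty)$, which is precisely what makes ``$\gamma$ avoids $A$'' coincide with ``$K$ avoids $A$''.
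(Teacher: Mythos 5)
Your proposal is correct and follows essentially the same route as the paper: the bounded (local) martingale of Lemma \ref{martingale-1-side}, the identification of $M_0$ with the right-hand side of the restriction formula via the exponent algebra, the identification of the terminal value $M_{T_A}=\indic_{\{\gamma\cap A=\emptyset\}}$, and optional stopping/martingale convergence; your extra remarks (that $\{K\cap A=\emptyset\}=\{\gamma\cap A=\emptyset\}$ and that the formula for all $A\in\mathcal{Q}^{(0,\infty)}$ implies the one-sided restriction property by cascading) are left implicit in the paper but are fine. The one place where you deviate is the proof that $M_t\to1$ on $\{T_A=\infty\}$: the paper argues factor by factor ``as in \cite[Lemmas 6.2, 6.3]{MR1992830}'', while you use $\capacity(g_t(A))\to0$ together with the representation (\ref{exp-M}); be aware that ``$\gamma$ transient and avoiding $A$ implies $\capacity(g_t(A))\to0$'' is not a consequence of transience alone (a curve tending to $\infty$ while hugging the real axis avoids $A$ yet leaves $\capacity(g_t(A))$ bounded away from $0$), so this step still requires the LSW-type estimates for the specific SLE$_{8/3}(\rho)$/hSLE behaviour at infinity -- which is exactly the input the paper invokes, so your argument is morally the same rather than a shortcut.
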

Recall that $\alpha,\beta,\gamma$ are given by (\ref{alphabetagamma}) hence
\begin{align*}
\alpha=\frac23\lambda^2+\frac{4}{3}\lambda+\frac{5}{8}, \quad
\beta=\frac{2}{3}\mu^2+\frac{1}{3}\mu, \quad
\gamma=\frac{2}{3}\nu^2+\frac{4}{3}\nu+\frac{5}{8}.
\end{align*}

\begin{proof}
By similar arguments as in \cite[Lemma 6.2, Lemma 6.3]{MR1992830}, we can show that
\begin{itemize}
\item[-] If $\gamma\cap A=\emptyset$, then ${T_A}=\infty$. As $t\to {T_A}$,
\begin{align*}
&h_t'(W_t)\to 1, \, h_t'(O_t)\to 1, \, h_t'(V_t)\to 1, \, \frac{h_t(O_t)-h_t(V_t)}{O_t-V_t}\to 1,\\
&\frac{h_t(W_t)-h_t(O_t)}{h_t(O_t)-h_t(V_t)}\to\infty,\, \frac{W_t-O_t}{O_t-V_t}\to\infty
\end{align*}
and
\begin{align*}
\frac{h_t(W_t)-h_t(O_t)}{h_t(O_t)-h_t(V_t)} \bigg/ \frac{W_t-O_t}{O_t-V_t} \to 1.
\end{align*}
By the expansion (\ref{z-to-infty}), we have $M_t\to 1$.

\item[-] If $\gamma\cap A\not=\emptyset$, then ${T_A}<\infty$. As $t\to {T_A}$,
$
h_t'(W_t)\to 0
$
and the quantities $$h_t'(O_t), \, h_t'(V_t), \, \frac{h_t(O_t)-h_t(V_t)}{O_t-V_t},\,\frac{h_t(W_t)-h_t(O_t)}{h_t(O_t)-h_t(V_t)},\, \frac{W_t-O_t}{O_t-V_t}$$ all go to  finite limits.
Hence $M_t\to 0$.
\end{itemize}
Therefore $M_t\to\mathbf{1}_{\gamma\cap A=\emptyset}$ as $t\to {T_A}$. Since Lemma \ref{martingale-1-side} says that $(M_t, t\ge 0)$ is a bounded martingale, we get by martingale convergence theorem and (\ref{M0}) that
\begin{align*}
\P({\gamma\cap A=\emptyset})&=\E(\mathbf{1}_{\gamma\cap A=\emptyset})=\E(M_0)=g_A'(0)^{5/8+n+\nu} g_A'(-1)^{m} \left( g_A(0)-g_A(-1) \right)^{l-m-n+\lambda-\nu}\\
&=\varphi_A'(0)^{(2/3)\nu^2+(4/3)\nu+5/8} \varphi_A'(-1)^{(2/3)\mu^2+(1/3)\mu} \varphi_A'(\infty)^{(2/3)\lambda^2+(4/3)\lambda+5/8}.
\end{align*}
\end{proof}

\begin{remark}
For $\lambda,\mu,\nu$ in (\ref{first-condition}), using the family of hSLE$(\lambda, \mu, \nu)$ curves, we can construct $\P^1(\alpha,\beta,\gamma)$ for all $(\alpha,\beta,\gamma)\in\R^3$ such that
\begin{align}\label{range:1}
 \alpha> 0,\quad\gamma>0,\quad -\frac{1}{24} \le \beta <  \tilde{\xi}(\alpha,\gamma) .
\end{align}
\end{remark}

\subsection{A limiting case: construction via SLE$_{8/3}(\rho_1, \rho_2)$ curves}\label{sec:limit}
In this section, we want to construct the case $\alpha>0, \gamma>0, \beta=\tilde\xi(\alpha,\gamma)$. This corresponds to the degenerate case in \S\,\ref{degenerated case}, in which we have
\begin{align}\label{case b=c}
\lambda>-\frac{3}{4}, \nu>-\frac{3}{4}, \mu=\nu+\lambda+\frac32.
\end{align}
{By Proposition \ref{prop:hsle}, an hSLE with such limiting parameters is actually an SLE$(\rho_1, \rho_2)$.} We will construct a family of random sets $K$ using SLE$_{8/3}(\rho_1,\rho_2)$ processes and later prove that they satisfy one-sided trichordal restriction.
The construction goes as the following.

\begin{itemize}
\item[(i)] For $\rho_1=\kappa\nu, \rho_2=\kappa\lambda+2$, let $\gamma_0$ be a SLE$_{8/3}(\rho_1,\rho_2)$ in $\mathbb{H}$ from $0$ to $\infty$ having $0^-,1$ as marked points. 
Let $K_0$ be the closure of the connected component of $\H\setminus\gamma_0$ which is to the left of $\gamma_0$.
\item[(ii)]
Let $\tilde K$ be a one-sided chordal restriction measure of exponent $\alpha(\lambda)$ in configuration $(\H, 0,\infty)$ for the side $(-\infty,0)$, independent of $\gamma_0$. 
Let $\phi$ be the conformal map from $\H$ to $\H\setminus K_0$ that sends the boundary $[0,\infty]$ to $[1,\infty]$.
Then $K_1:=\phi(\tilde K)$ is a one-sided chordal restriction measure of exponent $\alpha(\lambda)$ in the configuration $(\H\setminus K_0, 1, \infty)$ for the side $\gamma_0\cup(0,1)$.
\item[(iii)] Let $K:=f( K_0\cup K_1)$ where $f: z\mapsto {z}/({1-z})$ is the M\"obius transformation from $\H$ to itself that sends $\infty,0,1$ to $-1,0,\infty$.
\end{itemize}

\begin{figure}[h!]
    \centering
    \includegraphics[width=0.86\textwidth]{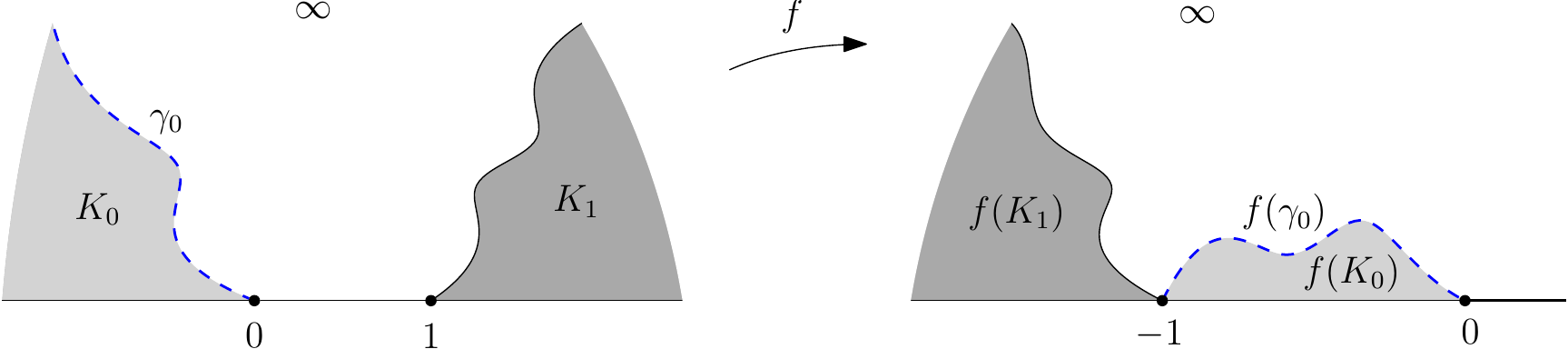}
    \caption{Construction of $K$.}
    \label{fig:limit-case}
\end{figure}

We will again use a proper martingale in order to prove the restriction property of $K$.
For all $A\in\mathcal{Q}^{(0,1)}$, let $W_t, O_t, V_t$ 
be associated to the Loewner curve $\gamma_0$ in a way that
$W_t$ is the driving function that generates $\gamma$ and $O_t=g_t(0), V_t=g_t(1)$. Define $h_t$
in the same way as in \S\, \ref{sec:construction-by-hsle}. 

\begin{lemma}
The following is a bounded martingale:
\begin{align*}
N_t&:=h_t'(W_t)^{5/8}h_t'(O_t)^{a_1} h_t'(V_t)^{a_2} \\
&\left(\frac{h_t(W_t)-h_t(O_t)}{W_t-O_t}\right)^{b_{01}}\left(\frac{h_t(W_t)-h_t(V_t)}{W_t-V_t}\right)^{b_{02}}\left(\frac{h_t(V_t)-h_t(O_t)}{V_t-O_t}\right)^{b_{12}},
\end{align*}
where
$$
a_1=\frac{\rho_1(4+3\rho_1)}{32}, a_2=\frac{\rho_2(4+3\rho_2)}{32}, b_{01}=\frac{3}{8}\rho_1, b_{02}=\frac{3}{8}\rho_2 , b_{12}=\frac{3}{16}\rho_1\rho_2.
$$
\end{lemma}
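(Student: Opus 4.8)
The plan is to mimic exactly the argument used in Lemma~\ref{martingale-1-side} and its proof: first establish the local martingale property by an It\^o computation, then upgrade to a genuine (bounded) martingale by exhibiting a uniform bound via a compactness argument. For the local martingale part, I would write $N_t = \exp\bigl(\int_0^{S_t} (\cdots)\,ds\bigr)$, where the integration is along the Loewner flow $\hat g_s = g_{\beta_t[0,s]}$ that peels off the boundary $\partial A_t$ of the image hull $A_t = g_t(A)$ exactly as in the proof of Lemma~\ref{martingale-1-side}. The derivatives $\partial_s\log \hat g_s'(z) = -2/(\hat g_s(z)-x_s)^2$ and $\partial_s\log(w_s - v_s) = -2/((x_s-w_s)(x_s-v_s))$ (and the analogous identities for the other two cross-differences $w_s-o_s$, $v_s-o_s$) reduce everything to checking that the drift coming from the $W_t$-SDE cancels against these flow derivatives. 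Concretely, one computes $dN_t$ using the SDE for $W_t$ (with drift $\rho_1/(W_t-O_t) + (\rho_2/(W_t-V_t))$ for the SLE$_{8/3}(\rho_1,\rho_2)$ with $\kappa = 8/3$), the ODEs $dO_t = 2\,dt/(O_t-W_t)$, $dV_t = 2\,dt/(V_t-W_t)$, together with the variation of $h_t$ under the Loewner flow in the $t$-variable; the exponents $a_1, a_2, b_{01}, b_{02}, b_{12}$ are precisely the ones that make the $dt$-coefficient vanish, and the formulas given ($a_i = \rho_i(4+3\rho_i)/32$, $b_{0i} = 3\rho_i/8$, $b_{12} = 3\rho_1\rho_2/16$) are the expected output of matching the second-order $\partial^2$ terms against the first-order drift — this is the standard LSW computation specialized to two force points, and it is a routine (if lengthy) calculation analogous to the one postponed to \S\,\ref{appendix}.

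For the boundedness, I would argue exactly as in the last paragraph of the proof of Lemma~\ref{martingale-1-side}: since $\capacity(A_t) = S_t \le \capacity(A)$ uniformly in $t$ by subadditivity of half-plane capacity, it suffices to show that the integrand in the exponential representation of $N_t$ is bounded below by a constant (depending only on $\rho_1,\rho_2$ and $A$). After the change of variables $x \mapsto (x_s-w_s)/(x_s-o_s)$ and $y \mapsto (x_s-w_s)/(x_s-v_s)$ (both lying in $[0,1]$ with the appropriate ordering coming from $v_s \le o_s \le w_s \le x_s$), the integrand becomes a fixed continuous function of $(x,y)$ on the compact triangle $\{0 \le y \le x \le 1\}$, hence bounded below. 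Note that in this case the factors are elementary powers of cross-ratios rather than hypergeometric functions, so there is no analogue of the function $q$ and the argument is in fact slightly simpler than in Lemma~\ref{martingale-1-side}; one only needs to verify that the polynomial obtained from $P_{\lambda,\mu,\nu}$-type bookkeeping is continuous on the closed triangle, which is immediate. Combined with the nonnegativity of each factor (each $h_t'$ is positive, each cross-ratio ratio is positive), this gives $0 \le N_t \le C$, so $(N_t)$ is a bounded local martingale, hence a martingale.

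The main obstacle is the It\^o computation itself: one must keep careful track of the joint evolution of $W_t$, $O_t$, $V_t$, and of $h_t$ and its derivatives $h_t'$, $h_t''$ at the three points $W_t, O_t, V_t$ under the Loewner flow, and verify that the specific exponents listed produce an exact cancellation of the drift. I would organize this by first recording $\partial_t h_t(z)$, $\partial_t h_t'(z)$ (using that $h_t = g_{A_t}$ and that $A_t$ itself flows), then applying It\^o's formula to $\log N_t$ and collecting terms grouped by $(W_t-O_t)^{-1}$, $(W_t-V_t)^{-1}$, and the $\sqrt{\kappa}\,dB_t$ martingale part. The a priori knowledge that this should work (because $N_t$ is morally $\P(\gamma_0 \cap A = \emptyset \mid \gamma_0[0,t])$ for the two-force-point SLE, matching the four-point restriction heuristic of \eqref{dubedat}) is the guide that the algebra will close; the choice $\kappa = 8/3$ is what makes the central-charge constraint consistent with pure powers (no extra loop-soup correction).
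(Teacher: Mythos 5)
Your outline of the local martingale step and the exponential representation $N_t=\exp\bigl(\int_0^{S_t}-2R(\cdot,\cdot)\,ds\bigr)$ with $S_t\le\capacity(A)$ matches the paper's proof, but the boundedness step contains a genuine gap: you have transplanted the geometry of Lemma \ref{martingale-1-side} into a configuration where it does not hold. In this limiting construction the curve $\gamma_0$ starts at $0$ with force points $0^-$ and $1$, and the hulls $A$ lie in $\mathcal{Q}^{(0,1)}$, i.e.\ \emph{between} the driving point and the second force point; consequently the ordering along the peeling flow is $o_s\le w_s\le x_s\le v_s$, not $v_s\le o_s\le w_s\le x_s$ as you assert. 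With the correct ordering the variable $x=(x_s-w_s)/(x_s-o_s)$ does lie in $[0,1]$, but $y=(x_s-w_s)/(x_s-v_s)$ is $\le 0$ and unbounded, so the integrand is a polynomial $R(x,y)=a_1x^2+a_2y^2+b_{12}xy+b_{01}x+b_{02}y+\tfrac58$ on the \emph{non-compact} strip $\{0\le x\le 1,\ y\le 0\}$, and your compact-triangle continuity argument does not apply.

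The lower bound therefore requires an actual argument at infinity: as $y\to-\infty$ with $x\in[0,1]$ the leading term of $R$ is $a_2y^2$, and one must use that $a_2=\rho_2(4+3\rho_2)/32>0$, which holds because $\rho_2=\kappa\lambda+2>0$ for $\lambda>-3/4$. This positivity of the coefficient of $y^2$ is exactly the point of the lemma's hypotheses and is what the paper's proof invokes; without it (and without noticing the unbounded range of $y$) the claimed bound $0\le N_t\le C$ is not established. The rest of your argument (It\^o cancellation fixing the exponents, subadditivity of capacity, positivity of each factor of $N_t$) is consistent with the paper's proof and needs no change.
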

\begin{proof}
We can first prove by It\^o computation that $N$ is a local martingale (for example see \cite{MR1992830}).
Then we should write $N$ in the form of (\ref{exp-M}). Using the same notations as in the proof of Lemma \ref{martingale-1-side}, we have
\begin{align*}
N_t=\exp \int_0^{S_t} -2 R \left( \frac{w_s-x_s}{o_s-x_s}, \frac{w_s-x_s}{v_s-x_s} \right) ds,
\end{align*} 
where
\begin{align*}
R(x,y)=a_1 x^2 +a_2 y^2+b_{12} xy+ b_{01} x+b_{02} y+\frac58.
\end{align*}
Note that in our case $o_s\le w_s\le x_s\le v_s$. Hence it is enough to prove that $R(x,y)$ is bounded from below for $0\le x\le 1, y\le 0$. 
This is true because as $y\to-\infty$, the leading term of $R(x,y)$ is $a_2 y^2$ where $a_2>0$.
\end{proof}

\begin{proposition}\label{prop:limit}
The random set $K$ has law $\P^1\left(\alpha, \tilde\xi(\alpha,\gamma) ,\gamma\right)$.
\end{proposition}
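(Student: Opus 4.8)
The plan is to verify the restriction identity
\[
\P(K\cap A=\emptyset)=\varphi_A'(\infty)^{\alpha}\,\varphi_A'(-1)^{\tilde\xi(\alpha,\gamma)}\,\varphi_A'(0)^{\gamma}\qquad\text{for all }A\in\mathcal{Q}^{(0,\infty)},
\]
where $\alpha=\tfrac23\lambda^2+\tfrac43\lambda+\tfrac58$ and $\gamma=\tfrac23\nu^2+\tfrac43\nu+\tfrac58$; by the one-sided analogue of the characterization recorded at the beginning of \S\ref{s5} this identifies the law of $K$ as $\P^1(\alpha,\tilde\xi(\alpha,\gamma),\gamma)$. Transporting through the M\"obius map $f(z)=z/(1-z)$ (which sends $0,1,\infty$ to $0,\infty,-1$ and maps $(0,1)$ onto $(0,\infty)$) — a routine change of variables — and writing $A'=f^{-1}(A)\in\mathcal{Q}^{(0,1)}$ and $\psi_{A'}=(g_{A'}-g_{A'}(0))/(g_{A'}(1)-g_{A'}(0))$ for the conformal map of $\H\setminus A'$ fixing $0,1,\infty$, the identity to be proved becomes
\[
\P\bigl((K_0\cup K_1)\cap A'=\emptyset\bigr)=g_{A'}'(1)^{\alpha}\,g_{A'}'(0)^{\gamma}\,\bigl(g_{A'}(1)-g_{A'}(0)\bigr)^{\tilde\xi(\alpha,\gamma)-\alpha-\gamma}.
\]
Since $A'$ meets $\partial(\H\setminus K_0)$ only along the boundary arc $(0,1)$, along which $K_1$ does not touch, the event on the left decomposes as $\{\gamma_0\cap A'=\emptyset\}\cap\{K_1\cap A'=\emptyset\}$.

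Next I would run a martingale argument of the type used in the chordal case. We have already shown that $(N_t)$ is a bounded martingale, defined up to $T_{A'}=\inf\{t:\gamma_0[0,t]\cap A'\ne\emptyset\}$. Arguing exactly as in the proof of Proposition \ref{prop:one-sided} (which follows \cite[Lemma 6.2, Lemma 6.3]{MR1992830}), one analyses the boundary behaviour of the conformal factors in $N_t$: on $\{\gamma_0\cap A'\ne\emptyset\}$ one has $T_{A'}<\infty$ and $h_t'(W_t)\to 0$ while the other factors stay bounded, so $N_t\to 0$; on $\{\gamma_0\cap A'=\emptyset\}$ one has $T_{A'}=\infty$, $\gamma_0$ escapes to $\infty$, $h_t'(W_t)\to1$, and the remaining factors converge to an explicit conformal functional of $(\gamma_0,A')$ built from the uniformising map of the component of $\H\setminus\gamma_0$ lying to the right of $\gamma_0$. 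The precise choices $\rho_1=\kappa\nu$, $\rho_2=\kappa\lambda+2$ and of $a_1,a_2,b_{01},b_{02},b_{12}$ are made exactly so that this limiting functional is the one-sided chordal restriction factor of exponent $\alpha$; since, conditionally on $\gamma_0$, $K_1$ is by construction a one-sided chordal restriction sample of exponent $\alpha$ in $\H\setminus K_0$ for the side $\gamma_0\cup(0,1)$, this limiting functional equals $\P(K_1\cap A'=\emptyset\mid\gamma_0)$ on that event. Hence $N_{T_{A'}}=\mathbf{1}_{\gamma_0\cap A'=\emptyset}\,\P(K_1\cap A'=\emptyset\mid\gamma_0)=\P\bigl((K_0\cup K_1)\cap A'=\emptyset\mid\gamma_0\bigr)$, and optional stopping for the bounded martingale $N$ gives $\P((K_0\cup K_1)\cap A'=\emptyset)=\E[N_{T_{A'}}]=N_0$.

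It then remains to compute $N_0$. Letting $t\to 0$ in the definition of $N_t$, using $W_0=O_0=0$, $V_0=1$, $h_0=g_{A'}$ and $(h_t(W_t)-h_t(O_t))/(W_t-O_t)\to g_{A'}'(0)$, one finds
\[
N_0=g_{A'}'(0)^{5/8+a_1+b_{01}}\,g_{A'}'(1)^{a_2}\,\bigl(g_{A'}(1)-g_{A'}(0)\bigr)^{b_{02}+b_{12}},
\]
and substituting $\kappa=8/3$, $\rho_1=\tfrac83\nu$, $\rho_2=\tfrac83\lambda+2$ into the values of $a_1,a_2,b_{01},b_{02},b_{12}$ gives $5/8+a_1+b_{01}=\gamma$, $a_2=\alpha$, and $b_{02}+b_{12}=\tfrac43\lambda\nu+\lambda+\nu+\tfrac34$. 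Finally, using $\tilde\xi(\alpha,\gamma)=U^{-1}(U(\alpha)+U(\gamma))=U^{-1}(\lambda+\nu+3/2)$ one checks by a short computation that $\tfrac43\lambda\nu+\lambda+\nu+\tfrac34=\tilde\xi(\alpha,\gamma)-\alpha-\gamma$, so $N_0$ coincides with the desired expression, which completes the proof.

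The main obstacle is the middle step: making rigorous the identification of $\lim_{t\to T_{A'}}N_t$, on the event $\{\gamma_0\cap A'=\emptyset\}$, with the one-sided chordal restriction probability $\P(K_1\cap A'=\emptyset\mid\gamma_0)$. This involves controlling the convergence of $g_t$ and $h_t$ (in particular $h_t'(W_t)\to1$) once $\gamma_0$ has left $A'$ for good and is transient to $\infty$, together with the continuation of $\gamma_0$ past any times it touches $(-\infty,0)$ — which occurs when $\nu<-1/4$ but is harmless since $\rho_1,\rho_2>-2$, so $\gamma_0$ is a continuous curve — and then recognising the limiting product of factors as exactly the conformal factor associated with the exponent $\alpha$, which is where the specific values of $\rho_1,\rho_2$ and of $a_1,a_2,b_{01},b_{02},b_{12}$ enter. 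These points are of the same nature as the corresponding steps in \cite{MR1992830} and in the proof of Proposition \ref{prop:one-sided}, so they can be treated by the same techniques, but the bookkeeping is somewhat involved.
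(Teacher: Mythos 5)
Your proposal is correct and follows essentially the same route as the paper: the same bounded martingale $N_t$, the identification $N_{T_{A'}}=\mathbf{1}_{\gamma_0\cap A'=\emptyset}\,\P(K_1\cap A'=\emptyset\mid\gamma_0)$ via the argument of \cite[\S\,5.2]{MR2060031}, optional stopping to reduce to $N_0$, and the same exponent bookkeeping, with the M\"obius transport by $f$ simply performed at the start rather than at the end. (Incidentally, your reading that it is $a_2=\alpha$, i.e.\ the factor $h_t'(V_t)^{a_2}$, that converges to $\P(K_1\cap A'=\emptyset\mid\gamma_0)$ is the correct one; the paper's ``$a_1=\alpha$'' at that point is a typo.)
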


\begin{proof}
{Let $T=\inf \{t: \gamma([0,t]) \cap {A}\not=\emptyset\}$.}
Similarly to the proof of Proposition \ref{prop:one-sided},
note that
\begin{itemize}
\item[-] If $\gamma\cap A=\emptyset$, then $T=\infty$. As $t\to \infty$, the  terms
\begin{align*}
h_t'(W_t), \, h_t'(O_t), \, \frac{h_t(V_t)-h_t(O_t)}{V_t-O_t},\,
\frac{h_t(W_t)-h_t(O_t)}{W_t-O_t},\, \frac{h_t(W_t)-h_t(V_t)}{W_t-V_t}
\end{align*}
all tend to $1$ almost surely. However, the term $h_t'(V_t)$ does not tend to $0$. In the same way as explained in \cite[\S\, 5.2]{MR2060031}, noting that $a_1=\alpha$, we have
\begin{align*}
h_t'(V_t)^{a_1}\underset{t\to\infty}{\longrightarrow} \P\left( K_1 \cap A=\emptyset \, | \, \gamma_0\right).
\end{align*}
\item[-] If $\gamma\cap A\not=\emptyset$, then $T<\infty$. As $t\to T$, $h_t'(W_t)\to 0$ and all the other terms in $N_t$ tend to a finite limit. Hence $N_T= 0$.
\end{itemize}
We have thus proved that
$$N_T= \mathbf{1}_{\gamma_0\cap A=\emptyset}\, \P\left( K_1 \cap A=\emptyset \,|\, \gamma_0 \right)=\P\left( (K_0\cup K_1)\cap A=\emptyset\,|\, \gamma_0 \right).$$
By the martingale convergence theorem, we have
\begin{align*}
\P\left( (K_0\cup K_1)\cap A=\emptyset\right)=\E(N_{T})=\E(N_0)&=
g_A'(0)^{5/8+a_1+b_{01}} g_A'(1)^{a_2} \left( g_A(1)-g_A(0) \right)^{b_{02}+b_{12}}\\
&=\varphi_A'(0)^{\gamma(\nu)}\varphi_A'(1)^{\alpha(\lambda)}\varphi_A'(\infty)^{\tilde\xi(\alpha,\gamma)}.
\end{align*}
Then it is easy to see that
\begin{align*}
\P(K\cap f(A)=\emptyset)=\varphi_{f(A)}'(0)^{\gamma(\nu)}\varphi_{f(A)}'(\infty)^{\alpha(\lambda)}\varphi_{f(A)}'(-1)^{\tilde\xi(\alpha,\gamma)}.
\end{align*}
The desired proposition follows.
\end{proof}
\begin{remark}
Proposition \ref{prop:limit} constructs $\P^1(\alpha,\beta,\gamma)$ for all $(\alpha,\beta,\gamma)\in\R^3$ such that
\begin{align*}
\alpha>0,\quad \gamma>0,\quad, \beta=\tilde \xi(\alpha,\gamma).
\end{align*}
\end{remark}
The following lemmas are  obvious from the construction.
\begin{lemma}
Conditionally on $f(K_0)$, $f(K_1)$ is a one-sided chordal restriction sample in $\H\setminus f(K_0)$.
\end{lemma}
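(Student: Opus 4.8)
The plan is to unwind the construction of $K$ and invoke only two ingredients: the conditional independence built into the construction, and the conformal invariance of one-sided chordal restriction measures.

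First I would recall from step (ii) that $\tilde K$ is, by definition, a one-sided chordal restriction sample (of exponent $\alpha(\lambda)$) sampled \emph{independently} of $\gamma_0$, and that $K_1=\phi(\tilde K)$, where $\phi$ is the conformal map from $\H$ onto $\H\setminus K_0$ sending $[0,\infty]$ to $[1,\infty]$. Since $\phi$ is a deterministic function of $\gamma_0$ (equivalently of $K_0$), the independence of $\tilde K$ and $\gamma_0$ together with the conformal invariance of one-sided chordal restriction — which is exactly what step (ii) records when it asserts that $K_1$ is a one-sided chordal restriction sample in $(\H\setminus K_0,1,\infty)$ for the side $\gamma_0\cup(0,1)$ — shows that, conditionally on $K_0$, the set $K_1$ is a one-sided chordal restriction sample in the simply connected domain $\H\setminus K_0$.

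Next I would push this forward by the M\"obius map $f\colon z\mapsto z/(1-z)$ of step (iii). As a conformal automorphism of $\H$, $f$ maps $\H\setminus K_0$ onto $\H\setminus f(K_0)$ and transports one-sided chordal restriction samples to one-sided chordal restriction samples of the same exponent, sending the marked points $1,\infty$ to $\infty,-1$ and carrying the designated side along. Moreover $f$ is a bijection, so the $\sigma$-algebra generated by $f(K_0)$ coincides with the one generated by $K_0$; hence conditioning on $f(K_0)$ is the same as conditioning on $K_0$. Combining the two steps gives that, conditionally on $f(K_0)$, the set $f(K_1)=(f\circ\phi)(\tilde K)$ is a one-sided chordal restriction sample in $\H\setminus f(K_0)$, as claimed.

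I expect no genuine obstacle here: the statement is a bookkeeping consequence of the construction plus the conformal invariance of (one-sided) chordal restriction. The only points that need a word of care are the identification of the relevant $\sigma$-algebras under the bijections $\phi$ and $f$, and keeping track of which boundary arc plays the role of the ``one side'' after each conformal map.
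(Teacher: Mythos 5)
Your proposal is correct and follows exactly the reasoning the paper intends: the paper states this lemma (together with Lemma \ref{lem:-1}) as ``obvious from the construction,'' and your argument simply spells out that bookkeeping — the independence of $\tilde K$ and $\gamma_0$, the conformal invariance of one-sided chordal restriction under the deterministic-given-$K_0$ map $\phi$, and the transport by the M\"obius map $f$ together with the identification of the $\sigma$-algebras generated by $K_0$ and $f(K_0)$. No gap; this matches the paper's (implicit) proof.
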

\begin{lemma}\label{lem:-1}
The right boundary of the random set $K$ with law $\P^1(\alpha, \tilde\xi(\alpha,\gamma),\gamma)$ intersects $-1$ almost surely.
\end{lemma}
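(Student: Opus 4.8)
The plan is to read off the right boundary of $K$ directly from the explicit construction of \S\ref{sec:limit} and to observe that this curve is forced through the point $-1$. Recall that $K=f(K_0\cup K_1)$, where $f(z)=z/(1-z)$ is the M\"obius automorphism of $\H$ sending $0,1,\infty$ to $0,\infty,-1$; where $K_0$ is the closure of the connected component to the left of the curve $\gamma_0$ (an SLE$_{8/3}(\rho_1,\rho_2)$ from $0$ to $\infty$); and where $K_1=\phi(\tilde K)$, with $\phi:\H\to\H\setminus K_0$ the conformal map sending $[0,\infty]$ to $[1,\infty]$ (so $\phi(0)=1$ and $\phi(\infty)=\infty$), and $\tilde K$ an independent one-sided chordal restriction sample in $(\H,0,\infty)$ for the side $(-\infty,0)$, so that $\tilde K\cap\R=[0,\infty)$. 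Let $\tilde\beta:=\partial\tilde K\cap\H$ be the boundary inside $\H$ of the simply connected domain $\H\setminus\tilde K$ --- a curve from $0$ to $\infty$ --- and set $\beta_1:=\phi(\tilde\beta)=\partial K_1\cap(\H\setminus K_0)$, a curve from $\phi(0)=1$ to $\phi(\infty)=\infty$ along which $K_1$ is glued to the rest of $\H\setminus K_0$.

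The main step is to check that the right boundary of $K$ equals $f(\gamma_0\cup\beta_1)$. Indeed, $\H\setminus(K_0\cup K_1)=(\H\setminus K_0)\setminus K_1$ is obtained from the simply connected domain lying to the right of $\gamma_0$ by removing the chordal restriction sample $K_1$; it is therefore simply connected, and its boundary inside $\H$ is $\gamma_0\cup\beta_1$, the remaining boundary being the real segment $(0,1)$ between the tip $0$ of $\gamma_0$ and the point $1$ where $K_1$ is anchored. Applying the conformal automorphism $f$, which sends $0,1,\infty$ to $0,\infty,-1$ and hence $(0,1)$ to $(0,\infty)$, we obtain that $\H\setminus K$ is simply connected with $\partial K\cap\H=f(\gamma_0\cup\beta_1)$. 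Thus the right boundary of $K$ --- the curve from $0$ to $\infty$ separating $K$ from $\H\setminus K$, exactly as for the hSLE in Proposition \ref{prop:one-sided} --- equals $f(\gamma_0\cup\beta_1)$. Now $\gamma_0$ is a curve from $0$ to $\infty$ and $\beta_1=\phi(\tilde\beta)$ is a curve from $\phi(0)=1$ to $\phi(\infty)=\infty$; since they share the endpoint $\infty$, their union $\gamma_0\cup\beta_1$ is a connected curve joining $0$ to $1$ and passing through $\infty$, and $f$ maps it to a curve joining $0$ to $\infty$ and passing through $f(\infty)=-1$. Consequently $-1$ lies on the right boundary of $K$ for every realization, hence almost surely.

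There is no genuine analytic difficulty here: the conclusion is essentially immediate once the construction is in place, and the proof reduces to the topological bookkeeping of the second paragraph. The one point requiring care is keeping track of which boundary arcs $\tilde K$ and $K_1$ are attached to (so that, in particular, $K_1$ touches $\gamma_0$ only at the point $\infty$), together with how the three marked points $0,1,\infty$ of the intermediate picture are moved to $0,\infty,-1$ by $f$ --- from which one concludes that the right boundary of $K$ is the \emph{whole} curve $f(\gamma_0\cup\beta_1)$, not a proper sub-arc that might avoid $f(\infty)=-1$.
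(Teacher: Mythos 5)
Your proposal is correct and follows exactly the route the paper intends: the paper states this lemma (together with the preceding one) as ``obvious from the construction'' of \S\,\ref{sec:limit}, and your argument is precisely that reading-off, the essential point being that $\gamma_0$ and $\beta_1$ both tend to $\infty$ in the intermediate picture while $f(\infty)=-1$, so the image of this common endpoint lands on the right boundary of $K$. The only cosmetic caveat is that $\gamma_0\cup\beta_1$ is connected only after adjoining the point at infinity (harmless, since $f$ is a M\"obius map of the sphere), and that one does not even need $\partial\tilde K\cap\H$ to be a simple curve --- the closure of $f(\gamma_0)$ alone already forces $-1$ onto the right boundary.
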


\subsection{Construction by Poisson point process of Brownian excursions}

Similarly to what we mentioned in \S\,\ref{sec:brownian-excursions} (or see \cite{MR2178043}), we can also construct one-sided trichordal measures by taking the filling of a Poisson point process of Brownian excursions.

More precisely, let $\mu_\H$ be the half-plane Brownian excursion measure as in (\ref{brownian-excursion-measure}). For 
 $\theta\ge0$, let $\mathcal{P}_{0}(\theta)$ be a Poisson point process of Brownian excursions of intensity $\theta \mu_\H$ but restricted to the set of excursions that start in $(-\infty,-1)$ and end in $(-1,0)$. 
\begin{lemma}\label{sub-family-1}
There exist $k>0$ such that
\begin{align*}
\P(\mathcal{P}_{0}(\theta)\cap A=\emptyset)=\varphi_A'(0)^{-k\theta}.
\end{align*} 
Hence $\mathcal{F}(\mathcal{P}_{0}(\theta))$ has the law $\P^1(0,-k\theta,0)$.
\end{lemma}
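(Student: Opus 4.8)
The plan is to reduce the avoidance probability of $\mathcal{P}_{0}(\theta)$ to a mass under the half-plane excursion measure, and then to evaluate that mass by splitting it into pieces that are each of the chordal one-sided type treated in \S\,\ref{sec:brownian-excursions}.

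First, since $\mathcal{P}_{0}(\theta)$ is a Poisson point process whose intensity is $\theta$ times the restriction of $\mu_\H$ to excursions from $(-\infty,-1)$ to $(-1,0)$, one has immediately
\begin{align*}
\P(\mathcal{P}_{0}(\theta)\cap A=\emptyset)=\exp\big(-\theta F(A)\big),\qquad F(A):=\mu_\H\big(\{\gamma:\ \gamma\ \text{from}\ (-\infty,-1)\ \text{to}\ (-1,0),\ \gamma\cap A\ne\emptyset\}\big),
\end{align*}
and the thinning property shows that conditionally on $\{\mathcal{P}_{0}(\theta)\cap A=\emptyset\}$ the process is again Poissonian, now with intensity $\theta$ times that same measure further restricted to excursions avoiding $A$. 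Because $A\cap\R\subset(0,\infty)$, the map $\varphi_A$ fixes $-1,0,\infty$ and maps the boundary arcs $(-\infty,-1)$ and $(-1,0)$ of $\H\setminus A$ onto themselves, so by the conformal invariance and the restriction property of $\mu_\H$ (\S\,\ref{sec:brownian-excursions}(i)--(ii)) the $\varphi_A$-pushforward of this restricted intensity is the original one. Since taking fillings commutes with conformal maps, $\mathcal{F}(\mathcal{P}_{0}(\theta))$ satisfies the one-sided trichordal restriction property; and, exactly as in the chordal construction \cite{MR2178043}, it lies in $\Omega^1(\H,-1,0,\infty)$ (closed and simply connected by definition of the filling, unbounded, and with intersection with $\R$ equal to $(-\infty,0]$ because the excursions accumulate densely on $(-\infty,-1)\cup(-1,0)$, with arbitrarily large ones and ones whose endpoints are arbitrarily close to $-1$ and to $0$). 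By the characterisation recalled at the start of \S\,\ref{s5}, there are then constants $\alpha,\beta,\gamma$ with $\P(\mathcal{F}(\mathcal{P}_{0}(\theta))\cap A=\emptyset)=\varphi_A'(\infty)^{\alpha}\varphi_A'(-1)^{\beta}\varphi_A'(0)^{\gamma}=e^{-\theta F(A)}$ for every $A\in\mathcal{Q}^{(0,\infty)}$.

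To identify the exponents I would decompose $\mu_\H$ restricted to excursions with both endpoints in $(-\infty,0)$ according to which of the subintervals $(-\infty,-1)$ and $(-1,0)$ each endpoint falls in. Writing $m_{00},m_{11},m_{22}$ for the restrictions of $\mu_\H$ to excursions with, respectively, both endpoints in $(-\infty,0)$, both in $(-\infty,-1)$, both in $(-1,0)$, the intensity of $\mathcal{P}_{0}(\theta)$ equals $\theta(m_{00}-m_{11}-m_{22})$, whence $F(A)=m_{00}(\{\gamma\cap A\ne\emptyset\})-m_{11}(\{\gamma\cap A\ne\emptyset\})-m_{22}(\{\gamma\cap A\ne\emptyset\})$. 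Each of these three masses is of the chordal one-sided type of \S\,\ref{sec:brownian-excursions}: there is a universal $c>0$ with $m_{00}(\{\gamma\cap A\ne\emptyset\})=-c\log g_A'(0)$ (this is the excursion construction of \S\,\ref{sec:brownian-excursions} together with the chordal one-sided restriction formula, see \cite{MR1992830,MR2178043}); translating by $z\mapsto z+1$ gives $m_{11}(\{\gamma\cap A\ne\emptyset\})=-c\log g_A'(-1)$ with the same $c$; and applying the M\"obius map $\Psi(z)=z/(z+1)$, which maps $\H$ onto $\H$, sends $(-1,0)$ bijectively onto $(-\infty,0)$ and sends $(0,\infty)$ into $(0,1)$, together with the conformal invariance of $\mu_\H$, gives $m_{22}(\{\gamma\cap A\ne\emptyset\})=m_{00}(\{\gamma\cap\Psi(A)\ne\emptyset\})=-c\log g_{\Psi(A)}'(0)$, where conjugating $g_A$ by $\Psi$ and renormalising at infinity yields $g_{\Psi(A)}'(0)=g_A'(-1)\,g_A'(0)/(g_A(0)-g_A(-1))^2$. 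Plugging these into the decomposition,
\begin{align*}
F(A)&=-c\log g_A'(0)+c\log g_A'(-1)+c\log\frac{g_A'(-1)\,g_A'(0)}{(g_A(0)-g_A(-1))^2}\\
&=2c\log\frac{g_A'(-1)}{g_A(0)-g_A(-1)}=2c\log\varphi_A'(-1),
\end{align*}
so that $\P(\mathcal{P}_{0}(\theta)\cap A=\emptyset)=\varphi_A'(-1)^{-2c\theta}$; with $k:=2c>0$ this gives $\alpha=\gamma=0$ and $\beta=-k\theta$, and hence $\mathcal{F}(\mathcal{P}_{0}(\theta))$ has law $\P^1(0,-k\theta,0)$.

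The step I expect to require the most care is this identification: one must check that the proportionality constant $c$ is genuinely the same in the three instances (it is, because after a conformal change of coordinates each reduces to the $\mu_\H$-mass of excursions attached to a half-line that meet a hull on the complementary side), and one must carry out honestly the renormalisation giving $g_{\Psi(A)}'(0)=g_A'(-1)g_A'(0)/(g_A(0)-g_A(-1))^2$. The remaining points --- finiteness of $F(A)$ (which in any case follows from the decomposition, each summand being finite) and the verification that $\mathcal{F}(\mathcal{P}_{0}(\theta))\in\Omega^1(\H,-1,0,\infty)$ --- are routine and can be treated as in \cite{MR2178043}.
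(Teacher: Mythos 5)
Your argument is correct, but it takes a genuinely different route from the paper's. The paper's proof is soft: it notes that conformal invariance and restriction of $\mu_\H$ give the product form $\varphi_A'(\infty)^\alpha\varphi_A'(-1)^\beta\varphi_A'(0)^\gamma$, kills the exponent at $0$ by letting a slit hull degenerate towards $0$ (only finitely many excursions come near $0$, so the avoidance probability stays positive while $g_{A_\eps}'(0)\to 0$), kills the exponent at $\infty$ by the symmetric argument, and then obtains linearity in $\theta$ from Poissonian additivity and the sign of $k$ from $\varphi_A'(-1)>1$; the constant $k$ is never identified. You instead compute the hitting mass exactly, by inclusion--exclusion over the endpoint intervals and by reducing each piece to the chordal excursion statement of \S\,\ref{sec:brownian-excursions} via a translation and the M\"obius map $\Psi(z)=z/(z+1)$; your identity $g_{\Psi(A)}'(0)=g_A'(-1)g_A'(0)/(g_A(0)-g_A(-1))^2$ checks out, and the algebra does give $F(A)=\mathrm{const}\cdot\log\varphi_A'(-1)$, hence an explicit $k$ in terms of the chordal constant $c$, at the price of some conformal bookkeeping. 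Note that your formula $\varphi_A'(-1)^{-k\theta}$ is the intended statement: the display in the lemma as printed (with $\varphi_A'(0)$) is a typo, since the asserted law $\P^1(0,-k\theta,0)$ puts the exponent at $-1$, which is also what the paper's own proof derives ($\alpha=\gamma=0$).

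Two small corrections. First, the intensity of $\mathcal{P}_0(\theta)$ is $\theta$ times $\mu_\H$ restricted to excursions \emph{from} $(-\infty,-1)$ \emph{to} $(-1,0)$, whereas $m_{00}-m_{11}-m_{22}$ counts both orientations, so $F(A)=\tfrac12(m_{00}-m_{11}-m_{22})(\{\gamma\cap A\neq\emptyset\})$; since hitting events are orientation-independent this only rescales $k$ and is harmless. Second, your justification that $\mathcal{F}(\mathcal{P}_0(\theta))\in\Omega^1(\H,-1,0,\infty)$ is wrong as stated: the endpoints do \emph{not} accumulate densely on $(-\infty,-1)\cup(-1,0)$. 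The mass of excursions leaving any fixed neighbourhood of $-1$ is finite (e.g.\ $\int_{-\infty}^{-2}\int_{-1}^{0}(x-y)^{-2}\,dy\,dx=\log 2$), so almost surely only finitely many excursions start left of $-2$ or end right of $-1/2$; the excursions accumulate only at $-1$, the filled set is bounded and its closure avoids $0$ and $\infty$, so $K\cap\R$ is a compact subset of $(-\infty,0)$ rather than $(-\infty,0]$. The fix is trivial and is what the paper implicitly does: adjoin $(-\infty,0]$ to the sample (this changes no avoidance probability for $A\in\mathcal{Q}^{(0,\infty)}$), consistently with the degenerate exponents $\alpha=\gamma=0$ and with the geometric description in Proposition \ref{prop:geom-one-sided}. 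With these adjustments your proof is complete; in particular your reduction to a single universal constant $c$ in the three pieces, and the identification of hitting events for the excursions and for their filling when $A\in\mathcal{Q}^{(0,\infty)}$, are sound.
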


\begin{proof}
The conformal invariance and restriction property of $\mathcal{P}_{0}(\theta)$ follows from the conformal invariance and restriction property of $\mu_\H$. Hence there exist $\alpha, \beta, \gamma$ such that
\begin{align*}
\P(\mathcal{P}_{0}(\theta)\cap A=\emptyset)=g_A'(-1)^\beta g_A'(0)^\gamma (g_A(0)-g_A(-1))^{\alpha}.
\end{align*}
Let $A_\eps$ be the vertical slit from $\eps$ to $\eps+i$, then
\begin{align*}
\lim_{\eps\to 0}\P(\mathcal{P}_{0}(\theta)\cap A_\eps=\emptyset)= \P(\mathcal{P}_{0}(\theta)\cap A_0=\emptyset)>0
\end{align*}
because there will be only finitely many excursions that are near the point $0$. However 
$
g_{A_\eps}'(0)\to 0,
$
as $\eps\to 0$,
so we must have $\gamma=0$. 
By symmetry, we also have $\alpha=0$. Hence
\begin{align*}
\P(\mathcal{P}_{0}(\theta)\cap A=\emptyset)=\varphi_A'(0)^{\beta}.
\end{align*} 
For all $\theta_1,\theta_2>0$ and all $A\in\mathcal{Q}^{(0,\infty)}$, we have
\begin{align*}
\P(\mathcal{P}_{0}(\theta_1+\theta_2)\cap A=\emptyset)=\P(\mathcal{P}_{0}(\theta_1)\cap A=\emptyset)\P(\mathcal{P}_{0}(\theta_2)\cap A=\emptyset).
\end{align*}
Therefore 
$\beta$ must be linear with respect to $\theta$.
Note that $\varphi_A'(0)>1$ so we must have $\beta<0$ so that $\varphi_A'(0)^{\beta}$ can be a probability.
Hence there exist $k>0$ such that $\beta= -k\theta$.
\end{proof}

Since the filling of the union of independent restriction samples still satisfy restriction, we have many ways to play with it.
For example, let $\mathcal{P}_1$ be a Poisson point process of Brownian excursions that start and end on $(-\infty,-1)$ which corresponds to a chordal one-sided restriction measure of exponent $\alpha\ge 0$.
Let $\mathcal{P}_2$ be an Poisson point process of Brownian excursions that start and end on $(-1,0)$ which corresponds to a chordal one-sided restriction measure of exponent $\gamma\ge 0$.
Let $\mathcal{P}_0$ as in Lemma \ref{sub-family-1} which has law $\P^1(0,-k\theta,0)$ for $k\theta\ge 0$. Let $\mathcal{P}_0,\mathcal{P}_1,\mathcal{P}_2$ be independent.
It is easy to see that $\mathcal{F}\left( \mathcal{P}_{0} \cup \mathcal{P}_1 \cup\mathcal{P}_2 \right)$ has the law $\P^1(\alpha,\alpha-k\theta+\gamma,\gamma)$. Thus we have constructed using Poisson point process of Brownian excursions all measures $\P^1(\alpha, \beta,\gamma)$ for $(\alpha,\beta,\gamma)\in\R^3$ such that
\begin{align}
\alpha\ge0,\quad \gamma\ge0, \quad\beta\le \alpha+\gamma.
\end{align}
This construction requires no SLE knowledge and can be seen as one of the first evidences why trichordal restriction measures form a three-parameter family. However this construction does not give the entire family of desired restriction measures.

We can also take the union of restriction samples with law $\P^1\left( \alpha, \tilde\xi(\alpha,\gamma), \gamma \right)$ as constructed in \S\,\ref{sec:limit} and restriction samples with law  $\mathcal{P}_0(\theta)$ as in Lemma \ref{sub-family-1}.
This constructs all measures $\P^1(\alpha,\beta,\gamma)$ for $(\alpha,\beta,\gamma)\in\R^3$ such that
\begin{align}\label{one-sided-final-range}
\alpha \ge 0,\quad \gamma\ge 0, \quad \beta\le \tilde\xi(\alpha,\gamma),
\end{align}
which is a larger range than that of (\ref{range:1}).

Note that this construction does not use hSLE processes. However these processes will turn out to be necessary in later constructions of two-sided and three-sided restriction measures.

Now we can make a remark about the existence of cut points for a sample $K$ with law $\P^1(\alpha,\beta,\gamma)$, which is an immediate consequence of the construction above and of cut-point properties for chordal restriction measures in \cite{MR1992830}.
\begin{proposition}\label{prop:geom-one-sided}
Let $K$ be a sample which has law  $\P^1(\alpha,\beta,\gamma)$ where $\alpha,\beta,\gamma$ are in range (\ref{one-sided-final-range}).
\begin{itemize}
\item[-] If $0<\gamma<1/3$, then $K$ has infinitely many cut points in any neighborhood of $0$.
\item[-] If $\gamma\ge1/3$, then $K$ has no cut point on the line segment $(-1,0]$.
\item[-] If $\gamma=0$, then $0\not\in\overline{K\setminus [-1,0]}$ and $\overline{K\setminus [-1,0]}$ has no cut point on $(-1,0]$.
\item[-] The point $-1$ is a cut point of $K$ if and only if $\beta=\tilde\xi(\alpha,\gamma)$.
\end{itemize}
\end{proposition}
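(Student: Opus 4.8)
The plan is to read off the four statements from the explicit descriptions of $\P^1(\alpha,\beta,\gamma)$ obtained above. I first record a soft topological fact: since $K\cap\R=(-\infty,0]$ and $\partial K$ is the union of $(-\infty,0]$ with a simple curve from $0$ to $\infty$ --- the right boundary of $K$ --- a point $z_0\in\R\setminus\{0\}$ in the interior of $K\cap\R$ is a cut point of $K$ if and only if the right boundary passes through $z_0$, and similarly for $\overline{K\setminus[-1,0]}$. Thus the first two bullets amount to deciding when, and how often near $0$, the right boundary of $K$ meets $(-1,0)$, while the fourth amounts to deciding whether it passes through $-1$.

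For $\beta\in[-\tfrac1{24},\tilde\xi(\alpha,\gamma))$ the right boundary of $K$ is an hSLE$_{8/3}(\lambda,\mu,\nu)$ (Proposition \ref{prop:one-sided}), which by Proposition \ref{prop:hsle} meets $(-1,0)$ if and only if $\nu<-1/4$, i.e.\ (by (\ref{alphabetagamma})) if and only if $\gamma=\tfrac23\nu^2+\tfrac43\nu+\tfrac58<\tfrac13$; and by the Girsanov argument of \S\,\ref{sec:girsanov} it is, near $0$, mutually absolutely continuous with an $\mathrm{SLE}_{8/3}(\kappa\nu)$ process with force point $0^-$, so in the boundary-hitting regime $\kappa\nu\in(-2,\kappa/2-2)$, i.e.\ $\nu\in(-3/4,-1/4)$, the scale invariance of the latter forces the curve to touch $(-1,0)$ infinitely often in every neighbourhood of $0$. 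For $\beta=\tilde\xi(\alpha,\gamma)$, and for $\beta<-\tfrac1{24}$, one argues identically from the construction of \S\,\ref{sec:limit} and from $K=\mathcal F(K''\cup\mathcal P_0(\theta))$ with $K''$ of law $\P^1(\alpha,\tilde\xi(\alpha,\gamma),\gamma)$ and $k\theta=\tilde\xi(\alpha,\gamma)-\beta\ge0$: near $0$ the relevant process is again governed by the force point $0^-$ with parameter $\rho=\kappa\nu$, and since $\mu_\H$ gives finite mass to excursions of diameter bounded below, a.s.\ only finitely many $\mathcal P_0(\theta)$-excursions reach any fixed ball about $0$, so $K$ coincides with the $\P^1(\alpha,\tilde\xi(\alpha,\gamma),\gamma)$-sample $K''$ on some random ball $B(0,\eps_0)$. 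Since adjoining a filled excursion --- a topological disc meeting $\R$ in an interval --- never creates a cut point on $(-1,0]$, this gives the first two bullets. The third is the degenerate endpoint $\gamma=0$, i.e.\ $\nu=-3/4$, $\rho=-2$, where the process no longer interacts with $0^-$ and nothing accumulates at $0$, so $0\notin\overline{K\setminus[-1,0]}$ and $\overline{K\setminus[-1,0]}$ has no cut point on $(-1,0]$.

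For the last bullet, if $\beta=\tilde\xi(\alpha,\gamma)$ then Lemma \ref{lem:-1} (and, when $\gamma=0$, directly the construction by a Poisson process of excursions over $(-\infty,-1)$) says the right boundary of $K$ passes through $-1$, so $-1$ is a cut point by the first paragraph. If $\beta<\tilde\xi(\alpha,\gamma)$ then in $K=\mathcal F(K''\cup\mathcal P_0(\theta))$ we have $\theta>0$, and since $\int_{-\infty}^{-1}\int_{-1}^{-1+\eps}(x-y)^{-2}\,dx\,dy=\infty$ the process $\mathcal P_0(\theta)$ a.s.\ contains an excursion $E$ from some $x<-1$ to some $y\in(-1,0)$. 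Then $\mathcal F(E)\subseteq K$ is a topological disc meeting $\R$ along $[x,y]$, which contains $-1$ in its interior, so $\mathcal F(E)\setminus\{-1\}$ is connected and meets both $(x,-1)$ and $(-1,y)$; together with $(-\infty,-1)$ and $(-1,0]$ this is a connected subset of $K\setminus\{-1\}$ containing all of $(K\cap\R)\setminus\{-1\}$, and since $K$ is simply connected every point of $K$ joins it within $K\setminus\{-1\}$ by detouring around $-1$ through $\mathcal F(E)$. Hence $K\setminus\{-1\}$ is connected and $-1$ is not a cut point.

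The main obstacle is the bookkeeping near $0$ for general $\beta$: showing that superposing the Poisson excursion processes onto the ``$\gamma$-piece'' $K''$ changes neither the cut points on $(-1,0)$ in a neighbourhood of $0$ nor, when $\gamma=0$, the status of $0$. This rests on the standard but not entirely trivial estimate that the half-plane Brownian excursion measure assigns finite mass to excursions of diameter bounded below that reach a prescribed compact set (cf.\ \cite{MR2178043}); everything else is either the explicit $\mathrm{SLE}_{8/3}(\rho)$ description of one-sided chordal restriction measures from \cite{MR1992830} (whose curved boundary touches the straight one infinitely often near the endpoints precisely for exponent $<1/3$) or the soft topology of the first paragraph.
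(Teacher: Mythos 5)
Your route is the same as the paper's implicit one: the paper gives no detailed proof of this proposition (it is declared an immediate consequence of the constructions of \S\,\ref{s5} and of the chordal cut-point facts of \cite{MR1992830}), and like the paper you read each bullet off the explicit constructions, using Proposition \ref{prop:hsle} for the dichotomy $\nu<-1/4\Leftrightarrow\gamma<1/3$, Lemma \ref{lem:-1} at $\beta=\tilde\xi(\alpha,\gamma)$, and the superposition $K=\mathcal F\bigl(K''\cup\mathcal P_0(\theta)\bigr)$ for general $\beta$. The last bullet is handled correctly and completely (the excursion straddling $-1$ is exactly the right argument), and deducing that touch points accumulate at $0$ from absolute continuity with SLE$_{8/3}(\kappa\nu)$ plus scale invariance is a correct supplement to Proposition \ref{prop:hsle}, which by itself only gives hitting or non-hitting of $(-1,0)$.

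Two steps are genuinely incomplete, though. (1) For $0<\gamma<1/3$ and $\beta<\tilde\xi(\alpha,\gamma)$ you must show that the cut points of $K''$ near $0$ are not \emph{destroyed} in $K=\mathcal F\bigl(K''\cup\mathcal P_0(\theta)\bigr)$; your justification only excludes the creation of new cut points, and ``$K$ coincides with $K''$ on $B(0,\eps_0)$'' does not suffice, since being a cut point is a global property: a complementary region enclosed jointly by $K''$ and by excursions lying entirely outside the ball could, once filled, reconnect the two sides of a touch point $x_0$ near $0$. One needs an extra argument -- for instance, that for touch points whose hull up to the touch time lies in $B(0,\eps_0/2)$, every excursion meets the right-boundary curve only at points visited after the curve first leaves $B(0,\eps_0)$, so that neither the excursions nor any jointly filled component can attach to that hull; this is elementary planar topology, but it is a real step and not a consequence of what you wrote. (2) The $\gamma=0$ bullet cannot be obtained from a ``$\rho=-2$'' process: none of the SLE-based constructions covers $\nu=-3/4$ (nor $\alpha=0$, which your hSLE and \S\,\ref{sec:limit} cases also silently exclude); these boundary cases must be read off the Poisson-excursion construction (Lemma \ref{sub-family-1} and the discussion following it), from which they do follow easily since a.s.\ no excursion accumulates at $0$. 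Finally, a small slip: $\mu_\H$ does \emph{not} give finite mass to excursions of diameter bounded below (it is translation invariant); what you need, and what is true, is that the intensity of $\mathcal P_0(\theta)$ gives finite mass to excursions reaching a fixed ball around $0$.
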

Due to symmetry, the situation in the neighborhood of $\infty$ is the same as in the neighborhood of $0$ if we consider $\alpha$ in stead of $\gamma$.
In Figure \ref{fig:one-sided} we make some illustrations of the above-mentioned geometric properties for some one-sided restriction measures in the unit disk (which is a more symmetric domain). The points $a,b,c$ have respectively restriction exponents $\alpha,\beta,\gamma$.

 \begin{figure}[h!]
    \centering
    \begin{subfigure}[t]{0.3\textwidth}
\centering
    \includegraphics[width=0.8\textwidth]{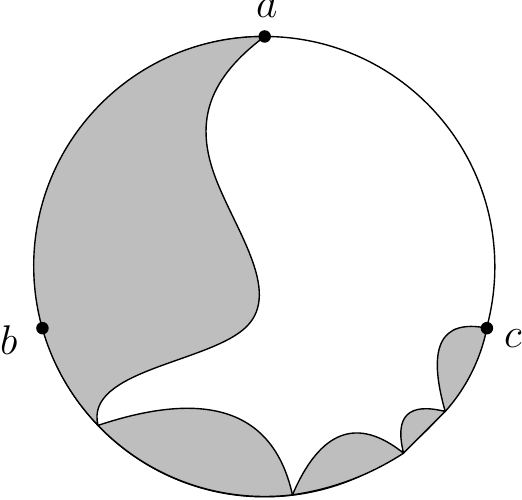}
    \caption{$\alpha\ge 1/3, \gamma<1/3, \beta<\tilde\xi(\alpha,\gamma)$. $K$ has infinitely many cut points in any neighborhood of $c$.}
    \label{fig:cut-point}
    \end{subfigure}
    \quad
    \begin{subfigure}[t]{0.3\textwidth}
    \centering
    \includegraphics[width=0.8\textwidth]{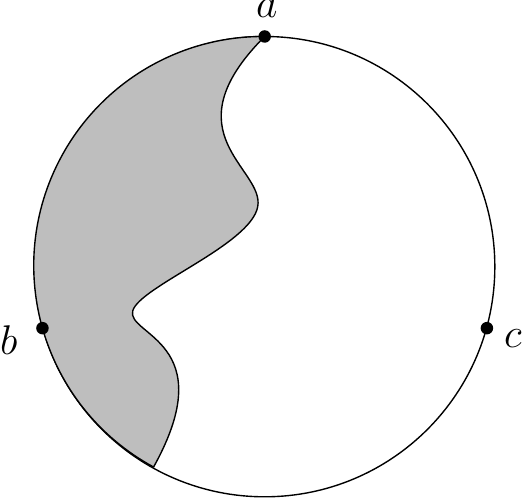}
    \caption{$\alpha\ge 1/3, \beta<\alpha, \gamma=0$. The closure of the interior of $K$ has no cut point and does not contain the point $c$.}
    \label{fig:cut-point}
    \end{subfigure}
    \quad
     \begin{subfigure}[t]{0.3\textwidth}
     \centering
    \includegraphics[width=0.8\textwidth]{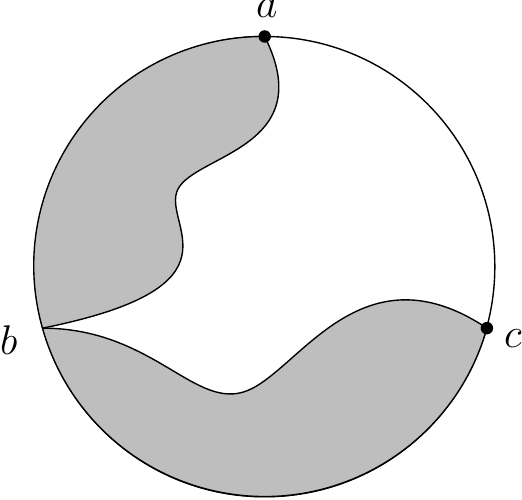}
    \caption{$\alpha,\gamma\ge 1/3, \beta=\tilde\xi(\alpha,\gamma)$. The point $b$ is the unique cut point of $K$.}
    \label{fig:cut-point}
    \end{subfigure}
    \caption{Some one-sided restriction samples in the unit disk.}
    \label{fig:one-sided}
\end{figure}

\subsection{Range of existence for one-sided restriction measures}
We will now show that (\ref{one-sided-final-range}) is the full range of values for which $\P^1(\alpha,\beta,\gamma)$ exist.
\begin{proposition}
$\P^1(\alpha,\beta,\gamma)$ does not exist for $(\alpha,\beta,\gamma)\not\in R_1$ where
\begin{align*}
R_1:= \left\{\left(\alpha,\beta,\gamma\right)\in\R^3;\, \alpha\ge 0,\, \gamma\ge 0,\, \beta\le \tilde\xi(\alpha,\gamma)\right\}.
\end{align*}
\end{proposition}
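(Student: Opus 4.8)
The plan is to rule out, one case at a time, each way in which $(\alpha,\beta,\gamma)$ can fail to lie in $R_1$: first $\gamma<0$, then $\alpha<0$ by symmetry, and finally $\beta>\tilde\xi(\alpha,\gamma)$ under the assumption --- legitimate once the first two cases are done --- that $\alpha\ge0$ and $\gamma\ge0$.

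For the case $\gamma<0$, I would test the identity $\P[K\cap A=\emptyset]=\varphi_A'(-1)^{\beta}\varphi_A'(0)^{\gamma}\varphi_A'(\infty)^{\alpha}$ against the vertical slits $A_\eps=\{\eps+iy:0\le y\le1\}\in\mathcal{Q}^{(0,\infty)}$ with $\eps\downarrow0$. As already recorded in the proof of Lemma \ref{sub-family-1}, $g_{A_\eps}'(0)\to0$; meanwhile $g_{A_\eps}'(-1)$ and $g_{A_\eps}(0)-g_{A_\eps}(-1)$ converge to finite, strictly positive limits, since the slits stay uniformly away from $-1$ and from $\infty$. Hence $\varphi_{A_\eps}'(0)\to0^{+}$ while $\varphi_{A_\eps}'(-1)$ and $\varphi_{A_\eps}'(\infty)$ stay bounded away from $0$ and $\infty$, so if $\gamma<0$ the right-hand side would tend to $+\infty$, which is impossible for a probability. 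This forces $\gamma\ge0$. For $\alpha<0$ I would apply the same conclusion after transporting $K$ by the anti-conformal involution $z\mapsto1/\bar z$ of $\H$, which fixes $-1$, interchanges $0$ and $\infty$, and preserves the arc $(0,\infty)$ and the family $\mathcal{Q}^{(0,\infty)}$; as with the symmetry used in the proof of Lemma \ref{sub-family-1}, this carries $\P^1(\alpha,\beta,\gamma)$ to $\P^1(\gamma,\beta,\alpha)$, whence $\alpha\ge0$.

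For the remaining case, assume $\alpha,\gamma\ge0$ and suppose for contradiction that $\P^1(\alpha,\beta,\gamma)$ exists with $\beta>\tilde\xi(\alpha,\gamma)$. Let $K$ have this law and let $\mathcal{P}_0(\theta)$ be the independent Poisson point process of Brownian excursions from $(-\infty,-1)$ to $(-1,0)$ of Lemma \ref{sub-family-1}, with $\theta$ chosen so that $k\theta=\beta-\tilde\xi(\alpha,\gamma)>0$. Since these excursions meet $\R$ only inside $(-\infty,0)$ and, having both endpoints negative, enclose no point of $(0,\infty)$, the set $\mathcal{F}(K\cup\mathcal{P}_0(\theta))$ is again an admissible one-sided restriction set; being the filling of a union of two independent restriction samples it satisfies one-sided restriction for the side $(0,\infty)$, and multiplying the two probabilities shows that it has law $\P^1(\alpha,\beta-k\theta,\gamma)=\P^1(\alpha,\tilde\xi(\alpha,\gamma),\gamma)$ --- a measure in $R_1$, constructed in \S\ref{sec:limit}, for which, by the last item of Proposition \ref{prop:geom-one-sided} (see also Lemma \ref{lem:-1}), the point $-1$ is a cut point almost surely. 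On the other hand, the intensity of $\mathcal{P}_0(\theta)$ has infinite mass, because $\int_{-\infty}^{-1}\int_{-1}^{0}(x-y)^{-2}\,dx\,dy=\infty$ (the integral diverges as the two endpoints pinch $-1$); thus almost surely $\mathcal{P}_0(\theta)$ contains an excursion $e$, and such an $e$ lies in $\mathcal{F}(K\cup\mathcal{P}_0(\theta))$, avoids the point $-1$, and joins a point of $(-\infty,-1)$ to a point of $(-1,0)$, so it provides a connected subset of $\mathcal{F}(K\cup\mathcal{P}_0(\theta))\setminus\{-1\}$ meeting both sides of $-1$; hence $-1$ is almost surely not a cut point of $\mathcal{F}(K\cup\mathcal{P}_0(\theta))$. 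This contradicts the previous sentence, so $\beta\le\tilde\xi(\alpha,\gamma)$, which completes the proof.

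The hardest part is this last step. It requires checking carefully that $\mathcal{F}(K\cup\mathcal{P}_0(\theta))$ is genuinely admissible (simply connected, unbounded, intersecting $\R$ exactly in $(-\infty,0]$, with no part of the positive half-line absorbed) and has the claimed product of exponents, together with the soft planar-topology fact that a single excursion from $(-\infty,-1)$ to $(-1,0)$ destroys the cut point at $-1$. It also relies essentially on the geometric input that at the extremal value $\beta=\tilde\xi(\alpha,\gamma)$ the point $-1$ really is a cut point almost surely, i.e. on Proposition \ref{prop:geom-one-sided} (equivalently Lemma \ref{lem:-1}). The first two cases are, by comparison, routine deterministic conformal-map estimates.
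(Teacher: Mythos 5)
Your proof is correct and follows essentially the same route as the paper: degenerate hulls to rule out $\gamma<0$ (and, by symmetry, $\alpha<0$), then for $\beta>\tilde\xi(\alpha,\gamma)$ add an independent $\P^1(0,\tilde\xi(\alpha,\gamma)-\beta,0)$ Poisson excursion cloud to produce a sample of $\P^1(\alpha,\tilde\xi(\alpha,\gamma),\gamma)$ and contradict Lemma \ref{lem:-1} via an excursion passing over $-1$. The only differences are cosmetic (vertical slits and the involution $z\mapsto 1/\bar z$ in place of the paper's rescaled hulls $A/n$ and $nA$), and your spelling out of why $-1$ cannot lie on the right boundary of the filled union makes explicit what the paper leaves terse.
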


\begin{proof}
Let $A\in\mathcal{Q}^{(0,\infty)}$.
For $n\ge 1$, the scaled hull $A/n\in\mathcal{Q}^{(0,\infty)}$.
Note that $$g_{A/n}'(-1)\underset{n\to\infty}{\longrightarrow} 1, \quad  g_{A/n}(0)-g_{A/n}(-1)\underset{n\to\infty}{\longrightarrow} 1, \quad g_A'(0)=g_{A/n}'(0) \mbox{ for all } n.$$
If $K$ has the law of $\P^1(\alpha,\beta,\gamma)$ for some $\gamma<0$, then
\begin{align*}
\P(K\cap A/n=\emptyset)=g_{A/n}'(-1)^\beta g_{A/n}'(0)^\gamma \left( g_{A/n}(0)-g_{A/n}(-1) \right)^{\alpha-\beta-\gamma}\underset{n\to\infty}{\longrightarrow} g_A'(0)^\gamma>1
\end{align*}
since $g_A'(0)<1$. This is impossible.
The same argument applied to the sequence of hulls $(nA)_{n\ge 0}$ shows that $\P^1(\alpha,\beta,\gamma)$ does not exist either for $\alpha<0$.

Now it is left to show that $\P^1(\alpha,\beta,\gamma)$ does not exist for $\beta> \tilde\xi(\alpha,\gamma)$.
Suppose that $\P^1(\alpha,\beta,\gamma)$ exists for some $\alpha\ge 0, \gamma\ge 0, \beta> \tilde\xi(\alpha,\gamma)$ and let $K_1$ be a sample.
Let $K_0$ be an independent restriction sample with law $\P^1(0,\tilde\xi(\alpha,\gamma)-\beta, 0)$ which exists according to Lemma \ref{sub-family-1}. 
Then the fill-in of $K_1\cup K_0$ has the law $\P^1(\alpha,\tilde\xi(\alpha,\gamma), \gamma)$. The right boundary of $K_0$ does not intersect $\{-1\}$ almost surely, however according to Lemma \ref{lem:-1}, the right boundary of $K_1\cup K_0$ intersects $\{-1\}$ almost surely since it has law $\P^1(\alpha,\tilde\xi(\alpha,\gamma), \gamma)$. This is a contradiction.
\end{proof}

Now we are able to state a corollary that justifies the fact that we did impose the condition that $K$ does hit all three points $a$, $b$ and $c$ in our definition of 
the trichordal restriction property. 
\begin{corollary}\label{corollary}
Let $K$ be a random set that satisfies all axioms of the three-sided trichordal restriction sample except that we replace the condition $K\cap\partial D =\{a,b,c\}$ by the condition $K\cap\partial D\subset\{a,b,c\}$. Then, 
\begin{itemize}
\item[(i)] Almost surely,  $K\cap\partial D$ has at least two points.
\item[(ii)]
If $K\cap\partial D$ contains exactly two points with positive probability, then the law of the set $K$ is exactly a chordal restriction measure.
\end{itemize}
\end{corollary}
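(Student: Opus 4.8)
The plan is to run everything through the characterization machinery of Section~\ref{Characterization}, observing that its derivation uses only the restriction property and never the hypothesis $K\cap\partial D=\{a,b,c\}$. Normalizing to $(\H,-1,0,\infty)$, this gives reals $\alpha,\beta,\gamma$ for which (\ref{c}) holds for all $A\in\mathcal{Q}^{**}$, shows that a ``relaxed'' restriction measure is determined by its triple $(\alpha,\beta,\gamma)$, and yields $\P(K\cap\eta_x([0,t])=\emptyset)=e^{-\lambda(x)t}$ with $\lambda(x)=\beta/(x+1)^2+\gamma/x^2+(\alpha-\beta-\gamma)/(x(x+1))$ for every $x\in\R\setminus\{-1,0\}$. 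Two facts are then at hand from the start: $\lambda(x)\ge 0$ for all such $x$ (so that the right side of (\ref{c}) is a probability), and $\alpha,\beta,\gamma\ge 0$ — the latter by the scaling argument from the proof of the preceding proposition, now applied to (\ref{c}) (and, for $\beta$, after a M\"obius change of coordinates taking $b$ to $0$).

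The core is an extremality argument. For $S\subset\{a,b,c\}$ set $E_S=\{K\cap\partial D=S\}$. Since $\varphi_A$ maps $\partial(D\setminus A)$ onto $\partial D$ fixing $a,b,c$, while $K\cap A=\emptyset$ keeps $K$ off the boundary arc swept by $A$, one checks that $\{K\in E_S\}=\{\varphi_A(K)\in E_S\}$ on $\{K\cap A=\emptyset\}$; hence each $E_S$ is ``invariant'', and whenever $\P(E_S)>0$ the conditional law $\P(\cdot\mid E_S)$ is again a relaxed trichordal restriction measure, with some triple $(\alpha_S,\beta_S,\gamma_S)$. Writing $\P=\sum_S\P(E_S)\,\P(\cdot\mid E_S)$ (a sum of at most eight terms) and substituting (\ref{c}) produces the identity $\varphi_A'(a)^{\alpha}\varphi_A'(b)^{\beta}\varphi_A'(c)^{\gamma}=\sum_S\P(E_S)\,\varphi_A'(a)^{\alpha_S}\varphi_A'(b)^{\beta_S}\varphi_A'(c)^{\gamma_S}$, valid for all $A$. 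As $A$ varies, the vector $(\log\varphi_A'(a),\log\varphi_A'(b),\log\varphi_A'(c))$ sweeps out a set with nonempty interior — this follows from the explicit formulas (\ref{-1})--(\ref{0-1}) and the additivity of $\log\varphi_A'$ under composition of the curves $\eta_x$, i.e.\ from the non-coplanarity of the directions $(-1,\tfrac1{x+1},-\tfrac1x)$ — so the identity extends to all real arguments, and since the exponents occurring in it are linear forms it forces every $(\alpha_S,\beta_S,\gamma_S)$ to equal $(\alpha,\beta,\gamma)$. By the uniqueness assertion of Proposition~\ref{prop:charac} (whose proof does not use full boundary contact), $\P(\cdot\mid E_S)=\P$ for every $S$ with $\P(E_S)>0$; as the $E_S$ are disjoint, exactly one $S$ then has $\P(E_S)=1$. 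So $K$ has, almost surely, a deterministic boundary footprint $S$.

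It remains to see which $S$ are possible. I would first show that a positive exponent at a marked point forces that point into $K$ almost surely: for a fixed $B\in\mathcal{Q}^{*}$ with $0\notin B$ and $A_r:=rB$ one has, by scale invariance of $g_B'(0)$, $\varphi_{A_r}'(0)\to g_B'(0)$ and $\varphi_{A_r}'(-1),\varphi_{A_r}'(\infty)\to 1$, while $\{0\notin K\}\subset\liminf_r\{K\cap A_r=\emptyset\}$; thus $\P(0\notin K)\le g_B'(0)^{\gamma}$, and since $\inf_B g_B'(0)=0$ (take $B$ almost surrounding $0$) we get $0\in K$ a.s.\ as soon as $\gamma>0$, and similarly for $a$ and $b$. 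Hence $S$ contains every marked point of positive exponent. If $|S|\le 1$ then at least two of $\alpha,\beta,\gamma$ vanish, and inspecting $\lambda$ on a suitable range of $x$ (using $\lambda\ge 0$) forces the third to vanish as well; so $\P(K\cap A=\emptyset)\equiv 1$, whence $K\subset\{a,b,c\}$ a.s., and, being connected, $K$ is a.s.\ one of $\emptyset,\{a\},\{b\},\{c\}$ — the degenerate samples, which we exclude. This gives~(i). For~(ii), if $\P(E_{\{a,b\}})>0$ then by the above $K\cap\partial D=\{a,b\}$ a.s., so $\gamma=0$; then $\lambda(x)=(\alpha(x+1)-\beta)/(x(x+1)^2)$, and imposing $\lambda\ge 0$ as $x\to0^{+}$ and as $x\to0^{-}$ gives $\alpha\ge\beta$ and $\alpha\le\beta$, i.e.\ $\alpha=\beta$. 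Then (\ref{c}) collapses to $\P(K\cap A=\emptyset)=g_A'(-1)^{\alpha}$, which is precisely the defining formula of the chordal restriction measure of exponent $\alpha$ between $a$ and $b$; by density of the curve semigroup and the characterization of the chordal case in \cite{MR1992830}, $\P$ is that measure (and necessarily $\alpha\ge 5/8$).

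I expect the delicate point to be the uniqueness input ``$\P(\cdot\mid E_S)=\P$'': it genuinely requires that $A\mapsto(\log\varphi_A'(a),\log\varphi_A'(b),\log\varphi_A'(c))$ have image with nonempty interior, which is exactly what the computation behind (\ref{-1})--(\ref{0-1}) provides. The remaining points — measurability of the $E_S$ and of $\{p\in K\}$ in the restriction $\sigma$-algebra, the $\liminf$ in the shrinking-hull estimate, and the (necessary but harmless) exclusion of the constant samples $K\equiv\emptyset$ and $K\equiv a,b,c$ — are routine.
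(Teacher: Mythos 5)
Your proposal is correct in substance, but it follows a genuinely different route from the paper. The paper disposes of (i) by invoking the classification of one-sided trichordal restriction measures from Section~\ref{s5} (no one-sided sample can have the closure of its interior meet $\partial D$ in a single point), and proves (ii) by observing that when $K\cap\partial D=\{a,b\}$ the restriction property along the arc $(bc)$ makes the relevant boundary a one-sided \emph{chordal} restriction sample, so that $\P(K\cap A=\emptyset)=(\phi_A'(a)\phi_A'(b))^\alpha$ for $A$ attached to that arc, and then uses Lemma~\ref{lem:123} to propagate this identity to hulls attached to the other two arcs. You instead re-run the Section~\ref{Characterization} machinery directly for the relaxed class, prove via the mixture/linear-independence (extremality) argument that the boundary footprint $K\cap\partial D$ is a.s.\ deterministic, show by shrinking hulls that a strictly positive exponent forces the corresponding marked point into $K$, and then pin down the exponents by the sign of $\lambda$. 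What your route buys is self-containedness — it needs none of the SLE constructions or the one-sided existence theorem, only Proposition~\ref{prop:charac} and its proof — and it yields the extra structural fact that the footprint is deterministic; what the paper's route buys is brevity, given that Section~\ref{s5} is already in place. Two points in your write-up deserve more than the word ``routine'': the measurability of the events $E_S$ in the $\sigma$-algebra generated by $\{K\cap A=\emptyset\}$, $A\in\mathcal{Q}^{**}$, which does hold for non-degenerate $K$ but only via a small argument using connectedness of $K$, $K\cap\R\subset\{-1,0\}$ and ``channel'' hulls almost surrounding a marked point (and genuinely fails for the point/empty samples, so their exclusion is not optional); and, in (ii), the upgrade of the identity $\P(K\cap A=\emptyset)=g_A'(-1)^\alpha$ from hulls avoiding $c$ to hulls that may cover $c$, needed to literally invoke the chordal characterization — an exhaustion/continuity step (using $c\notin K$ a.s.) that the paper's own proof also leaves implicit.
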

\begin{proof}
The statement (i) is due to the fact that there is no one-sided trichordal restriction sample $K$ such that the closure of the interior of $K$ intersects $\partial D$ at only one point.

For the statement (ii), note that one-sided trichordal restriction sample $J$ such that the closure of the interior of $J$ intersects only one arc of $\partial D$  are just one-sided chordal restriction samples. 
If $K\cap\partial D=\{a,b\}$, then the right boundary of $K$ is a one-sided chordal restriction sample.
Hence for all $A\subset D$ such that $D\setminus A$ is simply connected and $\overline A\cap \partial D\subset (bc)$, there is a $\alpha>0$ such that 
\begin{align}\label{corollary_align}
\P(K\cap A=\emptyset)=\left(\phi_A'(a)\phi_A'(b)\right)^\alpha.
\end{align}
By Lemma \ref{lem:123}, we know that (\ref{corollary_align}) is true for all $A\subset D$ such that $D\setminus A$ is simply connected and $a,b,c\not\in\overline A\cap \partial D$.
\end{proof}

\section{Two-sided restriction}\label{sec:two-sided}

We now turn our attention to the two-sided case: 
In the $(\H,-1,0,\infty)$ setting, let $\Omega^2(\H,-1,0,\infty)$ be the collection of simply connected and closed $K\subset\overline\H$ such that
$K\cap \R=[-1,0]$ and $K$ is unbounded.
 A probability measure $\P^2_{\H,-1,0,\infty}$ on $\Omega^2(\H,-1,0,\infty)$ is said to satisfy \emph{two-sided trichordal conformal restriction for the sides $(-\infty,-1)$ and $(0,\infty)$} if for all $A\in\mathcal{Q}^{(-\infty,-1)\cup(0,\infty)}$, the law of $\varphi_A(K)$ conditioned on $\{K\cap A=\emptyset\}$ is the same as the law of $K$.

Again, the same argument as before shows that there exist three constants $\alpha,\beta,\gamma\in\R$ such that for all $A\in\mathcal{Q}^{(-\infty,-1)\cup(0,\infty)}$,
\begin{align*}
\P[K\cap A=\emptyset]=\phi_A'(\infty)^\alpha\phi_A'(-1)^{\beta}\phi_A'(0)^\gamma
\end{align*}
and that conversely there exists at most one two-sided restriction measure, that we denote by $\P^2(\alpha, \beta, \gamma)$ that satisfies this. Our goal is now to determine the range of accessible values 
for these three parameters.

In the following subsections, we are first going to construct a big class of $\P^2(\alpha,\beta,\gamma)$ samples using hSLE curves and the one-sided restrictions samples that have been constructed in the previous section.
Then we will construct a class of limiting cases using a mixture of chordal restriction samples and Poisson point process of Brownian motions. 
In the end we will prove that these two constructions together will give the full range of accessible values for $\alpha,\beta,\gamma$.

\subsection{Construction by hSLE curves and one-sided samples}\label{sec:construction-two-sided}
In the present section, our goal is to construct a two-sided restriction sample by first constructing its right boundary by a hSLE curve and then putting a one-sided restriction sample to the left of the first curve. To this end, we are going to use the same local martingale $(M_t, t\ge 0)$ as before, but for $A\in\mathcal{Q}^{(-\infty,-1)\cup(0,\infty)}$.

In order for  $(M_t, t\ge 0)$ to be a bounded martingale, we need to restrict $\lambda,\mu,\nu$ to a range which is smaller than (\ref{first-condition}).
This is because two-sided restriction implies one-sided restriction but is harder to achieve.
Let us give the range right now.
\[
\left|\,
\begin{array}{lllllll}
%\label{6.1}
\displaystyle{\lambda\ge 0} &\iff& a-b\ge 2 &\iff& \displaystyle{\alpha\ge\frac58} &\iff& l\ge 0\\[2mm]
%\label{6.2}
\displaystyle{\mu\ge 0} &\iff & \displaystyle{a+b\ge c+\frac12} &\iff&  \displaystyle{\beta\ge0}&\iff& m\ge 0\\[2mm]
%\label{6.3}
\displaystyle{\nu>-\frac34} &\iff & c>0 &\iff& \gamma>0 &\iff& \displaystyle{n\ge-\frac1{24}} \\[2mm]
%\label{6.4}
\displaystyle{\mu<\lambda+\nu+\frac32 }& \iff & b<c &\iff& \beta<\tilde\xi(\alpha,\gamma)&\iff& (1) \\[2mm]
%\label{6.5}
\displaystyle{\nu\le \lambda+\mu}&\iff& \displaystyle{\c\le \a-\frac12} &\iff& \gamma\le\tilde\xi(\alpha,\beta)&\iff& (2)\\[3mm]
%\label{6.6}
\displaystyle{\lambda+\mu+\nu \ge -\frac12} &\iff& \displaystyle{\a\ge \frac32} &\iff& \tilde\xi(\alpha,\beta,\gamma)\ge1&\iff& (3).
\end{array}
\label{second-condition}\tag{Range 2}
\right.
\]
The equivalence here is to be understood in the sense that we assume above all $\lambda\ge0, \mu\ge 0, \nu>-3/4$. In this case, (\ref{align:bijection}) is true and moreover we have $\lambda=U(l), \mu=U(m)$. However, note that we have $\nu=U(n)$ only if $\nu\ge-1/4$, otherwise $\nu=-U(n)-1/2$.
Therefore 
if $\nu\ge-1/4$, then we have
\begin{align*}
(2)&\iff U(n)\le U(l)+U(m) \iff n\le\tilde\xi(l,m),\\
(3)&\iff U(l)+U(m)+U(n)\ge-1/2;
\end{align*}
if $\nu<-1/4$, then we have
\begin{align*}
(2)&\iff -U(n)-\frac12\le U(l)+U(m),\\
(3)&\iff U(l)+U(m)-U(n)-\frac12\ge-\frac12\iff n\le\tilde\xi(l,m).
\end{align*}
Note that $U(l)+U(m)+U(n)\ge-1/2$ is always true and hence we can say
\begin{align}\label{2+3}
(2)+(3)\iff n\le\tilde\xi(l,m).
\end{align}

Before proving the boundedness of the local martingale $M$, let us first prove a lemma which gives some estimation about the function $w$.
\begin{lemma}\label{auxiliary lemma}
For $a,b,c$ in (\ref{second-condition})  that satisfy in addition $b<0$ and $1-c\le-b$, we have 
\begin{align*}
p(z)=z\frac{w'(z)}{w(z)}<-b \quad \text{for all } z\in(-\infty,0).
\end{align*}

\end{lemma}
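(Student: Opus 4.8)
The plan is to argue by contradiction using Euler's hypergeometric equation. Since (\ref{second-condition}) is contained in (\ref{first-condition}), Lemma \ref{G} gives $w:=w(a,b;c;\cdot)>0$ on $(-\infty,0)$, so that $p(z)=zw'(z)/w(z)$ and $q(z):=p(z)+b$ are smooth there. Because $w(0)=1$ and $w'(0)=ab/c$ is finite, $q$ extends continuously to $0$ with $q(0)=b<0$, hence $q<0$ on some interval $(-\delta,0]$. The goal is $q<0$ on all of $(-\infty,0)$. Suppose not: then $S:=\{z\in(-\infty,0):q(z)\ge0\}$ is nonempty and bounded above by $-\delta$, so $z_1:=\sup S$ lies in $(-\infty,0)$; by continuity $q(z_1)=0$ and $q<0$ on $(z_1,0]$, which forces $q'(z_1)\le0$.

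The key computation is to evaluate $q'(z_1)$. From $q(z_1)=0$ we get $w'(z_1)/w(z_1)=-b/z_1$, and differentiating $q=zw'/w$ gives $q'=w'/w+zw''/w-z(w'/w)^2$. Substituting the value of $w'/w$ at $z_1$ and, for $w''$, the relation coming from (\ref{euler}), namely $z(1-z)w''=ab\,w-(c-(a+b+1)z)w'$, a routine simplification produces a substantial cancellation and leaves $q'(z_1)=b(c-b-1)/(z_1(1-z_1))$. Now $z_1<0$ makes the denominator negative, while $b<0$ together with the hypothesis $1-c\le-b$ (i.e.\ $c-b-1\ge0$) makes the numerator $\le 0$; thus $q'(z_1)\ge0$, with strict inequality whenever $1-c<-b$. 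When $1-c<-b$ this contradicts $q'(z_1)\le0$, so in that case $q<0$, i.e.\ $p(z)<-b$, throughout $(-\infty,0)$.

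It remains to treat the boundary case $c=b+1$, where the argument above only yields $q'(z_1)=0$ (consistent with a tangency) and hence is inconclusive. Here a cleaner direct argument applies: set $\rho:=bw+zw'=wq$. Then $\rho'=(b+1)w'+zw''$, and inserting $zw''$ from (\ref{euler}) with $c=b+1$, the $w'$-terms combine into $a\,zw'$, which together with the $ab\,w$-term yields the first-order linear ODE $\rho'=a\rho/(1-z)$ on $(-\infty,1)$. Since $\rho(0)=b$, this integrates to $\rho(z)=b(1-z)^{-a}$. For $z<0$ we have $1-z>1$, so $(1-z)^{-a}>0$, and as $b<0$ this gives $\rho<0$; since $w>0$, we conclude $q=\rho/w<0$, i.e.\ $p(z)<-b$, which completes the proof. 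The step I expect to require the most care is carrying out the cancellation in the formula for $q'(z_1)$, together with the observation that the endpoint $c=b+1$ genuinely needs the separate first-order-ODE argument rather than the contradiction scheme.
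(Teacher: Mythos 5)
Your proof is correct, and at its core it runs on the same engine as the paper's: suppose $p$ reaches the level $-b$ at a first point $z_1$ as $z$ decreases from $0$, and use Euler's equation (\ref{euler}) to show the derivative at $z_1$ has the wrong sign. The paper implements this by writing $(\log p)'(z)=\frac{1}{1-z}\bigl(\frac{(p+a)(p+b)}{p}+\frac{1-c-p}{z}\bigr)$ and reading off sign conditions in the regions $0<p\le 1-c$ and $p\ge -b$, whereas you differentiate $q=p+b$ and evaluate exactly at the crossing, getting the closed form $q'(z_1)=b(c-b-1)/\bigl(z_1(1-z_1)\bigr)$; I checked this simplification and it is right, as is your first-crossing setup (continuity and $q(z_1)=0$ rest on $w>0$ from Lemma \ref{G}, which applies since (\ref{second-condition}) is contained in (\ref{first-condition})). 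The genuine added value of your version is the boundary case $1-c=-b$: there the paper's claimed strict inequality $(\log p)'>0$ at the level $p=-b$ degenerates (both terms in the bracket vanish when $p=-b=1-c$), so the contradiction is not literally available from the stated sign observations, while your separate argument — $\rho:=bw+zw'$ satisfies $\rho'=a\rho/(1-z)$ when $c=b+1$, hence $\rho(z)=b(1-z)^{-a}<0$ for $z<0$ and $q=\rho/w<0$ — settles that case cleanly (it is the classical identity $\frac{d}{dz}\bigl(z^b w\bigr)=b\,z^{b-1}(1-z)^{-a}$ for $c=b+1$). So: same strategy, but your explicit derivative computation plus the exact solution in the degenerate case make the argument airtight precisely where the paper's strict-sign claims are slightly too coarse.
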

\begin{proof}
Note that $w'(0)=ab/c<0$ and $w(0)=1$. Hence $p(0)=0$ and there is a $M>0$ such that $p(z)>0$ for $z\in(-M,0)$.
We can therefore look at the function $\log p$ on $(-M,0)$ and compute
\begin{align*}
\left( \log p(z) \right)'=\frac1z+\frac{w''(z)}{w'(z)}-\frac{w'(z)}{w(z)}.
\end{align*}
Note that $w$ is a solution of (\ref{euler}), hence
\begin{align*}
\left( \log p(z) \right)'=\frac{1}{1-z} \left( \frac{(p(z)+a)(p(z)+b)}{p(z)} +\frac{1-c-p(z)}{z} \right).
\end{align*}
Observe that
\begin{align*}
\left( \log p(z) \right)'<0 \text{ when } 0<p(z)\le 1-c; \quad \left( \log p(z) \right)'>0 \text{ when } p(z)\ge -b.
\end{align*}
Therefore we must have $p(z)<-b$ for all $z\in(-\infty, 0)$.
\end{proof}

Now we are ready to prove our important lemma.
\begin{lemma}\label{martingale2}
Let   $\lambda,\mu,\nu$ be in (\ref{second-condition}).
For all  $A\in\mathcal{Q}^{(-\infty,-1)\cup (0,\infty)}$, there exist a constant $C$, possibly dependent on $\lambda,\mu,\nu$ and $A$, such that $0\le M_t\le C$ for all $t\ge 0$ a.s.
\end{lemma}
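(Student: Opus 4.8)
The strategy is exactly parallel to the proof of Lemma \ref{martingale-1-side}: we use the integral representation \eqref{exp-M} of $M_t$,
\begin{align*}
M_t=\exp\int_0^{S_t}-2\left[P_{\lambda,\mu,\nu}\!\left(\frac{x_s-w_s}{x_s-o_s},\frac{x_s-w_s}{x_s-v_s}\right)+q\!\left(\frac{w_s-o_s}{o_s-v_s}\right)\left(\frac{1}{(x_s-w_s)(x_s-o_s)}-\frac{1}{(x_s-o_s)(x_s-v_s)}\right)\right]ds,
\end{align*}
where $S_t=\capacity(A_t)\le\capacity(A)$ is uniformly bounded and $q(z)=-zw'(-z)/w(-z)$. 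Since the exponent is an integral over a bounded interval, it suffices to bound the integrand from below, i.e. to show that the relevant function of the configuration $(v_s,o_s,w_s,x_s)$ is bounded below. The only new feature compared to Lemma \ref{martingale-1-side} is that now $A$ may meet $\R$ on either of the arcs $(-\infty,-1)$ or $(0,\infty)$, so there are \emph{two} regimes for the ordering of the four real points $v_s\le o_s\le w_s$ and the slit tip $x_s$, and we must handle both.

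\textbf{Step 1 (split into two cases by the position of the slit).} For $A\in\mathcal{Q}^{(0,\infty)}$ the slit stays to the right of $\gamma$, so as in Lemma \ref{martingale-1-side} we have $v_s\le o_s\le w_s\le x_s$; rescaling by the M\"obius map that sends $v_s,o_s,x_s$ to $0,1,\infty$ (so $w_s/(\cdot)$ becomes a pair $(x,y)$ with $0\le y\le x\le 1$), boundedness below of the integrand reduces to boundedness below of a continuous function on the compact set $\{0\le y\le x\le 1\}$, which is automatic. The new case is $A\in\mathcal{Q}^{(-\infty,-1)}$: then the slit tip satisfies $x_s\le v_s\le o_s\le w_s$, and after the analogous M\"obius normalization the argument of $P_{\lambda,\mu,\nu}$ is a pair $(x,y)$ that is no longer confined to a compact set — one of the two variables runs off to $\pm\infty$ as the configuration degenerates — so compactness alone does not suffice and we must use the precise structure of $P_{\lambda,\mu,\nu}$ and the growth of $q$.

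\textbf{Step 2 (control the unbounded case using Lemma \ref{auxiliary lemma} and the sign of the leading coefficients).} This is where the hypotheses (\ref{second-condition}) enter. After normalization the integrand in the $(-\infty,-1)$ case becomes (up to the harmless factor coming from the derivative of the M\"obius map) a function of the form $P_{\lambda,\mu,\nu}(x,y)+q(\text{ratio})\cdot(\text{product of linear terms})$ where now $y\to-\infty$ (or $x\to-\infty$) in the degenerate limit. Recall from \eqref{P_lmn} that $P_{\lambda,\mu,\nu}(x,y)=nx^2+(l-m-n+\lambda-\nu-\mu)xy+my^2+\nu x+\mu y+5/8$. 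By (\ref{second-condition}) we have $l\ge0$, $m\ge0$ and $n\ge-1/24$, and the quadratic-form coefficients are arranged (via the equivalences $(1),(2),(3)$, in particular $b<c$, $c\le a-1/2$ and $a\ge 3/2$, equivalently $\gamma\le\tilde\xi(\alpha,\beta)$, $\beta<\tilde\xi(\alpha,\gamma)$ and $\tilde\xi(\alpha,\beta,\gamma)\ge 1$) so that the quadratic part $nx^2+(\cdots)xy+my^2$ is bounded below on the relevant quadrant; the role of Lemma \ref{auxiliary lemma} is to give the uniform bound $q(z)/z$-type estimate (i.e. $p(z)=zw'(z)/w(z)<-b$) that prevents the $q$-term from overwhelming this quadratic growth as the ratio tends to $0$ or $\infty$. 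Matching the $q$-term's asymptotics (from \eqref{q1}, \eqref{q2}, and Lemma \ref{auxiliary lemma}) against the quadratic growth of $P_{\lambda,\mu,\nu}$ along each boundary ray of the quadrant shows the total integrand is bounded below there, and on the complementary compact region continuity finishes the job.

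\textbf{Step 3 (assemble and handle the post-$T$ piece).} Combining the two cases, on each connected piece $A\cap\R\subset(-\infty,-1)$ or $A\cap\R\subset(0,\infty)$ the integrand in \eqref{exp-M} is bounded below by a constant $c(\lambda,\mu,\nu)$, hence $M_t\le\exp(2|c|\capacity(A))=:C$, and $M_t\ge 0$ is clear from its product form. One should also check that the definitions of $M$ by continuity at the boundary-hitting times $T_1,T_2$ and the modified expression after $T_2$ are likewise bounded; this follows because those are limits of the already-bounded expression and the post-$T_2$ formula, written again in the form \eqref{exp-M} with the degenerate $q$ (here $w(z)=(1-z)^{-a}$, so $q$ is an explicit rational function), is handled by the same leading-coefficient bookkeeping. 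The main obstacle is Step 2: getting the \emph{uniform} lower bound in the non-compact regime, which is precisely why (\ref{second-condition}) is strictly smaller than (\ref{first-condition}) and why the delicate inequality $p(z)<-b$ of Lemma \ref{auxiliary lemma}, valid only under $b<0$ and $1-c\le -b$, is needed.
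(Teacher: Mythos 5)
Your overall strategy is the paper's: reduce to $A\in\mathcal{Q}^{(-\infty,-1)}$ (the $(0,\infty)$ case being Lemma \ref{martingale-1-side} since (\ref{second-condition}) is contained in (\ref{first-condition})), reuse the representation (\ref{exp-M}) with $S_t\le\capacity(A)$, and reduce everything to a lower bound for $f(x,y)=P_{\lambda,\mu,\nu}(x,y)+q\bigl(\frac{x-1}{y-x}y\bigr)x(1-y)$ on the new region $y\ge x\ge 1$ coming from the ordering $x_s\le v_s\le o_s\le w_s$. However, your Step 2 — the heart of the lemma — is asserted rather than proved, and as stated it is not quite right. It is not true that the quadratic part $nx^2+(l-m-n+\lambda-\nu-\mu)xy+my^2$ of $P_{\lambda,\mu,\nu}$ is by itself bounded below on the quadrant: the cross coefficient can be very negative. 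The point is that the $q$-term is not a perturbation to be dominated but an essential contribution: since $q(z)=-b+O(1/z)$ at infinity and $b=\mu+\nu-\lambda$, the leading part of $q(\cdot)\,x(1-y)$ is $bxy-bx$, which exactly shifts the cross coefficient from $l-m-n+\lambda-\nu-\mu$ to $l-m-n$. Even after this cancellation, boundedness below of $nx^2+(l-m-n)xy+my^2+\dots$ on the cone $y\ge x\ge1$ is not automatic; one must (after the substitution $x=1+c_1$, $y=1+c_1+c_2$ with $c_1,c_2\ge0$) rule out a negative cross term with positive discriminant, and this is where the combined condition $n\le\tilde\xi(l,m)$ (i.e.\ (\ref{2+3})) enters through the computation $\Delta=(l+m-n)^2-4ml\le-\tfrac49\lambda\mu(2\lambda+2\mu+1)\le 0$. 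None of this bookkeeping appears in your argument.

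A second inaccuracy concerns the role of Lemma \ref{auxiliary lemma}. Its hypotheses ($b<0$ and $1-c\le-b$) are not available in general under (\ref{second-condition}); it is used only in the boundary case $m=0$ (hence $\mu=0$, $b\le0$), where the pointwise inequality $q\le -b$ together with $x(1-y)\le0$ gives $f(x,y)\ge lx^2+\lambda x+\tfrac58\ge0$ directly. For $m>0$ one cannot invoke it and must instead argue via the asymptotic expansion of $q$ at infinity plus the discriminant argument above (with a further subcase $l=0$, where $\lambda=0$ and $m\ge n$ save the day). Your description, in which Lemma \ref{auxiliary lemma} uniformly "prevents the $q$-term from overwhelming the quadratic growth," conflates these two regimes and skips the case analysis in which each hypothesis of (\ref{second-condition}) is actually consumed. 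So while the skeleton matches the paper, the proposal has a genuine gap at precisely the step that distinguishes this lemma from Lemma \ref{martingale-1-side}.
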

\begin{proof}
Since (\ref{second-condition}) is included in (\ref{first-condition}), for $A\in\mathcal{Q}^{(0,\infty)}$ the lemma follows from Lemma \ref{martingale-1-side}. Hence it is enough to prove the lemma for $A\in\mathcal{Q}^{(-\infty,-1)}$.
Let us keep the notations from Lemma \ref{martingale-1-side}. 
Recall that $P_{\lambda,\mu,\nu}$ and $q$ were defined by (\ref{P_lmn}) and (\ref{qw}) as
\begin{align*}
P_{\lambda,\mu,\nu}(x,y)&=nx^2+(l-m-n+\lambda-\nu-\mu)xy+my^2+\nu x+\mu y+\frac58,\\
q(z)&=-zw'(-z)/w(-z).
\end{align*}
Here $x$ and $y$ actually represent the quantities $(x_s-w_s)/(x_s-o_s)$ and $(x_s-w_s)/(x_s-v_s)$.

The only modification from the proof of Lemma \ref{martingale-1-side}
is: For $A\in\mathcal{Q}^{(-\infty,-1)}$, we have $x_s\le v_s\le o_s\le w_s$.
We need to prove that the quantity
\begin{align*}
f(x,y)=P_{\lambda,\mu,\nu}(x,y)+q\left(\frac{x-1}{y-x}y \right)x(1-y)
\end{align*}
is bounded from below for $y\ge x\ge 1$.
Since $f$ is continuous, it is enough to look at $f$ when either $x$ or $y$ goes to $\infty$.

\underline{Let us first consider the case $m=0$.} In this case we also have $\mu=0$ and
\begin{align*}
f(x,y)=nx^2+(l-n+\lambda-\nu)xy+\nu x+\frac58 +q\left(\frac{x-1}{y-x}y \right)x(1-y).
\end{align*}
The condition $m=0$ and (\ref{second-condition}) also imply
\begin{align*}
\lambda\ge\nu, l\ge n, \lambda+\nu\ge-\frac12.
\end{align*}
Note that in this case we have $b\le0$ and $1-c\le-b$. We can apply Lemma \ref{auxiliary lemma}:
 $$q\left(\frac{x-1}{y-x}y \right)\le-b.$$
Since $x(1-y)\le 0$, we have
 \begin{align*}
f(x,y)&>nx^2+(l-n+\lambda-\nu)xy+\nu x+\frac58 -b x(1-y)=nx^2+(l-n) xy +\lambda x +\frac58 \\
&\ge n x^2+(l-n) x^2 +\lambda x+\frac58=l x^2 +\lambda x+\frac58 \ge 0.
\end{align*}

\underline{Now let us consider the case $m\not=0$.} In fact $m>0$.

If $x$ stays bounded and $y$ goes to $\infty$, then $f$ behaves  asymptotically as a polynomial in $y$ with the leading term $my^2$ where $m> 0$. It is obviously bounded from below.

If $x$ goes to $\infty$ then $y$ must also go to $\infty$.
By (\ref{lim2}) and knowing that $a-b\ge 2$, we have as $z\to\infty$, 
\begin{align*}
q(z)=-b+O(z^{-1}).
\end{align*}
Hence as $x,y\to\infty$, we have
\begin{align*}
q\left(\frac{x-1}{y-x}y \right) x(1-y) =-bx+ bxy+O(y-x).
\end{align*}
It follows that
\begin{align}\label{estim6}
f(x,y)=nx^2+(l-m-n)xy+my^2+(\lambda-\mu) x +\mu y +O(y-x).
\end{align}
Let us make a change of variable $x=1+c_1, y=1+c_1+c_2$ for $c_1,c_2\ge 0$ and get
\begin{align}\label{inequality-f}
f(x,y)\ge l c_1^2+(l+m-n) c_1c_2 + m c_2^2 + (2l+\lambda) c_1 + O(c_2).
\end{align}
Note that $x,y\to\infty$ is equivalent to $c_1\to\infty$. So the $O$ in the equation above is to be understood in the $c_1\to\infty$ limit.

\begin{itemize}
\item
If $l=0$ then $\lambda=0$ and (\ref{inequality-f}) becomes
\begin{align*}
f(x,y)=(m-n) c_1c_2 +m c_2^2+ O(c_2).
\end{align*}
Since in this case we also have $m\ge n$ and $m>0$, it follows that $f$ is bounded from below as $c_1\to\infty$.
\item
If $l>0$ (we also have $m>0$), then if ever $f$ is not bounded from below then we must have $l+m-n<0$ and the discriminant
$\Delta:=(l+m-n)^2-4ml >0$. Note that  in this case on the one hand $\Delta$ is increasing in $n$, on the other hand
by (\ref{2+3}) we have
\begin{align*}
n\le \tilde\xi(l,m)=U^{-1}(U(l)+U(m))=U^{-1}(\lambda+\mu).
\end{align*}
Hence
\begin{align*}
\Delta\le\left( l+m-U^{-1}(\lambda+\mu) \right)^2-4ml=-\frac{4}{9}\lambda\mu(2\lambda+2\mu+1)\le 0.
\end{align*}
which leads to a contradiction. Therefore $f$ is bounded from below.
\end{itemize}
\end{proof}

For $\lambda,\mu,\nu$ in
(\ref{second-condition}), we are going to construct  two-sided restriction samples $K$  by the following steps, see Figure \ref{fig:construction-two-sided}. 
\begin{construction}\label{cons:two-sided}
\begin{itemize}
\item[(i)] Let $\gamma_0$ be a hSLE$(\lambda,\mu,\nu)$ from $0$ to $\infty$.  Let $X:=\inf \gamma_0([0,\infty])\cap \R$. According to Proposition \ref{prop:hsle}, we have that $X=0$ when $\nu\ge-1/4$ and $X\in(-1,0)$ when $-3/4<\nu<-1/4$.
Let $\H^-$ be the connected component of $\H\setminus\gamma_0$ which has $(-\infty, X)$ as a part of its boundary.
\item[(ii)] Let $\tilde K$ be an independent sample $\P^1(m,n,l)$. 
Note that (\ref{second-condition}) together with (\ref{2+3}) ensures that $(m,n,l)\in R_1$ and $\P^1(m,n,l)$ exists.
Let $\phi$ be the conformal map from $\H$ to $\H^-$ that sends the boundary points $-1,0,\infty$ to $X,\infty,-1$.
\item[(iii)] Let $K:=\mathcal{F}\left(\phi(\tilde K)\cup \gamma_0\right)$.
\end{itemize}
\end{construction}

\begin{figure}[h]
\centering
\includegraphics[width=0.78\textwidth]{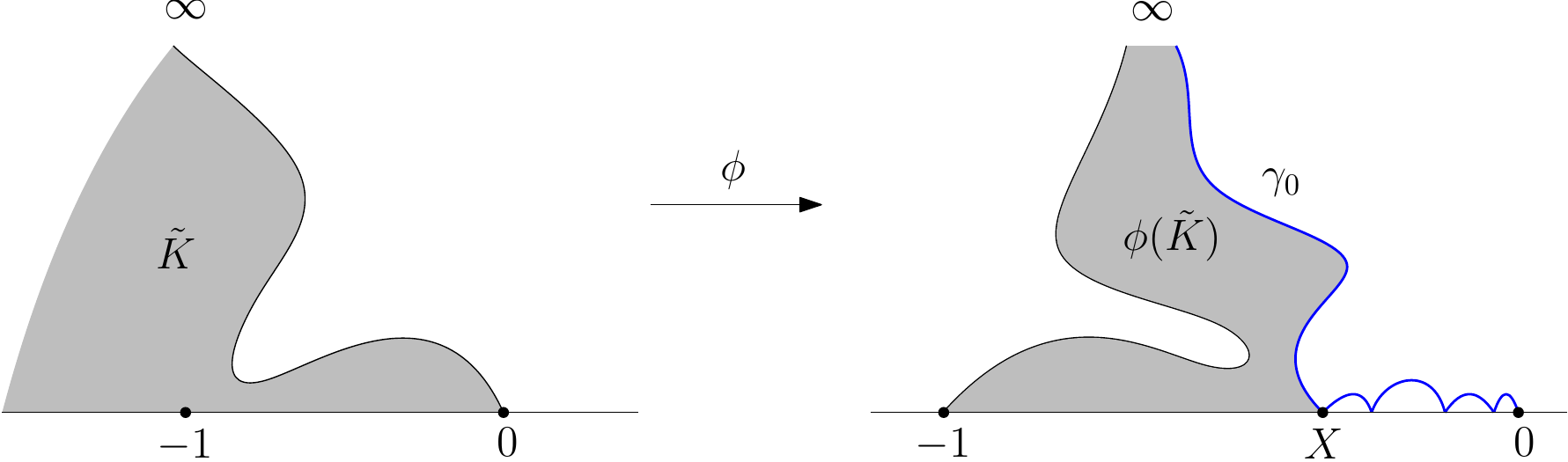}
\caption{Construction of two-sided restriction measures.}
\label{fig:construction-two-sided}
\end{figure}

\begin{proposition}\label{prop:two-sided}
For $\lambda,\mu,\nu$ in the range of Lemma \ref{martingale2}, the law of the set $K$
constructed above is $\P^2(\alpha,\beta,\gamma)$.
\end{proposition}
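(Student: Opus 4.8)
The plan is to verify the defining restriction formula for $\P^2(\alpha,\beta,\gamma)$ directly on the random set $K = \mathcal{F}(\phi(\tilde K)\cup\gamma_0)$ produced by Construction \ref{cons:two-sided}, by showing that the martingale $(M_t, t\ge 0)$ — already introduced and proved to be a local martingale in Lemma \ref{martingale-1-side}, and now bounded on all of $[0,\infty)$ by Lemma \ref{martingale2} for parameters in (\ref{second-condition}) — has terminal value equal to $\mathbf{1}_{\gamma_0\cap A=\emptyset}\,\P(\phi(\tilde K)\cap A=\emptyset\mid\gamma_0)$, and then applying the optional stopping / martingale convergence theorem together with the explicit value of $M_0$ from (\ref{M0}). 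Fix $A\in\mathcal{Q}^{(-\infty,-1)\cup(0,\infty)}$ and let $T_A$ be the first time $\gamma_0$ hits $A$; since $M$ is bounded it converges a.s.\ and in $L^1$ as $t\to T_A$.

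First I would analyze the two scenarios at $t\to T_A$, exactly as in the proof of Proposition \ref{prop:one-sided} but keeping one extra surviving factor. If $\gamma_0\cap A\neq\emptyset$ then $T_A<\infty$, $h_t'(W_t)\to0$ while all other constituent terms of $M_t$ stay bounded, so $M_{T_A}=0$. If $\gamma_0\cap A=\emptyset$ then $T_A=\infty$ and, as $t\to\infty$, $h_t'(W_t)\to1$, the three cross-ratio-type terms tend to $1$, $h_t'(V_t)\to1$, but $h_t'(O_t)$ does \emph{not} tend to $1$: just as in \cite[\S5.2]{MR2060031} and in the proof of Proposition \ref{prop:limit}, the surviving power of $h_t'(O_t)$ encodes the conditional probability that the independent one-sided sample $\tilde K$ (of law $\P^1(m,n,l)$, living in $\H^-$ after the conformal map $\phi$) avoids $A$. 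One must here track which marked point of $\gamma_0$'s Loewner picture corresponds, under $\phi$, to the generic interior point of the one-sided configuration in $\H^-$; by the normalization in step (ii) of the construction ($\phi$ sends $-1,0,\infty$ to $X,\infty,-1$) the relevant vanishing-derivative factor is the one at $O_t$ with exponent matching the exponent $n$ (or the appropriate entry of the triple $(m,n,l)$) in the one-sided restriction formula. This gives
$$M_{T_A} \;=\; \mathbf{1}_{\gamma_0\cap A=\emptyset}\,\P\!\left(\phi(\tilde K)\cap A=\emptyset\mid\gamma_0\right)\;=\;\P\!\left((\phi(\tilde K)\cup\gamma_0)\cap A=\emptyset\mid\gamma_0\right),$$
and since $\mathcal{F}$ of a set avoids $A\in\mathcal{Q}$ iff the set itself does (because $A$ is on the boundary component that $K$ touches only at isolated boundary points, and the filling only adds bounded complementary components), $M_{T_A}=\P(K\cap A=\emptyset\mid\gamma_0)$.

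Then taking expectations and using $\E[M_{T_A}]=M_0$ (boundedness gives uniform integrability, so optional stopping applies at $T_A\le\infty$), together with the closed form
$$M_0 = g_A'(0)^{5/8+n+\nu}\,g_A'(-1)^{m}\,\bigl(g_A(0)-g_A(-1)\bigr)^{l-m-n+\lambda-\nu}$$
from (\ref{M0}), and rewriting this product in the $\varphi_A$-normalization exactly as at the end of the proof of Proposition \ref{prop:one-sided}, one identifies $\P(K\cap A=\emptyset)=\varphi_A'(0)^{\gamma}\varphi_A'(-1)^{\beta}\varphi_A'(\infty)^{\alpha}$ with $\alpha,\beta,\gamma$ given by (\ref{alphabetagamma}) — using the arithmetic identities $5/8+n+\nu = U^{-1}(\nu+3/4)=\gamma$, $m=U^{-1}(\mu)=\beta$, etc. Since this holds for every $A\in\mathcal{Q}^{(-\infty,-1)\cup(0,\infty)}$ and such a measure is unique by the characterization argument of \S\ref{s5}, $K$ has law $\P^2(\alpha,\beta,\gamma)$. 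The main obstacle is not any of these limiting computations (they parallel Propositions \ref{prop:one-sided} and \ref{prop:limit}) but rather the bookkeeping in step two: one must carefully justify that conditionally on $\gamma_0$, after the coordinate change $\phi$, the boundedness constant and the surviving $h_t'(O_t)$-power really do produce the conditional avoidance probability of a genuine $\P^1(m,n,l)$ sample in the component $\H^-$ — i.e.\ that the two-sided restriction sample decomposes, conditionally on its right boundary $\gamma_0$, as an independent one-sided sample on the left, with the correct exponent triple; this is where the constraint (\ref{2+3}), ensuring $(m,n,l)\in R_1$ so that $\P^1(m,n,l)$ exists, is used, and where one invokes that a bounded nonnegative martingale with the stated terminal identification pins down the law.
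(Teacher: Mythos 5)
Your overall architecture is the paper's: you use the same process $M_t$, its boundedness on $\mathcal{Q}^{(-\infty,-1)\cup(0,\infty)}$ from Lemma \ref{martingale2}, optional stopping together with the value (\ref{M0}) of $M_0$, and the case $A\in\mathcal{Q}^{(0,\infty)}$ is indeed already covered by Proposition \ref{prop:one-sided}. However, the step where you identify the terminal value of $M$ for $A\in\mathcal{Q}^{(-\infty,-1)}$ — which is the crux of the proof — is wrong as you state it. On the event $\{\gamma_0\cap A=\emptyset\}$ it is \emph{not} true that $h_t'(V_t)\to 1$ and that the ratio terms tend to $1$, leaving only a surviving power of $h_t'(O_t)$: the point $-1$ (tracked by $V_t$) and the prime end at $\infty$ of $\H^-$ are marked points of the conditional sample $\phi(\tilde K)$ just as much as $X$ (tracked in the limit by $O_t$), and the hull $A$ remains at bounded conformal distance from all three of them, so none of the corresponding factors trivializes. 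What actually happens is that $h_t'(W_t)\to 1$, that $\tilde X_t, X_t\to\infty$, and that by (\ref{z-to-infty}) the ratio $G(\tilde X_t)/G(X_t)$ converges to $\bigl(\lim_t (O_t-V_t)/(h_t(O_t)-h_t(V_t))\bigr)^{\lambda}$, which cancels exactly the $\lambda$ in the explicit exponent $l-m-n+\lambda$, so that
\begin{align*}
\lim_{t\to\infty}M_t=\lim_{t\to\infty}\; h_t'(O_t)^{n}\, h_t'(V_t)^{m}\left(\frac{h_t(O_t)-h_t(V_t)}{O_t-V_t}\right)^{l-m-n}.
\end{align*}
It is this \emph{three-factor} product — the trichordal one-sided restriction formula with exponents $m$, $n$, $l$ attached to the marked points $-1$, $X$, $\infty$ of $\H^-$ — that is identified, by the argument of \cite[\S 5.2]{MR2060031}, with $\P\left(\phi(\tilde K)\cap A=\emptyset \mid \gamma_0\right)$.

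The source of the error is your analogy with Proposition \ref{prop:limit}: there the sample attached on the other side of the curve was a \emph{chordal} one-sided restriction sample, with a single relevant marked point, so a single derivative factor survived in the limit. In Construction \ref{cons:two-sided} the sample $\tilde K$ is a \emph{trichordal} one-sided sample of law $\P^1(m,n,l)$, whose avoidance probability involves derivatives at all three of its marked points, and the limit of $M_t$ must (and does) reproduce all three. Once this identification is corrected — your exponent arithmetic, the optional stopping step, and the uniqueness argument are otherwise fine — your proof coincides with the paper's.
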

Recall again that
\begin{align*}
\alpha=\frac{2}{3}\lambda^2+\frac{4}{3}\lambda+\frac{5}{8}, \quad
\beta=\frac{2}{3}\mu^2+\frac{1}{3}\mu, \quad
\gamma=\frac{2}{3}\nu^2+\frac{4}{3}\nu+\frac{5}{8}.
\end{align*}
\begin{proof}
For $A\in\mathcal{Q}^{(0,\infty)}$,  this proposition follows from Proposition \ref{prop:one-sided}.
It is thus enough to consider the case $A\in\mathcal{Q}^{(-\infty,-1)}$.

Recall $T_A=\inf \{t: \gamma_0([0,t]) \cap {A}\not=\emptyset\}$.
Similarly to the proof of Proposition \ref{prop:one-sided} and Proposition \ref{prop:limit},
note that
\begin{itemize}
\item[-] If $\gamma_0\cap A=\emptyset$, then ${T_A}=\infty$. As ${T_A}\to\infty$, we have
\begin{align*}
h_t'(W_t)\to 1, \, \frac{h_t(W_t)-h_t(O_t)}{W_t-O_t} \to 1,\,
\frac{h_t(W_t)-h_t(O_t)}{h_t(O_t)-h_t(V_t)}\to\infty,\, \frac{W_t-O_t}{O_t-V_t} \to\infty.
\end{align*}
By the expansion (\ref{z-to-infty}), we have
\begin{align*}
\lim_{t\to\infty}M_t=\lim_{t\to \infty} h_t'(O_t)^{n} h_t'(V_t)^{m}\left(\frac{h_t(O_t)-h_t(V_t)}{O_t-V_t}\right)^{l-m-n}=\P\left( K\cap A=\emptyset\, |\, \gamma \right).
\end{align*}

\item[-] If $\gamma_0\cap A\not=\emptyset$, then ${T_A}<\infty$. As $t\to {T_A}$, $h_t'(W_t)\to 0$ and other terms in $M_t$ tend to a finite limit. Hence $M_{T_A}=0$.
\end{itemize}
We have thus proved that
\begin{align*}
M_{T_A}=\mathbf{1}_{\gamma_0\cap A=\emptyset} \,\P\left( K\cap A=\emptyset\,|\, \gamma_0 \right) = \P\left( K\cap A=\emptyset\,|\, \gamma_0 \right).
\end{align*}
By the martingale convergence theorem, we have
\begin{align*}
\P\left( K\cap A=\emptyset \right)
& =\E(M_{T_A})=\E(M_0) \\
&=\varphi_A'(0)^{(2\nu^2/3)+(4\nu/3)+5/8} \varphi_A'(-1)^{(2\mu^2/3)+(\mu/3)} \varphi_A'(\infty)^{(2\lambda^2/3)+(4\lambda/3)+5/8}.
\end{align*}
\end{proof}
\begin{remark}
According to (\ref{second-condition}), we have constructed the measures $\P^2(\alpha,\beta,\gamma)$ for $(\alpha,\beta,\gamma)\in\R^3$ such that
\begin{align}\label{range:two-sided}
\alpha\ge\frac58,\quad\beta\ge 0,\quad \gamma>0,\quad \beta<\tilde\xi(\alpha,\gamma),\quad \gamma\le\tilde\xi(\alpha,\beta), \quad \tilde\xi(\alpha,\beta,\gamma)\ge 1.
\end{align}
\end{remark}

The Construction \ref{cons:two-sided} together with the geometric properties of one-sided restriction measures mentioned in Proposition \ref{prop:geom-one-sided} immediately imply some geometric properties of two-sided restriction samples. We list the two following properties without mentioning other obvious ones.
\begin{proposition}\label{lem:goemetry2}
Let $K$ be a sample which has law $\P^2(\alpha,\beta,\gamma)$ where $\alpha,\beta,\gamma$ are within the range (\ref{range:two-sided}), see Figure \ref{fig:construction-two-sided}. Then:
\begin{itemize}
\item[-]
The point $0$ is a cut-point of $K$ if and only if $\gamma=\tilde\xi(\alpha,\beta)$.
\item[-]
There exist a point $X\in(-1,0)$, which is the unique triple disconnecting point of $K$, if and only if $\tilde\xi(\alpha,\beta,\gamma)=1$. 
\end{itemize}
\end{proposition}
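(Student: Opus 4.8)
The plan is to read both statements off Construction \ref{cons:two-sided} and the cut-point description of one-sided samples in Proposition \ref{prop:geom-one-sided}, after translating the two conditions into constraints on $(\lambda,\mu,\nu)$. Throughout (\ref{second-condition}) one has $\lambda=U(\alpha)-3/4$, $\mu=U(\beta)$, $\nu=U(\gamma)-3/4$, hence $\tilde\xi(\alpha,\beta)=U^{-1}(\lambda+\mu+3/4)$ and $\tilde\xi(\alpha,\beta,\gamma)=U^{-1}(\lambda+\mu+\nu+3/2)$; since $U^{-1}$ is injective on $[-1/4,\infty)$ and all the arguments occurring here exceed $-1/4$, the condition $\gamma=\tilde\xi(\alpha,\beta)$ is equivalent to $\nu=\lambda+\mu$, and $\tilde\xi(\alpha,\beta,\gamma)=1$ is equivalent to $\lambda+\mu+\nu=-1/2$. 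Note that $\nu=\lambda+\mu$ forces $\nu\ge0>-1/4$, whereas $\lambda+\mu+\nu=-1/2$ forces $\nu\le-1/2<-1/4$; hence, by Proposition \ref{prop:hsle}, the hSLE right boundary $\gamma_0$ of Construction \ref{cons:two-sided} satisfies $X:=\inf(\gamma_0\cap\R)=0$ in the first case and $X\in(-1,0)$ in the second.

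The heart of the proof is a transfer principle. In Construction \ref{cons:two-sided}, $\phi$ is a conformal bijection from $\H$ onto the component $\H^-$ of $\H\setminus\gamma_0$, it carries the middle marked point $-1$ of the one-sided sample $\tilde K\sim\P^1(m,n,l)$ to $X$, and $\gamma_0$ forms part of $\partial\phi(\tilde K)$, glued to $\phi(\tilde K)$ along one side only. Consequently $X$ has, in $K=\mathcal{F}(\phi(\tilde K)\cup\gamma_0)$, the same disconnection behaviour as $-1$ has in $\tilde K$: when $X=0$, the point $0$ is a cut point of $K$ iff $-1$ is a cut point of $\tilde K$; when $X\in(-1,0)$, the point $X$ is a triple disconnecting point of $K$ iff $-1$ is a cut point of $\tilde K$, the third branch at $X$ being the region trapped between $\gamma_0$ and the segment $(X,0)$, which $\mathcal{F}$ attaches to the rest of $K$ only through $X$. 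By Proposition \ref{prop:geom-one-sided}, $-1$ is a cut point of $\P^1(m,n,l)$ exactly when $n=\tilde\xi(m,l)$; using $U(m)=\mu$, $U(l)=\lambda$ one has $\tilde\xi(m,l)=U^{-1}(\lambda+\mu)$, and $n=U^{-1}(\nu)=U^{-1}(-\tfrac12-\nu)$, so strict monotonicity of $U^{-1}$ on $[-1/4,\infty)$ gives: $n=\tilde\xi(m,l)\iff\nu=\lambda+\mu$ when $\nu\ge-1/4$, and $n=\tilde\xi(m,l)\iff\lambda+\mu+\nu=-\tfrac12$ when $\nu<-1/4$.

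These facts assemble as follows. For the cut-point statement: if $\gamma\ge1/3$ (equivalently $\nu\ge-1/4$, so $X=0$) then $0$ is a cut point of $K$ iff $\nu=\lambda+\mu$ iff $\gamma=\tilde\xi(\alpha,\beta)$; if $0<\gamma<1/3$ (so $\nu<-1/4$) then by Proposition \ref{prop:hsle} $\gamma_0$ touches $(-1,0)$, so the region bounded by the initial part of $\gamma_0$ and a sub-segment of $(-1,0)$ adjacent to $0$ is a bounded complementary component, which $\mathcal{F}$ fills, placing $0$ in the interior of a filled piece of $K$ and hence not a cut point --- and since here $\gamma<1/3<5/8\le\tilde\xi(\alpha,\beta)$, the claimed equivalence also holds trivially on the right. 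For the triple-point statement: if $\tilde\xi(\alpha,\beta,\gamma)=1$ then $\nu<-1/4$, so $X\in(-1,0)$, and the transfer principle together with $\lambda+\mu+\nu=-\tfrac12$ shows $X$ is a triple disconnecting point of $K$; conversely if $\tilde\xi(\alpha,\beta,\gamma)>1$ then either $\nu\ge-1/4$, so $X=0\notin(-1,0)$ and (since $\gamma_0$ avoids $(-1,0)$ and a one-sided sample has no interior triple disconnecting point --- clear from the constructions in \S\,\ref{s5}) $K$ has no triple disconnecting point on $(-1,0)$, or $\nu<-1/4$ with $\lambda+\mu+\nu>-\tfrac12$, so $X\in(-1,0)$ is only a double cut point of $K$. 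Uniqueness of $X$ among triple disconnecting points holds because $X$ is the only point of $\gamma_0\cap\R$ lying on $\partial\H^-$, hence the only place a pinch of $\phi(\tilde K)$ can meet $\gamma_0$, and any other candidate would have to be an interior triple point of $\phi(\tilde K)$.

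The step I expect to be the main obstacle is making the transfer principle rigorous: one must check that passing from $\phi(\tilde K)$ to $K=\mathcal{F}(\phi(\tilde K)\cup\gamma_0)$ neither destroys the pinch at $X$ --- in particular that the complementary ``gap'' at the pinch of $\phi(\tilde K)$ is unbounded, so $\mathcal{F}$ does not fill it back in and reconnect the two sides --- nor introduces a spurious branch, and that $\gamma_0$ is indeed glued to $\phi(\tilde K)$ along a single side of $X$. Granting this local topological bookkeeping, the remainder is the routine arithmetic with $U$, $U^{-1}$ and $\tilde\xi$ carried out above.
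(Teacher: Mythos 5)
Your proposal is correct and follows essentially the same route as the paper: translate $\gamma=\tilde\xi(\alpha,\beta)$ and $\tilde\xi(\alpha,\beta,\gamma)=1$ into $n=\tilde\xi(m,l)$ (with $X=0$ resp.\ $X\in(-1,0)$ via Proposition \ref{prop:hsle}), and then read the cut-point/triple-point structure of $K$ off Construction \ref{cons:two-sided} together with the fact that $-1$ is a cut point (equivalently lies on the right boundary) of the one-sided sample $\tilde K\sim\P^1(m,n,l)$ exactly when $n=\tilde\xi(m,l)$. Your write-up is somewhat more explicit than the paper's about the ``only if'' directions, uniqueness, and the topological bookkeeping at the pinch (the paper dispatches these with ``hence must be a cut point'' and ``obviously unique''), but the underlying argument is the same.
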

\begin{proof}
When $\gamma=\tilde\xi(\alpha,\beta)$, we have $\gamma\ge 5/8$ and $\gamma_0$ intersects $\R$ only at $0$. Moreover, $\gamma=\tilde\xi(\alpha,\beta)$ implies $n=\tilde\xi(m,l)$, hence $-1$ is on the right boundary of $\tilde K$, consequently $0$ is on the left boundary of $\phi(\tilde K)$, hence must be a cut point.

When  $\tilde\xi(\alpha,\beta,\gamma)=1$, since $\alpha\ge5/8$ we have $\gamma< 1/3$ hence $\gamma_0$ is boundary touching and $X\in(0,1)$. $X$ is obviously on the right boundary of $K$. When $\tilde\xi(\alpha,\beta,\gamma)=1$, we actually have
$n=\tilde\xi(m,l)$. Hence $-1$ is on the right boundary of $\tilde K$. Hence $X$ is on the left boundary of $\phi(\tilde K)$ hence it is on both the left and right boundaries of $K$, hence is a triple disconnecting point. {It is obviously unique.}
\end{proof}

In Figure \ref{fig:two-sided-geom} we illustrate some of the above-mentioned geometric properties for some two-sided restriction measures in the unit disk (which is a more symmetric domain). The points $a,b,c$ have respectively restriction exponents $\alpha,\beta,\gamma$.

\begin{figure}[h]
\centering
\begin{subfigure}[t]{0.3\textwidth}
\centering
\includegraphics[width=0.8\textwidth]{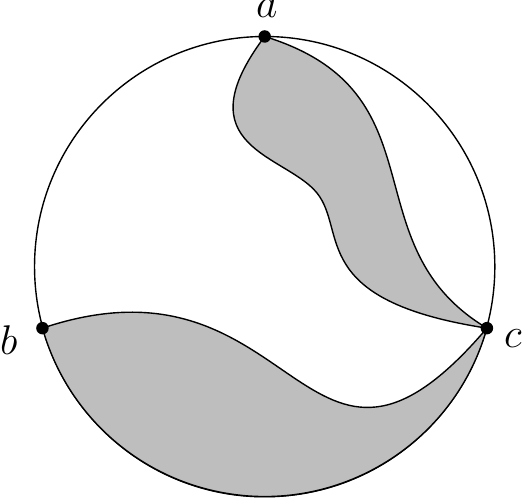}
\caption{The case $\gamma=\tilde \xi (\alpha,\beta)$. 
The point $c$ is a cut-point.
}
\label{fig:two-sided2}
\end{subfigure}
\quad
\begin{subfigure}[t]{0.3\textwidth}
\centering
\includegraphics[width=0.8\textwidth]{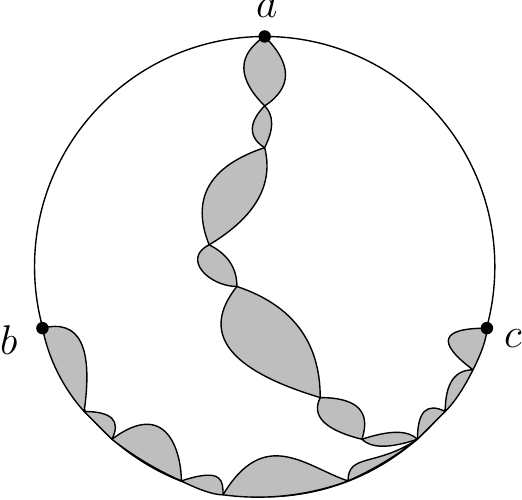}
\caption{The case $\tilde\xi(\alpha,\beta,\gamma)=1,\, \beta,\gamma>0$.
We always have $\alpha\le1$ and $\beta,\gamma< 1/3$,  hence each branch has infinitely many cut-points.}
\label{fig:two-sided1}
\end{subfigure}
\quad
\begin{subfigure}[t]{0.3\textwidth}
\centering
\includegraphics[width=0.8\textwidth]{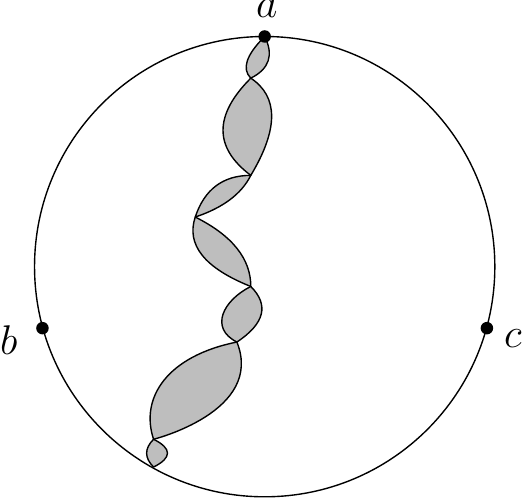}
\caption{The case $\alpha=1, \beta=\gamma=0$, which is not in range (\ref{range:two-sided}).
It is the filling of a Brownian excursion which starts at $a$ and exits at the arc $(bc)$.}
\label{fig:two-sided3}
\end{subfigure}
\caption{Some two-sided restriction samples in the unit disk.}
\label{fig:two-sided-geom}
\end{figure}

\subsection{Construction of limiting cases}
We can see that (\ref{range:two-sided}) {does not seem to be the full range}, because some limiting cases are missing, such as $\gamma=0$ and $\beta=\tilde\xi(\alpha,\gamma)$.

When $\gamma=0$, the right-boundary of the restriction sample does not start at $0$ and thus cannot be given by a hSLE curve which is defined to start at $0$  .
However since the case $\beta=0, \gamma>0$ is included in (\ref{range:two-sided}), by symmetry between $\beta$ and $\gamma$, the case $\beta>0, \gamma=0$ should also exist and can be constructed in a symmetric way, i.e. by first constructing its left boundary which is a hSLE curve starting at $-1$.

For the $\beta=\tilde\xi(\alpha,\gamma)$ case, we can either use again the SLE$_{8/3}(\rho_1,\rho_2)$ curves as in \S\,\ref{sec:limit}, or say it is symmetric to the $\gamma=\tilde\xi(\alpha,\beta)$ case.

The $\beta=\gamma=0$ case is still not constructed. However this case should exist. Consider a Brownian excursion in $\H$ starting from $\infty$ and exiting at a uniformly chosen point of the segment $[-1,0]$, then its outer boundary should satisfy two-sided restriction with exponents $(1,0,0)$ 
(see Figure \ref{fig:two-sided3}).
The union of $n$ independent such Brownian excursions should satisfy restriction with exponents $(n,0,0)$.

Now let us construct $\P^2(\alpha,0,0)$ samples for all $\alpha>1$. As illustrated in Figure \ref{fig:alpha00}:
For $\tau\in(-1,0)$, let $I_\tau$ be a chordal restriction sample of exponent $\alpha> 1$ in $(\H,\tau,\infty)$. Let $\mathcal{P}_\tau(\theta)$ be a Poisson point process of Brownian excursions of intensity 
$\theta \nu_\H$ but restricted to the excursions which start in $(-1,\tau)$ and end in $(\tau,0)$, so that it satisfies restriction in $(\H,-1,\tau,0)$ for exponent $(0,1-\alpha,0)$. This is possible because $1-\alpha<0$ and we have Lemma \ref{sub-family-1}. Let $K_\tau:=\mathcal{F}\left( I_\tau \cup \mathcal{P}_\tau(\theta) \right)$  be the filling of the union of $I_\tau$ and $ \mathcal{P}_\tau(\theta)$.
Let $\rho$ be the probability density function supported on $[-1,0]$ defined by
\begin{align*}
\rho(\tau)=\frac{\tau^{\alpha-1} (\tau+1)^{\alpha-1}}{M} \quad\text{with}\quad 
M=\int_{-1}^0 \tau^{\alpha-1} (\tau+1)^{\alpha-1} d\tau.
\end{align*}
Now let $K$ be the random set constructed by first choosing $T$ according to the density $\rho$ and then a (conditionally) independent $K_\tau$ for $\tau=T$.

\begin{figure}[h]
\centering
\includegraphics[width=0.39\textwidth]{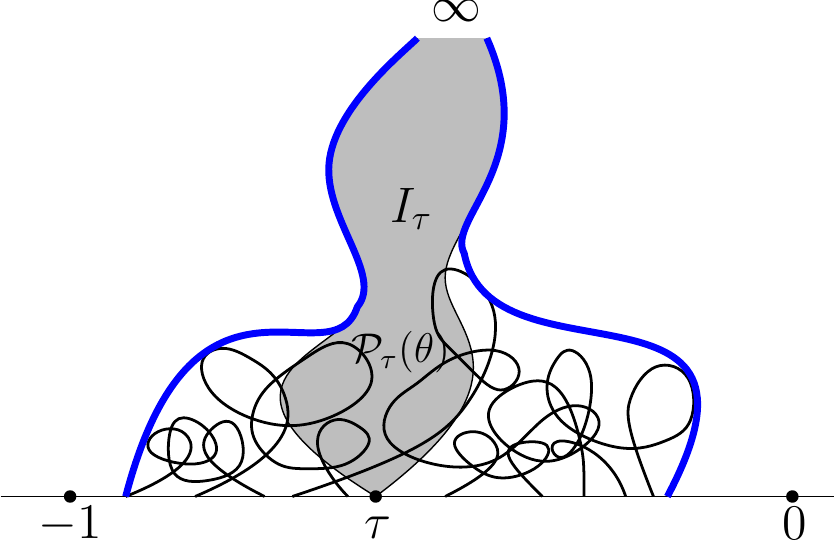}
\caption{The random set $K_\tau$.}
\label{fig:alpha00}
\end{figure}

\begin{lemma}
The law of this set $K$ is $\P^2(\alpha,0,0)$ (see Figure \ref{fig:alpha00}).
\end{lemma}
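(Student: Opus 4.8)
The plan is to verify directly the defining identity of $\P^2(\alpha,0,0)$: since that measure is already known to be unique, and since $K$ is by construction almost surely a closed, unbounded, simply connected subset of $\overline\H$ with $K\cap\R=[-1,0]$ (hence $K\in\Omega^2(\H,-1,0,\infty)$), it suffices to show that
\[
\P(K\cap A=\emptyset)=\varphi_A'(\infty)^\alpha=\bigl(g_A(0)-g_A(-1)\bigr)^\alpha
\qquad\text{for all }A\in\mathcal{Q}^{(-\infty,-1)\cup(0,\infty)}.
\]
Fix such an $A$ (so $g_A$ is real-analytic and increasing on $[-1,0]$). Conditioning on the value $\tau$ of $T$ and using the independence of $I_\tau$ and $\mathcal{P}_\tau(\theta)$ together with the fact that $\mathcal{F}(I_\tau\cup\mathcal{P}_\tau(\theta))$ avoids $A$ exactly when both $I_\tau$ and $\mathcal{P}_\tau(\theta)$ do (the standard property of fillings of unions of restriction samples used repeatedly above), we get
\[
\P(K\cap A=\emptyset)=\int_{-1}^0\rho(\tau)\,\P(I_\tau\cap A=\emptyset)\,\P(\mathcal{P}_\tau(\theta)\cap A=\emptyset)\,d\tau .
\]

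I would then compute each factor in terms of $g_A$. Because $I_\tau$ is a chordal restriction sample of exponent $\alpha$ in $(\H,\tau,\infty)$, applying (\ref{two-point-formula}) with the conformal map $z\mapsto g_A(z)-g_A(\tau)+\tau$ (which fixes $\tau$ and has leading coefficient $1$ at $\infty$, so contributes derivative $1$ at $\infty$, exactly as in the $(\H,0,\infty)$ case) gives $\P(I_\tau\cap A=\emptyset)=g_A'(\tau)^\alpha$. Because $\mathcal{P}_\tau(\theta)$ satisfies restriction in $(\H,-1,\tau,0)$ with exponent $(0,1-\alpha,0)$, and the exponents at $-1$ and $0$ vanish, $\P(\mathcal{P}_\tau(\theta)\cap A=\emptyset)=\chi_A'(\tau)^{1-\alpha}$, where $\chi_A$ is the unique conformal map from $\H\setminus A$ onto $\H$ fixing $-1,\tau,0$; writing $\chi_A=\sigma_\tau\circ g_A$ with $\sigma_\tau\in\mathrm{Aut}(\H)$ sending $(g_A(-1),g_A(\tau),g_A(0))$ to $(-1,\tau,0)$, the elementary formula for the derivative of a M\"obius map at an interpolation point gives
\[
\sigma_\tau'(g_A(\tau))=-\tau(\tau+1)\,\frac{g_A(0)-g_A(-1)}{\bigl(g_A(\tau)-g_A(-1)\bigr)\bigl(g_A(0)-g_A(\tau)\bigr)} .
\]
Hence $\chi_A'(\tau)=\sigma_\tau'(g_A(\tau))\,g_A'(\tau)$ and the product in the integrand is $g_A'(\tau)^\alpha\chi_A'(\tau)^{1-\alpha}=g_A'(\tau)\,\sigma_\tau'(g_A(\tau))^{1-\alpha}$.

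Substituting this and $\rho(\tau)=\bigl(-\tau(\tau+1)\bigr)^{\alpha-1}/M$, the weight $\bigl(-\tau(\tau+1)\bigr)^{\alpha-1}$ exactly cancels the factor $\bigl(-\tau(\tau+1)\bigr)^{1-\alpha}$ produced by $\sigma_\tau'(g_A(\tau))^{1-\alpha}$, leaving
\[
\P(K\cap A=\emptyset)=\frac{\bigl(g_A(0)-g_A(-1)\bigr)^{1-\alpha}}{M}\int_{-1}^{0}\frac{g_A'(\tau)\,d\tau}{\bigl[(g_A(\tau)-g_A(-1))(g_A(0)-g_A(\tau))\bigr]^{1-\alpha}} .
\]
The change of variables $s=g_A(\tau)$, then $s=g_A(-1)+(g_A(0)-g_A(-1))t$, turns the integral into $\bigl(g_A(0)-g_A(-1)\bigr)^{2\alpha-1}\int_0^1(t(1-t))^{\alpha-1}\,dt$, while the same rescaling (with $\tau=t-1$) shows $M=\int_0^1(t(1-t))^{\alpha-1}\,dt$; both integrals converge since $\alpha>1>0$. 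Everything then collapses to $\bigl(g_A(0)-g_A(-1)\bigr)^\alpha$, which is the desired identity.

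I do not expect a genuine obstacle: the computation is essentially forced once one has the M\"obius-derivative identity, and the density $\rho$ has been reverse-engineered precisely so that its $\tau$-dependence cancels the Jacobian-type factor $\sigma_\tau'(g_A(\tau))^{1-\alpha}$ after the substitution $s=g_A(\tau)$. The only points needing a little care are (a) confirming that the mixture over $\tau$ really produces a measurable random element of $\Omega^2(\H,-1,0,\infty)$, and (b) making sure the identity holds for \emph{all} $A\in\mathcal{Q}^{(-\infty,-1)\cup(0,\infty)}$, including those meeting both boundary arcs — but the argument above is uniform in this respect, since it only uses that $g_A$ is smooth and increasing on $[-1,0]$ and that $A$ stays away from $[-1,0]$ (so that both $I_\tau$'s chordal restriction formula and $\mathcal{P}_\tau(\theta)$'s trichordal restriction formula apply).
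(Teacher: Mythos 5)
Your proof is correct and follows essentially the same route as the paper's: condition on $T=\tau$, use independence to factor $\P(K_\tau\cap A=\emptyset)$ into the chordal restriction probability of $I_\tau$ and the trichordal one of $\mathcal{P}_\tau(\theta)$, observe that the density $\rho$ cancels the $\tau$-dependent Möbius factor, and conclude by the change of variables along $[-1,0]$. The only difference is cosmetic: you normalize with $g_A$ and a Möbius correction $\sigma_\tau$ (and substitute $s=g_A(\tau)$), where the paper works with $\varphi_A$ and the explicit map $f_{\tau,A}$ and changes variables via $\tau\mapsto\varphi_A(\tau)$; the two computations coincide term by term.
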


\begin{proof}
Note that the map $z\mapsto \varphi_A(z)-\varphi_A(\tau)$ is a conformal map from $\H\setminus A$ to $\H$ that fixes $\tau$ and $\infty$. Hence
\begin{align*}
\P\left( I_\tau \cap A=\emptyset \right)=\varphi_A'(\tau)^\alpha \varphi_A'(\infty)^\alpha.
\end{align*}
Note that the conformal map from $\H\setminus A$ to $A$ that fixes $-1,\tau,0$ is given by
\begin{align*}
f_{\tau,A}(z)=\frac{\tau(\varphi_A(\tau)+1)}{\varphi_A(\tau)-\tau}-\frac{\tau\varphi_A(\tau)(\tau+1)(\varphi_A(\tau)+1)}{(\varphi_A(\tau)-\tau)^2\varphi_A(z)+\varphi_A(\tau)(\varphi_A(\tau)-\tau)(\tau+1)}.
\end{align*}
It yields that
\begin{align*}
f_{\tau,A}'(\tau)=\frac{\tau(\tau+1)}{\varphi_A(\tau)\left( \varphi_A(\tau)+1 \right)} \varphi_A'(\tau).
\end{align*}
For all $A\in\mathcal{Q}^{(-\infty,-1)\cup(0,\infty)}$, we have
\begin{align*}
\P( \mathcal{P}_\tau(\theta) \cap A=\emptyset)=f_{\tau,A}'(\tau)^{1-\alpha}.
\end{align*}
Since $ \mathcal{P}_\tau(\theta)$ and $I_\tau$ are independent, we further have
\begin{align*}
\P\left( K_\tau \cap A=\emptyset \right)&=\P\left( I_\tau \cap A=\emptyset \right) \P( \mathcal{P}_\tau(\theta) \cap A=\emptyset)\\
&=\varphi_A'(\infty)^\alpha \varphi_A'(\tau) \left( \frac{\tau(\tau+1)}{\varphi_A(\tau)\left( \varphi_A(\tau)+1 \right)} \right)^{1-\alpha}.
\end{align*}
Hence
\begin{align*}
\P\left( K\cap A=\emptyset \right)&= \int_{-1}^0 \P\left( K_\tau \cap A=\emptyset \right) \rho(\tau) d\tau \\
&=\frac1M \int_{-1}^0 \varphi_A'(\infty)^\alpha \varphi_A'(\tau) \left( \frac{\tau(\tau+1)}{\varphi_A(\tau)\left( \varphi_A(\tau)+1 \right)} \right)^{1-\alpha} \tau^{\alpha-1} (\tau+1)^{\alpha-1} d\tau \\
&= \frac1M \varphi_A'(\infty)^\alpha  \int_{-1}^0 \left(\varphi_A(\tau)\left( \varphi_A(\tau)+1 \right)\right)^{\alpha-1} \varphi_A'(\tau) d\tau.
\end{align*}
Since $\varphi_A$ sends $-1,0$ to themselves, we can view $\tau\mapsto \varphi_A(\tau)$ as a change of variable on $[-1,0]$. Hence
\begin{align*}
\P\left( K\cap A=\emptyset \right)&= \frac1M \varphi_A'(\infty)^\alpha  \int_{-1}^0 \left( \tau(\tau+1) \right) ^{\alpha-1} d\tau
= \varphi_A'(\infty)^\alpha.
\end{align*}
\end{proof}

\begin{remark}
By adding these limiting cases to what have been constructed in the previous section, we have constructed $\P^2(\alpha,\beta,\gamma)$ for all $(\alpha,\beta,\gamma)\in\R^3$ in the following domain (which is an extension of (\ref{range:two-sided}))
\begin{align}\label{two-sided-final-range}
\alpha\ge\frac58,\quad\beta\ge 0,\quad \gamma\ge0,\quad \beta\le\tilde\xi(\alpha,\gamma),\quad \gamma\le\tilde\xi(\alpha,\beta), \quad \tilde\xi(\alpha,\beta,\gamma)\ge 1.
\end{align}
\end{remark}

\subsection{Range of existence for two-sided measures}
In this section we are going to prove that (\ref{two-sided-final-range}) is actually the largest range for which $\P^2(\alpha,\beta,\gamma)$ can exist.

The conditions $\beta\ge 0, \gamma\ge0,\beta\le\tilde\xi(\alpha,\gamma), \gamma\le\tilde\xi(\alpha,\beta)$ are inherited from the one-sided range (\ref{one-sided-final-range}), because the left and right boundaries of a two-sided restriction measure should both satisfy one-sided restriction.

The condition $\alpha\ge {5}/{8}$ is inherited from the chordal two-sided restriction measures. 
If $K$ is a sample with law $\P^1(\alpha,\beta,\gamma)$, then the sequence $(K/n)_{n\ge 0}$ ($K$ rescaled by the factor $1/n$) admits a subsequence that converges in law to a two-sided chordal restriction measure $K_\infty$, due to some standard facts about tightness and weak convergence of {measures}. Moreover , for $A\in\mathcal{Q}^{(0,\infty)}$, by approaching the event $\{K_\infty \cap A=\emptyset\}$ by open and closed sets  (for example see  \S\, 4.2 \cite{MR3293294}), we have 
\begin{equation}\label{kacvg}
\begin{split}
\P(K_\infty\cap A=\emptyset)&=\P\left(\frac1nK\cap A=\emptyset\right)\\
&=g_A'\left(-\frac1n\right)^\beta g_A'(0)^\gamma \left(g_{A}(0)-g_{A}\left(-\frac1n\right)\right)^{\alpha-\beta-\gamma}
\underset{n\to \infty}{\longrightarrow} g_A'(0)^\alpha.
\end{split}
\end{equation}
Hence we must have $\alpha\ge5/8$.

Now it is only remains to prove the following.
\begin{lemma}
The measure
$\P^2(\alpha,\beta,\gamma)$ does not exist if $\tilde\xi(\alpha,\beta,\gamma)<1$.
\end{lemma}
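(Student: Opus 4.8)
We may assume $\alpha\ge 5/8$, $\beta,\gamma\ge 0$, $\beta\le\tilde\xi(\alpha,\gamma)$ and $\gamma\le\tilde\xi(\alpha,\beta)$, since otherwise non-existence of $\P^2(\alpha,\beta,\gamma)$ has already been established. Recall from \S\,\ref{sec:brownian-exponents} that $\tilde\xi(\alpha,\beta,\gamma)<1$ is equivalent to $U(\alpha)+U(\beta)+U(\gamma)<1$. The plan is to suppose $\P^2(\alpha,\beta,\gamma)$ exists, to fatten a sample by an independent Poisson point process of Brownian excursions so that the enlarged set has law $\P^2(\alpha',\beta,\gamma)$ with $\tilde\xi(\alpha',\beta,\gamma)=1$ exactly, and then to contradict the triple-disconnecting-point property of such measures recorded in Proposition~\ref{lem:goemetry2}.

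So suppose $K$ has law $\P^2(\alpha,\beta,\gamma)$ and let $\mathcal P(\theta)$ be a Poisson point process of Brownian excursions with intensity $\theta\mu_\H$ restricted to those excursions having both endpoints on the arc $(-1,0)$, independent of $K$. Every $\varphi_A$ with $A\in\mathcal{Q}^{(-\infty,-1)\cup(0,\infty)}$ fixes $-1,0,\infty$ and hence maps the arc $(-1,0)$ onto itself, so exactly as in Lemma~\ref{sub-family-1} the Möbius invariance and restriction property of $\mu_\H$ give that $\mathcal F(\mathcal P(\theta))$ satisfies two-sided conformal restriction and that $\P(\mathcal P(\theta)\cap A=\emptyset)=\varphi_A'(\infty)^{a}\varphi_A'(-1)^{b}\varphi_A'(0)^{c}$ with $a,b,c$ linear in $\theta$. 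Taking $A$ to be a vertical slit at distance $\varepsilon$ from $0$ (resp.\ from $-1$) and letting $\varepsilon\to0$: although excursions of $\mathcal P(\theta)$ accumulate near $0$ (resp.\ near $-1$), those that can actually reach the slit have $\mu_\H$-mass bounded uniformly in $\varepsilon$ (a computation of the same type as in Lemma~\ref{sub-family-1}), so the avoidance probability stays bounded away from $0$ while $\varphi_A'(0)\to0$ (resp.\ $\varphi_A'(-1)\to0$), which forces $c=0$ (resp.\ $b=0$). Since $\varphi_A'(\infty)<1$ for every such $A$, we get $\P(\mathcal P(\theta)\cap A=\emptyset)=\varphi_A'(\infty)^{k\theta}$ for some constant $k>0$. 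Because $A$ avoids $(-1,0)$, one also has $\{\mathcal F(K\cup\mathcal P(\theta))\cap A=\emptyset\}=\{K\cap A=\emptyset\}\cap\{\mathcal P(\theta)\cap A=\emptyset\}$, so by independence $\mathcal F(K\cup\mathcal P(\theta))\in\Omega^2$ satisfies two-sided restriction with exponents $(\alpha+k\theta,\beta,\gamma)$, i.e.\ it has law $\P^2(\alpha+k\theta,\beta,\gamma)$.

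As $t\mapsto\tilde\xi(t,\beta,\gamma)$ is continuous, increasing and tends to $\infty$, while $\tilde\xi(\alpha,\beta,\gamma)<1$, we may fix $\theta>0$ with $\tilde\xi(\alpha+k\theta,\beta,\gamma)=1$. The triple $(\alpha+k\theta,\beta,\gamma)$ then lies in the range (\ref{two-sided-final-range}) — all of the other inequalities only become easier when $\alpha$ increases — so, writing $K':=\mathcal F(K\cup\mathcal P(\theta))$, Proposition~\ref{lem:goemetry2} says that almost surely $K'$ has a unique triple disconnecting point $X\in(-1,0)$. This is impossible. First, $X$ must already be a cut point of $K$: the three components of $K'\setminus\{X\}$ separate the arc $(-1,X)$, the arc $(X,0)$ and the point $\infty$, and since $K\subseteq K'$ the arcs $(-1,X)$ and $(X,0)$ cannot lie in a common component of $K\setminus\{X\}$; hence $X$ lies in the closed set $C$ of cut points of $K$ contained in $(-1,0)$, and $C$ is a function of $K$ alone, so independent of $\mathcal P(\theta)$. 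Second, no excursion $e$ of $\mathcal P(\theta)$ can straddle $X$ (have one endpoint in $(-1,X)$ and the other in $(X,0)$), for such an $e$ would be a connected subset of $K'\setminus\{X\}$ meeting both $(-1,X)$ and $(X,0)$, contradicting the disconnection. But $\mu_\H(\{e:\ e\ \text{straddles}\ x\})=\int_{-1}^{x}\!\!\int_{x}^{0}(u-v)^{-2}\,du\,dv=\infty$ for every $x\in(-1,0)$, and a standard random-covering argument shows that almost surely the open intervals spanned by the endpoints of the excursions of $\mathcal P(\theta)$ cover the whole arc $(-1,0)$, hence cover $C$. Thus a.s.\ every point of $C$, in particular $X$, is straddled by some excursion — the desired contradiction. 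Therefore $\P^2(\alpha,\beta,\gamma)$ cannot exist when $\tilde\xi(\alpha,\beta,\gamma)<1$.

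The one genuinely delicate step is the assertion that the endpoint-intervals of $\mathcal P(\theta)$ almost surely cover all of $(-1,0)$ (equivalently, that the uncovered set is empty): this holds because the induced intensity of intervals of length $\ell$ behaves like $\ell^{-2}\,d\ell$ near $\ell=0$, so that $\int_0^1\ell\cdot\ell^{-2}\,d\ell=\infty$, which places the configuration comfortably inside the regime of complete Poissonian covering of the line; together with the independence of $C$ from $\mathcal P(\theta)$ this dispenses with any need to control the (possibly fractal) structure of $C$.
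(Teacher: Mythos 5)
Your overall strategy---fatten a hypothetical sample by an independent Poissonian set so as to land exactly on the critical case $\tilde\xi=1$, and then contradict the triple-disconnecting-point statement of Proposition \ref{lem:goemetry2}---is the same as the paper's, but two of your key steps are incorrect, and the second one is fatal. First, the exponent bookkeeping for the Poissonian fattening is wrong. The filling of a Poisson point process of excursions with both endpoints on $(-1,0)$ is exactly the conformal image of the construction recalled in \S\,\ref{sec:brownian-excursions}: it is a one-sided \emph{chordal} restriction sample attached to $[-1,0]$ with chordal exponent $c\theta$, so for $A\in\mathcal{Q}^{(-\infty,-1)\cup(0,\infty)}$ one has $\P(\mathcal{F}(\mathcal P(\theta))\cap A=\emptyset)=\left(\varphi_A'(-1)\varphi_A'(0)\right)^{c\theta}$ (the product of the derivatives at the two fixed points is independent of the choice of normalization), i.e.\ trichordal exponents $(0,c\theta,c\theta)$ rather than $(k\theta,0,0)$; moreover the set is a.s.\ bounded, so its exponent at $\infty$ is $0$. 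Your slit argument breaks precisely where the situation differs from Lemma \ref{sub-family-1}: there the endpoints accumulate only at $-1$, whereas here they accumulate at $0$ at every scale, so the $\mu_\H$-mass of excursions hitting the slit at distance $\eps$ diverges (logarithmically) as $\eps\to0$ instead of staying bounded. This error is repairable: tuning $\theta$ so that $\tilde\xi(\alpha,\beta+c\theta,\gamma+c\theta)=1$ reproduces exactly the paper's construction, whose $K_0$ is precisely such a one-sided chordal sample of exponent $\delta=c\theta$.

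The irreparable gap is the complete-covering claim. The condition $\int_0^1\ell\cdot\ell^{-2}\,d\ell=\infty$ only says that each \emph{fixed} point of $(-1,0)$ is a.s.\ straddled (indeed infinitely often); covering of \emph{all} points simultaneously is governed by Shepp's criterion, which for the intensity $\theta\,du\,dv/(u-v)^2$ holds only when $\theta$ exceeds a positive critical value, and below that value the unstraddled set is a nonempty random fractal. This is exactly why the filled excursion process has cut points at all, in line with Proposition \ref{prop:geom-one-sided} (a one-sided sample of small exponent has infinitely many cut points), which would be impossible if your covering claim held for every $\theta>0$. Since your $\theta$ is forced by the tuning and is small whenever $\tilde\xi(\alpha,\beta,\gamma)$ is close to $1$, you cannot escape by taking $\theta$ large. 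Consequently the contradiction evaporates: the triple disconnecting point $X$ of the fattened set can perfectly well be an unstraddled cut point of $K$, which is precisely the picture that Construction \ref{cons:two-sided} and Proposition \ref{lem:goemetry2} exhibit at criticality. What is actually needed---and what the paper extracts from its construction when it asserts that the left and right boundaries of the fattened set have no common point on $[-1,0]$---is only that the boundary-touching points of the independent fattening set a.s.\ avoid the relevant pinch points of $K$; proving that the unstraddled set is \emph{empty} is both unnecessary and false.
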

\begin{proof}
If there exist a sample $K_1$ with law $\P^2(\alpha,\beta,\gamma)$ for some $(\alpha,\beta,\gamma)$ such that 
\begin{align*}
\alpha\ge\frac58,\quad\beta\ge 0,\quad \gamma\ge0,\quad \beta\le\tilde\xi(\alpha,\gamma),\quad \gamma\le\tilde\xi(\alpha,\beta),
\end{align*}
but
$\tilde\xi(\alpha,\beta,\gamma)<1$, then since $\tilde\xi$ is continuous and strictly increasing in its arguments, there exist $\delta>0$ such that $\tilde\xi(\alpha,\beta+\delta,\gamma+\delta)=1$. Let $K_0$ be an independent one-sided chordal restriction sample of exponent $\delta$ in $(\H,-1,0)$. Then $K:=\mathcal{F}(K_1\cup K_0)$ has law $\P^2(\alpha,\beta+\delta,\gamma+\delta)$. 
This construction shows that the left and right boundaries of $K$ have no common point on $[-1,0]$. However $(\alpha,\beta+\delta,\gamma+\delta)$ is in the range (\ref{range:two-sided}), and according to Proposition
\ref{lem:goemetry2} $K$ should have a triple disconnecting point on $[-1,0]$. This is a contradiction.
\end{proof}

\section{Three-sided restriction}\label{sec:three-sided}
We are now finally going to be able to determine which of the three-sided restriction measures exist described via Proposition \ref {prop:charac} do indeed exist. In order to distinguish them from the two-sided and one-sided ones, we will refer to them as $\P^3 ( \alpha, \beta, \gamma)$  instead of $\P( \alpha, \beta, \gamma)$ in the present section. 

\subsection{Construction by hSLE curves and two-sided samples}\label{sec:construction3}
We still consider the same martingale $(M_t, t\ge 0)$ as before, but this time for all $A\in\mathcal{Q}^{**}$.

In order for  $(M_t, t\ge 0)$ to be a bounded martingale, we need to restrict $\lambda,\mu,\nu$ to a range which is even smaller than (\ref{second-condition}). Let us give the range right now.
\[
\left|
\begin{array}{lllllll}
%\label{7.1}
\lambda\ge 0  &\iff& \a-\b\ge 2 &\iff&\displaystyle{\alpha\ge\frac58} &\iff& l\ge 0\\[2.5mm]
%\label{7.2}
 \displaystyle{\mu\ge \frac34}   &\iff& \a+\b-\c\ge 2 &\iff& \displaystyle{\beta\ge\frac58 }&\iff&\displaystyle{ m\ge \frac58}\\[2.5mm]
 %\label{7.3}
\displaystyle{\nu\ge 0 } &\iff& \displaystyle{\c\ge \frac32} &\iff&\displaystyle{ \gamma\ge \frac58} &\iff& n\ge 0\\[2.5mm]
%\label{7.4}
\displaystyle{\mu<\lambda+\nu+\frac32}  &\iff& \b<\c &\iff& \beta<\tilde\xi(\alpha,\gamma) &&\\[2.5mm]
 \nu\le \lambda+\mu  &\iff&\displaystyle{ c\le a-\frac12} &\iff& \gamma\le\tilde\xi(\alpha,\beta)&\iff& n\le\tilde\xi(l,m)\\[2.5mm]
%\label{7.6}
\lambda\le\mu+\nu  &\iff& b\ge 0 &\iff& \alpha\le\tilde\xi(\beta,\gamma) &\iff& l\le\tilde\xi(m,n)\\[2.5mm]
%\label{7.7}
\lambda+\mu+\nu \ge 1 &\iff& \a\ge 3 &\iff& \xi(\alpha,\beta,\gamma)\ge 2 &\iff&\tilde\xi(l,m,n)\ge1.
\end{array}
\label{third-condition}
\tag{Range 3}
\right.
\]
The equivalence here is to be understood in the sense that we assume above all $\lambda\ge0, \mu\ge3/4, \nu\ge 0$. In this case, (\ref{align:bijection}) is true and moreover $\lambda=U(l), \mu=U(m), \nu=U(n)$.

\begin{lemma}\label{lem7.1}
For $a,b,c$ in (\ref{third-condition}), we have $w'(z)>0$ for all $z\in(-\infty,0)$.
\end{lemma}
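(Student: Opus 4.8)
The plan is to reduce the statement to the positivity result already established in Lemma \ref{G}, via the classical derivative identity for hypergeometric functions. On the unit disc one has the well-known identity $\frac{d}{dz}\,{}_2F_1(a,b;c;z)=\frac{ab}{c}\,{}_2F_1(a+1,b+1;c+1;z)$. Since $w(a,b;c;\cdot)$ and $w(a+1,b+1;c+1;\cdot)$ are, by definition, the maximal analytic continuations of the corresponding ${}_2F_1$'s to the connected domain $\C\setminus[1,\infty)$, and the two sides of this identity (differentiated, and with $w'$ in place of ${}_2F_1{}'$) agree on a neighbourhood of $0$, the identity theorem gives
\begin{align*}
w'(a,b;c;z)=\frac{ab}{c}\,w(a+1,b+1;c+1;z)\qquad\text{for all }z\in\C\setminus[1,\infty),
\end{align*}
in particular for all $z\in(-\infty,0)$.

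Next I would verify that the shifted triple $(a+1,b+1,c+1)$ still lies in (\ref{first-condition}), so that Lemma \ref{G} applies to it. Recalling that (\ref{first-condition}) translates into the conditions $a-b>\frac12$, $a+b\ge c$, $c>0$, $b<c$ on $(a,b,c)$, one checks for the shifted parameters: $(a+1)-(b+1)=a-b\ge 2>\frac12$ (using $\lambda\ge0$ in (\ref{third-condition})); $(a+1)+(b+1)=a+b+2\ge c+1$ since $a+b-c\ge 2$ in (\ref{third-condition}) (indeed $\mu\ge\frac34$); $c+1>0$ since $c\ge\frac32$; and $(b+1)<(c+1)$ since $b<c$. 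Hence $(a+1,b+1,c+1)$ satisfies (\ref{first-condition}), and Lemma \ref{G} yields $w(a+1,b+1;c+1;z)>0$ for all $z\in(-\infty,0)$.

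Finally, in (\ref{third-condition}) we have $a\ge 3>0$, $c\ge\frac32>0$ and $b\ge 0$, so $ab/c\ge 0$; away from the degenerate value $b=0$ (for which $w\equiv1$ and the statement is vacuous/false-at-the-boundary) we have $ab/c>0$, and combining with the previous step we conclude $w'(z)=\frac{ab}{c}\,w(a+1,b+1;c+1;z)>0$ on $(-\infty,0)$, as claimed.

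The argument is short and the only point requiring a little care — the \emph{main obstacle}, such as it is — is justifying the derivative identity for the analytic continuation $w$ rather than for the series ${}_2F_1$: this is settled by the identity theorem on $\C\setminus[1,\infty)$, which is connected and on which both sides are holomorphic. The rest is the bookkeeping check that the shift $(a,b,c)\mapsto(a+1,b+1,c+1)$ keeps the parameters inside the region where Lemma \ref{G}'s positivity is available, which is exactly what the inequalities of (\ref{third-condition}) provide with room to spare.
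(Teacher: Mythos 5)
Your proof is correct, but it takes a different route from the paper. The paper argues by contradiction directly on $(a,b,c)$: since $w'(0)=ab/c>0$, a failure of monotonicity would produce a first point $z_0<0$ with $w'(z_0)=0$ and $w''(z_0)\ge 0$; plugging this into Euler's equation (\ref{euler}) gives $z_0(1-z_0)w''(z_0)=ab\,w(z_0)$, whose left side is $\le 0$ while Lemma \ref{G} forces the right side to be $>0$ — a contradiction. You instead invoke the contiguous-parameter identity $w'(a,b;c;\cdot)=\frac{ab}{c}\,w(a+1,b+1;c+1;\cdot)$ (correctly extended from the series to the continuation on $\C\setminus[1,\infty)$ by the identity theorem), check that the shifted triple stays inside (\ref{first-condition}) — which it does comfortably under (\ref{third-condition}) — and then apply Lemma \ref{G} at the shifted parameters. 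Your version yields an explicit positive formula for $w'$ rather than a mere sign statement, at the cost of importing one more classical identity; the paper's version stays entirely within the ODE framework already used to prove Lemma \ref{G} and needs no parameter bookkeeping. Both arguments tacitly require $b>0$: the paper's opening claim $w'(0)=ab/c>0$ fails at $b=0$ exactly as your prefactor $ab/c$ does (and indeed for $b=0$ one has $w\equiv 1$, so the lemma as stated is degenerate there; the paper deals with this case separately, in the proof of Lemma \ref{martingale3}, where $b=0$ gives $q\equiv 0$). Your explicit flagging of this boundary case is a small point in your favour.
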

\begin{proof}
Note that $w'(0)=ab/c>0$. If $w$ is not always strictly increasing on $(-\infty,0)$, then there exist $z_0<0$ such that $w'(z_0)=0$ and $w''(z_0)\ge 0$.
Putting this back to the equation (\ref{euler}) yields
\begin{align*}
z_0(1-z_0)w''(z_0)=abw(z_0).
\end{align*}
The two sides of the equation above should have the same sign, hence we have $w(z_0)\le 0$. This is in contradiction with Lemma \ref{G}.
\end{proof}

\begin{lemma}\label{martingale3}
Let $\lambda,\mu,\nu$ be in (\ref{third-condition}).
For  $A\in\mathcal{Q}^{**}$, there exist a constant $C$, possibly dependent on $\lambda,\mu,\nu$ and $A$, such that $0\le M_t\le C$ for all $t\ge 0$ a.s.
\end{lemma}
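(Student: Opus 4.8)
The plan is to imitate the proofs of Lemma~\ref{martingale-1-side} and Lemma~\ref{martingale2}. Since (\ref{third-condition}) is contained in (\ref{second-condition}) and hence in (\ref{first-condition}), for $A\in\mathcal{Q}^{(0,\infty)}$ the bound already follows from Lemma~\ref{martingale-1-side}, and for $A\in\mathcal{Q}^{(-\infty,-1)}$ from Lemma~\ref{martingale2}. A general $A\in\mathcal{Q}^{**}$ lies in the closure of the semigroup generated by hulls whose boundary is a simple arc attached to a single one of the intervals $(-\infty,-1)$, $(-1,0)$, $(0,\infty)$, and $M_t$ is multiplicative under composition of hulls (by the chain rule for the $h_t'$-factors, the telescoping of the $G$-ratios, and the fact that the estimates below only involve cross-ratios, hence are insensitive to the positions of the marked points). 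So it suffices to treat the remaining case $A\in\mathcal{Q}^{(-1,0)}$ with $\partial A$ a simple arc. This is the genuinely new case, since neither the one-sided nor the two-sided constructions ever used hulls attached to $(-1,0)$, and it is precisely this case that will force the extra inequalities of (\ref{third-condition}).

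For such an $A$ I would rerun the computation leading to (\ref{exp-M}): with $\hat g_s$, $x_s$, $v_s=\hat g_s(V_t)$, $o_s=\hat g_s(O_t)$, $w_s=\hat g_s(W_t)$ as in the proof of Lemma~\ref{martingale-1-side}, one obtains $M_t=\exp\int_0^{S_t}(-2)[\,P_{\lambda,\mu,\nu}(x,y)+q(\tfrac{x-1}{y-x}y)\,x(1-y)\,]\,ds$ with $x=\tfrac{x_s-w_s}{x_s-o_s}$, $y=\tfrac{x_s-w_s}{x_s-v_s}$ and $P_{\lambda,\mu,\nu}$, $q$ as in (\ref{P_lmn}), (\ref{qw}). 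Because $A\subset(-1,0)$ forces $A_t=g_t(A)$ to lie between $V_t=g_t(-1)$ and $O_t=g_t(0)$, with $O_t\le W_t$, the ordering is now $v_s\le x_s\le o_s\le w_s$, so that $x\ge 1$ and $y\le 0$, while the argument $\tfrac{x-1}{y-x}y$ of $q$ still equals $\tfrac{w_s-o_s}{o_s-v_s}\ge 0$. Since $S_t=\capacity(A_t)\le\capacity(A)$ is bounded, it remains to show that
\[
f(x,y):=P_{\lambda,\mu,\nu}(x,y)+q\!\left(\tfrac{x-1}{y-x}y\right)x(1-y)
\]
is bounded from below on the (now unbounded) region $\{(x,y):x\ge 1,\ y\le 0\}$.

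As $f$ is continuous, it is enough to control it as $x\to\infty$ (with $y$ then ranging over a compact subset of $(-\infty,0]$), as $y\to-\infty$ (with $x$ then bounded), and as $x\to\infty$, $y\to-\infty$ simultaneously. The ingredients I would feed in are: $q(0)=0$ and $q(z)<0$ for $z\in(0,\infty)$ (which follow from $w>0$ on $(-\infty,0)$, Lemma~\ref{G}, and $w'>0$ on $(-\infty,0)$, Lemma~\ref{lem7.1}); the sharp expansion $q(z)=-b+O(1/z)$ as $z\to\infty$ from (\ref{lim2}) (valid because (\ref{third-condition}) gives $a-b\ge 2$, so the exponent $e$ there equals $-1$); and the new sign constraints $l\ge 0$, $m\ge 5/8$, $n\ge 0$, $b\ge 0$ (the last being $\lambda\le\mu+\nu$), together with $a\ge 3$ (i.e. $\lambda+\mu+\nu\ge 1$) and $n\le\tilde\xi(l,m)$. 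Substituting the expansion of $q$ and changing variables to $x=1+c_1$, $y=-c_2$ with $c_1,c_2\ge 0$, the dominant part of $f$ becomes a quadratic form in $(c_1,c_2)$ with pure terms $l\,c_1^2$, $m\,c_2^2$ ($l\ge 0$, $m>0$) plus lower-order corrections; as in the proof of Lemma~\ref{martingale2} one separates the cases $l>0$ and $l=0$ (the case $m=0$ cannot occur) and checks that in each case the discriminant of this quadratic form is non-positive or the cross-coefficient has a favorable sign, using that $n\le\tilde\xi(l,m)$ translates (via $\lambda=U(l)$, $\mu=U(m)$, $\nu=U(n)$) into $\nu\le\lambda+\mu$, whence $l\le(\sqrt m+\sqrt n)^2$; the edge $x\to\infty$ with $y$ bounded is handled using $b\ge 0$ and $a\ge 3$. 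If the bookkeeping gets unpleasant, a cleaner route is to first establish, in the spirit of Lemma~\ref{auxiliary lemma} and Lemma~\ref{lem7.1}, a two-sided bound $-b\le q(z)\le 0$ on $(0,\infty)$, and plug it directly into the estimate of $q(\tfrac{x-1}{y-x}y)\,x(1-y)$.

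The hard part will be exactly this last step. Unlike in Lemma~\ref{martingale-1-side}, where the domain $\{0\le y\le x\le 1\}$ is compact and continuity alone does the job, here the region $\{x\ge 1,\ y\le 0\}$ is unbounded and, because $q<0$ while $x(1-y)>0$, mere boundedness of $q$ no longer suffices; one must use the precise asymptotic $q(z)=-b+O(1/z)$ and verify that exactly the four inequalities distinguishing (\ref{third-condition}) from (\ref{second-condition})---namely $\mu\ge 3/4$, $\nu\ge 0$, $\lambda\le\mu+\nu$, and $\lambda+\mu+\nu\ge 1$---are what render the resulting quadratic form bounded below.
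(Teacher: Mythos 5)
Your plan is the paper's plan: reduce to $A\in\mathcal{Q}^{(-1,0)}$ (the paper does this simply by noting that (\ref{third-condition}) is contained in (\ref{second-condition}), so Lemma \ref{martingale2} already covers $\mathcal{Q}^{(-\infty,-1)\cup(0,\infty)}$), observe the ordering $v_s\le x_s\le o_s\le w_s$, and prove that $f(x,y)=P_{\lambda,\mu,\nu}(x,y)+q\bigl(\tfrac{x-1}{y-x}y\bigr)x(1-y)$ is bounded below on $\{x\ge1,\ y\le0\}$ via the three asymptotic regimes. One slip is fixable but should be flagged: in the regime $x\to\infty$, $y\to-\infty$, with $x=1+c_1$, $y=-c_2$ the quadratic part is $n\,c_1^2-(l-m-n)c_1c_2+m\,c_2^2$ (pure coefficient $n$, not $l$; the $l\,c_1^2$ you quote comes from the substitution $y=1+c_1+c_2$ used in Lemma \ref{martingale2}, which is tied to the region $y\ge x\ge 1$), and the discriminant is killed by $l\le\tilde\xi(m,n)$, i.e.\ $\lambda\le\mu+\nu$ (equivalently $b\ge0$), not by $n\le\tilde\xi(l,m)$ as you invoke; this is exactly how the paper argues, via $\Delta'\le-\tfrac49\mu\nu(2\mu+2\nu+1)\le0$.

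The genuine gap is the regime $x\to\infty$ with $y$ bounded in the case $n=0$ (i.e.\ $\nu=0$, which does occur in (\ref{third-condition})). There $f\sim\bigl[(l-m+\lambda-\mu)y+q(-y)(1-y)\bigr]x$, and one must show this bracket is nonnegative for every $y\le0$; neither of your two routes does this. The ``cleaner route'' $-b\le q\le 0$ is insufficient: replacing $q$ by $-b$ yields the lower bound $n x^2+(l-m-n)xy+my^2+(\lambda-\mu)x+\mu y+\tfrac58$, which for $n=0$ and $\lambda<\mu$ (allowed in (\ref{third-condition})) tends to $-\infty$ as $x\to\infty$ with $y$ near $0$. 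The reason is that when $\nu=0$ one has $l-m+\lambda-\mu=-ab/c$ while $q(-y)\sim\tfrac{ab}{c}\,y$ as $y\to0^-$, so the bracket vanishes to \emph{second} order at $y=0$: the crude bound $q\ge-b$ throws away exactly the cancellation that is needed, and one must instead prove a pointwise inequality of the type $y\,w'(y)/w(y)\ge\tfrac{ab}{c}\,\tfrac{y}{1-y}$ on $(-\infty,0)$, i.e.\ a Riccati/ODE argument in the spirit of Lemma \ref{auxiliary lemma}, not just Lemmas \ref{G} and \ref{lem7.1}. Your main route disposes of this edge with the phrase ``handled using $b\ge0$ and $a\ge3$'', which is not an argument. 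Incidentally, your sign $q\le0$ on $(0,\infty)$ (from $w,w'>0$ on $(-\infty,0)$ and definition (\ref{qw})) is the correct one, and it is precisely what makes this sub-case delicate; note that the paper's own one-line treatment of it asserts $q(z)>0$ for $z>0$, which is inconsistent with (\ref{qw}) together with Lemmas \ref{G} and \ref{lem7.1}, so this step genuinely requires more care than either your sketch or the paper's justification supplies.
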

\begin{proof}
Since (\ref{third-condition}) is included in (\ref{second-condition}), it is enough to prove the lemma for $A\in\mathcal{Q}^{(-1,0)}$.
Let us keep the notations from the proofs of Lemma \ref{martingale-1-side} and Lemma \ref{martingale2}. 
Recall that $P_{\lambda,\mu,\nu}$ and $q$ were defined by (\ref{P_lmn}) and (\ref{qw}) as
\begin{align*}
P_{\lambda,\mu,\nu}(x,y)&=nx^2+(l-m-n+\lambda-\nu-\mu)xy+my^2+\nu x+\mu y+\frac58,\\
q(z)&=-zw'(-z)/w(-z).
\end{align*}
Here $x$ and $y$ actually represent the quantities $(x_s-w_s)/(x_s-o_s)$ and $(x_s-w_s)/(x_s-v_s)$.

The difference here is that
$v_s\le x_s\le o_s\le w_s$ for $A\in\mathcal{Q}^{(-1,0)}$. 
Hence we need to prove that the quantity
\begin{align*}
f(x,y)=P_{\lambda,\mu,\nu}(x,y)+q\left(\frac{x-1}{y-x}y \right)x(1-y)
\end{align*}
is bounded from below for $x\ge 1, y\le 0$.
Since $f$ is continuous, it is enough to look at $f$ when either $x$ or $y$ goes to $\infty$.

\underline{If $x\to\infty$ and $y$ stays bounded}, then:
\begin{itemize}
\item
If $n>0$,  then $f$ is asymptotically equivalent to the leading term $nx^2$. Hence $f$ is bounded from below. 
\item
If $n=0$, then we also have $\nu=0$ and
\begin{align*}
f(x,y)=(l-m+\lambda-\mu)xy+my^2+\mu y+\frac58 +q\left(\frac{x-1}{y-x}y \right)x(1-y).
\end{align*}
As $x\to\infty$, it is asymptotically equivalent to the leading term
\begin{align*}
\left[(l-m+\lambda-\mu)y +q\left(-y \right)(1-y)\right] x.
\end{align*}
First we can exclude the trivial case where $y=0$ for which $f=5/8$.
Then note that $l-m+\lambda-\mu\le0$ and $y\le 0$. Also note that $q(z)=-zw'(-z)/w(-z)>0$ for all $z>0$ because of Lemma \ref{lem7.1}, Lemma \ref{G}, and $1-y>0$. Hence $$(l-m+\lambda-\mu)y +q\left(-y \right)(1-y)>0.$$ Hence $f$ is bounded from below.
\end{itemize}

\underline{If $x$ stays bounded and $y\to-\infty$}, then $f$ is asymptotically a polynomial in $y$ with leading term $my^2$ where $m> 0$ hence $f$ is bounded from below.

\underline{If $x\to\infty, y\to-\infty$}, then $(x-1)y/(y-x)\to\infty$. In the same way as (\ref{estim6}), we have
\begin{align*}
f(x,y)=nx^2+(l-m-n)xy+my^2+o(xy).
\end{align*}
\begin{itemize}
\item
If $n=0$, then we also have $\nu=0$ and
\begin{align*}
f(x,y)=(l-m)xy+my^2+o(xy).
\end{align*}
Note that in this case we always have $l\le m$.
If $l<m$, then $f$ is obviously bounded from below.
If $l=m$, then we also have $\lambda=\mu$, hence $b=0$, hence $w$ is constantly equal to $1$. Hence $q$ is constantly equal to $0$ and
\begin{align*}
f(x,y)=my^2+\mu y+\frac58.
\end{align*}
This is bounded from below because $m>0$.
\item If $n>0$, then
note that we also have $m>0$. Assume $f$ is not bounded from below, we must have
\begin{align*}
l-m-n>0\\
\Delta':=(l-m-n)^2-4nm>0
\end{align*}
In this case $\Delta'$ is increasing in $l$ hence in $\lambda$.
Since $l\le\tilde\xi(m,n)$, we have
\begin{align*}
l\le U^{-1}\left( U(m)+U(n) \right)=U^{-1}(\mu+\nu).
\end{align*}
This implies that $\Delta'$ gets to its maximum when $l=U^{-1}(\mu+\nu)$. Hence
\begin{align*}
\Delta'\le -\frac49\mu\nu(2\mu+2\nu+1)\le 0
\end{align*}
which leads to a contradiction.
Hence $f$ is bounded from below.
\end{itemize}
\end{proof}

Now we can construct a three-sided restriction sample $K$ by the following steps, see Figure \ref{fig:construction-three-sided}. 
\begin{construction}\label{construction3}
Let $\lambda,\mu,\nu$ be inside  (\ref{third-condition}).
\begin{itemize}
\item[(i)]  Let $\gamma_0$ be a hSLE$(\lambda,\mu,\nu)$ from $0$ to $\infty$.  Note that since $\gamma\ge-1/4$, according to Proposition \ref{prop:hsle} $\gamma_0$ does not hit the real axis except at $0$.
Let $\H^-$ be the connected component of $\H\setminus\gamma_0$ which is to the left of $\gamma_0$.
\item[(ii)] Let $\tilde K$ be an independent sample $\P^2(m,n,l)$. Note that
 (\ref{third-condition}) implies that
 $\tilde K$ exists.
 Let $\phi$ be the conformal map from $\H$ to $\H^-$ that sends the boundary points $0,-1,\infty$ to $\infty,0,-1$.
Let $K:=\phi(\tilde K)$.
\end{itemize}
\end{construction}

\begin{figure}[h]
\centering
\includegraphics[width=0.68\textwidth]{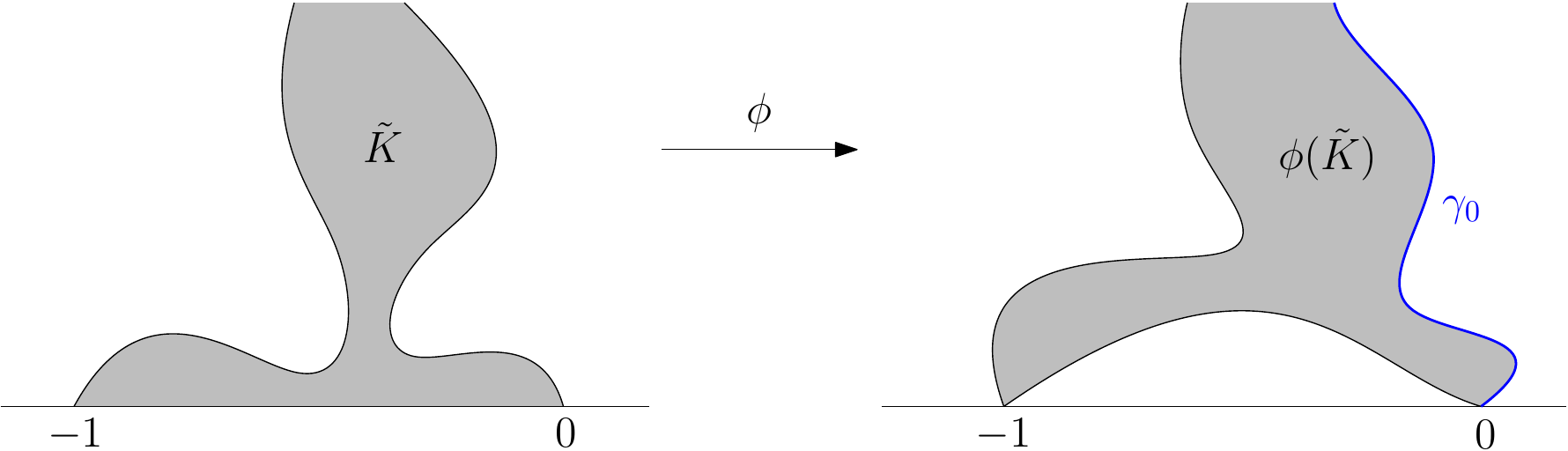}
\caption{Construction of three-sided restriction measures.}
\label{fig:construction-three-sided}
\end{figure}

\begin{proposition}
The above-constructed $K$ has law $\P^3(\alpha,\beta,\gamma)$, where
\begin{align*}
\alpha=\frac{2}{3}\lambda^2+\frac{4}{3}\lambda+\frac{5}{8}, \quad
\beta=\frac{2}{3}\mu^2+\frac{1}{3}\mu, \quad
\gamma=\frac{2}{3}\nu^2+\frac{4}{3}\nu+\frac{5}{8}.
\end{align*}
\end{proposition}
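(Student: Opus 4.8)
The plan is to reuse, essentially verbatim, the martingale argument of Propositions~\ref{prop:one-sided} and~\ref{prop:two-sided}, now testing the process $(M_t,t\ge 0)$ of Section~\ref{s5} against hulls $A\in\mathcal{Q}^{**}$. By Lemma~\ref{martingale3}, together with the It\^o verification that $M$ is a local martingale (the longish computation postponed to \S\,\ref{appendix}), $(M_t)$ is a bounded martingale up to the stopping time $T_A=\inf\{t:\gamma_0(0,t)\cap A\neq\emptyset\}$, so optional stopping and martingale convergence give $\P(K\cap A=\emptyset)=\E[M_{T_A}]=\E[M_0]=M_0$, once $M_{T_A}$ has been identified with $\P(K\cap A=\emptyset\mid\gamma_0)$. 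By (\ref{M0}) we have $M_0=g_A'(0)^{5/8+n+\nu}g_A'(-1)^{m}(g_A(0)-g_A(-1))^{l-m-n+\lambda-\nu}$, and since $5/8+n+\nu=\tfrac23\nu^2+\tfrac43\nu+\tfrac58=\gamma$, $m=\tfrac23\mu^2+\tfrac13\mu=\beta$, $5/8+l+\lambda=\tfrac23\lambda^2+\tfrac43\lambda+\tfrac58=\alpha$, and therefore $l-m-n+\lambda-\nu=\alpha-\beta-\gamma$, this $M_0$ is exactly $\varphi_A'(\infty)^{\alpha}\varphi_A'(-1)^{\beta}\varphi_A'(0)^{\gamma}$; so the entire proof reduces to identifying $M_{T_A}$.

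For this I would invoke Lemma~\ref{topology} to split into $A\in\mathcal{Q}^{(0,\infty)}$, $A\in\mathcal{Q}^{(-\infty,-1)}$ and $A\in\mathcal{Q}^{(-1,0)}$. First note that $K=\phi(\tilde K)$ contains $\gamma_0$ — because $\tilde K$ touches the whole segment $[-1,0]$, which $\phi$ maps onto the left side of $\gamma_0$ — and is contained in $\overline{\H^-}$. Hence for $A\in\mathcal{Q}^{(0,\infty)}$ one has $\{K\cap A=\emptyset\}=\{\gamma_0\cap A=\emptyset\}$, and the formula is immediate from Proposition~\ref{prop:one-sided} applied to $\gamma_0$ (whose parameters lie in (\ref{first-condition}), since (\ref{third-condition}) is contained in it). For $A\in\mathcal{Q}^{(-\infty,-1)}$ the computation is exactly the one in the proof of Proposition~\ref{prop:two-sided}: on $\{\gamma_0\cap A\neq\emptyset\}$ one has $T_A<\infty$, $h_t'(W_t)\to 0$ and all remaining factors stay bounded, so $M_{T_A}=0$, consistent with $K\supset\gamma_0$; on $\{\gamma_0\cap A=\emptyset\}$ one has $T_A=\infty$, the factors involving $W_t$ tend to $1$ and $(W_t-O_t)/(O_t-V_t)\to\infty$, so by (\ref{z-to-infty}) the surviving limit is $\lim_t h_t'(O_t)^{n}h_t'(V_t)^{m}\bigl((h_t(O_t)-h_t(V_t))/(O_t-V_t)\bigr)^{l-m-n}$, which, as in \cite[\S\, 5.2]{MR2060031}, equals $\P(\phi(\tilde K)\cap A=\emptyset\mid\gamma_0)=\P(K\cap A=\emptyset\mid\gamma_0)$. (One uses here that (\ref{third-condition}) forces $(m,n,l)$ into the existence range (\ref{two-sided-final-range}), so that $\tilde K$ indeed has a law $\P^2(m,n,l)$ obeying two-sided restriction.)

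The only genuinely new case is $A\in\mathcal{Q}^{(-1,0)}$, which never arises for two-sided samples; but the argument is unchanged. On $\{\gamma_0\cap A\neq\emptyset\}$ we again get $M_{T_A}=0$, and on $\{\gamma_0\cap A=\emptyset\}$ the same computation gives that the surviving limit of $M_t$ is $\lim_t h_t'(O_t)^{n}h_t'(V_t)^{m}\bigl((h_t(O_t)-h_t(V_t))/(O_t-V_t)\bigr)^{l-m-n}=\P(\phi(\tilde K)\cap A=\emptyset\mid\gamma_0)$. The point to verify is that this conditional probability is the right-hand side $\P(K\cap A=\emptyset\mid\gamma_0)$: when $\gamma_0\cap A=\emptyset$, the hull $A$ lies inside $\overline{\H^-}$ and $\phi^{-1}(A)$ is a hull whose closure meets $\partial\H$ only along $\phi^{-1}((-1,0))=(-\infty,-1)$, i.e. $\phi^{-1}(A)\in\mathcal{Q}^{(-\infty,-1)}\subset\mathcal{Q}^{(-\infty,-1)\cup(0,\infty)}$ — precisely the class of test hulls for which $\tilde K$ satisfies restriction — so $\P(\phi(\tilde K)\cap A=\emptyset\mid\gamma_0)=\P(\tilde K\cap\phi^{-1}(A)=\emptyset)$, and matching the two-sided formula for $\tilde K$ with the surviving limit (as above and in Proposition~\ref{prop:limit}) completes the identification. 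Combining the three sub-classes and invoking the continuity in Lemma~\ref{topology} then finishes the proof.

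As for where the difficulty lies: essentially all of it is already behind us, in Lemma~\ref{martingale3} (boundedness of $M_t$ over $\mathcal{Q}^{**}$, whose proof hinges on showing $f(x,y)=P_{\lambda,\mu,\nu}(x,y)+q\bigl((x-1)y/(y-x)\bigr)\,x(1-y)$ is bounded below on $\{x\ge 1,\ y\le 0\}$, using Lemmas~\ref{G} and~\ref{lem7.1} and the discriminant estimate that uses $\lambda\le\mu+\nu$) and in the It\^o computation that $M$ is a local martingale. Granting those inputs, the proposition is a short deduction, and the only mildly delicate bookkeeping is checking arc by arc that the surviving limit of $M_t$ is indeed $\P(K\cap A=\emptyset\mid\gamma_0)$ and that $\phi^{-1}(A)$ always lands in the admissible class of hulls for the inner two-sided sample $\tilde K$.
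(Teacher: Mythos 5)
Your proposal is correct and follows essentially the same route as the paper, whose proof is simply the observation that the argument of Proposition \ref{prop:two-sided} applies verbatim (bounded martingale from Lemma \ref{martingale3}, identification of $M_{T_A}$ with $\P(K\cap A=\emptyset\mid\gamma_0)$, then optional stopping), with the only new point being the check for $A\in\mathcal{Q}^{(-1,0)}$. Your additional bookkeeping that $\phi^{-1}(A)$ lands in $\mathcal{Q}^{(-\infty,-1)}\subset\mathcal{Q}^{(-\infty,-1)\cup(0,\infty)}$ is exactly the detail the paper leaves implicit.
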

\begin{proof}
The proof is identical to that of Proposition \ref{prop:two-sided}, except that we also need to check for $A\in\mathcal{Q}^{(-1,0)}$.\end{proof}

\begin{remark}
We constructed all three-sided restriction samples of law $\P^3(\alpha,\beta,\gamma)$ for $\alpha,\beta,\gamma$ in (\ref{third-condition}).
Note that the limiting case $\beta=\tilde\xi(\alpha,\gamma)$ can be either constructed using SLE$_{8/3}(\rho_1,\rho_2)$ curves as in \S\,\ref{sec:limit}, or alternatively in a symmetric way as the cases $\gamma=\tilde\xi(\alpha,\beta)$ and $\alpha=\tilde\xi(\beta,\gamma)$. Thus, we have now constructed (and proved the existence of) the measures $\P^3(\alpha,\beta,\gamma)$ for $(\alpha,\beta,\gamma)\in\R^3$ such that
\begin{align}\label{three-sided-final-range}
\alpha\ge\frac58,\quad\beta\ge \frac58,\quad \gamma\ge \frac58,\quad \beta\le\tilde\xi(\alpha,\gamma),\quad \gamma\le\tilde\xi(\alpha,\beta),\quad \alpha\le \tilde\xi(\beta,\gamma), \quad \xi(\alpha,\beta,\gamma)\ge 2.
\end{align}
\end{remark}

The Construction \ref{construction3} together with the geometric properties of two-sided restriction measures mentioned in Proposition \ref{lem:goemetry2}
 immediately imply some geometric properties of three-sided restriction samples. The following proposition is also a part of Theorem \ref{thm:existence}.
 \begin{proposition}
 Let $K$ be a sample which has law  $\P^3(\alpha,\beta,\gamma)$ where $\alpha,\beta,\gamma$ are within range (\ref{three-sided-final-range}).
 \begin{itemize}
 \item[-] $K$ has a triple disconnecting point which is unique if and only if $\xi(\alpha,\beta,\gamma)=2$.
 \item[-] The boundary point $0$ is a cut point of  $K$ if and only if $\gamma=\tilde\xi(\alpha,\beta)$.
 \end{itemize}
 \end{proposition}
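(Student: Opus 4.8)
The plan is to derive both statements directly from Construction \ref{construction3}, from the geometric description of two-sided samples in Proposition \ref{lem:goemetry2}, and from the dictionary between the triplets $(\lambda,\mu,\nu)$, $(l,m,n)$ and $(\alpha,\beta,\gamma)$ recorded in (\ref{third-condition}).

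First I would set up the transfer of geometry through the conformal map of Construction \ref{construction3}. There $K=\phi(\tilde K)$, where $\tilde K$ has law $\P^2(m,n,l)$ in $(\H,-1,0,\infty)$ and $\phi$ maps $\H$ onto the connected component $\H^-$ of $\H\setminus\gamma_0$ to the left of the hSLE curve $\gamma_0$, sending $0,-1,\infty$ to $\infty,0,-1$. By Proposition \ref{prop:hsle} the curve $\gamma_0$ is almost surely a simple arc and does not touch $\R$ except at $0$ (here $\nu\ge0$), so $\partial\H^-$ is a Jordan curve on the sphere and $\phi$ extends to a homeomorphism $\overline\H\to\overline{\H^-}$. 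Consequently "being a cut point" and "being a triple disconnecting point" are transported bijectively between $\tilde K$ and $K$; moreover the marked point $0$ of $K$ is the image $\phi(-1)$ of the marked point $-1$ of $\tilde K$, and a triple disconnecting point of $K$ lies on $\gamma_0$ and corresponds to one of $\tilde K$, which for a two-sided sample can only sit on the open boundary segment $(-1,0)$.

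Now for the first bullet: Proposition \ref{lem:goemetry2} says $\tilde K$ has a (then necessarily unique) triple disconnecting point on $(-1,0)$ iff $\tilde\xi(m,n,l)=1$, and has none otherwise. Since $\lambda\ge0$, $\mu\ge3/4$, $\nu\ge0$ force $U(l)=\lambda$, $U(m)=\mu$, $U(n)=\nu$, we get $\tilde\xi(m,n,l)=U^{-1}(\lambda+\mu+\nu)$, so $\tilde\xi(m,n,l)=1$ is equivalent to $\lambda+\mu+\nu=1$, which by the last line of (\ref{third-condition}) is exactly $\xi(\alpha,\beta,\gamma)=2$. Transporting through $\phi$ gives the claim. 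For the second bullet, using the reflection of $\H$ that fixes $\infty$ and swaps $-1$ and $0$ (under which $\P^2(\alpha',\beta',\gamma')$ becomes $\P^2(\alpha',\gamma',\beta')$), Proposition \ref{lem:goemetry2} also yields that $-1$ is a cut point of $\tilde K\sim\P^2(m,n,l)$ iff $n=\tilde\xi(m,l)=U^{-1}(\lambda+\mu)$, i.e. iff $\nu=\lambda+\mu$, which by (\ref{third-condition}) is $\gamma=\tilde\xi(\alpha,\beta)$; since $0=\phi(-1)$, this is the desired statement.

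It remains to address the boundary cases of (\ref{three-sided-final-range}) where the construction used is not Construction \ref{construction3} in its generic form: in particular the case $\beta=\tilde\xi(\alpha,\gamma)$, where $\gamma_0$ degenerates into an SLE$_{8/3}(\rho_1,\rho_2)$ process hitting $-1$, and the cases where the parameters $(m,n,l)$ of $\tilde K$ themselves land on the boundary of the two-sided range so that Proposition \ref{lem:goemetry2} must be invoked via the limiting constructions of \S\,\ref{sec:two-sided}. For these one runs the same transfer argument through the symmetric construction (attaching the hSLE at another pair of the three points, as in the remark following Construction \ref{construction3}) or through the relevant limiting construction, and one checks by a short direct computation that on $\lambda,\nu\ge0$, $\mu\ge3/4$ the three degenerate relations $\beta=\tilde\xi(\alpha,\gamma)$, $\gamma=\tilde\xi(\alpha,\beta)$ and $\xi(\alpha,\beta,\gamma)=2$ are pairwise incompatible (for instance the first two force $\lambda=-3/4$), so at most one extremal regime can occur at a time. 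I expect this bookkeeping — justifying the homeomorphic transfer through $\phi$ and making sure Proposition \ref{lem:goemetry2} is applied inside its valid parameter window at each boundary case — to be the only real obstacle; no new estimate is needed.
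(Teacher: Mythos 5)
Your proposal is correct and follows essentially the same route as the paper: transfer the geometric statements of Proposition \ref{lem:goemetry2} for the two-sided sample $\tilde K\sim\P^2(m,n,l)$ through the conformal map $\phi$ of Construction \ref{construction3}, and translate the conditions via the $(\lambda,\mu,\nu)\leftrightarrow(l,m,n)\leftrightarrow(\alpha,\beta,\gamma)$ dictionary of (\ref{third-condition}). The paper's own proof is exactly this two-line transfer; your additional care (Carath\'eodory extension of $\phi$, the reflection argument identifying when $-1$ is a cut point of $\tilde K$, and the check that the degenerate relations are pairwise incompatible on the boundary cases of (\ref{three-sided-final-range})) only makes explicit details the paper leaves implicit.
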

We illustrate these geometric properties in Figure \ref{fig:existence} for samples in the unit disk.
 \begin{proof}
 According to Construction \ref{construction3}, $\xi(\alpha,\beta,\gamma)=2$ if and only if $\tilde\xi(m,n,l)=1$. By Proposition \ref{lem:goemetry2}, the set $\tilde K$ has a unique triple disconnecting point $X$ on the boundary $(-1,0)$. This point will be mapped by $\phi$ to a point on $\gamma_0$, which will be the unique triple disconnecting point of $K$.
 
 When $\gamma=\tilde\xi(\alpha,\beta)$, then $m=\tilde\xi(l,n)$ hence $-1$ is a cut point of $\tilde K$, hence $0$ will be a cut point of $K$.
 \end{proof}

\subsection{Range of existence for three-sided restriction measures}\label{sec:range-3}

The range (\ref{three-sided-final-range}) is the same as in Theorem \ref{thm:existence}. In this section, we will prove the following proposition.

\begin{proposition}\label{prop:range-3}
Three-sided trichordal restriction measures $\P^3(\alpha,\beta,\gamma)$ do not exist for $(\alpha,\beta,\gamma)$ outside of range (\ref{three-sided-final-range}).
 \end{proposition}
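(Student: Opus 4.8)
The plan is to prove Proposition \ref{prop:range-3} in two stages: first exclude the triples that violate the ``soft'' inequalities $\alpha,\beta,\gamma\ge 5/8$ and $\beta\le\tilde\xi(\alpha,\gamma)$, $\gamma\le\tilde\xi(\alpha,\beta)$, $\alpha\le\tilde\xi(\beta,\gamma)$, and then exclude those with $\xi(\alpha,\beta,\gamma)<2$.

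For the first stage I would reduce to the two-sided case. If $K$ has law $\P^3(\alpha,\beta,\gamma)$ in $(\H,-1,0,\infty)$, then filling in the connected components of $\H\setminus K$ that touch the open arc $(-1,0)$ produces a set $\tilde K$ with $\tilde K\cap\R=[-1,0]$; arguing exactly as in the constructions of \S\ref{sec:two-sided} one checks that $\tilde K\cap A=\emptyset\iff K\cap A=\emptyset$ for all $A\in\mathcal{Q}^{(-\infty,-1)\cup(0,\infty)}$ and that $\tilde K$ inherits the two-sided restriction property, so $\tilde K$ has law $\P^2(\alpha,\beta,\gamma)$. Since $\P^2$ measures exist only in the range (\ref{two-sided-final-range}), this already forces $\alpha\ge 5/8$, $\beta\le\tilde\xi(\alpha,\gamma)$ and $\gamma\le\tilde\xi(\alpha,\beta)$. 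Applying the same operation after a M\"obius transformation of $\H$ permuting the three marked points (i.e. filling in the components touching the arc $(0,\infty)$, resp. $(-\infty,-1)$) yields in addition $\beta\ge 5/8$, $\gamma\ge 5/8$ and $\alpha\le\tilde\xi(\beta,\gamma)$. Via (\ref{align:bijection}) these inequalities say precisely that the triple $(\lambda,\mu,\nu):=(U(\alpha)-3/4,\,U(\beta),\,U(\gamma)-3/4)$ satisfies every line of (\ref{third-condition}) except possibly $\lambda+\mu+\nu\ge 1$, equivalently (Theorem \ref{thm:existence} and \S\ref{sec:brownian-exponents}) everything in (\ref{three-sided-final-range}) except $\xi(\alpha,\beta,\gamma)\ge 2$. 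It therefore remains to show that $\P^3(\alpha,\beta,\gamma)$ cannot exist when $\lambda+\mu+\nu<1$.

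For the second stage the idea is to run Construction \ref{construction3} backwards. Filling in the components of $\H\setminus K$ that touch the two arcs $(-\infty,-1)$ and $(-1,0)$ gives a one-sided sample $\hat K$ of law $\P^1(\alpha,\beta,\gamma)$ whose right boundary is exactly $\gamma_0:=$ the arc of $\partial K$ bordering the remaining complementary component (the one touching $(0,\infty)$). Because $\alpha,\beta,\gamma\ge 5/8$ and $\beta\le\tilde\xi(\alpha,\gamma)$, the unique measure $\P^1(\alpha,\beta,\gamma)$ is the one produced by an hSLE$_{8/3}(\lambda,\mu,\nu)$ curve in \S\ref{sec:construction-by-hsle}, or by an SLE$_{8/3}(\rho_1,\rho_2)$ curve in the limiting case $\beta=\tilde\xi(\alpha,\gamma)$ of \S\ref{sec:limit}; hence $\gamma_0$ has the law of such a curve. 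I would then condition on $\gamma_0=\eta$, let $\H^-$ be the component of $\H\setminus\eta$ on its left, and show that the conditional law $\nu_\eta$ of $K$ given $\gamma_0=\eta$ satisfies two-sided restriction in $\H^-$ with marked points the two endpoints of $\eta$ and $-1$: for an admissible hull $A\subset\H^-$ (so $A\in\mathcal{Q}^{**}$ and $A\cap\eta=\emptyset$), the map $\varphi_A$ fixing $-1,0,\infty$ sends the right boundary of $K$ to the right boundary of $\varphi_A(K)$, so the three-sided restriction property of $K$ disintegrates over $\sigma(\gamma_0)$ into the identity $\varphi_A\circ(\nu_\eta|_{\{K\cap A=\emptyset\}})=\nu_{\varphi_A(\eta)}$. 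Thus $\nu_\eta$ is a two-sided restriction measure, and comparing $\E[\nu_{\gamma_0}(K\cap A=\emptyset)]=\P(K\cap A=\emptyset)$ with the formula of Proposition \ref{prop:charac} and reading off the behaviour of the hSLE at its two endpoints (the computation behind (\ref{M0})) identifies its exponents so that $\nu_\eta\sim\P^2(m,n,l)$ with $(l,m,n)$ as in (\ref{align:lmn}), exactly as in Construction \ref{construction3}. Then $\P^2(m,n,l)$ exists, so by (\ref{two-sided-final-range}) we get $\tilde\xi(m,n,l)\ge 1$, i.e. $\lambda+\mu+\nu\ge 1$ by the last line of (\ref{third-condition}), contradicting $\lambda+\mu+\nu<1$; this finishes the proof.

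The hard part is the reverse decomposition, and within it the assertion that conditionally on its right boundary the set $K$ is again a restriction sample in the complementary domain. Making this rigorous requires a careful handling of regular conditional distributions together with the fact that the right boundary is a \emph{measurable and restriction-covariant} functional of $K$ (which is exactly why $\varphi_A$ transports right boundary to right boundary), and then the precise matching of exponents through the endpoint expansions of the hypergeometric functions of \S\ref{sec:hsle} -- in effect the martingale computation of \S\ref{sec:construction-two-sided} read backwards. By contrast the limiting slice $\beta=\tilde\xi(\alpha,\gamma)$ needs no such work: there $\mu=\lambda+\nu+3/2$, so $\lambda+\mu+\nu=2(\lambda+\nu)+3/2>1$ automatically once $\alpha,\gamma\ge 5/8$ forces $\lambda,\nu\ge 0$.
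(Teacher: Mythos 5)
Your overall strategy is the same as the paper's: obtain the soft constraints by restriction--inheritance, then derive $\xi(\alpha,\beta,\gamma)\ge 2$ by peeling one boundary arc off a hypothetical $\P^3(\alpha,\beta,\gamma)$ sample, showing that the remainder, suitably unfolded, is a two-sided restriction measure whose exponents at the two endpoints of the peeled arc drop by $h$, and then invoking the constraint $\tilde\xi\ge1$ from (\ref{two-sided-final-range}). The only structural difference is cosmetic: the paper peels the bottom arc $\gamma_b$ and identifies the unfolded law as $\P^2(\alpha,h(\beta),h(\gamma))$, while you peel the right boundary and aim for $\P^2(m,n,l)$ with $(l,m,n)$ as in (\ref{align:lmn}); both give $U(\alpha)+U(\beta)+U(\gamma)\ge 5/2$. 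Your first stage, your final arithmetic, and your separate handling of the slice $\beta=\tilde\xi(\alpha,\gamma)$ are all fine.

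The gap is exactly at the step you flag as hard, and your sketch of how to close it would not work as written. The disintegration identity $\varphi_A\circ\bigl(\nu_\eta|_{\{K\cap A=\emptyset\}}\bigr)=\nu_{\varphi_A(\eta)}$ only relates the conditional laws attached to two \emph{different} curves $\eta$ and $\varphi_A(\eta)$; by itself it does not show that any fixed $\nu_\eta$, transported to $\H$, satisfies two-sided restriction, nor that its exponents are deterministic. The ingredient that makes this work in the paper is precisely what is missing from your plan: the independence statement of Lemma \ref{indep} (the unfolded set is independent of the peeled curve), which collapses the family $\{\nu_\eta\}$ to a single law $J$, after which the restriction property of $J$ is proved not by an almost-sure disintegration but by conditioning on $\gamma_b\subset[-1,0]^\eps$ and passing to the limit (the convergence lemma preceding Lemma \ref{lem:J}). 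Similarly, identifying the conditional exponents ``by reading the endpoint expansions of the hypergeometric functions backwards'' is both vague and unnecessary: since in the regime being excluded no three-sided construction is available to compare with, the identification cannot be run through Construction \ref{construction3}; the paper instead transfers it from the already classified two-sided case via Lemma \ref{lem:aaa}, whose proof uses only Construction \ref{cons:two-sided} and the uniqueness of restriction measures --- no hypergeometric asymptotics and no conditional-law computation. So your proposal reproduces the paper's reduction in outline, but without an analogue of Lemma \ref{indep} and a uniqueness-based identification of the unfolded law, the second stage is a plan rather than a proof.
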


It is easy to see that, in  range (\ref{three-sided-final-range}), the conditions $\alpha\ge5/8,\,\beta\ge 5/8,\,\gamma\ge 5/8$  are inherited from chordal two-sided restriction measures and the conditions $\beta\le\tilde\xi(\alpha,\gamma),\, \gamma\le\tilde\xi(\alpha,\beta),\, \alpha\le \tilde\xi(\beta,\gamma)$  are inherited from trichordal one-sided restriction measures. Hence it is enough to justify the condition $ \xi(\alpha,\beta,\gamma)\ge 2$. As we are now going to explain, this condition is actually inherited from the two-sided condition $\tilde\xi(\alpha,\beta,\gamma)\ge 1$ in (\ref{two-sided-final-range}). 
For this purpose, we are going to explore the relation between two-sided and three-sided restriction measures. We can already see that if we take $K$ a three-sided restriction measure as in Construction \ref{construction3} and send the right boundary of $K$ to the half-line $(0,\infty)$ by the conformal map $\phi^{-1}$, then the image $\phi^{-1}(K)$ is a two-sided restriction measure.

We would like to prove this transitivity of restriction property in a way that does not rely on the Construction \ref{construction3}. This is because we want to know whether there exists any other $\P^3(\alpha,\beta,\gamma)$ than those constructed in Construction \ref{construction3}.
Let us reformulate the desired property and fix notations for the rest of this section.

Let $\gamma_b$ be the bottom part of $K$ which has $-1$ and $0$ as end points. Let $\varphi_{\gamma_b}$ be the conformal map from the unbounded connected component of $\H\setminus\gamma_b$ to $\H$ which sends $-1,0,\infty$ to themselves. Let $J:=\varphi_{\gamma_b}(K)$. (See Fig \ref{relation-multi}.) We will prove in Lemma \ref{lem:J} that $J$ satisfies two-sided restriction.

\begin{figure}[h]
    \centering
    \includegraphics[width=0.8\textwidth]{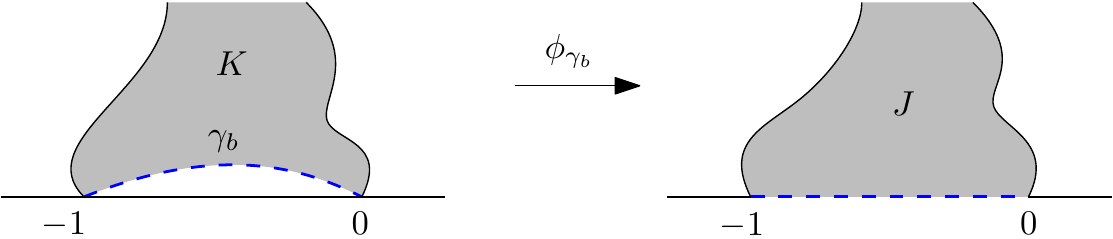}
    \caption{From three-sided restriction to two-sided restriction by applying conformal maps}
    \label{relation-multi}
\end{figure}

\begin{lemma}\label{indep}
 $J$ and $\gamma_b$ are independent. 
\end{lemma}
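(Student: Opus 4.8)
The plan is to establish the independence of $J$ and $\gamma_b$ by a domain-Markov-type argument, exploiting the trichordal restriction property of $K$ together with the fact that $\gamma_b$ is a curve touching $\partial\H$ along the two arcs $(-\infty,-1)$ and $(-1,0)$, while $A$-perturbations used in the restriction property of $J$ live in $\mathcal{Q}^{(0,\infty)}$ (on the third arc, which does not interact with $\gamma_b$). Concretely, I would first discretize: approximate $\gamma_b$ by a hull $B\in\mathcal{Q}^{(-\infty,-1)\cup(-1,0)}$ bounded away from $0$ (or rather, work with the event that $\gamma_b$ stays inside a fixed such $B$, or that the initial segment of $\gamma_b$ coincides with a prescribed simple curve up to capacity $s$, in the spirit of the semigroup arguments of Section~\ref{Characterization}). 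Since the law of $K$ is determined by the quantities $\P(K\cap A=\emptyset)$ for $A\in\mathcal{Q}^{**}$ (by the characterization, Proposition~\ref{prop:charac}, and the density of the relevant semigroups in Lemma~\ref{topology}), it suffices to show that for all $A\in\mathcal{Q}^{(0,\infty)}$ the conditional probability $\P(J\cap A=\emptyset\mid\gamma_b)$ is a.s.\ non-random, equal to the fixed value $\varphi_A'(\infty)^{m'}\varphi_A'(-1)^{n'}\varphi_A'(0)^{l'}$ that defines $\P^2$.

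The key computation goes as follows. Fix $A\in\mathcal{Q}^{(0,\infty)}$. By definition $J=\varphi_{\gamma_b}(K)$, so $\{J\cap A=\emptyset\}=\{K\cap \varphi_{\gamma_b}^{-1}(A)=\emptyset\}$. Now $\tilde A:=\varphi_{\gamma_b}^{-1}(A)$ is (conditionally on $\gamma_b$) a hull in $\H$ that touches $\partial\H$ only along $(0,\infty)$, hence $\gamma_b\cup\tilde A\in\mathcal{Q}^{**}$. Applying the three-sided restriction property of $K$ to the hull $\tilde A$ (or to an approximating hull built from the $\eta_x$-semigroup), we get that $\P(K\cap\tilde A=\emptyset\mid \gamma_b)$ equals a deterministic function of the conformal modulus data of $\varphi_{\tilde A}$ relative to $-1,0,\infty$, multiplied by the derivative factors of the map realizing $\gamma_b$. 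Crucially, because $\varphi_{\gamma_b}$ fixes $-1,0,\infty$ and $\tilde A=\varphi_{\gamma_b}^{-1}(A)$, the chain rule collapses these into exactly $\varphi_A'(\infty)^{\alpha}\varphi_A'(-1)^{\beta}\varphi_A'(0)^{\gamma}$ divided by the corresponding factors for $\gamma_b$ alone --- and the latter is precisely $\P(K\cap\text{(neighborhood of }\gamma_b)=\emptyset)$-type normalization, which cancels. In other words, the $\gamma_b$-dependence factors out and cancels between numerator $\P((K\cap\tilde A=\emptyset)\cap(\gamma_b\in\cdot))$ and the conditioning $\P(\gamma_b\in\cdot)$, leaving $\P(J\cap A=\emptyset\mid\gamma_b)$ equal to a non-random quantity. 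This is the content one extracts by the same restriction-composition identity $\P(K\cap(A_1\cup A_2)=\emptyset)=\P(K\cap A_1=\emptyset)\,\P(K\cap\varphi_{A_1}(A_2)=\emptyset)$ already used repeatedly (e.g.\ in the proof of Lemma~\ref{topology}'s setup and in Proposition~\ref{prop:two-sided}).

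Having shown $\P(J\cap A=\emptyset\mid\gamma_b)$ is a.s.\ constant for every $A\in\mathcal{Q}^{(0,\infty)}$, independence of $J$ from $\gamma_b$ follows from a monotone-class/$\pi$-system argument: the events $\{J\cap A=\emptyset\}$, $A\in\mathcal{Q}^{(0,\infty)}$, generate the $\sigma$-algebra on $\Omega^2(\H,-1,0,\infty)$ and are stable under finite intersection (since $A_1\cup A_2\in\mathcal{Q}^{(0,\infty)}$ when both $A_i$ are), so a conditional probability that is a.s.\ non-random on this generating $\pi$-system forces conditional independence of the whole $\sigma$-algebra generated by $J$ from $\sigma(\gamma_b)$; unconditioning then gives genuine independence and simultaneously identifies the marginal law of $J$ as the claimed $\P^2$.

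The main obstacle I anticipate is the rigorous handling of the conditioning on $\gamma_b$: $\gamma_b$ is a random fractal curve, not a nice hull in $\mathcal{Q}$, so one cannot directly apply the restriction axiom (which is stated for $A$ bounded away from the marked points with $D\setminus A$ simply connected) to ``$A=$ neighborhood of $\gamma_b$''. The standard remedy --- and the one I would carry out --- is to condition instead on the event that an initial piece of $\gamma_b$ (up to Loewner time/capacity $s$) follows a prescribed simple curve, use the restriction property for the corresponding finite hull $B_s\in\mathcal{Q}^{(-\infty,-1)\cup(-1,0)}$, obtain the factorization at level $s$, and then pass to the limit $s\to\infty$ (or refine the discretization) using the continuity properties of $A\mapsto\varphi_A$ and $A\mapsto\P(K\cap A=\emptyset)$ from Lemma~\ref{topology} together with the a.s.\ convergence of the approximating hulls to $\gamma_b$ in the topology $\mathcal{T}$. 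Making this limiting argument clean --- in particular checking that $\varphi_{\gamma_b}^{-1}(A)$ is well-approximated by $\varphi_{B_s}^{-1}(A)$ and that the relevant derivative factors converge --- is where the real work lies, but it is routine given the machinery already developed in Sections~\ref{Characterization}, \ref{s5} and \ref{sec:two-sided}.
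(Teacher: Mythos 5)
There is a genuine gap, and it sits exactly at the step you flag as ``where the real work lies''. Your key computation requires the conditional law of $K$ given the whole curve $\gamma_b$: you want $\P(K\cap\varphi_{\gamma_b}^{-1}(A)=\emptyset\mid\gamma_b)$ to be computable from the restriction axiom, with the $\gamma_b$-dependence cancelling. But the restriction axiom only speaks about deterministic hulls $A\in\mathcal{Q}^{**}$ that are bounded away from the marked points, whereas $\gamma_b$ is a random fractal curve passing through $-1$ and $0$; it is not an element of $\mathcal{Q}$, and the composition identity $\P(K\cap(A_1\cup A_2)=\emptyset)=\P(K\cap A_1=\emptyset)\P(K\cap\varphi_{A_1}(A_2)=\emptyset)$ cannot be applied with ``$A_1=$ a neighbourhood of $\gamma_b$''. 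Your proposed remedy, conditioning on the event that an initial segment of $\gamma_b$ follows a prescribed simple curve, is conditioning on an event of probability zero, so it does not produce the factorization either; one would have to set up regular conditional probabilities and a genuine limiting scheme, and the asserted cancellation is precisely the statement (conditionally on one boundary piece, the rest is a restriction sample with deterministic exponents) that the paper only obtains \emph{after} this lemma, via Lemma~\ref{lem:J}, Lemma~\ref{lem:aaa} and Construction~\ref{cons:two-sided}. In this sense your argument assumes what it is supposed to help prove. A secondary problem: you only use $A\in\mathcal{Q}^{(0,\infty)}$, but these events do not generate the $\sigma$-algebra of $J\in\Omega^2(\H,-1,0,\infty)$ (they see only the right side of $J$); you would at least need $A\in\mathcal{Q}^{(-\infty,-1)\cup(0,\infty)}$.

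The paper avoids all of this by arguing in the opposite direction: instead of conditioning on $\gamma_b$ and studying $J$, it conditions on the positive-probability events $\{K\cap A=\emptyset\}$ for deterministic $A\in\mathcal{Q}^{(-1,0)}$, which coincide with $\{\gamma_b\cap A=\emptyset\}$ since only the bottom boundary of $K$ can meet such $A$. The restriction axiom then says that, on this event, $\varphi_A(K)$ has the unconditioned law of $K$, and the deterministic uniqueness of normalized maps gives $\varphi_{\varphi_A(\gamma_b)}\circ\varphi_A=\varphi_{\gamma_b}$ on the unbounded component of $\H\setminus\gamma_b$, so that $J=\varphi_{\gamma_b}(K)$ computed from $K$ equals the corresponding object computed from $\varphi_A(K)$. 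Hence the law of $J$ is unchanged by conditioning on $\{\gamma_b\cap A=\emptyset\}$, and since these events (a $\pi$-system) generate $\sigma(\gamma_b)$, independence follows in a few lines, with no null-event conditioning, no limiting procedure, and no knowledge of the exponents of $J$. If you want to keep your strategy, you would essentially have to re-derive this conditioning-free mechanism anyway, so I recommend switching the direction of the conditioning as the paper does.
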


\begin{proof}
For all $A\in\mathcal{Q}^{(-1,0)}$, because of the restriction property of $K$, we have the following identity in law:
\begin{align}\label{lem: indep}
\varphi_{\gamma_b}(K) | \{K\cap A=\emptyset \}
 = \varphi_{\varphi_A(\gamma_b)}\circ \varphi_A(K) | \{K\cap A=\emptyset\}
=\varphi_{\gamma_b}(K).
\end{align}
Equivalently, (\ref{lem: indep}) says that 
$
J|\{\gamma_b \cap A=\emptyset\} 
$
has the same law as $J$. Since the events $\{\gamma_b \cap A=\emptyset\} $ for all $A\in\mathcal{Q}^{(-1,0)}$ generate the $\sigma$-algebra for the law of $\gamma_b$, we conclude that $J$ is independent of $\gamma_b$.
\end{proof}

Lemma \ref{indep} says that in order to sample $K$, we can first sample $\gamma_b$ and then sample the pre-image by $\varphi_{\gamma_b}$ of an independent sample $J$. 
This gives us an alternative way to describe the law of $J$ : it is the law of $K$ conditioned on $\gamma_b=[-1,0]$. Of course this conditioning must be taken as a limit.
Let $[-1,0]^\eps :=\left\{z\in\overline\H, \dist(z,[-1,0])\le\eps\right\}.$ Let $K_\eps$ be $K$ conditioned on  $\gamma_b\subset[-1,0]^\eps$. 
\begin{lemma}
The sequence $(K_\eps)_{\eps>0}$ converges in law to $J$ as $\eps\to 0$.
\end{lemma}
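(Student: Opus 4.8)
The plan is to reduce the statement to a deterministic conformal‑convergence fact by exploiting the independence of Lemma \ref{indep}. Since $J=\varphi_{\gamma_b}(K)$ is independent of $\gamma_b$, one may re-sample $K$ by first drawing $\gamma_b$ and then, independently, a copy of $J$, and setting $K=\varphi_{\gamma_b}^{-1}(J)$. Conditioning on $E_\eps:=\{\gamma_b\subset[-1,0]^\eps\}$ then simply replaces the law of $\gamma_b$ by the conditional law $\mu_\eps:=\mathrm{Law}(\gamma_b\mid E_\eps)$ while keeping $J$ with its original law and still independent; thus $K_\eps$ has the law of $\varphi_g^{-1}(J)$ with $g\sim\mu_\eps$ independent of $J$. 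For this to be meaningful one first checks $\P(E_\eps)>0$ for every $\eps>0$: the curve $\gamma_b$ is (a conformal image of) a boundary of a one‑sided restriction sample, which by the hSLE and SLE$_{8/3}(\rho)$ constructions of \S\,\ref{s5} and \S\,\ref{sec:two-sided} has full support among curves from $-1$ to $0$ in $\overline\H$, so $[-1,0]$ lies in the support of the law of $\gamma_b$.

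\textbf{Deterministic input.} The next step is to show that, as $\eps\to0$, the maps $\varphi_g^{-1}\colon\H\to U_g$ (with $U_g$ the unbounded component of $\H\setminus g$) converge to the identity \emph{uniformly over all curves $g$ from $-1$ to $0$ with $g\subset[-1,0]^\eps$}, uniformly on each compact subset of $\overline\H\setminus\{-1,0\}$. Indeed $\overline{U_g}\supset\overline\H\setminus[-1,0]^\eps$ and $\varphi_g$ is normalized by fixing $-1,0,\infty$, so this is a Carathéodory kernel–convergence statement, the kernels $U_g$ converging to $\H$; it is made uniform over the shrinking family of admissible $g$ by standard distortion and Beurling estimates, the only delicate behaviour, near the endpoints $-1$ and $0$, being excluded by staying on compacts away from those two points. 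In particular, for every $A\in\mathcal{Q}^{**}$ (bounded and bounded away from $-1,0,\infty$) one has $\varphi_g(A)\to A$ in the topology $\mathcal{T}$, uniformly over $g$ with $g\subset[-1,0]^\eps$; so there are hulls $A^-_\delta\subset\mathrm{int}(A)$ and $A^+_\delta\supset A$ with $A^\pm_\delta\to A$ as $\delta\to0$ such that $A^-_\delta\subset\varphi_g(A)\subset A^+_\delta$ for all such $g$ once $\eps$ is small enough relative to $\delta$.

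\textbf{Conclusion.} Combining the two ingredients, write $\{K_\eps\cap A=\emptyset\}=\{J\cap\varphi_g(A)=\emptyset\}$ with $g\sim\mu_\eps$ independent of $J$; using the sandwiching above and integrating over $g$ yields, for all sufficiently small $\eps$,
$$
\P\bigl(J\cap A^+_\delta=\emptyset\bigr)\ \le\ \P\bigl(K_\eps\cap A=\emptyset\bigr)\ \le\ \P\bigl(J\cap A^-_\delta=\emptyset\bigr).
$$
Letting first $\eps\to0$ and then $\delta\to0$: the outer bound is automatic since $\bigcap_\delta\{J\cap A^+_\delta\ne\emptyset\}=\{J\cap A\ne\emptyset\}$ for the compact set $J$, while the inner bound converges to $\P(J\cap A=\emptyset)$ because a restriction sample $K$ touches a fixed curve (here $\varphi_{\gamma_b}^{-1}(\partial A)$) only on a null event — this is the inside/outside approximation by open and closed sets used in \S\,4.2 of \cite{MR3293294} and Lemmas 6.2–6.3 of \cite{MR1992830}. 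Hence $\P(K_\eps\cap A=\emptyset)\to\P(J\cap A=\emptyset)$ for every $A\in\mathcal{Q}^{**}$. Since these events generate the relevant $\sigma$-algebra and the family $(K_\eps)_{\eps>0}$ is tight by the standard tightness of restriction hulls, this gives $K_\eps\to J$ in law.

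\textbf{Main obstacle.} The crux is the uniform conformal convergence of the second paragraph: one must control $\varphi_g^{-1}$ along the whole segment $[-1,0]$, not merely on its interior, uniformly over the entire family of admissible curves $g$; the inside/outside hull approximation in the last step is precisely what allows one to avoid a fixed‑boundary‑touching issue for $J$, for which no restriction formula is available at this stage of the argument.
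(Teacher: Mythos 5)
Your proof takes essentially the same route as the paper: both use the independence established in Lemma \ref{indep} to re-sample $K_\eps$ as $\varphi_g^{-1}(J)$ with $g$ the conditioned bottom curve independent of $J$, and both rest on the deterministic fact that $\varphi_g^{-1}$ tends to the identity as $g$ is squeezed onto $[-1,0]$, which is exactly the paper's assertion that the map $\mathcal{G}(\gamma,J_0)=\varphi_\gamma^{-1}(J_0)$ is continuous in the Hausdorff-type metric. The only difference is in the finishing step — the paper applies the continuous mapping theorem directly, while you verify convergence of the avoidance probabilities $\P(K_\eps\cap A=\emptyset)$ via a sandwiching argument and invoke tightness — and both are valid ways to conclude.
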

\begin{proof}
Let $H$ be the space of continuous curves from $-1$ to $0$ that stay in $\H$ except for the two extremities. Let $\Omega^2(\H,-1,0,\infty)$ be as in the definition of the two-sided measures. Let $\Omega(\H,-1,0,\infty)$ be as in the definition of our three-sided measures. Let $\mathcal{G}$ be a function from $H\times \Omega^2(\H,-1,0,\infty)$ to $\Omega(\H,-1,0,\infty)$ that sends $(\gamma, J_0)$ to the pre-image by $\varphi_\gamma$ of $J_0$. 
Let $f$ be a M\"obius transformation from the unit disk $\mathbb{U}$ to $\H$. Endow $H$, $\Omega^2(\H,-1,0,\infty)$ and $\Omega(\H,-1,0,\infty)$ with the metric which is the image by $f$ of the Hausdorff metric on $\mathbb{U}$, then $\G$ is continuous in both of its arguments.

By definition $K_\eps$ has the law of $\mathcal{G}(\tilde\gamma^\eps_b, J)$ where $\tilde\gamma^\eps_b$ is $\gamma_b$ conditioned on $\gamma_b\subset[-1,0]^\eps$ and $J$ is independent of $\tilde\gamma_b$. Since $\tilde\gamma^\eps_b$ converges to the interval $[-1,0]$ almost surely within Hausdorff distance, we have $K_\eps$ converges in law to $\mathcal{G}([-1,0], J)=J$.
\end{proof}

\begin{lemma}\label{lem:J}
$J$ satisfies two-sided restriction.
\end{lemma}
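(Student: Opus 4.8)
The plan is to deduce the two-sided restriction property of $J$ from the three-sided restriction property of $K$, using Lemma \ref{indep} (that $J$ and $\gamma_b$ are independent) as the key structural input. Fix $A\in\mathcal{Q}^{(-\infty,-1)\cup(0,\infty)}$; we must show that the law of $\varphi_A(J)$ conditioned on $\{J\cap A=\emptyset\}$ equals the (unconditioned) law of $J$. The idea is to realize the event $\{J\cap A=\emptyset\}$ and the map $\varphi_A$ at the level of $K$. Indeed, $J=\varphi_{\gamma_b}(K)$, so $J\cap A=\emptyset$ is the same as $K\cap \varphi_{\gamma_b}^{-1}(A)=\emptyset$; write $B:=\varphi_{\gamma_b}^{-1}(A)$, a random hull (measurable with respect to $\gamma_b$) lying in the unbounded component of $\H\setminus\gamma_b$ and not meeting $-1,0,\infty$. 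Since $A$ avoids a neighborhood of $[-1,0]$ while $\gamma_b$ connects $-1$ to $0$, the hull $B$ belongs to $\mathcal{Q}^{**}$ (it touches $\partial D$ only along the two arcs $(-\infty,-1)$ and $(0,\infty)$ as seen through $\varphi_{\gamma_b}$), so the three-sided restriction property of $K$ applies to $B$.

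First I would condition on $\gamma_b$ (equivalently, on $B$). By the three-sided restriction property of $K$, the conditional law of $\varphi_B(K)$ given $\{K\cap B=\emptyset\}$ (and given $\gamma_b$) is the conditional law of $K$ given $\gamma_b$; but by Lemma \ref{indep}, conditioning on $\gamma_b$ does not change $J=\varphi_{\gamma_b}(K)$. Now the two conformal maps fit together: $\varphi_{\gamma_b}$ maps the unbounded component of $\H\setminus\gamma_b$ onto $\H$ fixing $-1,0,\infty$, and $\varphi_A$ maps $\H\setminus A$ onto $\H$ fixing $-1,0,\infty$; on the other hand $\varphi_B$ maps the unbounded component of $\H\setminus(\gamma_b\cup B)$ onto the unbounded component of $\H\setminus\gamma_b$ fixing $-1,0,\infty$, and then $\varphi_{\gamma_b}$ maps that onto $\H$. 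By uniqueness of the conformal map fixing three boundary points, one gets the commutation identity
\begin{align*}
\varphi_A\circ\varphi_{\gamma_b}=\varphi_{\varphi_B(\gamma_b)}\circ\varphi_B,
\end{align*}
so that $\varphi_A(J)=\varphi_A\circ\varphi_{\gamma_b}(K)=\varphi_{\varphi_B(\gamma_b)}\circ\varphi_B(K)$. Conditionally on $\gamma_b$ and on $\{K\cap B=\emptyset\}$, the right-hand side has the law of $\varphi_{\gamma_b'}(K')$ where $\gamma_b'=\varphi_B(\gamma_b)$ is the image of the bottom curve and $K'$ is a fresh sample of $\P^3(\alpha,\beta,\gamma)$; but $\varphi_{\gamma_b'}(K')$ is again just $J$ in law by the very definition of $J$ (it is the image of a three-sided sample under the normalized map from the unbounded component of its complement past the bottom curve). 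Hence $\varphi_A(J)$ conditioned on $\{J\cap A=\emptyset\}$ and on $\gamma_b$ has the law of $J$; since this does not depend on $\gamma_b$, integrating out $\gamma_b$ gives the unconditional statement.

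The step I expect to be the main obstacle is making the commutation identity $\varphi_A\circ\varphi_{\gamma_b}=\varphi_{\varphi_B(\gamma_b)}\circ\varphi_B$ and the accompanying measurability bookkeeping fully rigorous, in particular verifying that $B=\varphi_{\gamma_b}^{-1}(A)$ genuinely lies in $\mathcal{Q}^{**}$ (closedness, $\H\setminus B$ simply connected, $B$ bounded away from $-1,0,\infty$) for $\P$-almost every realization of $\gamma_b$, and that $\varphi_B(\gamma_b)$ is again a curve connecting $-1$ to $0$ of the type parametrizing $J$. These are conceptually straightforward consequences of the fact that $\gamma_b$ is a.s. a simple curve from $-1$ to $0$ staying in $\H$ and that $A$ is bounded away from $[-1,0]$, but they require a careful use of the behaviour of conformal maps near the boundary (Carathéodory-type arguments, as already invoked in the proof of Lemma \ref{indep}). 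Everything else is a repackaging of the three-sided restriction property together with the independence statement of Lemma \ref{indep}.
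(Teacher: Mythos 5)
There is a genuine gap at the central step. The three-sided restriction property of $K$ is a statement about \emph{deterministic} hulls and the \emph{unconditional} law of $K$: for a fixed $A'\in\mathcal{Q}^{**}$, the law of $\varphi_{A'}(K)$ given $\{K\cap A'=\emptyset\}$ equals the law of $K$. In your argument you condition on $\gamma_b$ and then invoke this property for the hull $B=\varphi_{\gamma_b}^{-1}(A)$, which is a random hull measurable with respect to $\gamma_b$, i.e.\ a functional of $K$ itself. Nothing in the restriction property, nor in Lemma \ref{indep}, licenses this. Indeed, conditionally on $\gamma_b=g$ the law of $K$ is that of $\mathcal{G}(g,J)$ with $J$ carrying its unconditional law (this is the content of Lemma \ref{indep}), and then, using your commutation identity, the conditional law of $\varphi_B(K)$ given $\{K\cap B=\emptyset\}$ is the law of $\mathcal{G}\bigl(g',\varphi_A(J)\bigr)$ given $\{J\cap A=\emptyset\}$, where $g'=\varphi_B(g)$. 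Asserting that this is the law of a ``fresh'' three-sided sample with bottom curve $g'$ is precisely equivalent to the statement that $\varphi_A(J)$ conditioned on $\{J\cap A=\emptyset\}$ has the law of $J$ --- that is, the step presupposes the conclusion. Worse, the claim as literally written (``the conditional law of $\varphi_B(K)$ given $\{K\cap B=\emptyset\}$ and $\gamma_b$ is the conditional law of $K$ given $\gamma_b$'') is false: given $\gamma_b$, the bottom curve of $\varphi_B(K)$ is deterministically $\varphi_B(\gamma_b)\neq\gamma_b$, while the conditional law of $K$ given $\gamma_b$ has bottom curve $\gamma_b$, so the two conditional laws cannot coincide. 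The items you flag as the main obstacle (the identity $\varphi_A\circ\varphi_{\gamma_b}=\varphi_{\varphi_B(\gamma_b)}\circ\varphi_B$ and checking $B\in\mathcal{Q}^{**}$) are in fact the harmless part.

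One could try to repair the idea by a disintegration argument: for a \emph{deterministic} hull $B_0$, restriction gives that $\varphi_{B_0}(K)$ given $\{K\cap B_0=\emptyset\}$ has the law of $K$, and one may then condition both sides on the bottom curve of the \emph{image} set; combined with Lemma \ref{indep} this yields the desired identity for the hull $\varphi_{g_0}(B_0)$, but only for almost every curve $g_0$ with respect to a conditional law, with the exceptional null set depending on $B_0$. Since reaching a prescribed $A$ forces $B_0$ to depend on $g_0$, this requires genuine care with regular conditional distributions and is not addressed in your proposal. The paper sidesteps the issue entirely: it only ever applies the restriction property of $K$ to the fixed hull $A$, conditions on the event $\{\gamma_b\subset[-1,0]^\eps\}$ (which under $\varphi_A$ becomes $\{\varphi_A^{-1}(\gamma_b)\subset[-1,0]^\eps\}$), and then lets $\eps\to0$, using the convergence in law of $K_\eps$ to $J$ proved in the preceding lemma. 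Your proposal omits precisely this limiting (or a rigorous disintegration) mechanism, and without it the proof does not go through.
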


\begin{proof}
Let $F$ be a measurable continuous and bounded function on $\Omega^2(\H,-1,0,\infty) \cup \Omega(\H,-1,0,\infty)$. Let $A\in\mathcal{Q}^{(-\infty,-1)\cup(0,\infty)}$. We have
\begin{align*}
&\E\left( F(\varphi_A(K_\eps)) | K_\eps\cap A=\emptyset \right)
=\frac{\E\left(F(\varphi_A(K_\eps))\mathbf{1}_{K_\eps\cap A=\emptyset}\right)}{\P(K_\eps\cap A=\emptyset)}
\\
=&\frac{\E\left(F(\varphi_A(K))\mathbf{1}_{K\cap A=\emptyset} | \gamma_b\subset[-1,0]^\eps\right)}{\P(K\cap A=\emptyset | \gamma_b\subset[-1,0]^\eps)}
=\frac{\E\left(F(\varphi_A(K))\mathbf{1}_{K\cap A=\emptyset} \mathbf{1}_{ \gamma_b\subset[-1,0]^\eps}\right)}{\P\left(K\cap A=\emptyset , \gamma_b\subset[-1,0]^\eps\right)}
\\
=&\frac{\E\left(F(\varphi_A(K)) \mathbf{1}_{ \gamma_b\subset[-1,0]^\eps} | {K\cap A=\emptyset}\right)}{\P(\gamma_b\subset[-1,0]^\eps | K\cap A=\emptyset)}
=\frac{\E\left(F(K) \mathbf{1}_{ \varphi_A^{-1}(\gamma_b)\subset[-1,0]^\eps} \right)}{\P\left(\varphi_A^{-1}(\gamma_b)\subset[-1,0]^\eps \right)}
\\
=&\E\left(F(K) | { \varphi_A^{-1}(\gamma_b)\subset[-1,0]^\eps} \right).
\end{align*}

On the one hand, by the fact that $K_\eps$ converges to $J$ and by some open and closed approximations as needed for (\ref{kacvg}), we have 
\begin{align*}
\E\left( F(\varphi_A(K_\eps)) | K_\eps\cap A=\emptyset \right)\underset{\eps\to 0}{\longrightarrow} \E\left( F(\varphi_A(J)) | J\cap A=\emptyset \right).
\end{align*}
On the other hand, 
$$\E\left(F(K) | { \varphi_A^{-1}(\gamma_b)\subset[-1,0]^\eps} \right)\underset{\eps\to 0}{\longrightarrow} \E\left(F(J) \right)$$
for the same reason why $K_\eps$ converges in law to $J$.
Hence we have $$ \E\left( F(\varphi_A(J)) | J\cap A=\emptyset \right)= \E\left(F(J) \right)$$ and $J$ satisfies two-sided restriction.
\end{proof}

{Note that for two-sided (trichordal) restriction measures, we can do a similar conformal transformation to obtain a one-sided (trichordal) restriction measure}; this follows either via the same proof, or directly from our Construction \ref{cons:two-sided}.  
Moreover Construction \ref{cons:two-sided} gives us a  precise relation between the exponents.
\begin{lemma}\label{lem:aaa}
Let $K$ be a sample of law $\P^2(\alpha,\beta,\gamma)$ where $\beta,\gamma\ge5/8$ and let $\gamma_r$ be its right boundary. Then $\varphi_{\gamma_r}(K)$ has law $\P^1(\alpha, h(\beta),h(\gamma))$ where
\begin{align*}
h(x):=U^{-1}\left( U(x)-\frac34 \right)  \quad\text{ for } x\ge\frac58.
\end{align*}
\end{lemma}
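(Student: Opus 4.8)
The plan is to read off the exponent relations directly from Construction \ref{cons:two-sided}, which gives a completely explicit recipe for a $\P^2(\alpha,\beta,\gamma)$ sample in terms of its right boundary and an independent one-sided sample. First I would recall that in Construction \ref{cons:two-sided} the right boundary $\gamma_0$ of $K$ is an hSLE$_{8/3}(\lambda,\mu,\nu)$, where by (\ref{align:bijection}) the parameters $\lambda,\mu,\nu$ are determined by $\lambda=U(\alpha)-3/4$, $\mu=U(\beta)$, $\nu=U(\gamma)-3/4$ (this uses $\alpha\ge5/8$, $\beta\ge5/8$, $\gamma\ge5/8$, which hold under the hypothesis $\beta,\gamma\ge5/8$ and the fact that $\P^2$ already forces $\alpha\ge5/8$), and that conditionally on $\gamma_0$ the set $K$ is obtained by filling in $\phi(\tilde K)$, where $\tilde K$ has law $\P^1(m,n,l)$ with $l=U^{-1}(\lambda)$, $m=U^{-1}(\mu)$, $n=U^{-1}(\nu)$ and $\phi$ maps $(\H,-1,0,\infty)$ onto the left component $\H^-$ of $\H\setminus\gamma_0$ sending $(-1,0,\infty)$ to $(X,\infty,-1)$.

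Next I would observe that, since $\gamma_0$ is exactly the right boundary $\gamma_r$ of $K$ and $\varphi_{\gamma_r}$ maps the unbounded (left) component of $\H\setminus\gamma_r$ back to $\H$ fixing $-1,0,\infty$, the composition $\varphi_{\gamma_r}\circ\phi$ is the conformal automorphism of $\H$ that sends $(-1,0,\infty)\mapsto(-1,0,\infty)$, hence is the identity. Therefore $\varphi_{\gamma_r}(K)=\varphi_{\gamma_r}(\mathcal F(\phi(\tilde K)\cup\gamma_0))$ is just $\tilde K$ (its filling is itself, being already a one-sided restriction sample, and the $\gamma_0$ part is sent to the boundary). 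Thus $\varphi_{\gamma_r}(K)$ has law $\P^1(m,n,l)$. It now only remains to rewrite the triple $(m,n,l)=(U^{-1}(\mu),U^{-1}(\nu),U^{-1}(\lambda))$ in terms of $(\alpha,\beta,\gamma)$ using $\mu=U(\beta)$, $\nu=U(\gamma)-3/4$, $\lambda=U(\alpha)-3/4$: this gives $m=U^{-1}(U(\beta))=\beta$... wait, one must be careful, $\P^1(m,n,l)$ has its three exponents attached to the points $\infty,-1,0$ in the order $(m,n,l)$ meaning the first slot is the $\infty$-exponent. I would match slots so that the resulting $\P^1$ exponent at $\infty$ equals $m=\beta$? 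No — I should track which marked point of $\tilde K$ goes to which marked point of $K$ under $\phi$, and conclude that the $\infty$-exponent of $\varphi_{\gamma_r}(K)$ is $\alpha$ (inherited from the $\lambda$-slot via $l\mapsto$ the $\infty$-point), the $-1$-exponent is $U^{-1}(U(\beta)-3/4)=h(\beta)$, and the $0$-exponent is $U^{-1}(U(\gamma)-3/4)=h(\gamma)$.

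Concretely: in Construction \ref{cons:two-sided} the one-sided sample $\tilde K$ has law $\P^1(m,n,l)$ and $\phi$ sends its marked points $(-1,0,\infty)$ to $(X,\infty,-1)$; the exponents of $\P^1(m,n,l)$ at $(\infty,-1,0)$ are $(m,n,l)$, so after transport by $\phi$ the exponents of $\phi(\tilde K)$ at $(-1,X,\infty)$ are... and then $\varphi_{\gamma_r}$ identifies $\H^-$ with $\H$ with marked points $-1,0,\infty$. I would carefully run through this bookkeeping once, using the definitions $l=U^{-1}(\lambda)$, $m=U^{-1}(\mu)$, $n=U^{-1}(\nu)$ together with (\ref{align:bijection}), to arrive at
\begin{align*}
\varphi_{\gamma_r}(K)\ \sim\ \P^1\!\left(\alpha,\ U^{-1}\!\left(U(\beta)-\tfrac34\right),\ U^{-1}\!\left(U(\gamma)-\tfrac34\right)\right)=\P^1(\alpha,h(\beta),h(\gamma)),
\end{align*}
which is the claim. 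The main obstacle is purely combinatorial: getting the correspondence between the three parameter slots right through the chain of conformal maps $\phi$ and $\varphi_{\gamma_r}$, i.e. making sure one does not accidentally permute $(\alpha,\beta,\gamma)$; there is no analytic difficulty, since everything reduces to Construction \ref{cons:two-sided} and the bijections in Section \ref{sec:hsle}. One should also note, as a sanity check, that $h$ is exactly the map translating the ``$+3/4$'' shift appearing in (\ref{alphabetagamma}) versus (\ref{align:lmn}), so the identity $h(\beta)=U^{-1}(U(\beta)-3/4)$ is forced, and that the hypothesis $\beta,\gamma\ge5/8$ is precisely what makes $U(\beta)-3/4\ge-1/4$ and $U(\gamma)-3/4\ge-1/4$, so that $h$ is well-defined and the resulting $\P^1$ exponents are admissible.
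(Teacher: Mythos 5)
Your overall route is the paper's own: the lemma is obtained by reading the conditional structure of a $\P^2(\alpha,\beta,\gamma)$ sample off Construction \ref{cons:two-sided} (via uniqueness of the two-sided measure) and then transporting the attached one-sided piece by the relevant conformal maps. But the execution breaks down exactly at the step you yourself call the main obstacle. In Construction \ref{cons:two-sided} the map $\phi$ sends $(-1,0,\infty)$ to $(X,\infty,-1)=(0,\infty,-1)$ (here $X=0$ since $\gamma\ge 5/8$ gives $\nu\ge 0$), so $\varphi_{\gamma_r}\circ\phi$ is \emph{not} the identity: it is the M\"obius rotation of $\H$ with $-1\mapsto 0$, $0\mapsto\infty$, $\infty\mapsto -1$. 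Hence $\varphi_{\gamma_r}(K)$ is not $\tilde K$; it is the image of $\tilde K$ under this rotation (together with the image $[0,\infty]$ of $\gamma_r$), a set meeting $\R$ along $[-1,+\infty]$ rather than $(-\infty,0]$. Your second paragraph contradicts your own first paragraph on this point, and the ``wait, one must be careful'' passage shows you noticed the tension (indeed $m=U^{-1}(U(\beta))=\beta$, not $h(\beta)$) but you then assert the final triple instead of deriving it.

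If one carries out the bookkeeping, comparing (\ref{align:lmn}) with (\ref{alphabetagamma}) gives $(m,n,l)=(\beta,h(\gamma),h(\alpha))$, and transporting by $\varphi_{\gamma_r}\circ\phi$ places these exponents at $-1$, $0$ and $\infty$ respectively: the exponent at the marked point not lying on $\gamma_r$ (namely $-1$) is unchanged and equal to $\beta$, while $h$ acts on the exponents attached to the two endpoints of the unfolded arc ($0$ and $\infty$). This identification of \emph{which} slots receive $h$ is the entire substance of the lemma — it is what the paper actually uses in \S\,\ref{sec:range-3}, where the unfolded arc $\gamma_b$ has endpoints $-1$ and $0$, so that after the corresponding re-identification of marked points it is $\beta$ and $\gamma$ that get replaced by $h(\beta)$ and $h(\gamma)$ while $\alpha$ is untouched. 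Matching your computation to the displayed statement therefore requires exactly the careful relabelling through the chain of maps that your proposal postpones and never performs; as written, the proof is incomplete, and the two concrete claims it does make (that $\varphi_{\gamma_r}\circ\phi=\mathrm{id}$ and that $\varphi_{\gamma_r}(K)=\tilde K$) are false. The only analytic inputs needed are Construction \ref{cons:two-sided} and Proposition \ref{prop:two-sided}, so the missing piece is not deep, but it is precisely the piece the lemma asserts.
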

Recall that $U$, as defined in \S\,\ref{sec:brownian-exponents}, can be viewed as a bijection from $[0,\infty)$ onto itself,  and that for $x\ge5/8$, we have $U(x) \ge 3/4$ so that there are no definition problems here.
Furthermore, as $U(h(x))+U(5/8)=U(x)$, it follows that $h(x)$ can be equivalently described as the only non-negative real such that
$ x = \tilde \xi (5/8, h(x))$. 
Loosely speaking, when we unfold a corner by the conformal maps as we defined, the restriction exponent $\alpha\ge5/8$ goes to $h(\alpha)$ for that corner.
There is also a relation for the case $\gamma<5/8$ but we will not need it.
The same holds for chordal two-sided measures. It was proved in \cite{MR1992830, MR2060031} that we can construct all chordal restriction measure of exponent $\alpha$ by first constructing its right boundary $\gamma$ then putting an independent one-sided chordal restriction sample of exponent 
$h(\alpha)$
in the connected component of $\H\setminus \gamma$ which contains $\R^-$.

 \begin{proof}
 [Proof of Proposition \ref{prop:range-3}]
Let $(\alpha,\beta,\gamma)\in\R^3$ be such that
\begin{align*}
\alpha\ge\frac58,\quad\beta\ge \frac58,\quad \gamma\ge \frac58,\quad \beta\le\tilde\xi(\alpha,\gamma),\quad \gamma\le\tilde\xi(\alpha,\beta),\quad \alpha\le \tilde\xi(\beta,\gamma).
\end{align*}
Assume that the law $\P(\alpha,\beta,\gamma)$ exists and let $K$ be a sample, then by Lemma \ref{lem:J}, $J=\varphi_{\gamma_b}(K)$ is a two-sided restriction measure. We can in particular view $K$ as a two-sided restriction measure and apply Lemma \ref{lem:aaa}, so $J$  has law $\P^2(\alpha, h(\beta), h(\gamma))$.
Therefore $(\alpha, h(\beta), h(\gamma))$ must fall into the range (\ref{two-sided-final-range}).
Hence $\tilde\xi(\alpha,h(\beta),h(\gamma))\ge 1$ which means that
\begin{align*}
U(\alpha)+U(h(\beta))+U(h(\gamma))\ge U(1)=1.
\end{align*}
which can be rephrased as
\begin{align*}
U(\alpha)+U(\beta)+U(\gamma)\ge\frac52
\end{align*}
which is equivalent to
$
\xi(\alpha,\beta,\gamma)\ge 2$.
\end{proof}

\appendix
\section{It\^o calculus for the proof of Lemma \ref{martingale-1-side}} \label{appendix}
We need to show that the process
\begin{align*}
%\label{M_t}
M_t=h_t'(W_t)^{5/8}h_t'(O_t)^{n}h_t'(V_t)^m
\left(\frac{h_t(O_t)-h_t(V_t)}{O_t-V_t}\right)^{l-m-n+\lambda}
{G\left(\frac{h_t(W_t)-h_t(O_t)}{h_t(O_t)-h_t(V_t)}\right)}\Biggm/{G\left(\frac{W_t-O_t}{O_t-V_t}\right)}
\end{align*}
is a local martingale, where $(W_t, O_t, V_t)$ satisfy
\begin{equation*}
\left\{
\begin{split}
&\; dO_t=\frac{2dt}{O_t-W_t},
\quad  dV_t=\frac{2dt}{V_t-W_t},\\
&\; dW_t=\sqrt{\kappa}\dif B_t+J(W_t-O_t,W_t-V_t)dt. \\
\end{split}
\right.
\end{equation*}

This looks like a daunting task, but it is a rather straightforward direct It\^o formula computation: 
To simplify the notations, set
\begin{align*}
\tilde X_t := \frac{h_t(W_t)-h_t(O_t)}{h_t(O_t)-h_t(V_t)},\quad X_t:=\frac{W_t-O_t}{O_t-V_t}, \hbox { and } J_t:= J(W_t-O_t,W_t-V_t).
\end{align*}
First calculate the following It\^o derivatives:
\begin{align*}
&d\, h_t(W_t)= \left[-\frac53 h_t''(W_t)+h_t'(W_t) J_t \right] dt +\sqrt{\frac83} h_t'(W_t) d B_t\\
&d\, h_t'(W_t) =\left[ \frac{h_t''(W_t)^2}{2h_t'(W_t)} +h_t''(W_t) J_t \right] dt +\sqrt{\frac83} h_t''(W_t) d B_t\\
&d\, h_t(O_t)=\frac{2h_t'(W_t)^2}{h_t(O_t)-h_t(W_t)} dt, \quad  h_t(V_t)=\frac{2h_t'(W_t)^2}{h_t(V_t)-h_t(W_t)} dt\\
&d\, h_t'(O_t)=\left[-\frac{2h_t'(W_t)^2 h_t'(O_t)}{(h_t(O_t)-h_t(W_t))^2} +\frac{2h_t'(O_t)}{(O_t-W_t)^2} \right] dt\\
&d\, h_t'(V_t)=\left[-\frac{2h_t'(W_t)^2 h_t'(V_t)}{(h_t(V_t)-h_t(W_t))^2} +\frac{2h_t'(V_t)}{(V_t-W_t)^2} \right] dt.
\end{align*}
Then we have
\begin{eqnarray*}
\frac{d M_t}{M_t} &= & d\, \log M_t +\frac{1}{2M_t^2} d\langle M\rangle_t\\
&=&\frac58 \frac{d h_t'(W_t)}{h_t'(W_t)} -\frac{5h_t''(W_t)^2}{6 h_t'(W_t)^2} dt
+ n \frac{dh_t'(O_t)}{h_t'(O_t)} +m \frac{dh_t'(V_t)}{h_t'(V_t)} \\
&&+ (l-m-n+\lambda) \frac{d h_t(O_t)-d h_t(V_t) }{{h_t(O_t)-h_t(V_t)}} -(l-m-n+\lambda) \frac{d O_t -d V_t}{{O_t-V_t}} \\
&&+ \frac{d\,{G(\tilde X_t)}}{G(\tilde X_t)}
-\frac{4G'(\tilde X_t)^2}{3G(\tilde X_t)^2} \left( \frac{h_t'(W_t)}{h_t(O_t)-h_t(V_t)} \right)^2 dt
\\
&&- \frac{d\, {G(X_t)}}{G(X_t)} 
+\frac{4G'(X_t)^2}{3G(X_t)^2}\frac{1}{\left( O_t-V_t \right)^2} dt
+ \frac{1}{2M_t^2} d\langle M\rangle_t.
\end{eqnarray*}
From the formula above, we see that the local martingale term of $d M_t/ M_t$ is $L_t d B_t$  (and therefore $d\langle M \rangle _t =M_t^2 L_t^2 dt$), where
\begin{align*}
L_t=\frac58 \sqrt{\frac83}\frac{h_t''(W_t)}{h_t'(W_t)} +\frac{G'(\tilde X_t)}{G(\tilde X_t)} \sqrt{\frac83} \frac{h_t'(W_t)}{h_t(O_t)- h_t(V_t)}
-\frac{G'(X_t)}{G(X_t)}\sqrt{\frac83} \frac{1}{O_t-V_t}.
\end{align*}
The drift term of $d M_t/ M_t$ can be written as $D_t dt$ where
\begin{align*}
&D_t =\frac58 \left[ \frac{h_t''(W_t)^2}{2h_t'(W_t)^2} +\frac{h_t''(W_t)}{h_t'(W_t)} J_t  \right]-\frac{5h_t''(W_t)^2}{6 h_t'(W_t)^2} \\
&+n \left[-\frac{2h_t'(W_t)^2 }{(h_t(O_t)-h_t(W_t))^2} +\frac{2}{(O_t-W_t)^2} \right]+ m \left[-\frac{2h_t'(W_t)^2 }{(h_t(V_t)-h_t(W_t))^2} +\frac{2}{(V_t-W_t)^2} \right]\\
&+(l-m-n+\lambda) \frac{-2 h_t'(W_t)^2 }{\left( h_t(O_t)-h_t(W_t) \right) \left( h_t(V_t)-h_t(W_t) \right)}-(l-m-n+\lambda) \frac{-2}{(O_t-W_t)(V_t-W_t)}\\
&+ \frac{G'(\tilde X_t)}{G(\tilde X_t)}\cdot\frac{1}{h_t(O_t)-h_t(V_t)} \left[  \left(-\frac53 h_t''(W_t)+h_t'(W_t) J_t \right) - \frac{2h_t'(W_t)^2}{h_t(O_t)-h_t(W_t)} \right]\\
&-\frac{G'(\tilde X_t)}{G(\tilde X_t)}\cdot\frac{h_t(W_t)-h_t(O_t)}{\left(h_t(O_t)-h_t(V_t)\right)^2} 2 h_t'(W_t)^2 \frac{h_t(V_t)-h_t(O_t)}{\left( h_t(O_t)-h_t(W_t) \right) \left( h_t(V_t)-h_t(W_t) \right)}\\
&+\frac{G''(\tilde X_t)}{2G(\tilde X_t)}\cdot \frac83 \frac{h_t'(W_t)^2}{\left( h_t(O_t)- h_t(V_t) \right)^2}
-\frac{4G'(\tilde X_t)^2}{3G(\tilde X_t)^2} \left( \frac{h_t'(W_t)}{h_t(O_t)-h_t(V_t)} \right)^2\\
&-\frac{G'(X_t)}{G(X_t)}\cdot \left( \frac{J_t}{O_t-V_t}-\frac{2}{(O_t-W_t)(O_t-V_t)} \right)
+\frac{G'(X_t)}{G(X_t)}\cdot \frac{W_t-O_t}{(O_t-V_t)^2} \cdot\frac{2(V_t-O_t)}{(O_t-W_t)(V_t-W_t)}\\
&-\frac{G''(X_t)}{2G(X_t)} \cdot \frac83 \frac{1}{(O_t-V_t)^2} +\frac{4G'(X_t)^2}{3G(X_t)^2}\frac{1}{\left( O_t-V_t \right)^2}
 +\frac{1}{2} L_t^2.
\end{align*}
Recall that by (\ref{def:J}) we have
\begin{align}\label{app:J}
J_t=\frac{8}{3}\frac{G'(X_t)}{G( X_t)}\frac{1}{O_t-V_t}.
\end{align}
In $D_t$, we now interpret the terms that only involve $W_t, O_t, V_t$ as 'constant' and we regroup the coefficients in front of the terms which involve $h_t$. The coefficients for each of these terms are as follows:
The 'constant' term is 
\begin{align*}
&\frac{2n}{(O_t-W_t)^2} +\frac{2m}{(V_t-W_t)^2} +2\frac{l-m-n+\lambda}{(O_t-W_t)(V_t-W_t)}\\
&-\frac{G'(X_t)}{G(X_t)}\cdot \left( \frac{J_t}{O_t-V_t}-\frac{2}{(O_t-W_t)(O_t-V_t)} \right)\\
&+\frac{G'(X_t)}{G(X_t)}\cdot \frac{W_t-O_t}{(O_t-V_t)^2} \cdot\frac{2(V_t-O_t)}{(O_t-W_t)(V_t-W_t)}
-\frac{G''(X_t)}{2G(X_t)} \cdot \frac83 \frac{1}{(O_t-V_t)^2} \\
&+\frac{4}{3}\frac{G'(X_t)^2}{G( X_t)^2}\frac{1}{(O_t-V_t)^2}
+\frac{4G'(X_t)^2}{3G(X_t)^2}\frac{1}{\left( O_t-V_t \right)^2}
\end{align*}
and the remaining ones can be listed as follows: 
\begin{align}
\label{a.1}
\frac{h_t''(W_t)^2}{h_t'(W_t)^2}: \quad & \frac{5}{16}-\frac56 +\frac{25}{48}=0\\
\label{a.2}
\frac{h_t''(W_t)}{h_t'(W_t)}: \quad & \frac58 J_t-\frac{5}{3} \frac{G'(X_t)}{G(X_t)}\cdot \frac{1}{O_t-V_t}=0\\
\label{a.3}
\frac{h_t'(W_t)^2 }{(h_t(O_t)-h_t(W_t))^2} : \quad & -2n \\
\label{a.4}
\frac{h_t'(W_t)^2 }{(h_t(V_t)-h_t(W_t))^2} : \quad & -2m\\
\label{a.5}
\frac{h_t'(W_t)^2 }{(h_t(O_t)-h_t(V_t))^2} : \quad & \frac43 \frac{G''(\tilde X_t)}{G(\tilde X_t)} +\frac43 \frac{G'(\tilde X_t)^2}{G(\tilde X_t)^2}-\frac{4G'(\tilde X_t)^2}{3G(\tilde X_t)^2} =  \frac43 \frac{G''(\tilde X_t)}{G(\tilde X_t)} \\
\label{a.6}
\frac{h_t'(W_t)^2}{\left( h_t(O_t)-h_t(W_t) \right) \left( h_t(V_t)-h_t(W_t) \right)}:\quad & -2(l-m-n+\lambda)\\
\label{a.7}
\frac{h_t'(W_t)^2}{\left(h_t(O_t)-h_t(W_t)\right)\left( h_t(O_t)-h_t(V_t) \right)}: \quad & -2\frac{G'(\tilde X_t)}{G(\tilde X_t)} \\
\label{a.8}
\frac{h_t'(W_t)^2}{\left(h_t(V_t)-h_t(W_t)\right)\left( h_t(O_t)-h_t(V_t) \right)}: \quad &  -2\frac{G'(\tilde X_t)}{G(\tilde X_t)} \\
\label{a.9}
\frac{h_t''(W_t)}{h_t(O_t)-h_t(V_t)}: \quad &  -\frac{5G'(\tilde X_t)}{3G(\tilde X_t)}+ \frac{5G'(\tilde X_t)}{3G(\tilde X_t)}=0\\ 
\label{a.10}
\frac{h_t'(W_t)}{h_t(O_t)-h_t(V_t)}: \quad &\frac{G'(\tilde X_t)}{G(\tilde X_t)} \left[ J_t- \frac{8}{3}\frac{G'(X_t)}{G( X_t)}\frac{1}{O_t-V_t}\right]=0
\end{align}

Adding up the terms in (\ref{a.3}-\ref{a.8}), we get $$E_t \frac{h_t'(W_t)^2}{\left(h_t(V_t)-h_t(W_t)\right)\left( h_t(O_t)-h_t(W_t) \right)}$$ where
\begin{align*}
E_t=-2n\frac{\tilde X_t+1}{\tilde X_t}-2m\frac{\tilde X_t}{\tilde X_t +1} +\frac43 \frac{G''(\tilde X_t)}{G(\tilde X_t)} \tilde X_t (\tilde X_t +1)-2(l-m-n+\lambda) +2  \frac{G'(\tilde X_t)}{G(\tilde X_t)} (2\tilde X_t+1).
\end{align*}
As $G$ satisfies the differential equation (\ref{Gdiff}), we can conclude that $E_t=0$.
Now, it remains only to deal with the 'constant' term. It is equal to $C_t / ( ( V_t-W_t)(O_t-W_t)) $, with 
\begin{align*}
C_t=2n \frac{X_t+1}{X_t} +2m \frac{X_t}{X_t+1} -\frac{4G''(X_t)}{3 G(X_t)} X_t (X_t+1)+2(l-m-n+\lambda) -2 \frac{G'(X_t)}{G(X_t)}(2X_t+1),
\end{align*}
which is also equal to $0$ because of (\ref{Gdiff}).

This finally proves that $D_t=0$ hence $M_t$ is a local martingale.

\section*{Acknowledgements}
The author is very grateful to Wendelin Werner for suggesting this question, for numerous discussions and suggestions, and for his help throughout the preparation of this paper. The author acknowledges support of the SNF grant SNF-155922. The author is also part of the
NCCR Swissmap.

\end{document}